\documentclass{article}

\usepackage[english]{babel}

\usepackage[letterpaper,top=2cm,bottom=2cm,left=3cm,right=3cm,marginparwidth=1.75cm]{geometry}

\usepackage{bm}
\usepackage{amsmath}
\usepackage{amssymb}
\usepackage{amsthm}
\usepackage{graphicx}
\usepackage{parallel}
\usepackage{ stmaryrd }
\usepackage{tikz}
\usetikzlibrary{calc}
\usepackage{multicol}
\usepackage{ stmaryrd }
\usepackage{ latexsym }
\usepackage[shortlabels]{enumitem}
\usepackage{hyperref}
\hypersetup{colorlinks,allcolors=blue,breaklinks=true}

\newcommand{\forkindep}[1][]{
  \mathrel{
    \mathop{
      \vcenter{
        \hbox{\oalign{\noalign{\kern-.3ex}\hfil$\vert$\hfil\cr
              \noalign{\kern-.7ex}
              $\smile$\cr\noalign{\kern-.3ex}}}
      }
    }\displaylimits_{#1}
  }
}

\newcommand{\boxzero}{{\fbox{$0$}}}
\newcommand{\boxone}{{\fbox{$1$}}}

\newcommand{\hor}{{\operatorname{hor}}}
\newcommand{\ver}{{\operatorname{ver}}}

\newcommand{\mipo}{\operatorname{MiPo}}
\newcommand{\Char}{\operatorname{Char}}

\newcommand{\NN}{\mathbb{N}}

\newcommand{\QQ}{\mathbb{Q}}
\newcommand{\RR}{\mathbb{R}}
\newcommand{\FF}{\mathbb{F}}
\newcommand{\ZZ}{\mathbb{Z}}
\newcommand{\VV}{\mathbb{V}}

\newcommand{\QQp}[1]{\mathbb{Q}[X]_{\operatorname{irr}}^{#1}}
\newcommand{\Kp}[1]{K[X]_{\operatorname{irr}}^{#1}}
\newcommand{\Cc}{{\mathcal{C}}}
\newcommand{\Ccalg}{{\mathcal{C}^{\operatorname{alg}}}}
\newcommand{\Cctrans}{{\mathcal{C}^{\operatorname{tr}}}}

\newcommand{\mm}{\mathcal{M}}
\newcommand{\nn}{\mathcal{N}}
\newcommand{\bb}{\mathcal{B}}
\newcommand{\Gg}{\mathcal{G}}
\newcommand{\ii}{\mathcal{I}}
\newcommand{\jj}{\mathcal{J}}
\newcommand{\kk}{\mathcal{K}}
\newcommand{\rr}{\mathcal{R}}
\newcommand{\dd}{\mathcal{D}}

\newcommand{\set}[1]{{\{#1\}}}
\newcommand{\Set}[1]{{\left\{#1\right\}}}
\newcommand{\spanA}[2]{{{\langle#1\rangle_{#2}}}}

\newcommand{\dotcup}{\mathbin{\dot{\cup}}}
\newcommand{\Fac}{{\operatorname{Fac}}}
\newcommand{\Ker}{{\operatorname{Ker}}}
\newcommand{\ACF}{{\operatorname{ACF}}}
\newcommand{\Image}{{\operatorname{Im}}}

\newcommand{\dcl}{{\operatorname{dcl}}}

\newcommand{\cl}{{\operatorname{cl}}}
\newcommand{\Diag}{{\operatorname{Diag}}}
\newcommand{\Id}{{\operatorname{Id}}}
\newcommand{\Th}{{\operatorname{Th}}}

\newcommand{\acl}{{\operatorname{acl}}}

\newcommand{\ua}{{\underline{a}}{}}
\newcommand{\ub}{{\underline{b}}{}}
\newcommand{\uc}{{\underline{c}}{}}
\newcommand{\ud}{{\underline{d}}{}}

\newcommand{\ug}{{\underline{g}}{}}
\newcommand{\uh}{{\underline{h}}{}}

\newcommand{\us}{{\underline{s}}{}}
\newcommand{\ut}{{\underline{t}}{}}
\newcommand{\uu}{{\underline{u}}{}}
\newcommand{\uv}{{\underline{v}}{}}
\newcommand{\uw}{{\underline{w}}{}}
\newcommand{\ux}{{\underline{x}}{}}
\newcommand{\uy}{{\underline{y}}{}}
\newcommand{\uz}{{\underline{z}}{}}

\newcommand{\uzero}{{\underline{0}}}
\newcommand{\ulambda}{{\underline{\lambda}}}

\newcommand{\li}{{\scalebox{0.5}{{$\operatorname{li}$}}}}
\newcommand{\ld}{{\scalebox{0.5}{{$\operatorname{ld}$}}}}

\newcommand{\lii}{{\scalebox{0.5}{$\operatorname{li}$}}}
\newcommand{\ldd}{{\scalebox{0.5}{$\operatorname{ld}$}}}

\newcommand{\tilv}{{\Tilde{v}}}

\newcommand{\MM}{\mathbb{M}}

\newcommand{\uxvec}{{\underline{\Vec{x}}}}

\newcommand{\xvec}{{\Vec{x}}}

\newcommand{\Hfour}{{$(\operatorname{H4})$}}

\newcommand{\TKvs}{{T_{K\operatorname{-vs}}}}
\newcommand{\TKvsThe}{{T_{K\operatorname{-vs},\theta}}}
\newcommand{\TKvsTheC}{{T^C_{K\operatorname{-vs},\theta}}}

\newcommand{\RCF}{{\operatorname{RCF}}}

\newcommand{\tp}{{\operatorname{tp}}}

\newcommand{\LK}{{L_K}}
\newcommand{\NIP}{$\operatorname{NIP}$}

\newcommand{\TPtwo}{$\operatorname{TP}_2$}
\newcommand{\NATP}{$\operatorname{NATP}$}

\newcommand{\SOP}{$\operatorname{SOP}$}
\newcommand{\dprk}{\operatorname{dp-rk}}

\newcommand{\LKThe}{{L_{K,\theta}}}
\newcommand{\LRC}{{L_{R_C}}}

\newcommand{\Lr}{L_{\operatorname{r}}}

\newcommand{\emptyseq}{{\scalebox{0.5}{$\langle\rangle$}}}

\newcommand{\commonBaseTheorem}{\hyperref[theorem_r_c_element]{Common Base Theorem}}

\newcommand{\drawTextHelper}[5]{
\node[anchor=center, scale = 1] at (#1 * #4 + 0.5 * #4, -#2 * #5 - 0.5 * #5) {$#3$};
}
\newcommand{\drawText}[3]{\drawTextHelper{#1}{#2}{#3}{1.0}{0.6}}

\newcommand{\drawBorderHelper}[6]{\draw [black, line width=0.75]
(#1 * #5 + 0.105, -#2 * #6) --
(#1 * #5, -#2 * #6) --
(#1 * #5, -#4 * #6) --
(#1 * #5 + 0.105, -#4 * #6)
(#3 * #5 - 0.105, -#2 * #6) --
(#3 * #5, -#2 * #6) --
(#3 * #5, -#4 * #6) --
(#3 * #5 - 0.105, -#4 * #6);
\node[anchor=east, scale = 1.0] (Frame) at (0.15, -0.5 * #2 * #6 + -0.5 * #4 * #6 -0.08) {};}
\newcommand{\drawBorder}[4]{\drawBorderHelper{#1}{#2}{#3}{#4}{1.0}{0.6}}

\newcommand{\drawHDotsHelper}[5]{
\draw [line width=1.0, line cap=round, dash pattern=on 0 off 9.5 * #4]
        (#1 * #4, -#2 * #5 - 0.5 * #5) --
        (#1 * #4 + #3 * #4 + 0.02 * #4, -#2 * #5 - 0.5 * #5);    
}
\newcommand{\drawHDots}[3]{\drawHDotsHelper{#1}{#2}{#3}{1.0}{0.6}}

\newcommand{\drawVDotsHelper}[5]{
\draw [line width=1.0, line cap=round, dash pattern=on 0pt off 9.5 * #5]
        (#1 * #4 + 0.5 * #4, -#2 * #5) --
        (#1 * #4 + 0.5 * #4, -#2 * #5 -#3 * #5 - 0.02 * #5);    
}
\newcommand{\drawVDots}[3]{\drawVDotsHelper{#1}{#2}{#3}{1.0}{0.6}}

\newtheorem*{notation}{Notation}

\newtheorem{theorem}{Theorem}[section]
\newtheorem*{theorem*}{Theorem}
\newtheorem*{setting*}{Setting}
\newtheorem*{corollary*}{Corollary}
\newtheorem{theoremi}{Theorem}

\newtheorem{definition}[theorem]{Definition}
\newtheorem{fact}[theorem]{Fact}

\newtheorem{remark}[theorem]{Remark}
\newtheorem{lemma}[theorem]{Lemma}
\newtheorem{corollary}[theorem]{Corollary}
\newtheorem{example}[theorem]{Example}
\newtheorem{observation}[theorem]{Observation}

\newtheorem{subclaim}{Claim}[theorem]
\newtheorem{subdefinition}[subclaim]{Definition}

\newtheorem*{subdefinition*}{Definition}
\newtheorem*{subclaim*}{Claim}

\newenvironment{innerproof}[1][Proof]
{\begin{proof}[#1]}
{\end{proof}}

\usepackage{titlesec}
\titleformat{\section}
  {\normalfont\Large\bfseries\boldmath}{\thesection}{1em}{}
\titleformat{\subsection}
  {\normalfont\large\bfseries\boldmath}{\thesubsection}{1em}{}
\titleformat{\subsubsection}
  {\normalfont\normalsize\bfseries\boldmath}{\thesubsubsection}{1em}{}

\title{Model Theory of Generic Vector Space Endomorphisms V: The o-Minimal Case}
\author{Leon Chini}

\newcommand{\Addresses}{{
  \bigskip
  \footnotesize

  \textsc{Mathematisches Institut, Universität Bonn, Endenicher Allee 60, D-53115 Bonn, Germany}\par\nopagebreak
  \textit{E-mail address}: 
  \href{mailto:lchini@uni-bonn.de}{\tt lchini@uni-bonn.de}
}}

\begin{document}
\maketitle

\begin{abstract}
\noindent This paper further studies the model companion of an endomorphism acting on a vector space, possibly with extra structure. Let $T$ be a model-complete theory that $\varnothing$-defines an infinite $K$-vector space $\mathbb{V}$. In previous work, we introduced a family $\{T^C_\theta : C \in \mathcal{C}\}$ of extensions of the theory $T_\theta := T \cup \{\text{``$\theta$ is an endomorphism of $\mathbb{V}$''}\}$ that parameterizes all consistent extensions of the form  
$$  
    T_\theta \cup \left\{\sum\nolimits_{k}\bigcap\nolimits_{l}\operatorname{Ker}(\rho_{j, k, l}[\theta]) = \sum\nolimits_{k}\bigcap\nolimits_{l} \operatorname{Ker}(\eta_{j, k, l}[\theta]) : j \in \mathcal{J}\right\},  
$$  
where all sums and intersections are finite, all the $\rho[\theta]$'s and $\eta[\theta]$'s are polynomials over $K$ with $\theta$ plugged in, and $\mathcal{J}$ is some possibly infinite index set. We also presented a sufficient condition that implies that every $T^C_\theta$ has a model companion $T\theta^C$. In this paper, we study the case where $T$ is an o-minimal expansion of the theory of ordered groups. Doing so, we obtain a new family of theories that have $\operatorname{TP}_2$ and $\operatorname{SOP}$, and are $\operatorname{NATP}$, as well as distal non-o-minimal theories of various $\operatorname{dp}$-ranks with and without the exchange property.
\end{abstract}

\tableofcontents
\section{Introduction}

This paper is a continuation of \cite{Chi25}, \cite{Chi25b}, \cite{Chi26}, and \cite{Chi26b} which deal with the model companion of an endomorphism acting on a vector space, possibly with extra structure. For the relevance of this line of inquiry and a description of earlier work, see the introduction of \cite{Chi25}. The goal of this paper is to study the case where the starting theory in which the vector space lives is an o-minimal expansion of the theory of ordered groups.

We call a theory \textit{o-minimal} if it expands the theory of dense linear orders without endpoints and, in every model, every definable subset of the underlying order in one variable is a finite union of points and intervals, with endpoints in that model or in $\set{\pm\infty}$.
Note that some authors also consider non-dense structures such as $(\ZZ, <)$ to be o-minimal, but since we will never encounter such structures, we assume density throughout.
O-minimality was first introduced by van den Dries in \cite{vdD84} as a setting in which some results from semialgebraic geometry generalize.
It was further studied in \cite{PS86} by Pillay and Steinhorn, who gave this property the name o-minimality.
Definable sets in o-minimal theories are particularly well understood, since every definable set can be decomposed into finitely many definable sets of a particularly nice form, called \textit{cells}.
Using this decomposition, one can, for example, assign a meaningful dimension to every definable set.
Overall, o-minimality provides a powerful framework for geometry that avoids phenomena such as Cantor sets, space-filling curves, or even infinite discrete sets.
Thus o-minimality can be regarded as a notion of ``tame geometry''.
There are many other notions of ``tame geometry'' that generalize o-minimality, such as local o-minimality \cite{TV09}, weak o-minimality \cite{MMS00}, or $d$-minimality \cite{Mil05}.
Although o-minimality implies \NIP{}, some of these other notions of ``tame geometry'' generally do not express tameness in the sense of combinatorial complexity or neostability, but rather in the sense of the topology induced by the order.
O-minimality has striking applications to Diophantine geometry; for example, Pila and Zannier reproved the Manin-Mumford conjecture in \cite{PZ08}, where a crucial part relies on the so-called Pila-Wilkie theorem \cite{PW06}, which roughly states that for any set definable in an o-minimal expansion of $(\RR, 0, 1, +, \cdot)$, the number of rational points of height $\leq N$ lying outside its algebraic part grows slower than any power of $N$.
Similarly, the André-Oort conjecture was proven in full generality in \cite{PSTEG24} by Pila, Shankar, and Tsimerman.

We now discuss the results from our previous papers. Let $L$ be a language and $T$ a model-complete $L$-theory with an infinite $\varnothing$-definable $K$-vector space $\VV$ in every model.
Given a consistent set $C$ of constraints on an endomorphism $\theta$, which encodes conditions of the form  
$$
    \sum\nolimits_{k}\bigcap\nolimits_{l}\Ker(\rho_{k, l}[\theta]) = \sum\nolimits_{k}\bigcap\nolimits_{l} \Ker(\eta_{k, l}[\theta])
$$  
(where all sums and intersections are finite,  
and all the $\rho[\theta]$'s and $\eta[\theta]$'s are polynomials over $K$ with the endomorphism $\theta$ plugged in), we define the following theory in the language $L_\theta := L \cup \set{\theta}$:  
$$
T^C_\theta := T \cup \set{\text{``$\theta$ is an endomorphism of $\VV$''}} \cup \set{\text{``$\theta$ satisfies the constraints in $C$''}}.
$$
Note that these sets of constraints are a simplification for the sake of this introduction and that we will actually use so-called kernel configurations instead (see Definition \ref{def_kernel_conf}).
In \cite{Chi25}, we showed that $T^C_\theta$ has a model companion $T\theta^C$ if $T$ satisfies a certain condition \Hfour{}, which corresponds to Definition 1.11 in \cite{dEl21b} and basically states that ``$\psi(\ux; \uy)$ implies no finite disjunction of non-trivial linear dependencies in $\ux$ over $\VV$'' can be expressed as an $L$-formula $\sigma_\psi(\uy)$ for every $L$-formula $\psi(\ux; \uy)$. These results, as well as all others relevant to this paper, will be recalled in Section \ref{sec_prelim_res}. In \cite{Chi25b}, we studied definable sets in $T\theta^C$, the completions of this theory, as well as the algebraic closure in its models. We also obtained a quantifier-elimination result in \cite{Chi25b}, which we will apply in this paper. In \cite{Chi26}, we studied certain reducts of the model companion $T\theta^C$; for example, we were able to show that given $(\mm, \theta) \models T^C_\theta$ existentially closed and some $\rho \in K[X]$, the structure $(\mm, \Ker(\rho[\theta]))$ is either interdefinable with $\mm$ or an existentially closed model of the theory which expands $T$ by a predicate for a vector subspace. Finally, in \cite{Chi26b}, we proved that our construction preserves the neostability property \NATP{} if $T$ satisfies \Hfour{}. Here \NATP{} stands for ``not the \textit{antichain tree property}'', and is a mutual generalization of $\operatorname{NSOP}_1$ and $\operatorname{NTP}_2$ recently introduced by Ahn and Kim in \cite{AK24}.

We study our construction in the case where $T$ is an o-minimal expansion of the theory of ordered groups.
Note that this does not mean that $\VV$ is the ($\QQ$-)vector space induced by the group structure; for example, one can choose $T = \RCF{}$, the theory of real closed fields, and $\VV$ to be the $\QQ$-vector space induced by multiplication on the positive elements.
The work of Block Gorman in \cite{Blo23} gives us many o-minimal examples that satisfy \Hfour{}.
For example, any o-minimal expansion of $\RCF{}$ satisfies \Hfour{} with the above-mentioned vector space $\VV$ if and only if no partial exponential function is definable. In the following, ``linear'' roughly means that definable functions are piecewise affine (see Definition \ref{def_linear_new}) and we call a set of constraints $C$ \textit{trivial} if it implies that the endomorphism $\theta$ is the multiplication with some $\lambda \in K$.

\begin{theoremi} \label{thmi_dichotommy}
    Let $C$ be a non-trivial set of constraints, and let $T$ be a complete and model-complete o-minimal expansion of the theory of ordered groups.
    Then one of the following holds (see Lemma \ref{lemma_omin_sat_hfour}):
    \begin{enumerate}[(i)]
        \item The theory $T$ is linear and satisfies \Hfour{}.
        In this case:
        \begin{enumerate}[(a)]
            \item The vector space $\VV$, as a group, is isomorphic to $(M^d, 0, +)$ for $d = \dim(\VV)$ (Theorem \ref{lemma_group_lin_iso}).
            \item The theory $T\theta^C$ is distal, and hence \NIP{} (Theorem \ref{theorem_still_distal}).
            \item The $\dprk{}$ of $T\theta^C$ is given in terms of $\dim_K(R_C)$ and $\dim_K(\mathfrak{G})$, where $\mathfrak{G}$ denotes the $K$-vector space of all germs of definable endomorphisms of $\VV$ at $0$ (Theorem \ref{corollary_dp_rank_btter_fml}).
            \item The closure operator $\acl_{L_\theta}$ has the exchange property if and only if $R_C$ is a field and $\dim_K(\mathfrak{G}) = 1$ (Theorem \ref{theorem_acl_fail}).
        \end{enumerate}
        \item The theory $T$ is non-linear and satisfies \Hfour{}.
        In this case:
        \begin{enumerate}[(a)]
            \item The theory $T\theta^C$ has \TPtwo{} and \SOP{}, and is \NATP{} (Section \ref{sec_neo} and Corollary \ref{corollary_tP_two}).
            \item The closure operator $\acl_{L_\theta}$ does not have the exchange property.
        \end{enumerate}
        \item The theory $T$ is non-linear and does not satisfy \Hfour{}.
    \end{enumerate}
\end{theoremi}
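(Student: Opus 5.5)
This theorem collects results proved in later sections, so the plan is to assemble them in order. First I establish the trichotomy itself, essentially Lemma \ref{lemma_omin_sat_hfour}: for a complete, model-complete o-minimal expansion $T$ of ordered groups, linearity implies \Hfour{}. The fourth combination, linear without \Hfour{}, cannot occur, and the remaining three cases are exhaustive by excluded middle.

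To show linear $\Rightarrow$ \Hfour{}, I would:
\begin{itemize}
\item use the Peterzil--Starchenko trichotomy, in the form of Definition \ref{def_linear_new}, so that every definable function is piecewise affine;
\item deduce that for a formula $\psi(\ux;\uy)$, the condition ``$\psi(\ux;\uy)$ implies no nontrivial linear dependence over $\VV$'' can be read off from cell decompositions;
\item observe that a cell implies a finite disjunction of $K$-linear dependencies exactly when its dimension is too small in a definable, uniform way.
\end{itemize}
Since dimension in o-minimal theories is definable in parameters, this yields $\sigma_\psi(\uy)$. Here I would cite Block Gorman \cite{Blo23}, which handles the field case and plausibly adapts.

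Case (i) is proved as follows.
\begin{enumerate}[(a)]
\item Build the group isomorphism $\VV \cong M^d$ by taking germs at $0$ of definable homomorphisms. A definable group in a linear o-minimal structure is locally affine, and a torsion-free divisible group with a definable local isomorphism to $M^d$ extends to a global one.
\item Prove distality using the quantifier-elimination result of \cite{Chi25b}: types in $T\theta^C$ reduce to $L$-types of tuples $(a,\theta a,\theta^2 a,\dots)$ together with kernel data. Distality of o-minimal $T$ should then transfer, since the added linear-algebraic data is stably embedded and eventually definable in terms of $T$-structure on $\VV$.
\item Compute the dp-rank by choosing maximal independent families. Directions come from $R_C$ (which controls how many $\theta$-orbits are generic) and from $\mathfrak{G}$ (which controls how many independent definable endomorphisms give distinct ``ordered'' directions), and the bound follows by subadditivity.
\item For exchange, build counterexamples in two ways: a zero divisor in $R_C$ gives a tuple whose $\acl$ is asymmetric, and a second independent germ in $\mathfrak{G}$ does the same. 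Conversely, if $R_C$ is a field and $\dim_K\mathfrak{G}=1$, then $\acl_{L_\theta}$ reduces to $R_C$-span of $\acl_L$, and exchange holds.
\end{enumerate}

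Case (ii) is proved as follows.
\begin{enumerate}[(a)]
\item Non-linearity gives a definable real closed field on an interval, hence a definable multiplication. Combined with the generic endomorphism, this interprets a random-like binary relation of the form ``$\theta$-image lies in a multiplicatively defined interval'' with an order, giving \TPtwo{} (via an array of intervals) and \SOP{} (via the order). \NATP{} follows from preservation under \Hfour{} \cite{Chi26b}, since o-minimal $T$ is \NIP{} and hence \NATP{}.
\item Failure of exchange follows from the same field structure: for $a$ generic, $\theta(a)$ is not in $\acl_L(a)$, while multiplicative relations give an asymmetric dependence.
\end{enumerate}

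Case (iii) is vacuous, as it needs no proof beyond the trichotomy.

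The main obstacles are (i)(b), distality, and (i)(c), the exact dp-rank formula. For distality I would first try the criterion via indiscernible sequences, $I_1 + b + I_2$, reducing to the o-minimal reduct using the QE.
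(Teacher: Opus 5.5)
Your route to the trichotomy breaks at its main step, linear $\Rightarrow$ \Hfour{}. You claim that a cell implies a finite disjunction of non-trivial $K$-linear dependencies exactly when its dimension is too small. This is false.
\begin{itemize}
\item Let $T$ be the theory of ordered $\RR$-vector spaces, $K=\QQ$ and $\VV=M$. The formula $x_2=\sqrt{2}\cdot x_1$ defines a set of dimension $1<2$. Yet for $x_1\in\VV'\setminus\VV$ the pair $(x_1,\sqrt{2}\cdot x_1)$ is $\QQ$-linearly independent over $\VV$.
\item For $\VV=M^2$, the formula $\pi_1(x)=0$ has dimension $1<2$ but defines an infinite set, so it implies no dependency either.
\end{itemize}
Whether a dependency is implied depends on how $K$ sits inside $D^{d\times d}$, not on dimension. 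The paper proceeds as follows. It makes $\VV=M^d$ via a $\varnothing$-definable isomorphism (Theorem \ref{lemma_group_lin_iso} and Remark \ref{rem_vec_iso}). It passes to the ordered $D$-vector space expansion $T'$, which has quantifier elimination. There each disjunct is an open piece of an affine $D$-subspace, and implies exactly the $K$-dependencies obtainable as $D$-combinations of its equations; these can be read off from the coefficients. This lets it write $\sigma_\psi$ explicitly, and Lemma \ref{lemma_transfer_hfour_to_reduct} then pushes \Hfour{} down to the reduct $T$. Block Gorman's criterion, as used here, needs $\VV$ to be a subinterval of the line with $K=\QQ$, so it cannot replace this argument.

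In (i)(a) you only assert that a local isomorphism extends globally. The real obstruction is a definably compact quotient. The paper removes it with the Edmundo--Eleftheriou exact sequence, plus the fact that non-trivial definably compact groups in linear structures have torsion. It must then still remove parameters, and the rest of Section \ref{sec_linear} relies on that.

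Your distality argument never uses linearity. As stated, it applies equally to $\RCF$ with $\VV=(R_{>0},\cdot)$, where $\RCF\theta^C$ has \TPtwo{} (Example \ref{example_tpwo}) and so is not even \NIP{}. The missing idea is the splitting claim in the proof of Theorem \ref{theorem_still_distal}.
\begin{itemize}
\item In $\hat{L}\cup\LRC$ every function is additive. The $R_C$-symbols act on all of $M$. A symbol $\hat{g}$ whose argument stays in the domain of $g$ along the sequence can be replaced by an extension of $g$ that is additive on a larger interval.
\item Hence, on a segment around $\ua$, each term becomes $t'(\uc;\ud')+t''(\ud'\uu)$, with $t'$ free of parameters from $U$.
\item Indiscernibility of $I_1{}^\frown(\ua)^\frown I_2$ over $\varnothing$ then places $t'(\ua;\ud')$ between $t'(\uc^-;\ud')$ and $t'(\uc^+;\ud')$, which transfers the sign conditions.
\end{itemize}

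For the $\operatorname{dp}$-rank, subadditivity cannot produce the value. Since $\theta$ is only $K$-linear, elements of $R_C$ do not commute with germs in $\mathfrak{G}\cong D^{d\times d}$. So the alternating words $r_q\Gamma_{q-1}r_{q-1}\cdots\Gamma_1 r_1 g(y)$ are $D$-independent linear terms. The rank is therefore infinite once $C$ is non-trivial and $\dim_K(\mathfrak{G})>1$. For instance, ordered $\QQ(\sqrt{2})$-vector spaces with $K=\QQ$, $\VV=M$ and $\mipo(C)=X^2-3$ give $\omega$, although both dimensions are $2$. You need Lemma \ref{lemma_simp_randomness_pattern} for the upper bound, and density via Lemma \ref{lemma_rc_li_plus_li} for the lower bound.

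In case (ii), the real closed field lives on an interval of $M$, not on $\VV$, so by itself it says nothing about $\VV$'s group law. Path consistency in Lemma \ref{lemma_tptwo} needs an $f$ such that $(f(\uu_i+\ux))_i$ is linearly independent over $\VV$ with respect to $\VV$'s own operations. That is exactly what Theorem \ref{theorem_not_lin} and Lemmas \ref{lemma_not_loc_lin_fkt} and \ref{lemma_sequence_mix_ff} supply.

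Similarly, genericity of $\theta(a)$ over $a$ is irrelevant to exchange. The paper reduces to $\dim(\VV)=1$ and realizes $f(v)+f(\theta(v))=r$ for a nowhere locally linear unary $f$. Lemma \ref{lemma_fkn_stupid_function} shows this equation implies no linear dependency, and that yields $r\in\cl_\theta(Av)\setminus\cl_\theta(A)$ with $v\notin\cl_\theta(Ar)$.
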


\noindent In particular, every linear o-minimal expansion of the theory of ordered groups satisfies \Hfour{}.
The author believes that, in case (iii) of Theorem \ref{thmi_dichotommy}, the theory $T^C_\theta$ has no model companion, which would give a clean trichotomy.
However, we can prove this only in the following special case, using the results of \cite{Blo23}:

\begin{theoremi}[Theorem \ref{theorem_prove_conj}]
    Let $C$ be a non-trivial set of constraints, and let $T$ be a complete and model-complete o-minimal expansion of the theory of ordered groups.
    Furthermore, suppose that $\VV$ is, as a set, an open subinterval of the line, that the operations on $\VV$ are continuous, and that $K = \QQ$.
    Then $T^C_\theta$ has a model companion if and only if $T$ satisfies \Hfour{}.
\end{theoremi}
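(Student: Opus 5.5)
One direction is already available: if $T$ satisfies \Hfour{}, the results of \cite{Chi25} recalled in Section \ref{sec_prelim_res} give a model companion $T\theta^C$ of $T^C_\theta$. So the work is the converse. Assuming \Hfour{} fails, I will show that $T^C_\theta$ has no model companion.

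First I will pin down what the failure of \Hfour{} looks like in this setting. Here $\VV$ is an open interval with continuous operations and $K=\QQ$. By the Pillay–Steinhorn/Block Gorman analysis, $(\VV,+)$ is then definably isomorphic to an ordered group on an interval. By \cite{Blo23}, \Hfour{} fails exactly when there is a definable family of curves witnessing infinitely many distinct linear dependencies. Concretely, this means a definable function $f$ on an interval of $\VV$ such that the set $\{q\in\QQ : f \text{ and } x\mapsto qx \text{ agree on some } \ldots\}$, or more precisely the set of rational slopes along which some definable curve has infinitely many "$\QQ$-linear coincidences", is not uniformly definable. In the model-complete o-minimal setting this should reduce to the following: some definable one-variable function $g$ (of exponential type, for instance the group isomorphism to a multiplicative structure) has the property that, for each $n$, the formula "$g(x)=nx$" or similar is solvable, but the solutions cannot be bounded uniformly in terms of the coefficients. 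I will use the precise characterization from \cite{Blo23} as a black box: there is a formula $\psi(x;\uy)$ such that the set of $\uy$ for which $\psi(\cdot;\uy)$ implies no nontrivial $\QQ$-linear dependency over $\VV$ is not definable, nor is it type-definable by a single formula.

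The main step is the standard non-companion argument, as in Chatzidakis–Pillay style proofs or Kaplan–Ramsey. Suppose $T^C_\theta$ had a model companion $T'$. Since $C$ is non-trivial, in an existentially closed model there is some polynomial $\rho$ for which $\theta$ behaves non-scalarly. Using \cite{Chi25}, I will take elements $v$ with $\theta v$ "generic" over $v$, i.e. $v,\theta v$ are $\QQ$-independent, and the constraint $C$ allows this. I will then express the following condition by an existential $L_\theta$-formula with parameter $\uy$: there is $x$ with $\psi(x,\theta x;\uy)$ and $x$ not in the relevant kernels. This is solvable in an existentially closed model precisely when $\psi(\cdot;\uy)$ avoids all linear dependencies; if $\psi$ forces a dependency $ax+b\theta x\in$ something, the non-triviality of $C$ blocks realization.

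By model-completeness of $T'$ together with compactness, this set of $\uy$ would be $L_\theta$-definable. I then need to descend to $L$. Here the $\QQ$/interval hypotheses enter. Quantifier reduction from \cite{Chi25b} says that $L_\theta$-definable subsets of $\VV^n$ built from $L$-parameters are $L$-definable up to the kernel data, or alternatively one can use a reduct/stable-embeddedness argument from \cite{Chi26}. That would make the set $L$-definable, contradicting the failure of \Hfour{}.

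The hardest step is making this last descent rigorous and matching exactly the dependency set from \cite{Blo23}. I will try to choose $\psi$ so that $\uy$ ranges over $\VV$ itself. Then o-minimality gives that any $L_\theta$-definable subset of $\VV$ coming from such $\uy$ is a finite union of intervals and points in a suitable elementary extension. This should contradict the infinite discrete set of "bad" slopes or parameters produced in \cite{Blo23}, which is the typical form the failure takes on an interval with continuous operations.
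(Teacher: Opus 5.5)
Your forward direction is fine. The converse has a genuine gap at its central step. You claim that in an existentially closed model, $\exists x\,\psi(x,\theta(x);\uy)$ holds \emph{exactly} when $\psi(x^0x^1;\uy)$ implies no finite disjunction of non-trivial linear dependencies, and that otherwise the non-triviality of $C$ blocks realizations. Only the ``if'' half is true; it is Lemma~\ref{lemma_rc_li_plus_li}, using that $1$ and $\theta$ are linearly independent in $R_C$. The ``only if'' half is false:
\begin{itemize}
\item[(a)] An inhomogeneous dependency such as $x^1-x^0=y$ is realized by $\theta(x)-x=y$ for every $y\in\Image(X-1)$. When $C=C_0$ this is all of $\VV$.
\item[(b)] A homogeneous dependency $x^1=q\cdot x^0$ with $x^0\neq 0$ is realized as soon as $C(X-q)>0$, e.g.\ for $\mipo(C)=X(X-\tfrac12)$ and $q=\tfrac12$.
\end{itemize}
So an arbitrary formula witnessing the failure of \Hfour{} gives you no control. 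That is all your ``black box'' provides, since it is just the negation of \Hfour{}.

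What the paper actually extracts from \cite{Blo23} is the uniform endomorphism property. This is a definable family of partial endomorphisms $h_b\colon I\to\VV$, $b\in(0,\gamma)$, with $h_b(\gamma)=b$ and $h_{q\cdot\gamma}(t)=q\cdot t$. For the formula $x^1=h_b(x^0)$, the only dependencies that can be implied are the homogeneous ones $x^1=q\cdot x^0$, and this happens exactly for $b\in\QQ\cdot\gamma$. These are blocked on $I\setminus\{0\}$ when $(X-q)[\theta]$ is injective, i.e.\ when $C(X-q)=0$, which holds for every $q$ when $R_C$ is a field. This gives
\[
Z:=\{b\in(0,\gamma) : \forall x\in I\setminus\{0\}\colon \theta(x)\neq h_b(x)\}=(\QQ\cap(0,1))\cdot\gamma .
\]

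Your descent to $L$ is neither available nor needed.
\begin{itemize}
\item[(i)] The quantifier elimination of \cite{Chi25b} (Fact~\ref{theorem_qe}) is proved under \Hfour{}, which is exactly what fails here.
\item[(ii)] $L_\theta$-definable subsets of $\VV$ are not finite unions of points and intervals. By Lemma~\ref{lemma_rc_li_plus_li} the graph of $\theta$ is dense in $\VV^2$, so for instance $\{x\in\VV:\theta(x)>0\}$ is dense and codense.
\end{itemize}
The correct endgame is a cardinality argument. If the model companion existed, take an $\aleph_1$-saturated model of it; there the definable set $Z$ would be countably infinite, which is impossible.

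Your plan also has no treatment of the case where $R_C$ is not a field. There $C(X-q)$ can be positive for all but finitely many rationals $q$ (e.g.\ $C$ transcendental with $C(X-q)=1$ for all $q\in\QQ$). Then $Z$ is finite and the argument above yields nothing. The paper uses a different idea:
\begin{itemize}
\item[(1)] By Fact~\ref{lemma_both_im_ker_inf}, choose $f$ with both $\Ker(f)$ and $\Image(f)$ infinite.
\item[(2)] By Fact~\ref{theorem_kernel_generic}, $(\mm,\Ker(f))$ is an existentially closed model of $T_V$. Since $K=\QQ$ and $V$ is dense and codense, it is therefore an existentially closed model of Block Gorman's $T_{\mathcal{G}}$.
\item[(3)] By \cite{Blo23}, every existentially closed model of $T_{\mathcal{G}}$ has a definable set of cardinality $\aleph_0$.
\item[(4)] This set is definable in the reduct, hence in $(\mm,\theta)$, again contradicting $\aleph_1$-saturation.
\end{itemize}
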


\noindent It is easy to see that the model companion $T\theta^C$ is not o-minimal unless $C$ is trivial (Observation 5.2 in \cite{Chi25b}).
Having an o-minimal open core is another notion of tame geometry: Given a structure, the open core is the reduct obtained by adding predicates for the closures of all sets definable with parameters and then removing the original language. We would like to note that in \cite{Chi25b}, we have already shown that whenever $T$ is o-minimal and satisfies \Hfour{}, the model companion $T\theta^C$ has an o-minimal open core, even if $T$ does not expand the theory of ordered groups (Theorem 5.3 in \cite{Chi25b}).

Overall, studying $T\theta^C$ when $T$ is an o-minimal expansion of the theory of ordered groups yields many interesting new theories.
On the one hand, we obtain non-o-minimal distal theories with an o-minimal open core, both with and without the exchange property, and with various values of $\dprk{}$.
On the other hand, we obtain many new examples of theories that have \TPtwo{} and \SOP{}, and are \NATP{}.
One of them is $\RCF{}\!\theta^C$, as already conjectured in \cite{dEl25}.
In general, working with sets of constraints $C \neq C_0$, where $C_0$ is the set of constraints that ensures that every $\rho[\theta]$ is injective for $\rho \in K[X] \setminus \set{0}$, can add a lot of notational complexity.
Thus we believe that $\RCF{}\!\theta^{C_0}$ is, in some sense, the perfect toy example for open problems regarding our construction, such as classifying all definable endomorphisms of $\VV$ or eliminating imaginaries.
On the one hand, $\RCF{}\!\theta^{C_0}$ is still very ``wild'' in terms of combinatorial complexity, but on the other hand, the ingredients $\RCF{}$ and $C_0$ are both very ``tame''.
\\

\noindent \textbf{Acknowledgment.} The author would like to thank Christian d'Elbée and Philipp Hieronymi for reading parts of an earlier draft of this paper and for providing valuable feedback.

\section{Preliminary Results} \label{sec_prelim_res}

In this chapter, we provide a recap of all relevant results from \cite{Chi25}, \cite{Chi25b}, and \cite{Chi26}. We also show some new results, which follow easily from these.

\subsection{The setting}  \label{sec_setting}
Let $L$ be a first-order language, let $T$ be a model-complete $L$-theory, and let $K$ be a field.
Furthermore, assume that the theory of $K$-vector spaces is definable in $T$.
By this we mean that there are $L$-formulas $\Omega_{\VV}(\ux)$, $\Omega_{0}(\ux)$, $\Omega_{+}(\ux_1, \ux_2, \uy)$, and $\Omega_{\lambda\cdot}(\ux, \uy)$ for each $\lambda \in K$ such that, in every model $\mm \models T$, they define an infinite $K$-vector space $(\VV, 0, +, (\lambda \cdot)_{\lambda \in K})^{\mm}$.
In the case $K = \QQ$, one may also view $(\VV, 0, +)^{\mm}$ as a torsion-free divisible abelian group, since these are precisely the $\QQ$-vector spaces.
If a model $\mm \models T$ is given, then $\VV$ denotes $\VV^\mm$.
If the model is denoted with $\mm'$ instead of $\mm$, then we will write $\VV'$ instead of $\VV$.
If no model of $T$ is clear from the context, then we will still use the letter $\VV$ to denote a $K$-vector space.
We may say ``vector space'' instead of ``$K$-vector space'', ``polynomial'' instead of ``$K$-polynomial'', ``linearly independent'' instead of ``$K$-linearly independent'', and so on. Definable will always mean $\varnothing$-definable.

\begin{example}
    \label{example_main}
    Two of our main examples are as follows:
    \begin{enumerate}[(i)]
        \item Let $\LK = \set{0, +, (\lambda \cdot)_{\lambda\in K}}$ and let $\TKvs$ be the theory of $K$-vector spaces, with the obvious formulas.
        Then, for any $\mm \models \TKvs$, we choose $(\VV, 0, +, (\lambda \cdot)_{\lambda\in K}) = \mm$.
        Similarly, we can also work with ordered divisible abelian groups, since these are precisely ordered $\QQ$-vector spaces.
        \item Let $K = \QQ$, $\Lr = \set{0, 1, +, \cdot, <}$, and $T = \RCF$, the theory of real closed fields.
        Then, for any $\rr \models \RCF{}$, we choose $(\VV, 0, +, (q \cdot)_{q\in \QQ})^{\rr}$ to be $(\rr_{>0}, 1, \cdot, (x \mapsto x^q)_{q\in \QQ})$.
    \end{enumerate}
\end{example}

\noindent Notationally, we will treat $\VV$ as a unary set, or even as a separate sort.
Any $\LK$-term or formula can then be viewed as an $L$-definable function or an $L$-formula, respectively.
Note that $0$, $+$, and $(\lambda\cdot)_{\lambda\in K}$ need not belong to $L$, so they are not necessarily $L$-terms.
For a given theory, there can be multiple definable vector spaces, as in the case of $\RCF{}$.
Thus, whenever a theory $T$ is given, we actually mean the tuple $(T, \Omega_{\VV}, \Omega_{0}, \Omega_{+}, (\Omega_{\lambda\cdot})_{\lambda \in K})$.

We now define $L_\theta := L(\theta) := L \cup \set{\theta}$, where $\theta$ is a function symbol not contained in $L$.
This has the drawback that, for example, if $\rr \models \RCF$ and the positive elements are viewed as a $\QQ$-vector space, then $\theta$ must also be defined outside $\VV = \rr_{>0}$.
In this case, one might try to set $\theta(0) = 0$ and $\theta(-1) \in \set{1, -1}$ so as to extend $\theta$ to an endomorphism of $(\rr, 1, \cdot)$, but then the ambient structure is no longer a vector space.
For the sake of uniformity, we instead define $\theta(x)$ to be the neutral element of $\VV$ for all $x \not\in \VV$ and set
$$
T_\theta := T \cup \set{\text{``$\theta_{\restriction \VV}$ is a $(\VV, 0, +, (\lambda \cdot)_{\lambda\in K})$-endomorphism''}} \cup \set{\forall x \not\in \VV: \theta(x) = 0}.
$$
In particular, $\theta^n(x) = 0$ for all $x \not\in \VV$ and all $n > 0$.
For consistency, we also define $\theta^0(x) = 0$ whenever $x \not\in \VV$, and $x + y = 0$ whenever either $x \not\in \VV$ or $y \not\in \VV$.
Practically, we will ignore the behavior of $\theta$ outside of $\VV$ and simply treat $\theta$ as a function defined only on $\VV$.
For example, we set $\Ker(\theta) := \set{v \in \VV : \theta(v) = 0}$, and similarly define the kernel for any function that is an endomorphism of $\VV$.

Note that if $\VV$ is an $n$-ary set, then we actually need $n$ different $n$-ary function symbols $\theta_1, \dots, \theta_n$ rather than a single unary function symbol $\theta$.
However, as mentioned above, we will treat elements of $\VV$ as singletons in order to simplify notation and will therefore pretend that $\theta$ is unary.

\begin{definition}
    Given a polynomial $\rho \in K[X]$ and any $d \geq \deg(\rho)$, we let $\rho[\theta]$ denote the endomorphism of $\VV$ defined by
    $
    \rho[\theta](v) := \sum\nolimits_{i=0}^{d} (\rho)_i \cdot \theta^i(v)
    $
    for all $v \in \VV$. Here each $(\rho)_i$ is the respective coefficient of $X^i$ in $\rho$.
\end{definition}

\begin{notation}
    If $\theta$ is clear from the context we may write $\Ker(\rho)$ and $\Image(\rho)$ instead of $\Ker(\rho[\theta])$ and $\Image(\rho[\theta])$.
\end{notation}

\noindent It is easy to check that $\rho[\theta] + \eta[\theta] = (\rho + \eta)[\theta]$ and $\rho[\theta] \circ \eta[\theta] = (\rho \cdot \eta)[\theta]$. Many classical facts for polynomials, such as Bézout's identity or Euclidean division, can be translated to this setting.

\subsection{Kernel configurations and extensions of $T_\theta$}

\noindent As stated in the introduction, we consider a family $\set{T^C_\theta : C \in \Cc}$ of extensions of $T_\theta$.

\begin{notation}
    We let $\Kp{}$ denote the set of all monic irreducible polynomials over $K$.
\end{notation}

\noindent We start by defining our index set $\Cc$:

\begin{definition} \label{def_kernel_conf}
    We call a pair $(c, d)$ a \textbf{kernel configuration} if
    \begin{enumerate}[(i)]
        \item $c \colon \Kp{} \to \NN \cup \set{\infty}$ is a function; and
        \item $d \in \NN_{> 0} \cup \set{\infty}$ is either $\infty$ or satisfies $d = \sum_{f \in \Kp{}} \deg(f) \cdot c(f)$.
    \end{enumerate}
    We let $\Cc$ denote the set of all kernel configurations.
    Given such a kernel configuration $C = (c, d) \in \Cc$, we set $C(f) := c(f)$ for all $f \in \Kp{}$.
    We define the \textbf{degree} of $C$ by $\deg(C) := d$.
    We say that $C$ is \textbf{algebraic} if $\deg(C) < \infty$.
    In this case, we define the \textbf{minimal polynomial} of $C$ by $\mipo(C) := \prod_{f \in \Kp{}} f^{C(f)}$
    (since $\deg(C) < \infty$, only finitely many factors are different from $1$, so this product is well defined).
    We say that $C$ is \textbf{transcendental} if $\deg(C) = \infty$.
    We let $\Ccalg$ and $\Cctrans$ denote the sets of all algebraic and all transcendental kernel configurations, respectively.
\end{definition}

\noindent Note that the set of kernel configurations depends on the field $K$.
We are now ready to define the family $\set{T^C_\theta : C \in \Cc}$:

\begin{definition} \label{def_T_C_theta}
    Given $C \in \Cc$ and an endomorphism $\theta \colon \VV \to \VV$, we say that $\theta$ is a \textbf{$\mathbf{C}$-endomorphism} if one of the following holds:
    \begin{enumerate}[(i)]
        \item $C$ is algebraic and $\Ker(\mipo(C)) = \VV$, that is, $\mipo(C)[\theta] = 0$.
        \item $C$ is transcendental and $\Ker(f^{C(f)}) = \Ker(f^{C(f)+1})$ for all $f \in \Kp{}$ with $C(f) < \infty$.
    \end{enumerate}
    We define $T^C_\theta := T_\theta \cup \set{\text{``$\theta$ is a $C$-endomorphism''}}$.
\end{definition}

\noindent The family $\set{T^C_\theta : C \in \Cc}$ might seem a bit arbitrary at first; however, notice that any consistent extension of the form
$$
    T_\theta \cup \Set{\sum\nolimits_{k}\bigcap\nolimits_{l}\Ker(\rho_{j, k, l}[\theta]) = \sum\nolimits_{k}\bigcap\nolimits_{l} \Ker(\eta_{j, k, l}[\theta]) : j \in \jj},
$$
where all sums and intersections are finite,  all the $\rho_{j, k, l}$'s and $\eta_{j, k, l}$'s are polynomials over $K$, and $\jj$ is a potentially infinite index set, is equivalent to some $T^C_\theta$ (see Corollary 2.13 in \cite{Chi25} - the proof heavily uses consequences of Bézout's identity). Also note that every $T^C_\theta$ is consistent, and that $T^C_\theta \not\equiv T^{C'}_\theta$ whenever $C \neq C'$ (see Lemma 2.19 in \cite{Chi25}). So the set $\Cc$ parametrizes all consistent extensions as described above, in some sense. Some concrete examples:
\begin{enumerate}[(i)]
    \item Let $C_\infty$ be transcendental with $C_\infty(f) = \infty$ for all $f \in \Kp{}$. One can check that $T_\theta^{C_\infty}$ is $T_\theta$. 
    \item Let $C_0$ be transcendental with $C_0(f) = 0$ for all $f \in \Kp{}$. One can check that $T_\theta^{C_0}$ is $T_\theta \cup \set{\text{``$\rho[\theta]$ is injective''} : \rho \in K[X] \setminus \set{0}}$. In an existentially closed model of $T_\theta^{C_0}$, the maps $\rho[\theta]$ are isomorphisms, so one can solve systems of equations of the form $\bigwedge_{k=1}^m \sum_{l=1}^n \rho_{k, l}[\theta](x_l) = y_k$ just like in a $K(X)$-vector space.
    \item Let $C_f$ be algebraic with $\mipo(C_f) = f \in \Kp{}$. One can, similarly to (ii), solve such systems of equations as in a $K[X]/(f)$-vector space. Here, however, one has the potential advantage that the sequence $\set{\theta^i(v) : i \in \omega}$ is already determined by $\set{\theta^i(v) : 0 \leq i < \deg(f)}$.
\end{enumerate}
These are, in some cases, the ``easiest'' examples to work with, and it can often be helpful to first work with one of these kernel configurations and then turn to the general case. By contrast, the ``hardest'' kernel configurations to work with are those for which $\set{f \in \Kp{} : 0 < C(f) < \infty}$ is infinite.

In \cite{Chi25}, we also showed that every $T^C_\theta$ is inductive (see Lemma 3.2 there). Hence, the model companion of each $T^C_\theta$ exists if and only if the class of existentially closed models of $T^C_\theta$ is elementary. In this case, the model companion is exactly the axiomatization.

One can easily check that if $C$ and $C'$ are algebraic kernel configurations, then $C = C'$ if and only if $\mipo(C) = \mipo(C')$. Furthermore, we have $\deg(C) = \deg(\mipo(C))$.

\begin{fact} \label{remark_alg_kc}
    Let $C$ be algebraic and let $\theta$ be a $C$-endomorphism. Then
    $\Ker(f^{C(f)}) = \Ker(f^{C(f)+1})$ holds for all $f \in \Kp{}$.
\end{fact}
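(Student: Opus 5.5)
The plan is to use Bézout's identity in $K[X]$ to split off the $f$-primary part of $\mipo(C)$. Write $P := \mipo(C) = f^{C(f)} \cdot g$, where $g := \prod_{h \in \Kp{}, h \neq f} h^{C(h)}$. Since $C$ is algebraic, only finitely many $C(h)$ are nonzero and every $C(h)$ is finite, so $g$ is a well-defined polynomial. Because $f$ is irreducible and does not divide $g$, the polynomials $f^{C(f)+1}$ and $g$ are coprime, and Bézout gives $\alpha, \beta \in K[X]$ with $\alpha f^{C(f)+1} + \beta g = 1$. The case $C(f) = 0$ is covered by the same formula, with $f$ simply not dividing $P$.

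First, the inclusion $\Ker(f^{C(f)}) \subseteq \Ker(f^{C(f)+1})$ is immediate, since $f^{C(f)+1}[\theta] = f[\theta] \circ f^{C(f)}[\theta]$.

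For the reverse inclusion, take $v \in \Ker(f^{C(f)+1})$. Apply the Bézout identity with $\theta$ plugged in, using that $\rho \mapsto \rho[\theta]$ respects sums and products:
$$v = \alpha[\theta](f^{C(f)+1}[\theta](v)) + \beta[\theta](g[\theta](v)) = \beta[\theta](g[\theta](v)).$$
Now apply $f^{C(f)}[\theta]$. Because polynomials in $\theta$ commute, this gives
$$f^{C(f)}[\theta](v) = \beta[\theta]\big((f^{C(f)} g)[\theta](v)\big) = \beta[\theta](P[\theta](v)) = 0,$$
since $P[\theta] = 0$ by Definition \ref{def_T_C_theta}(i). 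Hence $v \in \Ker(f^{C(f)})$.

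There is no real obstacle here. The only point needing care is that the coprimality used is that of $f^{C(f)+1}$ with $g$, not with $P$ itself.
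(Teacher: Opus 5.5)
Your proof is correct, including the degenerate cases $C(f)=0$ and $g=1$. The paper gives no proof of this statement, since it is recalled as a fact from earlier work, and your Bézout argument is the natural one: it amounts to the identity $\Ker(\rho[\theta]) \cap \Ker(\eta[\theta]) = \Ker(\gcd(\rho,\eta)[\theta])$ applied to $\rho = f^{C(f)+1}$ and $\eta = \mipo(C)$, whose gcd is $f^{C(f)}$, together with $\Ker(\mipo(C)[\theta]) = \VV$.
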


\begin{fact} \label{fact_trivvivi}
    If $C$ is \textbf{trivial}, that is, if $\deg(C) = 1$, then the models of $T_\theta^C$ and $T$ are interdefinable.
    Hence, the model companion of $T_\theta^C$ is $T_\theta^C$ itself.
\end{fact}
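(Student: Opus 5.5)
If $C$ is trivial, i.e.\ $\deg(C) = 1$, then $C$ is algebraic. The plan is to show that $\mipo(C)$ is a monic linear polynomial $X - \lambda$ for some $\lambda \in K$. From this it follows that $\theta$ is $\varnothing$-definable in $T$, so the two classes of models are interdefinable.

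First I identify the minimal polynomial. Since $\deg(C) = \sum_f \deg(f)\cdot C(f) = 1$ and every $f \in \Kp{}$ has degree at least $1$, exactly one $f$ has $C(f) = 1$. This $f$ has degree $1$, and every other $f'$ has $C(f') = 0$. Hence $\mipo(C) = f = X - \lambda$ for a unique $\lambda \in K$.

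Next I translate the axioms. By Definition \ref{def_T_C_theta}(i), a $C$-endomorphism satisfies $(X-\lambda)[\theta] = 0$, that is, $\theta(v) = \lambda \cdot v$ for all $v \in \VV$. Together with the convention $\theta(x) = 0$ for $x \notin \VV$ built into $T_\theta$, the theory $T^C_\theta$ proves
\[
\forall x\,\bigl(\theta(x) = y \leftrightarrow \delta(x,y)\bigr),
\]
where $\delta(x,y)$ is the $L$-formula $(\Omega_\VV(x) \wedge \Omega_{\lambda\cdot}(x,y)) \vee (\neg\Omega_\VV(x) \wedge \Omega_0(y))$.

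Conversely, given $\mm \models T$, interpret $\theta$ by $\delta$. This map is a well-defined function because the $\Omega$-formulas define a vector space structure. It is an endomorphism since scalar multiplication by $\lambda$ is linear, and it satisfies $\mipo(C)[\theta] = 0$. So every model of $T$ expands uniquely to a model of $T^C_\theta$, and the reduct of a model of $T^C_\theta$ is a model of $T$. Each $L_\theta$-formula is therefore $T^C_\theta$-equivalent to an $L$-formula, obtained by replacing each occurrence of $\theta$ using $\delta$. This gives the interdefinability.

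For the model companion, note that $T$ is model-complete. An $L_\theta$-formula translates to an $L$-formula, which is $T$-equivalent to an existential $L$-formula. That formula is in particular an existential $L_\theta$-formula. Hence $T^C_\theta$ is model-complete, and a model-complete theory is its own model companion.

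The only delicate point is the bookkeeping with the multi-sorted or $n$-ary representation of $\VV$ and the values of $\theta$ off $\VV$. The defining formula $\delta$ handles both, so I expect no real obstacle.
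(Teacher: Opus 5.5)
Your proof is correct and is the argument the paper takes for granted (it states this Fact without proof): $\deg(C)=1$ forces $\mipo(C)=X-\lambda$, so $T^C_\theta$ is a definitional expansion of $T$ via $\theta(v)=\lambda\cdot v$ on $\VV$ and $\theta=0$ off $\VV$. Model-completeness then transfers from $T$ to $T^C_\theta$ by translating $L_\theta$-formulas into $L$-formulas, which makes $T^C_\theta$ its own model companion.
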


\noindent We will often implicitly assume that $C$ is non-trivial.

\begin{notation}
    We introduce a few more notations for working with a kernel configuration $C \in \Cc$:
    \begin{enumerate}[(i)]
        \item Given $f \in \Kp{}$ with $C(f) < \infty$, we write $f^C$ instead of $f^{C(f)}$, $f^{C+k}$ instead of $f^{C(f) + k}$, and so on.
        \item Given a finite set $F \subseteq \Kp{}$ with $C(f) < \infty$ for all $f \in F$, we set \hbox{$F^C := \prod_{f \in F} f^C$}.
        \item We define the following subsets of $\Kp{}$:
        $$
        \Kp{C<\infty} := \set{f \in \Kp{} : C(f) < \infty}, \quad \Kp{0<C<\infty} := \set{f \in \Kp{} : 0 < C(f) < \infty},
        $$
        $$
        \Kp{C=0} := \set{f \in \Kp{} : C(f) = 0}, \quad \text{and} \quad \Kp{C=\infty} := \set{f \in \Kp{} : C(f) = \infty}.
        $$
    \end{enumerate}
\end{notation}

\noindent With the above, for any algebraic kernel configuration $C \in \Ccalg$, we have $\deg(C) = \deg(\mipo(C))$ and
    $$
    \mipo(C) = \prod\nolimits_{f \in \Kp{0<C<\infty}} f^C = (\Kp{0<C<\infty})^C.
    $$
\noindent One should note that any $\LKThe$-sentence that holds in $\TKvsTheC$ also holds in $T^C_\theta$ (here $\LKThe$ is the language of $K$-vector spaces with an endomorphism).
To be more precise, one has to modify the sentence accordingly, e.g., quantifiers of the form $\exists x$ must be replaced with $\exists x \in \VV$ and the formulas that define addition/scalar multiplication must be used instead of the function symbols in $\LKThe$.
In general, it also turns out that whenever a model $(\mm, \theta)$ of $T^C_\theta$ is existentially closed, $(\VV, \theta)$ is an existentially closed model of $\TKvsTheC$ (see Remark 3.4 in \cite{Chi25}).

\subsection{$C$-image-completeness}

\noindent Note that the condition of $\theta$ being a $C$-endomorphism does not (at least in the transcendental case) imply any kind of equations that involve the image of $\rho[\theta]$ for some $\rho \in K[X]$.
In Remark 2.21 in \cite{Chi25}, we discussed that it is very unlikely that considering expansions of $T_\theta$ that also impose equations on the images (or mixed equations with sums and intersections of both kernels and images) will lead to new model companions.
However, the following condition holds in any existentially closed model of $T^C_\theta$ and is fundamental to understanding the structure of these models:

\begin{definition} \label{def_C_image_comple}
    We say that an endomorphism $\theta \colon \VV \to \VV$ is \textbf{$\mathbf{C}$-image-complete} if it is a $C$-endomorphism and $\Image(f^{C+1}) = \Image(f^C)$  holds for all $f \in \Kp{C<\infty}$ (recall $\Image(f^C) := \Image(f^{C(f)}[\theta])$).
    We may call a model $(\mm, \theta) \models T_\theta$ \textbf{$\mathbf{C}$-image-complete} if the endomorphism $\theta$ is $C$-image-complete.
\end{definition}

\begin{fact} \label{fact_c_image_comple}
    The following statements hold:
    \begin{enumerate}[(i)]
        \item If $C$ is algebraic, then every $C$-endomorphism is $C$-image-complete (see Lemma 3.11 in \cite{Chi25}).
        \item If $C$ is any kernel configuration and $(\mm, \theta) \models T_\theta^C$ is existentially closed, then $(\mm, \theta)$, or equivalently $\theta$, is also $C$-image-complete (see Corollary 3.13 in \cite{Chi25}).
    \end{enumerate}
\end{fact}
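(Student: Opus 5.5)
For part (i) I will use Bézout's identity directly. Let $C$ be algebraic, let $\theta$ be a $C$-endomorphism, and fix $f \in \Kp{C<\infty}$.

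\begin{itemize}
\item Write $\mipo(C) = f^C \cdot g$, where $g = \prod_{h \neq f} h^C$ is coprime to $f$. If $C(f) = 0$, then $f^C = 1$ and $g = \mipo(C)$; the argument below is unchanged.
\item Choose $a, b \in K[X]$ with $af + bg = 1$.
\item Since $(f^C g)[\theta] = 0$, we have $\Image(f^C) \subseteq \Ker(g)$.
\item For $w \in \Ker(g)$ we get $w = a[\theta] f[\theta](w) + b[\theta] g[\theta](w) = f[\theta](a[\theta](w))$.
\item Applying this to $w = f^C[\theta](v)$ gives $f^C[\theta](v) = f^{C+1}[\theta](a[\theta](v))$, because polynomials in $\theta$ commute.
\end{itemize}

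Hence $\Image(f^C) \subseteq \Image(f^{C+1})$. The reverse inclusion is trivial, so (i) holds; it needs nothing beyond the calculus $\rho[\theta]\circ\eta[\theta] = (\rho\eta)[\theta]$.

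For part (ii), let $(\mm,\theta) \models T^C_\theta$ be existentially closed, let $f \in \Kp{C<\infty}$, and let $w = f^C[\theta](v)$ with $v \in \VV$. The statement $\exists u\, (f^{C+1}[\theta](u) = w)$ is existential over $\mm$. It therefore suffices to build an extension $(\mm',\theta') \supseteq (\mm,\theta)$ that is a model of $T^C_\theta$ and contains such a $u$. The construction has an algebraic layer and a model-theoretic layer.

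\emph{Algebraic layer.} View $\VV$ as a $K[X]$-module via $\theta$. Form $N = (\VV \oplus K[X]e)/\langle f e - v'\rangle$ for a suitable $v' \in \VV$ with $f^C[\theta](v') = w$; the first attempt is $v' = v$. Then $f^{C+1}e = f^C v' = w$, so $e$ is the required solution.

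\begin{itemize}
\item $\VV$ embeds into $N$, since $K[X]$ is torsion-free.
\item $N/\VV \cong K[X]/(f)$, so $N$ is a $K$-vector space of the form $\VV \oplus K^{\deg f}$.
\item One must check that $N$ is still a $C$-module, i.e.\ $\Ker_N(h^{C}) = \Ker_N(h^{C+1})$ for all $h \in \Kp{C<\infty}$.
\item For $h \neq f$: reduce the $e$-coefficient modulo $f$. Then $h^{C+1}p \equiv 0 \bmod f$ forces $p \equiv 0$, so any kernel element lies in $\VV$, where the condition already holds.
\end{itemize}

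The case $h = f$ is the main obstacle. For an element $x = v_0 + pe$ with $\deg p < \deg f$, the condition $f^{C+1}x = 0$ unwinds to $f^C(f v_0 + p v') = 0$ in $\VV$, and we need to conclude $f^C x = 0$. This can fail for a badly chosen $v'$. I would first try to replace $v$ by a better preimage of $w$ before forming $N$, using $\Ker(f^C) = \Ker(f^{C+1})$ in $\VV$. Concretely, I would decompose $v$ along the $f$-primary part, via Bézout as in (i), to normalize $v'$ so that $f^{C-1}p\,v' \notin \Image(f^C)$ unless $p = 0$. If that fails, I would adjoin the $f$-power torsion more freely: take $N = \VV \oplus K[X]e/\langle f^{C+1}e - w\rangle$ and then quotient by the offending $f^{C+1}$-kernel elements, verifying that $\VV$ still embeds.

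\emph{Model-theoretic layer.} Take a sufficiently saturated $\mm' \succeq \mm$, which is possible since $T$ is model-complete. Choose elements of $\VV'$ that are linearly independent over $\VV$ to realize the $K$-space $N$. Extend the module structure to all of $\VV'$ by taking a complement and defining $\theta'$ there as some $C$-endomorphism, e.g.\ one with $\Ker(h)=0$ on the complement for all $h \in \Kp{C<\infty}$, as in the consistency argument for $T^C_\theta$. The resulting $(\mm',\theta')$ is a model of $T^C_\theta$ extending $(\mm,\theta)$ and contains $u = e$. Existential closedness of $(\mm,\theta)$ then yields a solution in $\mm$, so $\Image(f^C) \subseteq \Image(f^{C+1})$, which proves (ii).
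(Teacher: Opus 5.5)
Your part (i) is correct: from $af+bg=1$ and $\Image(f^C)\subseteq\Ker(g)$ you get $\Image(f^C)\subseteq\Image(f^{C+1})$. The case $C(f)=0$ is covered as well.

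Part (ii) has a genuine gap at the step that carries the content of the statement: you never establish $\Ker_N(f^{C+1})=\Ker_N(f^{C})$. You say this ``can fail for a badly chosen $v'$'' and then list two fallback plans without carrying out either:
\begin{enumerate}[(a)]
\item renormalizing $v'$;
\item quotienting $(\VV\oplus K[X]e)/\langle f^{C+1}e-w\rangle$ by offending elements, which would additionally need a new argument that $\VV$ still embeds and that $w$ is still hit.
\end{enumerate}
The missing observation is that you may assume $w\notin\Image(f^{C+1})$, since otherwise there is nothing to prove. Under that assumption any preimage $v'$ works. For $x=v_0+pe$ with $\deg p<\deg f$ you have $f^{C+1}x=f^{C+1}[\theta](v_0)+p[\theta](w)$. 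Suppose this is $0$ and $p\neq 0$. Then $\gcd(p,f^{C+1})=1$, and writing $ap+bf^{C+1}=1$ gives $w=f^{C+1}[\theta](b[\theta](w)-a[\theta](v_0))\in\Image(f^{C+1})$, a contradiction. So $p=0$ and $x=v_0\in\Ker_{\VV}(f^{C+1})=\Ker_{\VV}(f^{C})$. The failure you anticipate (e.g.\ $C(f)=1$ and $0\neq v'\in\Ker(f)$) only occurs when $w\in\Image(f^{C+1})$ already.

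Two smaller repairs are also needed. First, for algebraic $C$ a $C$-endomorphism must satisfy $\mipo(C)[\theta']=0$. Neither $N$ satisfies this (if $C(f)=0$, then $N/\VV\cong K[X]/(f)$ is not annihilated by $\mipo(C)$), nor does a complement with $\Ker(h)=0$ for all $h$. Just note that (ii) for algebraic $C$ is immediate from (i), and run the construction only for transcendental $C$. Second, you should specify the structure on the complement $W$ of $N$ in $\VV'$. For example, transport the structure of a free $K[X]$-module of rank $\dim_K(W)$, which is infinite by saturation, so that all $h^j[\theta']$ are injective on $W$.

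With these changes, your one-generator module extension, realized in an elementary extension and combined with existential closedness, gives a complete proof. The paper does not reprove this fact; it only cites Lemma 3.11 and Corollary 3.13 of \cite{Chi25}.
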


\noindent Note that the converse of (ii) in Fact \ref{fact_c_image_comple} above does not hold.
The most important consequence of $C$-image-completeness is that we can decompose $\VV$ into definable direct summands as follows.

\begin{fact}[Lemma 3.14 in \cite{Chi25}] \label{lemma_decomposition}
    If $\theta \colon \VV \to \VV$ is $C$-image-complete and $F \subseteq \Kp{0<C<\infty}$ is a finite set, then we have
    $$
    \VV = \Image(F^C) \oplus \Ker(F^C) = \Image(F^C) \oplus \bigoplus\nolimits_{f\in F} \Ker(f^C).
    $$
    If $C$ is algebraic and $F = \Kp{0<C<\infty}$, then the summand $\Image(F^C)$ is $\set{0}$ and can therefore be omitted.
\end{fact}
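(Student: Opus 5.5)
Write $g := F^C = \prod_{f \in F} f^{C(f)}$. The plan is to establish the two equalities in turn: first the decomposition $\VV = \Image(g) \oplus \Ker(g)$, then the splitting $\Ker(g) = \bigoplus_{f \in F} \Ker(f^C)$.

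\emph{Step 1: $\Ker(g)$ and $\Image(g)$ are stable under $g$.} For each $f \in F$ we have $\Ker(f^C) = \Ker(f^{C+k})$ for all $k$ (by the $C$-endomorphism condition, or by Fact \ref{remark_alg_kc} in the algebraic case) and $\Image(f^C) = \Image(f^{C+k})$ for all $k$ (by $C$-image-completeness). We first transfer these to $g$. The $f^C$ for $f \in F$ are pairwise coprime, and the operators $\rho[\theta]$ all commute. Write $g = f^C h$ with $h$ coprime to $f$. By Bézout there are $a, b$ with $af^{C+1} + bh = 1$, and from this one gets that $h[\theta]$ is injective on $\Ker(f^{C+1})$. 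Consequently:
\begin{itemize}
\item[] \textbf{Kernel side.} $\Ker(g^2) = \Ker(g)$. Indeed, a coprime factorization gives $\Ker(g^2) = \bigoplus_f \Ker(f^{2C}) = \bigoplus_f \Ker(f^C) = \Ker(g)$.
\item[] \textbf{Image side.} $\Image(g^2) = \Image(g)$. Here one uses induction on $|F|$. If $\Image(p) = \Image(p^2)$ and $\Image(q) = \Image(q^2)$ with $p, q$ coprime, then $\Image(pq) = \Image(p) \cap \Image(q)$, again via Bézout: $1 = ap + bq$. This gives $\Image(p^2q^2) = \Image(p^2) \cap \Image(q^2) = \Image(pq)$.
\end{itemize}

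\emph{Step 2: Fitting-type decomposition.} Given $\Ker(g) = \Ker(g^2)$ and $\Image(g) = \Image(g^2)$, take $v \in \VV$. Choose $w$ with $g(v) = g^2(w)$. Then $v - g(w) \in \Ker(g)$ and $g(w) \in \Image(g)$, so $\VV = \Image(g) + \Ker(g)$. The sum is direct: if $g(u) \in \Ker(g)$, then $u \in \Ker(g^2) = \Ker(g)$, so $g(u) = 0$.

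\emph{Step 3: splitting the kernel.} Since the $f^C$ are pairwise coprime, Bézout gives the standard primary decomposition $\Ker(g) = \bigoplus_{f \in F} \Ker(f^C)$. The projections are given by the polynomials $e_f = b_f \cdot g/f^C$ with $\sum_f e_f = 1$.

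\emph{Step 4: the algebraic case.} If $C$ is algebraic and $F = \Kp{0<C<\infty}$, then $g = \mipo(C)$, so $g[\theta] = 0$ and $\Image(g) = \set{0}$.

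The main care is the image side of Step 1, i.e., passing image-completeness from the individual $f^C$ to their product. Everything else is routine Bézout manipulation.
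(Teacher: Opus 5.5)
Your proof is correct: stabilizing $\Ker(F^C)$ and $\Image(F^C)$ via Bézout, then the Fitting-type splitting $\VV = \Image(F^C) \oplus \Ker(F^C)$, then the primary decomposition of $\Ker(F^C)$ is the standard argument, and each step checks out. Two side remarks: the identity $\Image(pq) = \Image(p) \cap \Image(q)$ for coprime $p, q$ holds without assuming $\Image(p) = \Image(p^2)$, and your remark that $h[\theta]$ is injective on $\Ker(f^{C+1})$ is never used. The present paper does not reprove this fact (it only cites Lemma 3.14 of \cite{Chi25}), so there is no in-paper proof to compare against, but your Bézout-based route is the natural one.
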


\noindent In \cite{Chi25}, we defined additional endomorphisms of $\VV$ using these decompositions.

\begin{notation}
    For any $\rho \in K[X] \setminus \set{0}$, we let $\Fac(\rho) := \set{f \in \Kp{} : f \mid \rho}$ denote the set of all irreducible factors of $\rho$.
    As a convention, we set $\Fac(0) = \varnothing$.
\end{notation}

\begin{fact}[Lemma 3.15 in \cite{Chi25}] \label{fact_endo_gen}
    In the theory \hbox{$\TKvsThe \cup \set{\text{``$\theta$ is $C$-image-complete''}}$}, the following endomorphisms are definable:
    \begin{enumerate}[(i)]
        \item For $F \subseteq \Kp{0<C<\infty}$ finite, we define the \textbf{projection to the image of $\bm{F^C[\theta]}$} by
        $$
        \pi_{\Image(F^C)}(x) := \text{``the unique $u \!\in\! \Image(F^C)$ for which there is $v \in \Ker(F^C)$ with $x \!=\! u \!+\! v$''.}
        $$
        \item For $F \subseteq \Kp{0<C<\infty}$ finite, we define the \textbf{projection to the kernel of $\bm{F^C[\theta]}$} by
        $$
        \pi_{\Ker(F^C)}(x) := \text{``the unique $v \!\in\! \Ker(F^C)$ for which there is $u \in \Image(F^C)$ with $x \!=\! u \!+\! v$''.}
        $$
        We clearly have $\pi_{\Ker(F^C)} = 1[\theta] - \pi_{\Image(F^C)}$.
        \item For every monic polynomial $\eta \in K[X]$ with $\Fac(\eta) \subseteq \Kp{C<\infty}$, we define the \textbf{pseudo-inverse of $\bm{\eta[\theta]}$} by
        $$
        \eta[\theta]^{-1}(x) := \text{``the unique $u \in \Image(\Fac(\eta)^C)$ with $\eta[\theta](u) = \pi_{\Image(\Fac(\eta)^C)}(x)$''.}
        $$
        Notice that $\Fac(\eta)^C = (\Fac(\eta) \cap \Kp{0<C<\infty})^C$.
        In practice, we will also use $\eta[\theta]^{-1}$ for (non-zero) non-monic polynomials by setting $\eta[\theta]^{-1} := \lambda^{-1} \cdot (\eta/\lambda)[\theta]^{-1}$ for the leading coefficient $\lambda$ of $\eta$.
    \end{enumerate}
\end{fact}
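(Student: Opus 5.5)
The plan is to derive all three definability claims from the decomposition in Fact \ref{lemma_decomposition}. Two sets will be used throughout. $\Ker(F^C)$ is quantifier-free definable by $F^C[\theta](x)=0$. $\Image(F^C)$ is existentially definable by $\exists y\, F^C[\theta](y)=x$. Both are $\theta$-invariant subspaces, since all $\rho[\theta]$ commute with each other. By Fact \ref{lemma_decomposition}, every $x\in\VV$ decomposes uniquely as $x=u+v$ with $u\in\Image(F^C)$ and $v\in\Ker(F^C)$. So the formulas in (i) and (ii) define total functions. These functions are additive and $K$-linear by uniqueness of the decomposition, and $\pi_{\Ker(F^C)}=1[\theta]-\pi_{\Image(F^C)}$ is immediate. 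This settles (i) and (ii).

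For (iii), let $G:=\Fac(\eta)\cap\Kp{0<C<\infty}$ and $W:=\Image(G^C)=\Image(\Fac(\eta)^C)$; the last equality holds because $f^{C(f)}=1$ when $C(f)=0$. It suffices to show that $\eta[\theta]$ restricted to $W$ is a bijection $W\to W$. Then the defining formula in (iii) picks out a unique element, namely the preimage in $W$ of $\pi_W(x)$. Since $\eta$ is a product of powers of $f\in\Fac(\eta)$ and all these maps preserve $W$, it suffices to show that each $f[\theta]$ with $f\in\Fac(\eta)$ maps $W$ bijectively onto $W$. By hypothesis, each such $f$ has $C(f)<\infty$.

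Injectivity on $W$ will be handled by cases on $C(f)$.
\begin{enumerate}[(a)]
\item If $C(f)=0$, the $C$-endomorphism condition gives $\Ker(f)=\Ker(f^0)=\set{0}$, so $f[\theta]$ is injective.
\item If $0<C(f)<\infty$, induction on the stabilizing kernel chain yields $\Ker(f^{C+k})=\Ker(f^C)$ for all $k$; Fact \ref{remark_alg_kc} covers the algebraic case. In particular $\Ker(f)\subseteq\Ker(f^C)\subseteq\Ker(G^C)$. By Fact \ref{lemma_decomposition}, $\Ker(G^C)\cap W=\set{0}$, so $f[\theta]$ is injective on $W$.
\end{enumerate}

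Surjectivity onto $W$ is the step needing a small trick. First, $W\subseteq\Image(f)$ in both cases.
\begin{enumerate}[(a)]
\item If $C(f)=0$, $C$-image-completeness gives $\Image(f)=\Image(f^0)=\VV$.
\item If $C(f)\geq1$, then $W\subseteq\Image(f^C)\subseteq\Image(f)$, since $f^C$ divides $G^C$.
\end{enumerate}
Now take $w\in W$ and write $w=f[\theta](z)$. Decompose $z=z_1+z_2$ with $z_1\in W$ and $z_2\in\Ker(G^C)$. Then
$$
f[\theta](z_2)=w-f[\theta](z_1)\in W\cap\Ker(G^C)=\set{0},
$$
using that $f[\theta]$ preserves both summands. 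Hence $w=f[\theta](z_1)$ with $z_1\in W$.

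I expect the main obstacle to be exactly this surjectivity step. Image-completeness only controls $\Image(f^{C+1})$ versus $\Image(f^C)$ on all of $\VV$, so the preimage $z$ must be projected back into $W$ via the decomposition. Everything else is routine bookkeeping. The non-monic case follows by scaling with $\lambda^{-1}$.
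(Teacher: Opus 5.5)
Your proof is correct. The paper does not reprove this statement; it quotes Lemma 3.15 of \cite{Chi25}. Your argument builds everything on the direct-sum decomposition of Fact \ref{lemma_decomposition}, which is exactly how the definitions in (i)--(iii) are set up. For (iii) you show that each $f[\theta]$ with $f \in \Fac(\eta)$ restricts to an automorphism of $W=\Image(\Fac(\eta)^C)$, and projecting a preimage back into $W$ via the decomposition handles surjectivity cleanly.

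Three small remarks:
\begin{itemize}
\item In the injectivity case $0<C(f)<\infty$ you do not need kernel stabilization. Since $C(f)\geq 1$, the inclusion $\Ker(f)\subseteq\Ker(f^{C})$ is trivial.
\item In the injectivity case $C(f)=0$ with $C$ algebraic, $\Ker(f)=\set{0}$ comes from Fact \ref{remark_alg_kc}, not directly from the $C$-endomorphism condition.
\item You should state explicitly that the pseudo-inverse is $K$-linear, so that it really is an endomorphism. This holds because it is the inverse of a linear bijection $W\to W$ composed with the linear projection $\pi_{\Image(\Fac(\eta)^C)}$.
\end{itemize}
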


\noindent Notice that whenever $\Fac(\eta) \cap \Kp{0<C<\infty} \neq \varnothing$, we obtain $\eta[\theta] \circ \eta[\theta]^{-1} = \pi_{\Image(\Fac(\eta)^C)} \neq \Id = 1[\theta]$, so the notation might be a bit misleading. Here $\Id$ is the identity on $\VV$.
It is also easy to see that $\Id = 1[\theta] = \pi_{\Image(f^C)} + \pi_{\Ker(f^C)}$. We consider the ring generated by all $\LKThe$-definable endomorphisms we have collected so far:

\begin{fact}[Theorem 3.18 in \cite{Chi25}] \label{theorem_r_c_def}
    We let $R_C$ be the set of all endomorphisms definable in the theory $\TKvsThe \cup \set{\text{``$\theta$ is $C$-image-complete''}}$ that are $\set{+, \circ}$-generated by
    $$
    \set{\rho[\theta] : \rho \in K[X]} \cup \set{\pi_{\Image(F^C)} : F \subseteq \Kp{0<C<\infty} \text{ finite}} \cup \set{\eta[\theta]^{-1} : \eta \text{ monic with }\Fac(\eta) \subseteq \Kp{C<\infty}}.
    $$
    The structure $(R_C, 0[\theta], 1[\theta], +, \circ)$ is a unitary commutative ring with $\Char(R_C) = \Char(K)$. In fact, it can also be seen as a $K$-algebra, as $K \subseteq R_C$ (identifying $\lambda \in K$ with the endomorphism $x \mapsto \lambda \cdot x$ which is $\lambda[\theta]$).
\end{fact}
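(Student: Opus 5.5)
The plan is to show, in order, that $R_C$ is closed under $+$ and $\circ$, that it is commutative, and that it contains $K$ with $\Char(R_C) = \Char(K)$. Throughout I fix a $C$-image-complete $\theta$ on a $K$-vector space $\VV$ and use the decomposition of Fact \ref{lemma_decomposition}. By construction $R_C$ is the closure of the generating set under $+$ and $\circ$. Each generator is a $K$-linear endomorphism that is $\LKThe$-definable in the given theory, and definability is preserved under sums and compositions. So $R_C$ is a set of definable endomorphisms closed under $+$ and $\circ$, and it contains $0[\theta]$ and $1[\theta]$. Associativity and distributivity hold automatically, since we are inside the endomorphism ring of $\VV$ and all elements are additive. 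The identification $\lambda \mapsto \lambda[\theta]$ embeds $K$ as scalar maps, which gives the $K$-algebra structure. Because $\VV$ is nonzero, $n\cdot 1[\theta]=0$ holds exactly when $n=0$ in $K$, hence $\Char(R_C)=\Char(K)$.

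The substantive part is commutativity. Since $\circ$ distributes over $+$, it suffices to show that the generators pairwise commute. My approach is to show that every generator commutes with $\theta$ and preserves each summand in the decompositions $\VV=\Image(F^C)\oplus\Ker(F^C)$.
\begin{enumerate}[(i)]
\item Polynomials $\rho[\theta]$ commute with one another and with $\theta$.
\item Both $\Image(F^C)$ and $\Ker(F^C)$ are $\theta$-invariant. Hence $\pi_{\Image(F^C)}$ commutes with every $\rho[\theta]$: write $x=u+v$ and apply $\rho[\theta]$ to each component.
\item For the pseudo-inverse $\eta[\theta]^{-1}$, let $G=\Fac(\eta)\cap\Kp{0<C<\infty}$. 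On $\Image(G^C)$ the map $\eta[\theta]$ is bijective. Injectivity there comes from $\Ker(\eta)\subseteq \Ker(G^C)$, using Fact \ref{remark_alg_kc} or the $C$-endomorphism condition to control the kernels of $f^k$. Surjectivity comes from image-completeness. It follows that $\eta[\theta]^{-1}$ equals the inverse of $\eta[\theta]|_{\Image(G^C)}$ composed with $\pi_{\Image(G^C)}$.
\item The restriction of $\eta[\theta]$ to $\Image(G^C)$ commutes with the restriction of $\theta$, so its inverse does as well. Combined with (ii), this shows $\eta[\theta]^{-1}$ commutes with $\theta$.
\end{enumerate}

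For pairs of projections, I would show that $\pi_{\Image(F^C)}$ and $\pi_{\Image(F'^C)}$ commute. One route is to refine to the common decomposition over $F\cup F'$ from Fact \ref{lemma_decomposition}: $\VV=\Image((F\cup F')^C)\oplus\bigoplus_{f\in F\cup F'}\Ker(f^C)$. Each projection then acts as the identity or zero on each summand, provided I check that $\Image(F^C)=\Image((F\cup F')^C)\oplus\bigoplus_{f\in F'\setminus F}\Ker(f^C)$. That check follows from Bézout-type arguments applied to the coprime polynomials $f^C$.

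Once every generator is shown to preserve each summand of this refined decomposition, for every finite $F$, commutativity reduces to a summand-wise check. On $\Ker(f^C)$ and on the big image summand, each generator acts as a map that commutes with $\theta$:
\begin{enumerate}[(a)]
\item a polynomial in $\theta$;
\item zero or the identity;
\item the inverse of a polynomial in $\theta$ on a summand where that polynomial is invertible, or zero.
\end{enumerate}
Inverses of commuting invertible maps commute, so the generators commute summand-wise and hence globally.

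The main obstacle I expect is step (iii): verifying carefully that $\eta[\theta]$ is bijective on $\Image(G^C)$ and that $\eta[\theta]^{-1}$ preserves the refined decomposition. This is where the $C$-endomorphism condition $\Ker(f^C)=\Ker(f^{C+1})$ and image-completeness must be combined. It may also involve factors of $\eta$ with $C(f)=0$, for which $\eta[\theta]$ is injective.
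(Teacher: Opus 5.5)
Your proposal is correct. The paper does not prove this statement itself; it is quoted from \cite{Chi25}, so there is no in-paper argument to compare against. Your summand-wise picture gives a complete proof of commutativity. In it, every generator preserves $\Image(H^C)$ and each $\Ker(f^C)$ with $f\in H$, and acts there as a polynomial in $\theta$, as $0$ or $\Id$, or as the inverse of an invertible polynomial in $\theta$. This is the same structure the paper later uses in packaged form as the Common Base Theorem (Fact \ref{theorem_r_c_element}).

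Two points need tidying.

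First, closure under $+$ and $\circ$ does not by itself give additive inverses. You need the observation that $-r=(-1)[\theta]\circ r\in R_C$.

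Second, the step you flag in (iii) is short.
\begin{enumerate}[(i)]
\item Injectivity of $\eta[\theta]$ on $\Image(G^C)$ follows from $\Ker(\eta)\subseteq\Ker(G^C)$ together with $\Image(G^C)\cap\Ker(G^C)=\set{0}$. The inclusion holds because factors $f$ with $C(f)=0$ satisfy $\Ker(f)=\set{0}$, and $\Ker(f^k)\subseteq\Ker(f^C)$ for $f\in G$.
\item Surjectivity follows from $\Image(\eta\cdot G^C)=\Image(G^C)$. You get this by peeling off one irreducible factor of $\eta$ at a time. For $f\in G$ and $h:=(G\setminus\set{f})^C$, one has $\Image(f\cdot G^C)=h[\theta](\Image(f^{C+1}))=h[\theta](\Image(f^{C}))=\Image(G^C)$. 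For $C(f)=0$, image-completeness gives $\Image(f)=\Image(f^0)=\VV$.
\item The same computation with any finite $H\supseteq G$ in place of $G$ shows that $\eta[\theta]^{-1}$ acts on $\Image(H^C)$ as the inverse of $\eta[\theta]_{\restriction\Image(H^C)}$.
\item For $f\in H\setminus G$, use Bézout identities for the coprime pairs $(f^C,G^C)$ and $(f^C,\eta)$, say $a\eta+bf^C=1$ for the latter. These give $\Ker(f^C)\subseteq\Image(G^C)$, and they show that $\eta[\theta]^{-1}$ acts on $\Ker(f^C)$ as $a[\theta]$.
\end{enumerate}
That is all your reduction to summand-wise commutation requires.
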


\noindent We may sometimes write $0$ instead of $0[\theta]$ and $\Id$ or $1$ instead of $1[\theta]$. In particular, if we regard $R_C$ purely as a ring, we may write $(R_C, 0, 1, +, \cdot)$. The ring $R_C$ may again look complicated at first, but for the ``easiest to work with'' kernel configurations, we obtain the following:
\begin{enumerate}[(i)]
    \item $(R_{C_\infty}, 0, 1, +, \cdot) \simeq (K[X], 0, 1, +, \cdot)$ for the unique kernel configuration $C_\infty \in \Cctrans$, which satisfies \hbox{$C_\infty(f) = \infty$} for all $f \in \Kp{}$.
    \item $(R_{C_0}, 0, 1, +, \cdot) \simeq (K(X), 0, 1, + , \cdot)$ for the unique kernel configuration $C_0 \in \Cctrans$ with $C_0(f) = 0$ for all $f \in \Kp{}$. 
    Notice that this is a field.
    \item $(R_{C}, 0, 1, +, \cdot) \simeq (K[X]/(\mipo(C)), 0, 1, +, \cdot)$ for all $C \in \Ccalg$.
    This also implies that our ring $(R_{C}, 0, 1, +, \cdot)$ is a field for all algebraic kernel configurations $C$ with $\mipo(C)$ being irreducible.
\end{enumerate}
Other examples of $R_C$ can be found in Corollary 3.25 in \cite{Chi25}. 
There, the case where $C$ is transcendental and $\Kp{0<C<\infty}$ is infinite again turns out to be the most complicated. Multiplication rules, such as $\rho[\theta] \circ \pi_{\Image(F^C)} = \rho[\theta]$ if $F^C \mid \rho$, can be found in Lemma 3.21 in \cite{Chi25}.  

\begin{fact}[see Remark 3.26 in \cite{Chi25}] \label{fact_when_field}
    $R_C$ is a field if and only if $C = C_0$, as in (ii) above, or if $C$ is algebraic with $\mipo(C)$ irreducible. 
\end{fact}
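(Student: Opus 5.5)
The statement is an equivalence. One direction comes directly from the explicit descriptions of $R_C$ listed just before it. The other direction needs, for every remaining kernel configuration, either a non-zero non-unit or a zero divisor in $R_C$.

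For the ``if'' direction I would use the isomorphisms recorded after Fact \ref{theorem_r_c_def}. We have $R_{C_0} \simeq K(X)$, which is a field. For $C$ algebraic we have $R_C \simeq K[X]/(\mipo(C))$, which is a field exactly when $\mipo(C)$ is irreducible. The same isomorphism gives the ``only if'' direction for algebraic $C$: if $\mipo(C)$ is reducible, then $K[X]/(\mipo(C))$ has zero divisors.

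It remains to treat a transcendental $C \neq C_0$, so there is some $f \in \Kp{}$ with $C(f) > 0$. Elements of $R_C$ are definable endomorphisms of the theory $\TKvsThe \cup \set{\text{``$\theta$ is $C$-image-complete''}}$, so two of them are equal iff they agree in every model. Hence it suffices to build one suitable model $(\VV,\theta)$ and read off non-invertibility there. I would take an infinite-dimensional $K$-vector space and give it a $K[X]$-module structure of the form $\VV = A \oplus B$. Here $B$ is a non-zero $K(X)$-vector space and $A$ is a direct sum of copies of $K[X]/(f^{m})$, with $m = C(f)$ if $C(f)<\infty$ and $m=1$ if $C(f)=\infty$. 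The dimensions are chosen so that $\dim_K \VV$ matches a model of $\TKvs$.

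I would then check that $\theta$ (multiplication by $X$) is a $C$-image-complete $C$-endomorphism. For $g \neq f$, the map $g[\theta]$ is bijective on both summands. If $C(f)<\infty$, then $\Ker(f^{C}) = A = \Ker(f^{C+1})$ and $\Image(f^C) = B = \Image(f^{C+1})$. Every other $g$ with $C(g)<\infty$ has trivial kernels, so its conditions hold trivially. No condition is imposed when $C(f)=\infty$.

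In this model I would produce the witnesses as follows.
\begin{enumerate}[(a)]
    \item If $0<C(f)<\infty$, the element $e := \pi_{\Image(f^C)}$ is idempotent, so $e \circ (1-e) = 0$. Here $e$ is the projection onto $B$ along $A$, and both $A$ and $B$ are non-zero. So $e \neq 0$ and $1 - e \neq 0$ as endomorphisms of this model, hence in $R_C$. This gives a zero divisor.
    \item If $C(f)=\infty$, take $f[\theta] \in R_C$. It is non-zero, since it acts bijectively on $B \neq 0$. It is not a unit, since any $r \in R_C$ with $r \circ f[\theta] = 1$ would force $f[\theta]$ to be injective in this model, but $\Ker(f[\theta]) \supseteq A \neq 0$.
\end{enumerate}

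The main obstacle I expect is the model-building step. I need to make sure the constructed $(\VV,\theta)$ really sits inside some model of the ambient theory, or at least of $\TKvsThe$ with $C$-image-completeness. Since $R_C$ is defined over the pure vector-space theory, it should suffice to realize the $K[X]$-module structure on any infinite-dimensional $K$-vector space of the right dimension. The point to verify carefully is that the transcendental axioms $\Ker(g^{C}) = \Ker(g^{C+1})$ hold for all $g \in \Kp{C<\infty}$ simultaneously, which the direct-sum decomposition above makes routine.
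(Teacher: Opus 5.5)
Your proof is correct. The paper gives no argument of its own here. It cites Remark~3.26 of \cite{Chi25} and relies on the descriptions $R_{C_0}\simeq K(X)$ and $R_C\simeq K[X]/(\mipo(C))$ listed just before the statement, which is exactly what you use for $C_0$ and for algebraic $C$.

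For transcendental $C\neq C_0$ you argue semantically rather than by computing the ring. Any equation between elements of $R_C$ holds in $R_C$ only if it holds in every model of $\TKvsThe\cup\set{\text{``$\theta$ is $C$-image-complete''}}$. The relation $\pi_{\Image(f^C)}\circ\pi_{\Ker(f^C)}=0$ is such an equation, coming from the decomposition $\VV=\Image(f^C)\oplus\Ker(f^C)$. A putative inverse $r\circ f[\theta]=1$ would also be one. You then test both in a single explicit module $A\oplus B$. There the two projections are non-zero, and $f[\theta]$ is non-zero but not injective.

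This route needs nothing about the global structure of $R_C$ beyond its definition as definable endomorphisms modulo the theory. That global structure is complicated when $\Kp{0<C<\infty}$ is infinite. An argument via the Common Base Theorem or an explicit presentation of $R_C$ yields more information, but needs that machinery.

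Two minor points.
\begin{itemize}
\item The remark about matching $\dim_K\VV$ to a model of $\TKvs$ is unnecessary. Any $K$-vector space with a $C$-image-complete endomorphism is a model of the theory over which $R_C$ is computed, and your $\VV$ is infinite anyway because $B\neq 0$.
\item You should state explicitly that $A$ has at least one summand, since both witnesses use $A\neq 0$.
\end{itemize}
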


\noindent Also note that the elements of $R_C$ are, as defined in Fact \ref{theorem_r_c_def}, definable functions in the theory $\TKvsThe \cup \set{\text{``$\theta$ is $C$-image-complete''}}$.
By this, we mean that the elements of $R_C$ are equivalence classes of $\LKThe$-formulas modulo the theory \hbox{$\TKvsThe \cup \set{\text{``$\theta$ is $C$-image-complete''}}$} that define an endomorphism in every model of $\TKvsThe \cup \set{\text{``$\theta$ is $C$-image-complete''}}$.
So, in order to prove $r = r'$, we need to show
$$
    r^{(\VV, \theta)} = r'^{(\VV, \theta)} \quad \text{for all $(\VV, \theta) \models \TKvsThe \cup \set{\text{``$\theta$ is $C$-image-complete''}}$},
$$
and, in order to prove $r \neq r'$, we need to find $(\VV, \theta) \models \TKvsThe \cup \set{\text{``$\theta$ is $C$-image-complete''}}$ with
$
    r^{(\VV, \theta)} \neq r'^{(\VV, \theta)}.
$
In Remark 3.20 in \cite{Chi25}, we showed that $r \neq r'$ implies $r^{(\VV, \theta)} \neq r'^{(\VV, \theta)}$ if $(\VV, \theta)$ is an existentially closed model of $\TKvsTheC$.
In Theorem 4.10 in \cite{Chi26} we proved that in an existentially closed model of $T_\theta^C$, any $\LKThe$-definable endomorphism of $\VV$ is, in fact, in $R_C$.

\begin{definition}
    We define $\LRC$ as the language of (left) $R_C$-modules (with $R_C$ as in Fact \ref{theorem_r_c_def}), i.e., $\LRC = (0, +, (r)_{r \in R_C})$, where each $r \in R_C$ is treated as a unary function symbol.
\end{definition}

\noindent We will write $r(x)$ instead of $r \cdot x$, since $r$ will usually be a function such as $\pi_{\Image(F^C)}$.
Given a model $(\VV, \theta) \models \TKvsThe \cup \set{\text{``$\theta$ is $C$-image-complete''}}$, we define an $\LRC$-structure on $\VV$ using the definable functions of which $R_C$ consists. This structure is obviously interdefinable with $(\VV, \theta)$.

\begin{fact}[Common Base Theorem - Theorem 3.24 in \cite{Chi25}]\label{theorem_r_c_element}
    Given $r_1, \dots, r_q \in R_C$ and any finite subset $F_0 \subseteq \Kp{0<C<\infty}$, we can write
    $$
    r_i = \rho_{F,i}[\theta] \circ \eta[\theta]^{-1} \circ \pi_{\Image(F^C)} + \sum\nolimits_{f\in F} \rho_{f,i}[\theta] \circ \pi_{\Ker(f^C)}
    $$
    for all $i \in \set{1, \dots, q}$, where
    \begin{enumerate}[(i)]
        \item $F \subseteq \Kp{0<C<\infty}$ is finite with $F_0 \subseteq F$,
        \item $\eta$ is a monic polynomial with $\Fac(\eta) \subseteq F \cup \Kp{C=0}$,
        \item the $\rho_{F,i}$ satisfy both $\Fac(\rho_{F,i}) \subseteq F \cup \Kp{C=0}\cup\Kp{C=\infty}$ and $\gcd(\rho_{F,1}, \dots, \rho_{F,q}, \eta) = 1$,
        \item the $\rho_{f,i}$ are polynomials with $\deg(\rho_{f,i}) < \deg(f^C)$.
    \end{enumerate}
    If $C$ is algebraic, we can furthermore choose $F = \Kp{0<C<\infty}$, resulting in
    $
    r_i = \sum\nolimits_{f\in F} \rho_{f,i}[\theta] \circ \pi_{\Ker(f^C)}
    $.
\end{fact}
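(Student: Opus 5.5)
We plan to prove the Common Base Theorem by induction on the generating structure of $R_C$, combined with a normal-form argument. Step 1: every generator has the claimed form for a suitable $F$. Step 2: the set of elements admitting such a representation, for every sufficiently large finite $F \supseteq F_0$, is closed under $+$ and $\circ$. Step 3: representations of finitely many elements can be brought to a common base by enlarging $F$ and $\eta$.

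For the generators we proceed as follows. For $\rho[\theta]$ we take any $F \supseteq F_0$ and use Fact \ref{lemma_decomposition}: $\rho[\theta] = \rho[\theta]\circ\pi_{\Image(F^C)} + \sum_{f\in F}\rho[\theta]\circ\pi_{\Ker(f^C)}$. On $\Ker(f^C)$ the map $\rho[\theta]$ agrees with $(\rho \bmod f^C)[\theta]$, which yields (iv). On the image part we factor $\rho = \rho' \cdot g$, where $g$ collects the irreducible factors lying in $\Kp{0<C<\infty}\setminus F$. We then enlarge $F$ by $\Fac(g)\cap\Kp{0<C<\infty}$ so that (iii) holds, with $\eta = 1$.

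For $\pi_{\Image(G^C)}$ we choose $F \supseteq G$. It equals $\pi_{\Image(F^C)} + \sum_{f \in F\setminus G}\pi_{\Ker(f^C)}$, which is of the required form with $\rho_{F}=1$ and $\rho_f \in \{0,1\}$.

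For $\eta[\theta]^{-1}$ we take $F \supseteq \Fac(\eta)\cap\Kp{0<C<\infty}$. On $\Image(F^C)$ the pseudo-inverse is a genuine inverse of $\eta[\theta]$, so the image part is $1\circ\eta[\theta]^{-1}\circ\pi_{\Image(F^C)}$. On $\Ker(f^C)$ with $f \nmid \eta$, the map $\eta[\theta]$ is invertible with inverse $\mu[\theta]$, where $\mu\eta \equiv 1 \bmod f^C$ by Bézout. On $\Ker(f^C)$ with $f\mid\eta$, the pseudo-inverse vanishes, since the projection kills that summand.

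For closure under $+$ and $\circ$, we use the fact that the decomposition $\VV=\Image(F^C)\oplus\bigoplus_{f\in F}\Ker(f^C)$ consists of $\theta$-invariant summands. Every element of $R_C$ preserves these summands, because all generators commute with $\theta$ and with the projections. Hence composition acts componentwise: on the $\Ker(f^C)$-components we multiply polynomials modulo $f^C$, and on the image component we get
\[
\rho\,\rho'[\theta]\circ(\eta\eta')[\theta]^{-1}\circ\pi_{\Image(F^C)}.
\]
For sums we pass to the common denominator $\eta\eta'$.

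Two auxiliary facts are needed to make this rigorous. The first is a refinement lemma: any representation with respect to $F$ yields one with respect to every $F'\supseteq F$. This holds by splitting $\pi_{\Image(F^C)} = \pi_{\Image(F'^C)} + \sum_{f\in F'\setminus F}\pi_{\Ker(f^C)}$ and rewriting $\rho\circ\eta^{-1}$ on each new kernel summand as a polynomial modulo $f^C$, again by Bézout. The second is the identity $\eta[\theta]^{-1}\circ\eta'[\theta]^{-1}=(\eta\eta')[\theta]^{-1}$ on $\Image(F^C)$. Both follow from the multiplication rules of Lemma 3.21 in \cite{Chi25}.

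The main obstacle is Step 3, namely arranging the gcd condition (iii) and the factor conditions simultaneously for all $q$ elements. Our first move is to put everything over the common denominator $\eta=\operatorname{lcm}$ of the individual denominators. We then cancel common factors of $\gcd(\rho_{F,1},\dots,\rho_{F,q},\eta)$. This cancellation is legitimate on $\Image(F^C)$: if $g$ divides all numerators and $\eta$, then $\rho g[\theta]\circ(\eta' g)[\theta]^{-1} = \rho[\theta]\circ\eta'[\theta]^{-1}$ there, since $g[\theta]$ is bijective on $\Image(F^C)$ when $\Fac(g)\subseteq F\cup\Kp{C=0}$. Factors of the numerators that lie in $\Kp{0<C<\infty}\setminus F$ are absorbed by enlarging $F$ and applying the refinement lemma. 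Factors of $\eta$ lying in $\Kp{C=\infty}$ cannot occur, since pseudo-inverses are only formed for $\Fac(\eta)\subseteq\Kp{C<\infty}$. Enlarging $F$ may reintroduce common factors, so we iterate the cancellation; the process terminates because $\deg\eta$ strictly decreases. In the algebraic case, taking $F=\Kp{0<C<\infty}$ makes $\Image(F^C)=\{0\}$ by Fact \ref{lemma_decomposition}, so the first summand vanishes.
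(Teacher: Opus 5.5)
Your proof is correct. This paper does not reprove the statement; it imports it as Theorem 3.24 of \cite{Chi25}, so there is no in-paper proof to compare against. Your argument is a complete, natural proof built on the multiplication rules of Lemma 3.21 there. It proceeds in three steps:
\begin{enumerate}[(i)]
\item explicit representations of the three kinds of generators;
\item closure under $+$ and $\circ$, using the $\theta$-invariant decomposition of Fact~\ref{lemma_decomposition} and refining $F$ to $F' \supseteq F$ by Bézout on the new summands $\Ker(f^C)$;
\item a final normalization: cancel $\gcd(\rho_{F,1},\dots,\rho_{F,q},\eta)$ on $\Image(F^C)$, and absorb numerator factors from $\Kp{0<C<\infty}\setminus F$ into $F$.
\end{enumerate}

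One small correction. Refining $F$ leaves the image-part numerators $\rho_{F,i}$ and $\eta$ unchanged, so it cannot reintroduce common factors. A single cancellation followed by a single enlargement of $F$ already gives (i)--(iv), and no iteration is needed.
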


\noindent As previously mentioned, the ring $R_C$ can be seen as a $K$-algebra, and therefore also as a $K$-vector space. We now compute its dimension:

\begin{remark} \label{rem_dim_r_C}
    The following holds:
    \begin{enumerate}[(i)]
        \item If $C$ is algebraic, then $\dim_K(R_C) = \deg(\mipo(C))$.
        \item If $C$ is transcendental, then $\dim_K(R_C) = \max\set{\omega, \Kp{C<\infty}}$.
    \end{enumerate}
\begin{proof}
    In the transcendental case, verify that
    \begin{align*}
        &\Big(X^i[\theta] \circ \Big( \prod\nolimits_{f\in F} f^{n(f)} \Big)[\theta]^{-1} : F \subseteq \Kp{C<\infty} \text{ finite}, 0 \leq i < \min\set{\deg(f) : f \in F}, n \in (\NN_{>0})^F \Big) \\
        &\quad\quad{}^\frown\Big( X^i[\theta] \circ \pi_{\Ker(f^C)} : f \in \Kp{0<C<\infty}, 0 \leq i <\deg(f^C) \Big)
    \end{align*}
    is a $K$-basis of $R_C$ (here we set $\min(\varnothing) = \infty$). To do this, one can use the \commonBaseTheorem{} above and the multiplication rules from Lemma 3.21 in \cite{Chi25}. From there, one can easily calculate the cardinality. For the algebraic case, one just has to omit the first line and recall $\mipo(C) = \prod_{f \in \Kp{0<C<\infty}} f^C$.
\end{proof}
\end{remark}

\subsection{Existentially closed models and first-order axiomatization}

We now state the characterization of the existentially closed models of $T^C_\theta$ from \cite{Chi25}. We start with the remaining ingredients:

\begin{definition}
    \label{def_param_c_sequence_system} A \textbf{parametrized $\mathbf{C}$-sequence-system} is an $\LKThe$-formula of the form
    $$
    S(\ux; \uy) = \bigwedge\nolimits_{k=1}^n f_k^{q_k}[\theta](x_{\ldd, k}) = y_k
    $$
    with $\ux := \ux_\li\ux_\ld := (x_{\lii, k} : 1 \leq k \leq m)(x_{\ldd, k} : 1 \leq k \leq n)$ and $\uy = (y_1, \dots, y_n)$ that satisfies the following conditions:
    \begin{enumerate}[(i)]
        \item If $C$ is algebraic, then $m = 0$.
        \item $f_k \in \Kp{0<C}$ and $q_k \in \set{q \in \NN : 0 < q \leq C(f_k)}$ hold for all $k \in \set{1, \dots, n}$.
    \end{enumerate}
    
\end{definition}
\noindent We will always denote parametrized $C$-sequence-systems by the letter $S$.
Given such a parametrized $C$-sequence-system $S(\ux; \uy)$, we assume that everything is as above, that is, $m$, $n$, and the $f_k$'s and $q_k$'s are defined implicitly, and we set $\ux = \ux_\li\ux_\ld$ as above.
When we partition $\ux = \ux_\li\ux_\ld$ as above, we think of:
    \begin{enumerate}[(i)]
        \item $\ux_\li$ as the linearly independent part of $\ux$, since $S(\ux; \uy)$ does not imply any linear dependencies for the sequence $(\theta^i(x_{\lii, k}) : 1 \leq k \leq m, i \in \omega)$;
        \item $\ux_\ld$ as the linearly dependent part of $\ux$, since the formula $S(\ux; \uy)$ implies that the sequence $(\theta^i(x_{\ldd, k}) : i \in \omega)$ is linearly dependent over $\spanA{y_k}{K}$ for each $k \in \set{1, \dots, n}$.
    \end{enumerate}
The names $\ux_\li$ and $\ux_\ld$ for these tuples are abbreviations for linearly independent and linearly dependent. Notice that in the algebraic case, we require $\ux_\li$ to be empty, which makes sense, as we have $\sum_{i=0}^{\deg(\mipo(C))} (\mipo(C))_i \cdot \theta^i(v) = 0$ for any $v \in \VV$ in that case.

\begin{definition}
    \label{def_c_sequence_system} \label{def_compatible} Let $S(\ux; \uy) = \bigwedge_{k=1}^n f_k^{q_k}[\theta](x_{\ldd, k}) = y_k$ be a parametrized $C$-sequence-system as in Definition \ref{def_param_c_sequence_system}, and let $(\VV, \theta) \models \TKvsThe$ be given.
\begin{enumerate}[(i)]
    \item We say that a tuple $\uu = (u_1, \dots, u_n) \in \VV$ is \textbf{compatible} with $S$ if $u_k \in \Ker(f_k^{C-q_k})$ for every $k \in \set{1, \dots, n}$ with $f_k \in \Kp{0<C<\infty}$.
    \item A \textbf{$\mathbf{C}$-sequence-system} over $(\VV, \theta)$ is an $\LKThe(\VV)$-formula of the form
    \hbox{$
    S(\ux) = S'(\ux; \uu)
    $}
    where $S'$ is a parametrized $C$-sequence-system and $\uu \in \VV$ is compatible with $S'$.
\end{enumerate}
\end{definition}

\noindent We will also denote $C$-sequence-systems over some $(\VV, \theta) \models \TKvsTheC$ by the letter $S$.
Notice that a $C$-sequence-system over $(\VV, \theta)$ is also a $C$-sequence-system over any extension $(\VV', \theta')$ that is also a model of $\TKvsThe$.

\begin{definition}[Placeholder notation] \label{def_placeholder_notation}
    Let $\ux = (x_k : k \in \kk)$ be a tuple of variables.
    We define the \textbf{placeholder sequence} \hbox{$\uxvec := (x^i_k : k \in \kk, i \in \omega)$} to be a new tuple of variables.
    We call each $x^i_k$ a \textbf{placeholder variable} or a \textbf{placeholder} for $\theta^i(x_k)$.
    We furthermore define:
    \begin{enumerate}[(i)]
        \item $\ux^i := (x_k^i : k \in \kk)$ for each $i \in \omega$, and
        \item $\xvec_k := (x^i_k : i \in \omega)$ for each $k \in \kk$.
    \end{enumerate}
    We may sometimes write $(\ux^i : i \in \omega)$ or $(\xvec_k : k \in \kk)$ instead of $\uxvec$.
    For a singleton $x$, we similarly define $\xvec := (x^i : i \in \omega)$.
    If a formula $\psi(\uxvec; \uw)$ or a term $\lambda(\uxvec{}; \uw)$ is given, we define:
    \begin{enumerate}[(i)]
        \setcounter{enumi}{2}
        \item $\psi_\theta(\ux; \uw) := \psi((\theta^i(x_k) : k \in \kk, i \in \omega); \uw)$.
        \item $\lambda_\theta(\ux; \uw) := \lambda((\theta^i(x_k) : k \in \kk, i \in \omega); \uw)$.
    \end{enumerate}
\end{definition}

\begin{definition} \label{def_formual_bounded}
    Let $S(\ux; \uy)= \bigwedge\nolimits_{k=1}^n f_k^{q_k}[\theta](x_{\ldd, k}) = y_k$ be a parametrized $C$-sequence-system as in Definition \ref{def_param_c_sequence_system}.
    If $\psi(\uxvec; \uw)$ is a formula, then we say $\psi(\uxvec; \uw)$ is \textbf{bounded} by $S$ if one of the following equivalent conditions holds:
        \begin{enumerate}[(i)]
            \item For all $k \in \set{1, \dots, n}$, the variable $x^i_{\ldd, k}$ does not appear in $\psi(\uxvec; \uw)$ for \hbox{$i \geq \deg(f_k^{q_k})$}.
            \item For all $k \in \set{1, \dots, n}$, the term $\theta^i(x_{\ldd, k})$ does not appear in $\psi_\theta(\ux; \uw)$ for \hbox{$i \geq \deg(f_k^{q_k})$}.
        \end{enumerate}
    We also say that $\psi_\theta(\ux; \uw)$ is \textbf{bounded} by $S$ if $\psi(\uxvec; \uw)$ is bounded by $S$. Furthermore, we say that a formula is \textbf{bounded} by a $C$-sequence-system $S$ (i.e., a parametrized $C$-sequence-system with some compatible parameters plugged in) if it is bounded by the underlying parametrized $C$-sequence-system.
\end{definition}

\noindent In practice, for a formula $\psi(\uxvec)$ to be bounded by $S$ means that no subterm of the form $\theta^i(x_{\ldd, k})$ appearing in $\psi_\theta(\ux)$ can be replaced by applying a Euclidean division with the equation $f_k^{q_k}[\theta](x_{\ldd, k}) = y_k$ from $S(\ux; \uy)$. Indeed, if $i \geq \deg(f^{q_k}_k)$, then we could replace $\theta^i(x_{\ldd, k})$ with $\chi[\theta](y_k) + r[\theta](x_{\ldd, k})$, where $\chi, r \in K[X]$ are the unique polynomials  satisfying $\chi \cdot f^{q_k}_k + r = X^i$ and $\deg(r) < \deg(f^{q_k}_k) \leq i$.

Now that we have all ingredients, we can state the characterization of existentially closed models of $T^C_\theta$:

\begin{theorem}[Theorem 3.32 in \cite{Chi25}] \label{theorem_big_characterization}
    $(\mm, \theta) \models T^C_\theta$ is existentially closed if and only if it is $C$-image-complete and 
    $$
        (\mm, \theta) \models \exists \ux \in \VV : \psi_\theta(\ux) \wedge S(\ux)
    $$
    holds for any $C$-sequence-system $S(\ux)$ over $(\VV, \theta)$ and $L(M)$-formula $\psi(\uxvec)$ that is bounded by $S$ and does not imply any finite disjunction of non-trivial linear dependencies in $\uxvec$ over $\VV$.
\end{theorem}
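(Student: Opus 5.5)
We prove the two directions separately. Throughout we use that $T$ is model-complete, so every $L$-formula is equivalent modulo $T$ to an existential one, and extensions of models of $T$ are elementary.

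\emph{($\Rightarrow$)} Let $(\mm,\theta)$ be existentially closed. Then it is $C$-image-complete by Fact \ref{fact_c_image_comple}(ii). Now fix a $C$-sequence-system $S(\ux)=S'(\ux;\uu)$ over $(\VV,\theta)$ and a formula $\psi(\uxvec)$ bounded by $S$ that implies no finite disjunction of non-trivial linear dependencies over $\VV$.

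\begin{enumerate}[(1)]
\item Realize a generic solution. By compactness there is $\mm'\succeq\mm$ containing a realization $\uxvec^*$ of $\psi$ that is linearly independent over $\VV$. Only the placeholders allowed by boundedness are relevant: $x^i_{\lii,k}$ for all $i$, and $x^i_{\ldd,k}$ for $i<\deg(f_k^{q_k})$.
\item Extend $\theta$ by hand. Choose a complement $W$ of $\VV\oplus\langle\uxvec^*\rangle$ in $\VV'$. Define $\theta'$ on the span of the placeholders by shifting $x^i\mapsto x^{i+1}$. For the top placeholder of each $x_{\ldd,k}$, send $\theta'(x^{\deg-1}_{\ldd,k})$ to the value forced by $f_k^{q_k}[\theta'](x_{\ldd,k})=u_k$.
\item Check the kernel condition on this span. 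We must verify that $\theta'$ stays a $C$-endomorphism there, i.e.\ that $\Ker(f^{C})=\Ker(f^{C+1})$ is not violated. This is exactly what compatibility gives: $u_k\in\Ker(f_k^{C-q_k})$ implies $x_{\ldd,k}\in\Ker(f_k^{C})$. In the algebraic case one checks $\mipo(C)[\theta']=0$, which is why $m=0$ there.
\item Handle the complement. On $W$ (and on any leftover infinite pieces) put a $C$-endomorphism, e.g.\ a direct sum of copies of the above constructions or of an existentially closed model of $\TKvsTheC$.
\item Conclude. Now $(\mm',\theta')\models T^C_\theta$ extends $(\mm,\theta)$ and satisfies $\exists\ux\,(\psi_\theta\wedge S)$. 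Existential closedness pulls this formula back to $(\mm,\theta)$.
\end{enumerate}
Step 2 requires $\theta'$ to be well defined and linear. This holds because $\psi$ forces no linear dependency among the placeholders over $\VV$.

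\emph{($\Leftarrow$)} Let $(\mm,\theta)\subseteq(\nn,\theta')$ with both models of $T^C_\theta$, and suppose an existential $L_\theta(M)$-formula holds in $\nn$ with witness $\ua$.

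\begin{enumerate}[(1)]
\item Unnest to placeholder form. Rewrite the formula as $\exists\ux\,\chi_\theta(\ux)$ with $\chi$ an $L(M)$-formula. Enlarge $\ua$ so that $\{\theta'^i(\ua)\}$ spans a $K[X]$-submodule $N_0$ of $\VV^{\nn}$ modulo $\VV$.
\item Choose a generating tuple. By the structure theory of finitely generated $K[X]$-modules, pick new generators $\ub=\ub_\li\ub_\ld$ of $N_0$ over $\VV$. The $\ub_\li$ are free over $\VV$. Each $b_{\ldd,k}$ satisfies a relation $f_k^{q_k}[\theta'](b_{\ldd,k})=v_k\in\VV$ and is torsion of minimal order, with $f_k$ irreducible and $q_k\le C(f_k)$ because $\nn$ is a $C$-endomorphism.
\item Make the parameters compatible. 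Use $C$-image-completeness of $(\mm,\theta)$ together with Fact \ref{lemma_decomposition} to replace $b_{\ldd,k}$ by $b_{\ldd,k}-w$ for suitable $w\in\VV$. After this shift, $v_k\in\Ker(f_k^{C-q_k})$ in $\mm$, so the system $S(\ux):=\bigwedge_k f_k^{q_k}[\theta](x_{\ldd,k})=v_k$ is a $C$-sequence-system over $(\VV,\theta)$.
\item Build $\psi$. Express $\ua$ through $\ub$ and $\VV$, and reduce higher powers via Euclidean division with $S$. Then $\chi_\theta(\ua)$ becomes $\psi_\theta(\ub)$ for some $\psi$ bounded by $S$.
\item Remove forced linear dependencies. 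Conjoin to $\psi$ the $L(M)$-type information that the bounded placeholders of $\ub$ are linearly independent over $\VV$, which holds in $\nn$ by minimality. If $\psi$ nonetheless implied a finite disjunction of non-trivial dependencies over $\VV$, realizing it in $\nn$ would contradict this. One needs some care to stay within a single formula; use that $\mm\preceq\nn\restriction L$ and shrink $\psi$ accordingly.
\item Conclude. The hypothesis now yields a solution of $\psi_\theta\wedge S$ in $\mm$, hence a witness of the original formula in $\mm$.
\end{enumerate}

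The main obstacle is step 3 of ($\Leftarrow$): arranging the relations into the exact shape $f^q[\theta](x)=u$ with compatible $u$. This needs a careful primary-decomposition argument relative to $\VV$, interacting with the image and kernel decomposition of $\VV$, especially when $C(f)=\infty$ or when $C$ is transcendental with infinitely many $f$ satisfying $0<C(f)<\infty$. A secondary difficulty is step 5, guaranteeing that $\psi$ forces no dependencies.
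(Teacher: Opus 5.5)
Your proposal is correct and follows essentially the same route as the proof of this theorem in \cite{Chi25}. For ($\Rightarrow$) you freely extend $\theta$ over $\VV$ inside an elementary extension, and compatibility is exactly what keeps $\Ker(f^C)=\Ker(f^{C+1})$. For ($\Leftarrow$) you put a primary decomposition of $(\VV+K[X]\ua)/\VV$ into the shape of a $C$-sequence-system whose bounded placeholders are linearly independent over $\VV$; since $\mm\preceq\nn\restriction L$, $\psi$ itself then implies no disjunction of dependencies, so no conjoining is needed in step 5.

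The one correction concerns step 2. The bound $q_k\le C(f_k)$ does not follow from $\nn$ alone; it comes out of the step-3 shift, which is short. For $C(f_k)<\infty$ and $n:=\max(q_k,C(f_k))$:
\begin{enumerate}[(1)]
\item Write $v_k=f_k^{n}[\theta](w')+z$ with $z\in\Ker(f_k^{C})$, using image-completeness and Fact~\ref{lemma_decomposition}.
\item Replace $b_{\ldd,k}$ by $b_{\ldd,k}-f_k^{n-q_k}[\theta](w')$; then $f_k^{C+q_k}[\theta']$ kills the new element.
\item Use $\Ker(f_k^{C})=\Ker(f_k^{C+q_k})$ in $\nn$ to get $f_k^{C}[\theta'](b_{\ldd,k})=0$.
\end{enumerate}
This yields both $q_k\le C(f_k)$ and $z\in\Ker(f_k^{C-q_k})$, i.e.\ the compatibility you wanted.
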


\noindent We give two examples where the characterization simplifies quite a lot:
\begin{enumerate}[(i)]
    \item Fix $f \in \Kp{}$ and let $C_f$ be the unique algebraic kernel configuration with $\mipo(C_f) = f$.
    A model $(\mm, \theta) \models T^{C_f}_\theta$ is existentially closed if and only if
    $$
        (\mm, \theta) \models \exists \ux \in \VV : \psi(\theta^0(\ux), \dots, \theta^{\deg(f)-1}(\ux))
    $$
    holds for every $L(M)$-formula $\psi(\ux^0, \dots, \ux^{\deg(f)-1})$ that does not imply any finite disjunction of non-trivial linear dependencies in $\ux^0, \dots, \ux^{\deg(f)-1}$ over $\VV$.
    This is Theorem 3.33 in \cite{Chi25}.
    
    \item Let $C_0$ be the unique transcendental kernel configuration with $C_0(f) = 0$ for all $f \in \Kp{}$.
    A model $(\mm, \theta) \models T^{C_0}_\theta$ is existentially closed if and only if $\rho[\theta]$ is invertible for every $\rho \in K[X] \setminus \set{0}$ and
    $$
        (\mm, \theta) \models \exists \ux \in \VV : \psi_\theta(\ux)
    $$
    holds for every $L(M)$-formula $\psi(\uxvec)$ that does not imply any finite disjunction of non-trivial linear dependencies in $\uxvec$ over $\VV$.
    This is Theorem 3.34 in \cite{Chi25}.
\end{enumerate}

\noindent The next step is to first-order axiomatize this characterization when possible.
For this, we need the following two families of formulas:

\begin{fact}[Lemma 3.36 in \cite{Chi25}]
    Given a parametrized $C$-sequence-system $S(\ux; \uy)$, there is a $\LKThe$-formula $\delta_S(\uy)$ such that $(\mm, \theta) \models \delta_S(\uu)$ holds if and only if $\uu$ is compatible with $S$.
\end{fact}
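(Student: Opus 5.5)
The formula $\delta_S(\uy)$ can be written down explicitly, because compatibility is a finite conjunction of kernel conditions, each expressible by a polynomial in $\theta$. Write $S(\ux;\uy) = \bigwedge_{k=1}^n f_k^{q_k}[\theta](x_{\ldd,k}) = y_k$ as in Definition \ref{def_param_c_sequence_system}. Let $I := \set{k \in \set{1,\dots,n} : f_k \in \Kp{0<C<\infty}}$. This set depends only on $S$ and on $C$, both fixed, so it is a fixed finite index set. For $k \in I$ we have $0 < q_k \leq C(f_k) < \infty$ by Definition \ref{def_param_c_sequence_system}(ii). Hence $C(f_k) - q_k \in \NN$ and $g_k := f_k^{C(f_k)-q_k}$ is a well-defined polynomial in $K[X]$; when $q_k = C(f_k)$, $g_k = 1$. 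For $k \notin I$, i.e. when $C(f_k) = \infty$, compatibility imposes no condition on $u_k$.

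I then set
$$
\delta_S(\uy) := \bigwedge\nolimits_{k=1}^n y_k \in \VV \;\wedge\; \bigwedge\nolimits_{k \in I} g_k[\theta](y_k) = 0.
$$
The conjunct $y_k \in \VV$ is really $\Omega_{\VV}(y_k)$, and in the pure $\LKThe$ setting it can be dropped. Next I check that this is a genuine $\LKThe$-formula (or $L_\theta$-formula via the $\Omega$'s). The map $g_k[\theta]$ is $v \mapsto \sum_{i=0}^{\deg g_k} (g_k)_i \cdot \theta^i(v)$. This is an $\LKThe$-term built from $\theta$, $+$ and the scalar multiplications $(\lambda\cdot)$, and it is replaced by the defining formulas $\Omega_+$, $\Omega_{\lambda\cdot}$, $\Omega_0$ when working inside $T$. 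Since $\deg g_k$ is finite and $I$ is finite, $\delta_S$ is a finite formula.

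For correctness, suppose $\uu \in \VV$. Then $(\mm,\theta) \models \delta_S(\uu)$ iff $g_k[\theta](u_k) = 0$ for all $k \in I$. That is, $u_k \in \Ker(f_k^{C-q_k})$ for every $k$ with $f_k \in \Kp{0<C<\infty}$, which is exactly Definition \ref{def_compatible}(i).

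There is no real obstacle here. The only points needing care are the exponent bookkeeping, which is handled by the constraint $q_k \leq C(f_k)$, and the observation that the case distinction $C(f_k) < \infty$ is made at the meta level when building the formula rather than inside it.
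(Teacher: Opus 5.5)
Your proposal is correct: since $S$ and $C$ are fixed, the indices with $f_k \in \Kp{0<C<\infty}$ are determined at the meta level. The formula $\delta_S(\uy)$ is then the finite conjunction of the equations $f_k^{C(f_k)-q_k}[\theta](y_k) = 0$ over those indices, and $0 < q_k \leq C(f_k)$ guarantees the exponent is a natural number. The paper states this only as a cited fact from \cite{Chi25} without reproducing a proof, and your explicit formula is the direct argument one would expect there.
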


\begin{definition}[see Definition 1.11 in \cite{dEl21b}] \label{def_hfour}
    We say that $T$ (with the specific choice of $\VV$) satisfies $(\operatorname{H4})$ if, for every $L$-formula $\psi(\ux; \uw)$, there is an $L$-formula $\sigma_\psi(\uw)$ such that, for all $\mm \models T$ and $\ud \in M$, we have $\mm \models \sigma_\psi(\ud)$ if and only if one of the following two equivalent conditions holds:
    \begin{enumerate}[(i)]
        \item The formula $\psi(\ux; \ud)$ implies no finite disjunction of non-trivial linear dependencies in $\ux$ over $\VV$.
        \item There are an elementary extension $\mm' \succ \mm$ and a tuple $\uv' \in \VV'$ linearly independent over $\VV$ such that $\mm' \models \psi(\uv'; \ud)$.
    \end{enumerate}
\end{definition}

\begin{theorem}[Theorem 3.39 in \cite{Chi25}] \label{theorem_first_oder}
If $T$ satisfies \Hfour{}, then $T_\theta^C$ has a model companion $T\theta^C$, i.e., a first-order axiomatization of the class of existentially closed models.
It is axiomatized by the theory \hbox{$T_\theta \cup \set{\text{``$\theta$ is $C$-image-complete''}}$} together with the sentence
$$
\forall \uw:\forall \uy \in \VV : (\sigma_\psi(\uw) \wedge \delta_S(\uy)) \rightarrow \exists \ux \in \VV : \psi_\theta(\ux; \uw) \wedge S(\ux; \uy)
$$
for every parametrized $C$-sequence-system $S(\ux; \uy)$ and every $L$-formula $\psi(\uxvec; \uw)$ that is boun\-ded by $S$.
\end{theorem}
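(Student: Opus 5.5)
The plan is to reduce everything to the characterization of existentially closed models in Theorem~\ref{theorem_big_characterization}, together with the fact that $T^C_\theta$ is inductive (Lemma 3.2 in \cite{Chi25}). For an inductive theory, a model companion exists as soon as the class of existentially closed models is elementary, and that axiomatization is then the model companion. So it suffices to show one thing: a model $(\mm,\theta)\models T_\theta$ is an existentially closed model of $T^C_\theta$ if and only if it satisfies the displayed axiom system, which I will call $\Sigma$.

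First I check that $\Sigma$ is first-order.

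1. ``$\theta$ is $C$-image-complete'' is the conjunction of two families of sentences.
   - ``$\theta$ is a $C$-endomorphism'' is a set of sentences $\Ker(f^C)=\Ker(f^{C+1})$, or the single sentence $\mipo(C)[\theta]=0$ in the algebraic case.
   - For each $f\in\Kp{C<\infty}$ there is the sentence $\Image(f^{C+1})=\Image(f^C)$.

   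In particular $\Sigma\supseteq T^C_\theta$.
2. The formulas $\sigma_\psi$ exist by \Hfour{} (Definition~\ref{def_hfour}).
3. The formulas $\delta_S$ exist by Lemma 3.36 of \cite{Chi25}.
4. Each $\psi(\uxvec;\uw)$ mentions only finitely many placeholders, so $\psi_\theta$ is a genuine $L_\theta$-formula.

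\emph{Existentially closed models satisfy $\Sigma$.} Let $(\mm,\theta)$ be existentially closed. By Fact~\ref{fact_c_image_comple}(ii) it is $C$-image-complete. Fix $S(\ux;\uy)$, an $L$-formula $\psi(\uxvec;\uw)$ bounded by $S$, and $\ud\in M$, $\uu\in\VV$ with $\mm\models\sigma_\psi(\ud)\wedge\delta_S(\uu)$.
- By $\delta_S(\uu)$, the tuple $\uu$ is compatible with $S$, so $S(\ux;\uu)$ is a $C$-sequence-system over $(\VV,\theta)$.
- The $L(M)$-formula $\psi(\uxvec;\ud)$ is still bounded by $S$, since boundedness is purely syntactic in the $\uxvec$-variables.
- By the defining property of $\sigma_\psi$, the formula $\psi(\uxvec;\ud)$ implies no finite disjunction of non-trivial linear dependencies over $\VV$.

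Theorem~\ref{theorem_big_characterization} then gives a witness $\ux\in\VV$ for $\psi_\theta(\ux;\ud)\wedge S(\ux;\uu)$.

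\emph{Models of $\Sigma$ are existentially closed.} Conversely, let $(\mm,\theta)\models\Sigma$; it is $C$-image-complete. Take a $C$-sequence-system $S'(\ux;\uu)$ over $(\VV,\theta)$ and an $L(M)$-formula $\chi(\uxvec)$ that is bounded by $S'$ and implies no finite disjunction of non-trivial linear dependencies.
- Write $\chi(\uxvec)=\psi(\uxvec;\ud)$ with $\psi$ an $L$-formula and $\ud\in M$; $\psi$ is still bounded by $S'$.
- By \Hfour{}, $\mm\models\sigma_\psi(\ud)$.
- By compatibility of $\uu$, $(\mm,\theta)\models\delta_{S'}(\uu)$.

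The axiom for the pair $(S',\psi)$ then yields the required witness, so $(\mm,\theta)$ is existentially closed by Theorem~\ref{theorem_big_characterization}.

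The only delicate points are bookkeeping ones. The passage between $L(M)$-formulas and $L$-formulas with parameter variables must preserve boundedness, which it does because boundedness refers only to the $\uxvec$-variables. The uniformity in the parameters is exactly what \Hfour{} provides; this is the step that would fail without \Hfour{}. Combined with inductiveness, this shows that $\Sigma$ axiomatizes the existentially closed models and hence is the model companion $T\theta^C$.
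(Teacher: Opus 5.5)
Your proof is correct and follows the route the paper sets up for this cited result (Theorem 3.39 of \cite{Chi25}): inductiveness of $T^C_\theta$ reduces the claim to elementarity of the class of existentially closed models, and you turn the characterization in Theorem~\ref{theorem_big_characterization} into first-order axioms using $\delta_S$ for compatibility and $\sigma_\psi$ from \Hfour{} for the uniform linear-independence condition. The points that need checking are that $\Sigma \supseteq T^C_\theta$, so the ``if'' direction of the characterization applies, and that boundedness survives passing between $L(M)$-formulas and $L$-formulas with parameter variables; you handle both correctly.
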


\begin{example} \label{examples_hfour}
    \Hfour{} holds in the following settings:
    \begin{enumerate}[(i)]
        \item The theory $\TKvs$ with the vector space $(\VV, +, 0, (\lambda \cdot)_{\lambda \in K})$ being the entire structure satisfies \Hfour{}.
        This follows easily from quantifier elimination.
        \item Any complete and model-complete o-minimal theory $T$ extending the theory of divisible ordered abelian groups, with $(\VV, +, 0, (q \cdot)_{q \in \QQ})$ being a subinterval of the line (but not necessarily a subgroup) and continuous operations, satisfies \Hfour{} if and only if there is no infinite definable family of germs of $(\VV, +, 0, (q \cdot)_{q \in \QQ})$-endomorphisms at $0_\VV$; combine Theorem 2.4 and Lemma 2.9 in \cite{Blo23}.
        \item Any complete and model-complete o-minimal expansion of $\RCF{}$ with $(\VV, +, 0, (q \cdot)_{q \in \QQ})$ given by $(R_{>0}, \cdot, 1, (x \mapsto x^q)_{q\in \QQ})$ satisfies \Hfour{} if and only if no partial exponential function is definable.
        This is a special case of (ii); see the proof of Theorem A in \cite{Blo23}.
        \item Let $\FF_q$ be a finite field with $q = p^r$.
        If $T$ expands the theory of an $\FF_q$-vector space and $\VV$ is that vector space, then $T$ satisfies \Hfour{} if and only if it eliminates $\exists^\infty$; see the proof of Theorem 5.2 in \cite{dEl21b}.

        For expansions of the theories $\ACF{}_p$, $\operatorname{SCF}_{p, e}$ ($e$ either finite or infinite), $\operatorname{Psf}_p$, $\operatorname{ACFA}_p$, and $\operatorname{DCF}_p$, this implies that whenever $\FF_q$ is contained in every model as constants, \Hfour{} holds with the $\FF_q$-vector space given by addition.
        For more details, see Example 5.10 in \cite{dEl21b}.
        \item If $T$ eliminates $\exists^\infty x \in \VV$ (see Definition 3.3 in \cite{Chi26}; recall that $\VV$ might not be the entire domain, or even an $n$-ary set), and the following condition holds:
        \begin{itemize}
            \item[(V)] For any $\mm' \succ \mm \models T$ and any tuple $\uv'$ in $\VV'$, $\uv'$ is $\acl_L$-independent over $M$ if and only if it is linearly independent over $\VV$.
        \end{itemize}
        then $T$ satisfies \Hfour{} (see Lemma 3.4 in \cite{Chi26}).
        \item If $T$ is a linear o-minimal expansion of the theory of ordered groups, then $T$ satisfies \Hfour{} (see Lemma \ref{lemma_omin_sat_hfour}).
    \end{enumerate}
\end{example}

\noindent We also have a necessary condition for the existence of the model companion, and therefore also for \Hfour{}:

\begin{fact}[Theorem 3.6 in \cite{Chi26}] \label{obser_no_elim_exist_inf_no_model_companion}
    If $T$ does not eliminate $\exists^\infty x \in \VV$, then the model companion of $T^C_\theta$ does not exist for any non-trivial $C \in \Cc$.
\end{fact}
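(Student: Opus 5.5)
This is Fact \ref{obser_no_elim_exist_inf_no_model_companion}, that failure of elimination of $\exists^\infty x \in \VV$ rules out a model companion. The plan is to show that the class of existentially closed models of $T^C_\theta$ is not elementary. Since $T^C_\theta$ is inductive, a model companion exists exactly when this class is elementary, so it suffices to exhibit an existentially closed model that is elementarily equivalent, or even $L_\theta$-ultraproduct-related, to a model that is not existentially closed.

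Fix an $L$-formula $\phi(x;\uw)$ with $x$ ranging over $\VV$ witnessing the failure of elimination of $\exists^\infty$. Then there are models $\mm_n \models T$ and parameters $\ud_n$ with $\phi(\VV_n;\ud_n)$ finite of size at least $n$, while in some elementary extension (an ultraproduct) the set $\phi(\VV;\ud)$ is infinite. Because $C$ is non-trivial, the endomorphism $\theta$ cannot be a scalar on large sets, and the plan is to exploit this through the characterization of Theorem \ref{theorem_big_characterization}.

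Concretely, I would consider the formula $\psi(x^0,x^1;\uw) := \phi(x^0;\uw)\wedge\phi(x^1;\uw)$, or a variant adapted to $S$ when $C$ is algebraic of degree $\geq 2$. I would also use a suitable $C$-sequence-system $S$ so that $x^1$ is a placeholder for $\theta(x)$. If $\phi(\VV;\ud)$ is infinite, then $\psi$ implies no finite disjunction of non-trivial linear dependencies in $x^0,x^1$ over $\VV$. This requires choosing $x^0$ and then $x^1$ generically outside the $K$-span of $\VV$ together with $x^0$, which is possible in a saturated extension because an infinite definable subset of $\VV$ acquires new points. Consequently, in an existentially closed model with $\phi(\VV;\ud)$ infinite, there is some $v$ with $v,\theta(v)\in\phi(\VV;\ud)$ and $v,\theta(v)$ linearly independent. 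More strongly, iterating this with many variables produces arbitrarily many such $v$ with $\{v,\theta(v)\}$ independent.

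On finite sets the situation is the reverse. In $\mm_n$, with $\phi(\VV_n;\ud_n)$ finite, we may freely choose the existentially closed expansion, and I would arrange that no $v$ in the finite set has $\theta(v)$ in the set unless $\theta(v)$ is a scalar multiple of $v$. To do this, start from any model of $T^C_\theta$ on $\mm_n$ and pass to an existentially closed extension. Over an $L_\theta$-extension, however, new points can enter $\phi$, so instead I would take the $\mm_n$ to be elementary substructures in which $\phi$ stays finite. The natural tool is model completeness of $T$: an existentially closed extension $(\mm',\theta')$ of $(\mm_n,\theta)$ satisfies $\mm_n \preceq \mm'$. The difficulty is that $\phi(\VV';\ud_n)$ equals $\phi(\VV_n;\ud_n)$, since finiteness of a fixed size is first-order, so the finite set is preserved, but the values of $\theta'$ on it are determined by $\theta$ only because $\theta' \supseteq \theta$.

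Hence the plan is to build $\theta$ on $\VV_n$ so that $\theta$ maps the finite set $\phi(\VV_n;\ud_n)$ injectively into $K$-span complements outside the set. This can be done by choosing a basis extending a maximal independent subset and defining $\theta$ as a $C$-endomorphism with $\theta(v)\notin\phi$ for $v\in\phi$ linearly independent. Then $\sigma := \exists x\,(\phi(x;\ud)\wedge\phi(\theta x;\ud)\wedge$ ``$x,\theta x$ independent''$)$ fails in the existentially closed extensions of $(\mm_n,\theta)$. An ultraproduct of these existentially closed models has $\phi$ infinite but $\neg\sigma$, which contradicts existential closedness via the characterization, so the class is not elementary.

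The main obstacle is constructing a $C$-endomorphism that avoids mapping the finite set into itself, especially for algebraic $C$ and positive characteristic. When $K$ is finite, the finite set may contain entire spans, so I expect to first pass to a suitable linearly independent sub-configuration of $\phi(\VV_n;\ud_n)$ and handle scalar multiples separately.
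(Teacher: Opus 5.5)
Your overall strategy is the right one. On an existentially closed model where $\phi(\VV;\ud)$ is infinite, $\theta$ is forced to send some point of the set back into the set; on a finite set you can prevent this by choosing $\theta$ before passing to an existentially closed extension; an ultraproduct then contradicts elementarity. However, the separating sentence you chose breaks the argument. For infinite $K$ (e.g.\ $K=\QQ$), ``$x,\theta x$ independent'' is only the infinite conjunction $\bigwedge_{(\lambda,\mu)\neq(0,0)}\lambda\cdot x+\mu\cdot\theta(x)\neq 0$. So $\sigma$ is in general not first-order, and \L{}o\'s's theorem does not transfer $\neg\sigma$ to the ultraproduct.

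This is a real failure, not a formality. The invariant you first aim for on the finite side (``$\theta(v)$ lies in the set only if it is a scalar multiple of $v$'') does not survive ultraproducts. Indeed, $\theta(v_n)=\lambda_n\cdot v_n$ with $\lambda_n$ varying in $n$ yields an element $[v_n]$ with $[v_n],\theta([v_n])$ linearly independent. Symmetrically, Theorem \ref{theorem_big_characterization} lets you impose only finitely many inequations $\lambda\cdot x^0+\mu\cdot x^1\neq 0$ at once. It therefore does not give a $v$ with $v,\theta(v)$ fully independent in an arbitrary existentially closed model.

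The fix is to separate with $\sigma(\uw):=\exists x\in\VV:\ x\neq 0\wedge\phi(x;\uw)\wedge\phi(\theta(x);\uw)$. If $\phi(\VV;\ud)$ is infinite, your new-points argument shows that $\phi(x_1;\ud)\wedge\phi(x_2;\ud)\wedge x_1\neq 0$ implies no finite disjunction of non-trivial linear dependencies. Since $C$ is non-trivial, $1$ and $\theta$ are $K$-linearly independent in $R_C$, so Lemma \ref{lemma_rc_li_plus_li} yields $\sigma(\ud)$ in every existentially closed model. This also disposes of your unspecified ``variant adapted to $S$'' for algebraic $C$. If, e.g., $\mipo(C)=X(X-1)$, no formula mentioning $\theta(x)$ for a single variable $x$ is bounded by a $C$-sequence-system, and one has to split $x$ into its $\Ker(f^C)$-components, which is exactly what that lemma does.

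The finite-side construction you list as the main obstacle is then straightforward, uniform in $K$ and the characteristic, and gives exactly $\neg\sigma(\ud_n)$. First replace $\mm_n$ by an elementary extension with $\dim_K(\VV_n)$ infinite; this does not change the finite set $F:=\phi(\VV_n;\ud_n)$. Fix a $K$-basis $b_1,\dots,b_m$ of $W:=\spanA{F}{K}$. Choose $\theta$ making $\VV_n$ a free $K[X]$-module if $C$ is transcendental, or a free $K[X]/(\mipo(C))$-module if $C$ is algebraic, with $b_1,\dots,b_m$ among the free generators. In the first case every $\rho[\theta]$ with $\rho\neq 0$ is injective, so $\theta$ is a $C$-endomorphism for every transcendental $C$. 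In the second case $\mipo(C)[\theta]=0$ by construction.

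In both cases the family $(\theta^j(b_i) : 1\leq i\leq m,\ 0\leq j<\deg(C))$ is part of a $K$-basis of $\VV_n$, and $j\in\set{0,1}$ is allowed because $\deg(C)\geq 2$. Hence $\theta_{\restriction W}$ is injective and $\theta(W)\cap W=\set{0}$, i.e.\ $\theta(v)\notin F$ for all $v\in F\setminus\set{0}$. No passage to an independent sub-configuration is needed for finite $K$.

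Finally, any existentially closed extension $(\mm',\theta')$ of $(\mm_n,\theta)$ satisfies $\mm_n\preceq\mm'$ by model completeness of $T$, and $\theta'\supseteq\theta$. So $\phi(\VV';\ud_n)=F$ and $(\mm',\theta')\models\neg\sigma(\ud_n)$, while $|\phi(\VV';\ud_n)|\geq n$. The ultraproduct of these models then has $\phi$ infinite and satisfies $\neg\sigma$, contradicting the infinite-side argument above.
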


\noindent Assuming \Hfour{}, we obtained some results in \cite{Chi25b} for $T\theta^C$ regarding completions, the algebraic closure, and quantifier elimination.

\begin{definition} \label{def_cl_theta}
    Let $(\mm, \theta)$ be $C$-image-complete and $A \subseteq M$. We let $\cl_\theta(A)$ denote the smallest set containing $A$ closed under $\acl_L$ and multiplication by elements of $R_C$.
\end{definition}

\begin{fact}[Theorem 4.16 and Theorem 4.20 in \cite{Chi25b}] \label{theorem_nes_cond_fixed} \label{theorem_acl}
     Assume that $T$ satisfies \Hfour{}. For any $(\mm, \theta) \models T\theta^C$ and any set $A \subseteq M$, we have $\acl_{L_\theta}(A) = \cl_\theta(A)$. Moreover, the following holds:
    \begin{enumerate}[(i)]
        \item If $\acl_L$ does not have the exchange property in $T$, then $\acl_{L_\theta}$ does not have the exchange property in $T\theta^C$.
        \item Assume that $C$ is non-trivial and that $\acl_L$ has the exchange property in $T$. If $\VV$ is not one-dimensional (with respect to the dimension induced by $\acl_L$), then $\acl_{L_\theta}$ does not have the exchange property in $T\theta^C$.
        \item If the ring $R_C$ is not a field, then $\acl_{L_\theta}$ does not have the exchange property in $T\theta^C$.
    \end{enumerate}
\end{fact}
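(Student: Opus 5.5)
The plan is to prove the two inclusions of $\acl_{L_\theta}(A) = \cl_\theta(A)$ separately, and then derive (i)--(iii) from this description together with the existential-closedness characterization, Theorem \ref{theorem_big_characterization}.

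\emph{The inclusion $\cl_\theta(A) \subseteq \acl_{L_\theta}(A)$.} This direction is easy. Since $(\mm,\theta)\models T\theta^C$ is existentially closed, it is $C$-image-complete by Fact \ref{fact_c_image_comple}. Hence every $r\in R_C$ is an $\varnothing$-definable function on $\VV$ (Fact \ref{theorem_r_c_def}). Also $\acl_L\subseteq \acl_{L_\theta}$. So $\acl_{L_\theta}(A)$ contains $A$ and is closed under $\acl_L$ and under every $r\in R_C$, and therefore contains $\cl_\theta(A)$.

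\emph{The inclusion $\acl_{L_\theta}(A) \subseteq \cl_\theta(A)$.} Set $B:=\cl_\theta(A)$ and let $b\notin B$. I want infinitely many realizations of $\tp_{L_\theta}(b/B)$.
\begin{enumerate}[(1)]
\item First I would establish a description of types: over an $\acl_L$- and $R_C$-closed set $B$, the $L_\theta$-type of a tuple is determined by the $L$-type over $B$ of the tuples $(r(\ub) : r\in R_C)$. I would get this from the quantifier-elimination-style back-and-forth that Theorem \ref{theorem_big_characterization} provides.
\item Next I would build, in an elementary extension $\mm'\succ\mm$, a copy $(\mm_1,\theta_1)$ of a model containing $b$ amalgamated with $\mm$ over $B$. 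The amalgam should be $\acl_L$-independent from $b$ over $B$, and on $\VV$ it should be linearly independent in the $R_C$-module sense.
\item Then I would iterate this to obtain infinitely many distinct copies $b_j$, realized inside an existentially closed extension, which has the same type as $b$ by (1).
\end{enumerate}
The main obstacle is the consistency of the amalgam. One must define $\theta$ on the $L$-free amalgam so that it is still a $C$-endomorphism, i.e.\ the direct-sum decompositions of Fact \ref{lemma_decomposition} must glue. This is exactly where the closure of $B$ under the projections $\pi_{\Image(F^C)}$ and the pseudo-inverses $\eta[\theta]^{-1}$ is needed. I would first treat the easy configurations $C_0$ and $C_f$, and then reduce the general case using the \commonBaseTheorem{}.

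\emph{(i).} Take a failure of exchange for $\acl_L$ witnessed by $a,c$ over a set $A$, with $A$ chosen so that $\cl_\theta$-closures add no new relations. For this I would choose $a,c$ (and, when they lie in $\VV$, their $R_C$-images) $\acl_L$-independent in a generic way, using the axiom scheme of Theorem \ref{theorem_first_oder}. Then $\acl_{L_\theta}$ coincides with $\acl_L$ on the relevant sets, and the failure transfers.

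\emph{(ii) and (iii).} In both cases the plan is to find $a\in\VV$ and $r\in R_C$ such that $r(a)\in\acl_{L_\theta}(a)\setminus\acl_{L_\theta}(\varnothing)$ but $a\notin\cl_\theta(r(a))$; this violates exchange.
\begin{itemize}
\item For (iii), I take $r\neq0$ non-invertible, for instance a nontrivial projection $\pi_{\Ker(f^C)}$ or $\rho[\theta]$ with $\rho$ having a factor in $\Kp{C=\infty}$. I choose $a$ generic, so that $a-r'(r(a))$ stays outside $\acl_L(\text{the }R_C\text{-orbit of } r(a))$ for every $r'\in R_C$.
\item For (ii), I use $r=\theta$ with $C$ non-trivial. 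Since $\dim\VV\ge2$, I choose $a$ realizing, via Theorem \ref{theorem_big_characterization}, a generic configuration in which $a$ is not $\acl_L$-algebraic over $\set{r'(\theta(a)) : r'\in R_C}$. Non-triviality is what keeps $a$ from being forced to equal $\lambda\theta(a)$-type expressions.
\end{itemize}
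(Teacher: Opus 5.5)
\textbf{Gap in (ii).} Your witness there has the form $r(a)\in\acl_{L_\theta}(a)\setminus\acl_{L_\theta}(\varnothing)$ and $a\notin\cl_\theta(r(a))$, with $a\in\VV$ and $r\in R_C$. This cannot work when $r$ is a unit of $R_C$, because then $a=r^{-1}(r(a))\in\cl_\theta(r(a))$.
\begin{itemize}
\item For $r=\theta$, your condition that $a$ is not $\acl_L$-algebraic over $\set{r'(\theta(a)) : r'\in R_C}$ already fails for $r'=\theta^{-1}$ whenever $C(X)=0$. Examples are $C=C_0$, or $C$ algebraic with $X\nmid\mipo(C)$.
\item If $R_C$ is a field, no such pair $(a,r)$ exists at all. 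That is exactly the case in which (ii) says more than (iii).
\item Exchange is a statement about single elements of $M$. Under the hypothesis of (ii), $\VV$ has $\acl_L$-dimension at least $2$, so it is not unary and $a$, $r(a)$ are tuples. An asymmetry between tuples does not contradict exchange.
\end{itemize}
The missing idea is to use $\dim\VV\ge 2$ to produce singletons. Fix an $A$-definable $Z\subseteq\VV$ of dimension $1$. Fix a coordinate projection $\pi\colon\VV\to M$ and some $y\notin\cl_\theta(A)$ whose fibre $\pi^{-1}(y)$ is infinite; this is where $\dim\VV\ge2$ is used. The formula $x^0\in Z\wedge\pi(x^1)=y$ is a product of two infinite sets in disjoint variables, so it implies no linear dependency. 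Apply Theorem~\ref{theorem_big_characterization} in the form of Lemma~\ref{lemma_rc_li_plus_li}, with $r_1=1$ and $r_2=\theta$ (independent because $C$ is non-trivial). Together with compactness and the equality $\acl_{L_\theta}=\cl_\theta$, this gives $w\in Z$ with $\pi(\theta(w))=y$ and $w\notin\cl_\theta(Ay)$. Let $x$ be a coordinate of $w$ with $w\in\acl_L(Ax)$. Then $y\in\cl_\theta(Ax)\setminus\cl_\theta(A)$, while $x\notin\cl_\theta(Ay)$.

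\textbf{Other steps.} Several of them treat $\cl_\theta$ as $\acl_L$ of the $R_C$-orbit, but it is an iterated closure.
\begin{itemize}
\item \emph{Step (1) is false as stated.} For a vector $w\in\acl_L(B\ub)$ outside $\VV(B)+R_C\cdot\ub$, the type of $\theta(w)$ is not recorded by $\tp_L((r(\ub))_{r\in R_C}/B)$. For example, in $\RCF$ with multiplicative $\VV$ and $C=C_0$, the sign of $\theta(u+1)-\theta(u)-1$ (field operations) is not determined by the $L$-type of the orbit of $u$. You need the $L$-type of an enumeration of $\cl_\theta(B\ub)$ together with its $R_C$-structure.
\item \emph{Your amalgam in (2) does work once (1) is corrected.} Realize $\tp_L(\cl_\theta(Bb)/B)$ by some $E'$ with $E'\cap M=B$. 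Define $\theta$ on $\VV(M)+\VV(E')$, verify $\Ker(f^{C})=\Ker(f^{C+1})$ there using the pseudo-inverses available in $B$, and extend to a complement.
\item \emph{Your genericity condition in (iii) is insufficient}, for the same one-step reason, but the fix is easy. Take $r=f[\theta]$ with $f$ as in Fact~\ref{lemma_both_im_ker_inf}; your list of candidates misses, e.g., $\mipo(C)=f^2$. Take $b\in\Image(r)\setminus\cl_\theta(\varnothing)$ and a preimage $a_0$. Since $\VV\cap\cl_\theta(b)$ is a subspace and $\Ker(r)$ is infinite, either $a_0$ or some $a_0+k$ with $k\in\Ker(r)$ lies outside $\cl_\theta(b)$. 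Passing to singletons still needs $\dim\VV=1$ and exchange for $\acl_L$, which you may assume after (i) and (ii).
\item \emph{In (i), the claim is false} that $\acl_{L_\theta}$ agrees with $\acl_L$ on the relevant sets, as soon as those sets contain non-algebraic vectors. What you actually need is only $c\notin\cl_\theta(A)$ and $a\notin\cl_\theta(Ac)$. Since $\acl_L$ lacks exchange, this must come from a Neumann-style disjointness and amalgamation argument that controls the whole iterated closure, not from choosing elements ``$\acl_L$-independent''.
\end{itemize}
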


\begin{fact}[Theorem 4.10 in \cite{Chi25b}] \label{theorem_qe}
    Let $L'$ be a language interdefinable with $L$ (modulo $T$) such that $T$ has quantifier elimination in $L'$ and $\spanA{A}{L'} = \acl_L(A)$ for every $A \subseteq \mm \models T$.
    Then $T\theta^C$ has quantifier elimination in the language $L' \cup \LRC$.
\end{fact}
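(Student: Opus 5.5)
The plan is the usual embedding test for quantifier elimination. Let $(\mm,\theta),(\nn,\theta)\models T\theta^C$ with $(\nn,\theta)$ $|M|^+$-saturated. Let $A$ be a common $L'\cup\LRC$-substructure of $\mm$, and let $\iota\colon A\to\nn$ be an $L'\cup\LRC$-embedding. It suffices to extend $\iota$ to $\cl_\theta(Ab)$ for a single $b\in M$; iterating this gives an embedding of $\mm$. Since $\spanA{A}{L'}=\acl_L(A)$ and $A$ is closed under $R_C$, $A$ is $\cl_\theta$-closed. Moreover $\iota$ is an $L$-elementary map, because $T$ has quantifier elimination in $L'$.

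First I would compute $\cl_\theta(Ab)$ explicitly. It is $\acl_L$ of $A$, $b$, and the $R_C$-module generated by the $\VV$-coordinates of $b$. Using the decomposition $\VV=\Image(F^C)\oplus\bigoplus_{f\in F}\Ker(f^C)$ (Fact \ref{lemma_decomposition}) and the \commonBaseTheorem{}, I would pick a tuple $\uv$ in $\VV$ of the following kind. Its entries split into a part $\uv_\li$, on which the sequence $(\theta^i(v_{\lii,k}))_i$ is linearly independent over $\VV(A)$, and a part $\uv_\ld$ satisfying a $C$-sequence-system $S(\ux;\uu)$ with compatible $\uu\in\VV(A)$. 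I would also arrange that $\spanA{\VV(A)\cup R_C\uv}{K}$ is spanned over $\VV(A)$ by finitely many $\theta^i$-images of the entries of $\uv$, together with images under pseudo-inverses $\eta[\theta]^{-1}$. To absorb the pseudo-inverses, I would replace each $v$ by suitable $\eta[\theta]^{-1}$-preimages, using $C$-image-completeness, so that everything in the module becomes a $K[X]$-combination of a single sequence. Then $\cl_\theta(Ab)=\acl_L(A\,b\,\uvec{v})$, where $\uvec{v}$ is the placeholder sequence evaluated, and the only $K$-linear relations among the $\theta^i(v_k)$ over $\VV(A)$ are those forced by $S$.

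Next I would realize the corresponding configuration in $\nn$. Consider the partial type $p(\uxvec,z)$ over $\iota(A)$ consisting of three parts: the $\iota$-image of $\tp_L(\uvec{v},b/A)$; the system $S(\ux;\iota(\uu))$, which is still compatible because $\delta_S$ is an $\LKThe$-formula preserved by $\iota$; and the statement that the entries of $\uvec{x}$ not determined by $S$ are linearly independent over $\VV^\nn$. Each finite fragment of the $L$-part is a formula $\psi$ bounded by $S$. The key claim is that $\psi(\uxvec;\iota(\ua))$ implies no finite disjunction of non-trivial linear dependencies over $\VV^\nn$. This holds in $\mm$ over $\VV(A)$ by the choice of $\uv$, and it transfers to $\nn$ through \Hfour{}, since $\sigma_\psi$ is an $L$-formula and $\iota$ is $L$-elementary. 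Theorem \ref{theorem_big_characterization} then gives finite satisfiability, and saturation gives a realization $(\uv',b')$ in $\nn$. The map $\theta^i(v_k)\mapsto\theta^i(v'_k)$ together with $b\mapsto b'$ is $L$-elementary over $\iota$. It therefore extends to $\acl_L(Ab\uvec{v})$, and it is an $L'$-embedding there because $\spanA{\cdot}{L'}=\acl_L$.

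The main obstacle is the last check: that this extension commutes with every $r\in R_C$, including the projections and pseudo-inverses, and not only with $\theta$. To handle it, I would use the normal form from the \commonBaseTheorem{} to express $r(w)$, for $w$ in the new span, as a polynomial combination of the $\theta^i(v_k)$ plus an element of $A$. Because the sequence systems and the compatibility of $\uu$ are preserved, the same normal form evaluates correctly in $\nn$. A second subtle point is choosing $\uv$ so that the independent part stays independent over all of $\VV^\nn$ rather than only over $\VV(A)$; this is exactly where the quantification over $\VV$ in \Hfour{} is needed.
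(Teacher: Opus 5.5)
The paper does not reprove this statement; it imports it from \cite{Chi25b}, so your argument has to stand on its own. Its architecture is sound: an embedding test into a saturated $\nn$, $L$-elementarity of $\iota$ from quantifier elimination in $L'$, $\cl_\theta$-closedness of substructures, and realizing fragments in $\nn$ via Theorem~\ref{theorem_big_characterization} after transferring $\sigma_\psi$ along $\iota$. But your computation of $\cl_\theta(Ab)$ is false, and everything after it depends on it.

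\textbf{The gap.} The closure is not $\acl_L$ of $A$, $b$ and one $R_C$-orbit. $\acl_L$ creates new vectors, $R_C$ must act on those, then $\acl_L$ must be applied again, and so on. For instance, take $T=\RCF$, $\VV=\rr_{>0}$ (multiplicative), $C=C_0$ and $b\in\VV\setminus A$.
\begin{itemize}
\item The elements $b,b+1,b+2,\dots$ (field addition) lie in $\VV\cap\acl_L(Ab)$ and are multiplicatively independent over $\VV(A)$, since $A$ is real closed.
\item So $\theta(b+1)\in\cl_\theta(Ab)$, but for generic $b$ it is not in $\acl_L(A\cup R_Cb)$.
\item For generic $b$, $\VV(\cl_\theta(Ab))$ is not a finitely generated $R_C$-module over $\VV(A)$.
\end{itemize}
Hence, for your finite $\uv$, the set $\acl_L(Ab\vec v)$ is not closed under $R_C$. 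It contains vectors outside $\VV(A)+\spanA{\vec v}{K}$ (e.g.\ $v_1+1$ in the example), and your Common Base normal form says nothing about them. The map you construct is therefore not an $L'\cup\LRC$-embedding of a substructure, and the one-step iteration cannot be restarted from it.

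Your pseudo-inverse absorption has the same defect. For $C=C_0$, $R_Cv\cong K(X)$ is not finitely generated over $K[X]$, so no finite family of $\theta$-sequences exhausts it. The absorption only works for finitely many elements at a time.

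\textbf{The repair.} Realize the whole closure at once and use normal forms only locally.
\begin{itemize}
\item Let $\uv$ be a (generally infinite) tuple generating $\VV(\cl_\theta(Ab))$ over $\VV(A)$ as an $R_C$-module. Then $\cl_\theta(Ab)=\acl_L(A\,b\,R_C\uv)$ does hold: the $\VV$-part of the right-hand side lies in $\VV(\cl_\theta(Ab))$, so it equals $\VV(A)+R_C\uv$ and is $R_C$-closed.
\item Realize in $\nn$ the $\iota$-image of the $L$-type of $(b,(r(v_k))_{r,k})$ over $A$, together with all $\LRC$-relations among these elements and $A$.
\item For finite satisfiability, note that each fragment mentions only finitely many $r(v_k)$, which lie in a finitely generated submodule. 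There the \commonBaseTheorem{} gives a free part plus a $C$-sequence-system with compatible parameters in $\VV(A)$, as in the proof of Lemma~\ref{lemma_rc_li_plus_li}, and Theorem~\ref{theorem_big_characterization} applies.
\end{itemize}

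\textbf{Two smaller points.} First, a tuple realized in $\nn$ cannot be linearly independent over $\VV^\nn$. What you need is independence over $\VV(\iota A)$, obtained by adding finitely many inequations per fragment. Second, passing from ``realized in $\mm$ by a tuple independent over $\VV(A)$'' to $\mm\models\sigma_\psi(\ua)$ uses $A=\acl_L(A)$ together with the algebraic form of $\sigma_\psi$ from Fact~\ref{lemma_hfour_fml}.
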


\begin{example}[Example 4.11, Observation 4.12, and Example 4.17 in \cite{Chi25b}]
    \label{example_kvs} The theory $\TKvs\theta^C$ is a complete theory, has quantifier elimination in the language of $R_C$-modules, and we have $\acl_{\LKThe} = \dcl_{\LKThe} = \cl_\theta$. Furthermore, if $R_C$ is a field, then $\TKvs\theta^C$ is the theory of $R_C$-vector spaces and hence strongly minimal.
\end{example}

\begin{fact}[Lemma 4.19 in \cite{Chi25b}] \label{lemma_both_im_ker_inf}
    Let $C$ be a kernel configuration such that $R_C$ is not a field.
    Then there is $f \in \Kp{}$ such that both $\Image(f)$ and $\Ker(f)$ are infinite in every existentially closed model of $T^C_\theta$.
\end{fact}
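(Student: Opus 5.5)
The plan is to argue by cases on $C$, using the characterization of existentially closed models in Theorem \ref{theorem_big_characterization} to produce infinitely many elements. Since $R_C$ is not a field, Fact \ref{fact_when_field} tells us that $C \neq C_0$, and that if $C$ is algebraic then $\mipo(C)$ is reducible. In either case we fix an $f \in \Kp{}$ with $C(f) \geq 1$, chosen more carefully in the algebraic case as described below.

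The basic tool is a \emph{genericity trick}. Let $S(\ux)$ be a $C$-sequence-system over $(\VV,\theta)$, let $\lambda(\uxvec)$ be a non-trivial $K$-linear combination of placeholder variables permitted by boundedness, and let $v_1,\dots,v_N \in \VV$. Consider
$$\psi(\uxvec) := \bigwedge\nolimits_{j=1}^N \lambda(\uxvec) \neq v_j.$$
Every tuple linearly independent over $\VV$ (in an elementary extension) satisfies $\psi$. So by the equivalence (i)$\Leftrightarrow$(ii) in Definition \ref{def_hfour}, $\psi$ implies no finite disjunction of non-trivial linear dependencies. Theorem \ref{theorem_big_characterization} then gives a solution in $\mm$ of $\psi_\theta \wedge S$, i.e.\ an element $\lambda_\theta(\ux)$ outside any prescribed finite set. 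Iterating shows the relevant set is infinite.

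\emph{Transcendental case.} Here $C(f) \geq 1$.
\begin{itemize}
\item For $\Ker(f)$: take $S = \big(f[\theta](x_{\ldd,1}) = 0\big)$, which is allowed since $q=1 \leq C(f)$ and $0$ is compatible. Take $\lambda = x^0_{\ldd,1}$, which is bounded because $0 < \deg f$. The solutions are new elements of $\Ker(f)$.
\item For $\Image(f)$: since $C$ is transcendental, we may use a linearly independent variable $x_{\lii,1}$ with empty sequence-system. Take $\lambda = \sum_i (f)_i x^i_{\lii,1}$; then $f[\theta](x_{\lii,1})$ gives new elements of $\Image(f)$.
\end{itemize}

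\emph{Algebraic case.} Since $\mipo(C)$ is reducible, either some $f$ has $C(f) \geq 2$, or there are distinct $f,g$ with $C(f) = C(g) = 1$.
\begin{itemize}
\item If $C(f) \geq 2$, take $S = \big(f^{C}[\theta](x_{\ldd,1}) = 0\big)$ and $\lambda = f^{C-1}$ applied to the placeholders. This is bounded since $\deg(f^{C-1}) < \deg(f^C)$. The element $f^{C-1}[\theta](x)$ lies in both $\Ker(f)$ and $\Image(f)$, so both sets are infinite.
\item If there are distinct $f,g$ with $C(f) = C(g) = 1$, the argument above shows $\Ker(f)$ and $\Ker(g)$ are infinite. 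Since $\gcd(f,g)=1$, Bézout's identity shows $f[\theta]$ restricts to a bijection of $\Ker(g)$, so $\Ker(g) \subseteq \Image(f)$. Hence $\Image(f)$ is infinite as well.
\end{itemize}

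The main thing to check carefully is the legitimacy of each sequence-system: compatibility of the parameter $0$, and the condition $m=0$ in the algebraic case, which is why the $\Image$ part there must go through $\Ker(f^{C-1})$-type or Bézout arguments rather than through free variables $x_{\lii}$. One must also check that each chosen $\lambda$ is bounded by $S$ and is a non-trivial combination of placeholders, so that generic solutions avoid the finite set. Everything else is routine.
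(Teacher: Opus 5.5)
Your argument is correct. The paper does not prove this statement itself; it is imported as Lemma 4.19 of \cite{Chi25b}. So there is no in-paper proof to compare with, and what you give is a valid self-contained proof using only tools recalled here:
\begin{enumerate}[(i)]
\item Fact \ref{fact_when_field} reduces to three cases: transcendental with some $C(f)\geq 1$; algebraic with some $C(f)\geq 2$; algebraic with two distinct factors of exponent $1$.
\item Theorem \ref{theorem_big_characterization} then produces elements avoiding any prescribed finite set.
\end{enumerate}

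The individual checks all go through:
\begin{enumerate}[(i)]
\item The parameter $0$ is compatible with each of your sequence-systems. For $f^{C}[\theta](x)=0$ the compatibility condition is $0\in\Ker(f^{0})=\set{0}$.
\item Each linear combination you use is non-trivial and bounded by its sequence-system.
\item The direction (ii)$\Rightarrow$(i) of Definition \ref{def_hfour} that you invoke is the trivial one. A universally quantified implication transfers to elementary extensions, and an extension containing a tuple linearly independent over $\VV$ exists by compactness since $\VV$ is infinite.
\item In the last case, Bézout gives $af+bg=1$, so $v=f[\theta](a[\theta](v))$ with $a[\theta](v)\in\Ker(g)$ for every $v\in\Ker(g)$. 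Hence $\Ker(g)\subseteq\Image(f)$.
\end{enumerate}

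Your genericity trick is genuinely needed, not just convenient. In the general setting $K$ may be finite, and then a non-zero subspace such as $\Ker(f)$ need not be infinite; your argument handles this uniformly. It is also the same kind of argument the paper uses in the proof of Lemma \ref{lemma_tptwo}, where sequence-systems of the form $f[\theta](x_i)=0$ are fed into the characterization of existentially closed models.
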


\noindent In the previous papers, we often encountered situations where we knew that an $L(M)$-formula $\psi(x^0x^1)$ implies no finite disjunction of non-trivial linear dependencies over $\VV$ and wanted to show that $\psi(x\theta(x))$ is consistent (assuming that $(\mm, \theta) \models T\theta^C$ and that $C$ is non-trivial). If $C$ is transcendental, this is easy to show using our characterization of existentially closed models (Theorem \ref{theorem_big_characterization}). However, in the algebraic case, especially if every $f \in \Kp{0<C<\infty}$ satisfies $\deg(f) = 1$ and $C(f) = 1$, one needs to awkwardly decompose $x$ into each $\Ker(f^C)$-component in order to apply Theorem \ref{theorem_big_characterization} (see, e.g., the proof of Claim 4.20.1 in \cite{Chi25b}). Lemma \ref{lemma_rc_li_plus_li} below will ensure that we no longer have to do such unwieldy constructions.

\begin{fact}[Lemma 2.26 in \cite{Chi26}] \label{lemma_iter_lin_indep}
    Let $t_1(\ux), \dots, t_m(\ux)$ be $\LK(\VV)$-terms in $\ux = (x_1, \dots, x_n)$ that are linearly independent over $\VV$, and let $\uu = (u_1, \dots, u_m) \in \VV'$ be linearly independent over $\VV$, where $\mm' \succ \mm$.
    Then there exists $\uv \in \VV''$, for some elementary extension $\mm'' \succ \mm'$, such that $\uv$ is linearly independent over $\VV$ and $\mm'' \models t_k(\uv) = u_k$ for $k = 1, \dots, m$.
\end{fact}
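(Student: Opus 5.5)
Fact \ref{lemma_iter_lin_indep} is an earlier result (Lemma 2.26 in \cite{Chi26}), so it may be cited directly. I will nonetheless sketch how it can be proved by induction on $m$.

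\textbf{Reduction to a triangular system.} First, view the terms as affine maps. Write $t_k(\ux) = \sum_{l=1}^n \lambda_{k,l} x_l + c_k$ with $\lambda_{k,l} \in K$ and $c_k \in \VV$. Linear independence of $t_1, \dots, t_m$ over $\VV$ means precisely that the $m \times n$ matrix $\Lambda = (\lambda_{k,l})$ has rank $m$. If some combination $\sum_k \mu_k \Lambda_k$ of the rows vanished, then $\sum_k \mu_k t_k$ would be a constant in $\VV$, which contradicts the assumption. Hence $m \le n$.

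After permuting the variables and applying Gaussian elimination, I will find an invertible $K$-linear change of variables $\ux = P\uz$ (with $P \in \mathrm{GL}_n(K)$) such that $t_k(P\uz) = z_k + c_k$ for $k \le m$. Since $P$ is invertible over $K$, a tuple $\uz$ is linearly independent over $\VV$ if and only if $\ux = P\uz$ is. So it suffices to solve the triangularized system.

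\textbf{Choosing the solution.} I set $z_k := u_k - c_k$ for $k \le m$. These already lie in $\VV'$, and they are linearly independent over $\VV$ because the $u_k$ are and $c_k \in \VV$. For the remaining coordinates $z_{m+1}, \dots, z_n$, I need elements that are linearly independent over $\VV'$. I pass to an elementary extension $\mm'' \succ \mm'$ that is $|\VV'|^+$-saturated. Since $\VV$ is infinite, $\VV''$ has $K$-dimension larger than that of $\VV'$, so such elements can be chosen there.

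\textbf{Checking independence and concluding.} Then $(z_1, \dots, z_n)$ is linearly independent over $\VV$: a nontrivial relation involving some $z_j$ with $j > m$ contradicts their independence over $\VV' \supseteq \{z_1, \dots, z_m\} \cup \VV$, and a relation involving only $z_1, \dots, z_m$ contradicts the previous step. Setting $\uv := P\uz$ gives $t_k(\uv) = u_k$ for all $k \le m$, with $\uv$ linearly independent over $\VV$.

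The only delicate point is the first step: correctly translating ``terms linearly independent over $\VV$'' into the rank condition on $\Lambda$, so that the constants $c_k$ are absorbed harmlessly.
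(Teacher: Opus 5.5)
Your proof is correct. The paper gives no argument of its own for this statement; it only cites Lemma 2.26 of \cite{Chi26}. Your direct linear-algebra argument is a complete, self-contained justification: extend the coefficient rows of the $t_k$ to a matrix in $\mathrm{GL}_n(K)$, put $z_k := u_k - c_k$ for $k \le m$, and fill the remaining coordinates with elements independent over $\VV'$.

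Two cosmetic remarks:
\begin{enumerate}[(i)]
    \item The announced induction on $m$ is never actually used; the argument is direct.
    \item In the last step, what you need is $\dim_K(\VV''/\VV') \geq n-m$, and ``$\VV''$ has larger dimension than $\VV'$'' does not by itself say this. It is cleaner to realize, for $j = m, \dots, n-1$ in turn, the type $\{x \in \VV\} \cup \{x \neq w : w \in \VV' + \spanA{z_{m+1},\dots,z_j}{K}\}$. Each such type uses at most $|\VV'|$ parameters and is finitely satisfiable because $\VV''$ is infinite, so your $|\VV'|^+$-saturated $\mm''$ realizes it.
\end{enumerate}
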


\begin{lemma} \label{lemma_rc_li_plus_li}
    Suppose that $(\mm, \theta) \models T_\theta^C$ is an existentially closed model.
    Let \hbox{$r_1, \dots, r_n \!\in\! R_C$} be $K$-linearly independent.
    Let $\ux_1, \dots, \ux_n$ all be of length $m$, and suppose that an $L(M)$-formula $\psi(\ux_1, \dots, \ux_n)$ implies no finite disjunction of non-trivial linear dependencies in $\ux_1 \dots \ux_n$ over $\VV$.
    Then $(\mm, \theta) \models \exists \ux \in \VV : \psi(r_1(\ux), \dots, r_n(\ux))$.
\begin{proof}
    Assume that $C$ is transcendental.
    Using the \commonBaseTheorem{} (Fact \ref{theorem_r_c_element}), we may write
    $$
    r_i = \rho_i[\theta] \circ \eta[\theta]^{-1} \circ \pi_{\Image(F^C)} + \sum\nolimits_{f\in F} \rho_{i,f}[\theta] \circ \pi_{\Ker(f^C)}
    $$
    where $F \subseteq \Kp{0<C<\infty}$ is finite, $\Fac(\eta) \subseteq F \cup \Kp{C=0}$, and $\deg(\rho_{i,f}) < \deg(f^C)$ for all $f \in F$.
    Notice that $F$ and $\eta$ do not depend on $i$.
    We define $\ux_\li = (x_{\li, 1}, \dots, x_{\li, m})$ and, for each $f \in F$, define $\ux_f = (x_{f, 1}, \dots, x_{f, m})$ to be a tuple of length $m$.
    Using the linear independence of the $r_i$'s, we see that the terms
    $$
    t_{i, l}(\uxvec{}_{\li}(\uxvec{}_f : f \in F)) := (\rho_i \cdot F^C)[x_{\li, l}] + \sum\nolimits_{f\in F} \rho_{i, f}[x_{f, l}]
    $$
    are also $K$-linearly independent, where $\rho[x] := \sum_{i=0}^{\deg(\rho)} (\rho)_i \cdot x^i$.
    Define $\ut_{i}(\uxvec{}_{\li}(\uxvec{}_f : f \in F))$ as the tuple consisting of all $t_{i, l}(\uxvec{}_{\li}(\uxvec{}_f : f \in F))$.
    Now, by Fact \ref{lemma_iter_lin_indep}, we see that
    $$
    \psi'(\uxvec{}_\li(\uxvec{}_f : f \in F)) := \psi(\ut_1(\uxvec{}_\li(\uxvec{}_f : f \in F)), \dots, \ut_n(\uxvec{}_\li(\uxvec{}_f : f \in F)))
    $$
    implies no finite disjunction of non-trivial linear dependencies in $\uxvec{}_\li(\uxvec{}_f : f \in F)$ over $\VV$.
    Since $\deg(\rho_{i, f}) < \deg(f^C)$ for all $i$ and $f$, it is also easy to see that $\psi'(\uxvec{}_\li(\uxvec{}_f : f \in F))$ is bounded by the $C$-sequence-system
    $$
    S(\ux_\li(\ux_f : f \in F)) := \bigwedge\nolimits_{f\in F} \bigwedge\nolimits_{l=1}^m f^C[\theta](x_{f, l}) = 0.
    $$
    With our characterization of existentially closed models (Theorem \ref{theorem_big_characterization}), we obtain a realization $\uv_\li(\uv_f : f \in F) \in \VV$ of the formula
    $$
    \psi'_\theta(\ux_\li(\ux_f : f \in F)) \wedge S(\ux_\li(\ux_f : f \in F)).
    $$
    Define $\uv := (\eta \cdot F^C)[\theta](\uv_\li) + \sum_{f\in F} \uv_f$.
    Plugging in all definitions, we see that $\psi'_\theta(\uv_\li(\uv_f : f \in F))$ is $\psi(\ux_1, \dots, \ux_n)$ with each occurrence of $x_{i, l}$ replaced by
    \begin{align*}
        (\rho_i \cdot F^C)[\theta](v_{\li, l}) + \!\sum\nolimits_{f\in F} \rho_{i, f}[\theta](v_{f, l}) &= \rho_i[\theta] \!\circ\! \eta[\theta]^{-1} \!\circ \pi_{\Image(F^C)}(v_l) + \!\sum\nolimits_{f \in F} \rho_{i, f}[\theta] \circ \pi_{\Ker(f^C)}(v_l)\\ &= r_i(v_l).
    \end{align*}
    Therefore, we obtain $(\mm, \theta) \models \exists \ux \in \VV : \psi(r_1(\ux), \dots, r_n(\ux))$.
    If $C$ is algebraic, the proof is essentially the same, but without the tuple $\ux_\li$.
\end{proof}
\end{lemma}

\subsection{More on \texorpdfstring{\Hfour{}}{(H4)}}

In this section, we prove some additional properties of \Hfour{}.
First, we show that $T$ satisfying \Hfour{} with a vector space $\VV$ is preserved under definable vector space isomorphisms:

\begin{remark} \label{rem_vec_iso}
    Suppose $(\VV_1, 0_1, +_1, (\lambda\cdot_1)_{\lambda\in K})$ and $(\VV_2, 0_2, +_2, (\lambda\cdot_2)_{\lambda\in K})$ are two $\varnothing$-definable $K$-vector spaces with which $T$ satisfies our general assumption (see Section \ref{sec_setting}).
    Furthermore, suppose that the two vector spaces are isomorphic in $T$ via a $\varnothing$-definable map $\iota \colon \VV_1 \to \VV_2$.
    Let $T^C_{1,\theta}$ denote the theory $T^C_\theta$ obtained by choosing $\VV = \VV_1$, and define $T^C_{2,\theta}$ similarly with $\VV = \VV_2$.
    The following holds:
    \begin{enumerate}[(i)]
        \item A model $(\mm, \theta) \models T^C_{1,\theta}$ is existentially closed if and only if the structure $(\mm, \iota \circ \theta \circ \iota^{-1})$ is an existentially closed model of $T^C_{2,\theta}$.
        \item The theory $T$ satisfies \Hfour{} with $\VV = \VV_1$ if and only if $T$ satisfies \Hfour{} with $\VV = \VV_2$.
    \end{enumerate}
\begin{proof}
    This is easy to verify using our characterization of existentially closed models and the fact that whenever $\psi(\ux; \ud)$ implies no finite disjunction of non-trivial linear dependencies in $\ux$ over $\VV_1$, the formula $\psi(\iota^{-1}(\ux); \ud)$ implies no finite disjunction of non-trivial linear dependencies in $\ux$ over $\VV_2$.
\end{proof}
\end{remark}

\noindent Next, we show that \Hfour{} is preserved under reducts.
By the model completeness of $T$ (and the assumption that $T$ and $T'$ are deductively closed), we have $T = T'_{\restriction L}$ if and only if, for every $\mm \models T$, we can find some $\mm' \models T'$ with $\mm \prec \mm'_{\restriction L}$.

\begin{fact}[Lemma 3.7 in \cite{Chi25b}] \label{lemma_hfour_fml}
    If $T$ satisfies \Hfour{}, then one can choose the formula $\sigma_\psi(\uw)$ (see Definition \ref{def_hfour}) for each $L$-formula $\psi(\ux; \uw)$ with $\ux = (x_1, \dots, x_n)$ to be of the form
    $$
    \exists \ux \in \VV : \psi(\ux; \uw) \wedge \bigwedge\nolimits_{k=1}^m \neg\varphi_k\Big(\sum\nolimits_{l=1}^n \lambda_{k, l} \cdot x_l; \uw\Big),
    $$
    with $(\lambda_{k, 1}, \dots, \lambda_{k, n}) \in K^n \setminus \set{\uzero}$ and $\varphi_k(y; \uw)$ algebraic in $y$ for each $k \in \set{1, \dots, m}$.
\end{fact}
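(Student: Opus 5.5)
The plan is to compare $\sigma_\psi$ with the candidate formulas
$$
\chi_{\Lambda,\Phi}(\uw) := \exists \ux \in \VV : \psi(\ux; \uw) \wedge \bigwedge\nolimits_{k=1}^m \neg\varphi_k\Big(\sum\nolimits_{l=1}^n \lambda_{k, l} \cdot x_l; \uw\Big),
$$
where $\Lambda = (\lambda_{k,l})$ has non-zero rows and $\Phi = (\varphi_k)$ consists of $L$-formulas algebraic in $y$. (Algebraicity is made uniform in $\uw$ by conjoining a clause $\exists^{\leq N} y\, \varphi_k(y;\uw)$.) I would show that $\sigma_\psi$ is equivalent modulo $T$ to the infinite conjunction of all $\chi_{\Lambda,\Phi}$, and then use compactness to replace this conjunction by a single $\chi_{\Lambda,\Phi}$.

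\textbf{Step 1 ($\sigma_\psi \to \chi_{\Lambda,\Phi}$ for all $\Lambda,\Phi$).} Assume $\mm \models \sigma_\psi(\ud)$. By Definition \ref{def_hfour}(ii) there is $\mm' \succ \mm$ and a tuple $\uv' \in \VV'$ that is linearly independent over $\VV$ with $\mm' \models \psi(\uv';\ud)$. For a non-zero row $\lambda_k$, the element $\sum_l \lambda_{k,l} v'_l$ lies in $\VV' \setminus \VV$. Since $\VV' \cap M = \VV$, this element lies outside $M$. Because $\varphi_k(y;\ud)$ is algebraic, all of its solutions in $\mm'$ already lie in $M$. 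Hence $\uv'$ witnesses $\chi_{\Lambda,\Phi}(\ud)$ in $\mm'$, and by elementarity also in $\mm$.

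\textbf{Step 2 (if $\neg\sigma_\psi(\ud)$, then some $\chi_{\Lambda,\Phi}(\ud)$ fails, with $\Lambda,\Phi$ independent of $\ud$).} If $\neg\sigma_\psi(\ud)$, then $\psi(\ux;\ud)$ implies a disjunction $\bigvee_k \sum_l \lambda_{k,l} x_l = c_k$ with $c_k \in \VV$. The difficulty is that the $c_k$ are arbitrary parameters, so they need not be captured by an algebraic $L$-formula over $\ud$ alone. To get around this, I would choose the disjunction to be \emph{minimal}: no disjunct can be dropped. Then for each $k$, the fibre
$$
\psi(\ux;\ud) \wedge \sum\nolimits_l \lambda_{k,l} x_l = c_k
$$
is large. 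Concretely, after eliminating one variable using the relation, the fibre, as a formula in the remaining variables, does not imply the remaining disjunctions. This largeness is itself $L$-definable in the parameter $y$ (standing for $c_k$) together with $\uw$, by applying \Hfour{} to the formula $\psi(\ux;\uw) \wedge \sum_l\lambda_{k,l}x_l = y$ in the reduced variables. Call the resulting formula $\varphi_k(y;\uw)$.

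Next I would show that only finitely many $y$ can satisfy $\varphi_k(y;\ud)$. If infinitely many did, then by compactness we could pick such a $y$ outside the finitely many $c_j$ in some elementary extension. Its large fibre would then produce a realization of $\psi$ violating the disjunction. A uniform bound $N$ comes from compactness as well. With this bound, $\varphi_k$ is algebraic in $y$, and $\chi_{\Lambda,\Phi}(\ud)$ fails because every realization of $\psi(\ux;\ud)$ satisfies some $\varphi_k$.

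\textbf{Step 3 (compactness).} Steps 1 and 2 give $T \models \sigma_\psi \leftrightarrow \bigwedge_{\Lambda,\Phi} \chi_{\Lambda,\Phi}$. Each implication $\neg\sigma_\psi \to \bigvee_{\Lambda,\Phi}\neg\chi_{\Lambda,\Phi}$ is witnessed by one of the countably many pairs $(\Lambda,\Phi)$ constructed in Step 2. By compactness, finitely many pairs suffice. Concatenating their rows and formulas yields a single $\chi_{\Lambda,\Phi}$, and this conjunction is again of the required form. By Step 1 it is still implied by $\sigma_\psi$, so $\sigma_\psi$ is equivalent to it.

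I expect the main obstacle to be Step 2. There one must make the "large fibre" property definable via \Hfour{}, possibly by induction on $n$ applied to the reduced formulas. One must also verify that minimality of the disjunction forces finitely many large fibres, with a uniform bound in $\uw$.
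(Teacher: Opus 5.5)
Your Steps 1 and 3 are sound, but Step 2 has a genuine gap. Write $\lambda\cdot\ux$ for $\sum_l \lambda_l\cdot x_l$. Irredundancy of the disjunction $\bigvee_k \lambda_k\cdot\ux = c_k$ only tells you that the fibre $\psi(\ux;\ud)\wedge\lambda_k\cdot\ux = c_k$ is not contained in $\bigcup_{j\neq k}\set{\lambda_j\cdot\ux = c_j}$. That property depends on the other constants $c_j$. It is also much weaker than the property your $\varphi_k(y;\uw)$ defines via \Hfour{}, namely that the fibre implies no finite disjunction of non-trivial dependencies in the reduced variables. So the decisive claim, that every realization of $\psi(\ux;\ud)$ satisfies some $\varphi_k(\lambda_k\cdot\ux;\ud)$, is false.

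Here is a counterexample. Let $\psi(x_1,x_2;\ud)$ define the union of the line $\set{x_1 = d_1}$ and the point $(d_2,d_3)$, with $d_1\neq d_2$.
\begin{itemize}
\item $\sigma_\psi(\ud)$ fails.
\item Every irredundant disjunction consists of $x_1 = d_1$ (up to a scalar) together with one hyperplane $H$ through $(d_2,d_3)$, and $H\cap\psi(\VV^2;\ud)$ is finite.
\item Your $\varphi_1(\lambda_1\cdot\ux;\ud)$ holds exactly on the line, and $\varphi_2$ fails at $\lambda_2\cdot(d_2,d_3)$ because that fibre is finite.
\end{itemize}
Hence $(d_2,d_3)$ witnesses $\chi_{\Lambda,\Phi}(\ud)$ even though $\sigma_\psi(\ud)$ fails. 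The finiteness half of Step 2 is essentially fine; it is the covering half that breaks. An induction on $n$ does not rescue it, since it yields formulas algebraic over $\lambda_k\cdot\ux$ and $\ud$ jointly, not over $\ud$ alone.

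The missing idea is that the constants can be moved into $\acl(\ud)$. Let
\[
\Pi_{\ud}(\ux) := \set{\psi(\ux;\ud)}\cup\set{\neg\varphi(\lambda\cdot\ux;\ud) : \lambda\neq\uzero,\ \varphi(y;\uw)\text{ algebraic in } y}.
\]
Its realizations are exactly the realizations of $\psi(\ux;\ud)$ that are linearly independent over $\VV\cap\acl(\ud)$. By P.~M.~Neumann's lemma, applied to $\operatorname{Aut}(\MM/\ud)$ acting on the non-algebraic elements, any realization $\uv$ can be replaced by some $\uv'\equiv_{\ud}\uv$ with $\acl(\ud\uv')\cap M\subseteq\acl(\ud)$. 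Such a $\uv'$ is linearly independent over all of $\VV$. Indeed, $\lambda\cdot\uv' = c\in\VV$ would give $c\in\dcl(\ud\uv')\cap M\subseteq\acl(\ud)$, contradicting $\uv'\models\Pi_{\ud}$. This is the upgrade the paper alludes to in the proof of Lemma \ref{lemma_sequence_mix_ff} (``the same arguments as at the end of the proof of Lemma 3.7 in \cite{Chi25b}'').

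With this, Step 2 only needs to hold pointwise: $\neg\sigma_\psi(\ud)$ forces $\Pi_{\ud}$ to be inconsistent, i.e. some $\chi_{\Lambda,\Phi}(\ud)$ fails. The uniformity in $\ud$ that you tried to build into Step 2 then comes for free from the compactness argument in your Step 3, which is where \Hfour{} (first-orderness of $\sigma_\psi$) is actually needed.
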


\noindent In the following, we will always assume that $T'$ is an $L'$-theory.

\begin{lemma} \label{lemma_transfer_hfour_to_reduct}
    Suppose that $T'$ is a theory that satisfies \Hfour{}, and that $T$ is a model-complete reduct of $T'$ that still defines the same vector space $\VV$.
    Then $T$ also satisfies \Hfour{}.
\begin{proof}
    Let $\psi(\ux; \uw)$ be an $L$-formula.
    By Fact \ref{lemma_hfour_fml}, there are  $L'$-formulas $\varphi_1(y; \uw), \dots, \varphi_m(y; \uw)$ and tuples $\ulambda{}_1, \dots, \ulambda{}_m \in K^{n} \setminus \set{\uzero}$ such that, for $\ud \in \mm' \models T'$, the formula $\psi(\ux; \ud)$ implies a finite disjunction of non-trivial linear dependencies in $\ux$ over $\VV'$ if and only if
    $$
    \forall \ux \in \VV : \psi(\ux; \ud) \rightarrow \bigvee\nolimits_{k=1}^m \varphi_k\Big(\sum\nolimits_{l=1}^n \lambda_{k, l} \cdot x_l; \ud\Big).
    $$
    Since $T'$ satisfies \Hfour{}, it also eliminates $\exists^\infty x \in \VV$.
    Thus, we can find $q \in \NN$ such that, for any $\ud \in \mm'$, we have $\sum_{k=1}^m |\varphi_k(\VV'; \ud)| < q$.
    Hence $\psi(\ux; \ud)$ implies a finite disjunction of non-trivial linear dependencies in $\ux$ over $\VV'$ if and only if
    \begin{align*}
        \forall \ux_1,\dots,\ux_q  \in \VV :& \Big(\bigwedge\nolimits_{i=1}^q \psi(\ux_i; \ud) \wedge \bigwedge\nolimits_{1 \leq i < j \leq q} \ux_i \neq \ux_j\Big) \\& \hspace{20pt} \rightarrow \Big(\bigvee\nolimits_{1 \leq i < j \leq q}\bigvee\nolimits_{k=1}^m \sum\nolimits_{l=1}^n \lambda_{k, l} \cdot x_{i,l} = \sum\nolimits_{l=1}^n \lambda_{k, l} \cdot x_{j,l}\Big).
    \end{align*}
    It is easy to verify that this formula cannot hold if $\psi(\ux; \ud)$ implies no finite disjunction of non-trivial linear dependencies in $\ux$ over $\VV'$.
    Since $\psi(\ux; \uw)$ is an $L$-formula, the formula above can be written as $\chi(\ud)$ for an $L$-formula $\chi(\uw)$.

    Now, given $\mm \models T$ with $\mm \prec \mm'_{\restriction L}$ and $\ud \in M$, one can easily see that $\psi(\ux; \ud)$ implies no finite disjunction of non-trivial linear dependencies in $\ux$ over $\VV$ if and only if $\psi(\ux; \ud)$ implies no finite disjunction of non-trivial linear dependencies in $\ux$ over $\VV'$, if and only if $\mm' \models \neg \chi(\ud)$, if and only if $\mm \models \neg \chi(\ud)$.
\end{proof}
\end{lemma}

\noindent The proof above shows that the formula $\sigma_\psi(\uw)$ from \Hfour{} for an $L$-formula $\psi(\ux; \uw)$ can be chosen as the same $L$-formula in both $T$ and $T'$.
By looking at the axiomatization of $T\theta^C$ (see Theorem \ref{theorem_first_oder}), we obtain the following:

\begin{theorem} \label{theorem_reduct_}
    If $T'$ satisfies \Hfour{} and $T$ is a reduct of $T'$, then $T\theta^C$ exists for every kernel configuration $C$ and is a reduct of $T'\theta^C$.
\end{theorem}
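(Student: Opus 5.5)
The plan has two steps. First, Lemma \ref{lemma_transfer_hfour_to_reduct} gives that $T$ satisfies \Hfour{}, so $T\theta^C$ exists. Second, comparing the two axiomatizations from Theorem \ref{theorem_first_oder} shows that every model of $T'\theta^C$ restricts to a model of $T\theta^C$, and the converse embedding into such restrictions follows by model-completeness.

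For the first step, the statement only says that $T$ is a reduct of $T'$. Implicitly, $T$ must satisfy our standing assumptions: it is model-complete and defines the same vector space $\VV$ by the same formulas $\Omega_\VV,\Omega_0,\Omega_+,\Omega_{\lambda\cdot}$. So Lemma \ref{lemma_transfer_hfour_to_reduct} applies directly, $T$ satisfies \Hfour{}, and Theorem \ref{theorem_first_oder} yields the model companion $T\theta^C$ of $T^C_\theta$.

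Next I show that $L_\theta$-reducts of models of $T'\theta^C$ are models of $T\theta^C$. Let $(\mm',\theta)\models T'\theta^C$. Its $L_\theta$-reduct $(\mm'_{\restriction L},\theta)$ satisfies $T_\theta$, since $\mm'_{\restriction L}\models T$ and $\theta$ is an endomorphism of the same $\VV$. It is $C$-image-complete, because this notion only refers to $(\VV,\theta)$. For the existential axioms, take a parametrized $C$-sequence-system $S$ and an $L$-formula $\psi(\uxvec;\uw)$ bounded by $S$. The remark after Lemma \ref{lemma_transfer_hfour_to_reduct} says we may take $\sigma_\psi$ to be the same $L$-formula in $T$ and in $T'$. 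Also $\psi$ is an $L'$-formula, still bounded by $S$, and $\delta_S$ is an $\LKThe$-formula. Hence the axiom
$$\forall \uw\,\forall\uy\in\VV:(\sigma_\psi(\uw)\wedge\delta_S(\uy))\rightarrow\exists\ux\in\VV:\psi_\theta(\ux;\uw)\wedge S(\ux;\uy)$$
of $T\theta^C$ is literally one of the axioms of $T'\theta^C$. Thus $(\mm'_{\restriction L},\theta)\models T\theta^C$, and so $T'\theta^C_{\restriction L_\theta}\supseteq T\theta^C$.

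For the converse inclusion, which makes $T\theta^C$ exactly the reduct, I use the criterion stated before Fact \ref{lemma_hfour_fml}. Given $(\mm,\theta)\models T\theta^C$, it suffices to find $(\mm',\theta')\models T'\theta^C$ with $(\mm,\theta)\preceq(\mm'_{\restriction L},\theta')$. Since $T\theta^C$ is model-complete, an $L_\theta$-embedding between its models is elementary. So it is enough to embed $(\mm,\theta)$ into the $L_\theta$-reduct of some model of $T'^C_\theta$ and then extend that to an existentially closed model of $T'^C_\theta$; by the first inclusion, that extension's reduct is a model of $T\theta^C$. To get the embedding, choose $\mm'\models T'$ with $\mm\prec\mm'_{\restriction L}$, which exists because $T=T'_{\restriction L}$. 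Then I need to extend $\theta$ from $\VV$ to a $C$-endomorphism $\theta'$ of $\VV'$.

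This extension step is the main obstacle. Take a complement $\VV'=\VV\oplus W$ and let $\theta'$ be $\theta$ on $\VV$ and some $C$-endomorphism on $W$. Such an endomorphism exists on any vector space: in the algebraic case, use a $K[X]/(\mipo(C))$-module structure on $W$, which exists when the dimension is large enough, and enlarge $\mm'$ elementarily if needed; in the transcendental case, use a $K(X)$-structure. A direct sum of $C$-endomorphisms is again a $C$-endomorphism, since the kernel conditions hold componentwise. The resulting $\theta'$ extends $\theta$, so $(\mm,\theta)\subseteq(\mm'_{\restriction L},\theta')$ and $(\mm',\theta')\models T'^C_\theta$, which completes the argument.
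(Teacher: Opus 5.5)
Your proof is correct. Its first half is exactly the paper's argument. The paper's whole proof is the remark that, by the proof of Lemma \ref{lemma_transfer_hfour_to_reduct}, $\sigma_\psi$ can be taken to be the same $L$-formula for $T$ and $T'$. Inspecting the axiomatization in Theorem \ref{theorem_first_oder} then shows that every axiom of $T\theta^C$ follows from $T'\theta^C$, i.e.\ $L_\theta$-reducts of models of $T'\theta^C$ are models of $T\theta^C$. The paper stops there.

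Your second half is not in the paper:
\begin{enumerate}[(i)]
\item choose $\mm'\models T'$ with $\mm\prec\mm'_{\restriction L}$;
\item extend $\theta$ by a $C$-endomorphism on a complement $W$ of $\VV$ in $\VV'$;
\item pass to an existentially closed model of the inductive theory $T'^C_\theta$;
\item use model-completeness of $T\theta^C$ to make the embedding elementary.
\end{enumerate}
This is what is needed if ``reduct'' means $T\theta^C=(T'\theta^C)_{\restriction L_\theta}$, which is the sense suggested by the criterion stated before Fact \ref{lemma_hfour_fml}. It also justifies that every completion of $T\theta^C$ is the reduct of a completion of $T'\theta^C$. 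Section \ref{sec_linear_dp} uses this tacitly when it transfers the dp-rank computation from one completion.

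Two small points to tighten:
\begin{itemize}
\item Your claim that a $C$-endomorphism ``exists on any vector space'' is false for non-zero finite-dimensional $W$. It fails for $C_0$ on any such $W$, and for algebraic $C$ with $\mipo(C)$ irreducible of degree $\geq 2$ on a $1$-dimensional $W$. So the elementary enlargement is needed in the transcendental case too. It works because $\VV$ is infinite: any $\mm''\succ\mm'$ with $|\VV''|>|\VV|$ has $\dim_K(\VV''/\VV)=|\VV''|>|K|$. That is large enough for both a $K[X]/(\mipo(C))$-module structure and a $K(X)$-structure on the complement.
\item You should say why $\theta'$ agrees with $\theta$ off $\VV$, namely $M\cap\VV'=\VV$ by elementarity. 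This is what makes $(\mm,\theta)\subseteq(\mm'_{\restriction L},\theta')$ an $L_\theta$-embedding.
\end{itemize}
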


\section{o-Minimal Preliminaries} \label{sec_o_min_prelim}

\begin{definition}
    Let $L \supseteq \set{<}$ be a language extending the language of ordered sets.
    We say that an $L$-theory $T$ is \textbf{o-minimal} if it extends the theory of dense linear orders without endpoints and, for every model $\mm \models T$, every definable subset of $M$ with parameters is a finite union of points and intervals.
\end{definition}

\noindent Note that some authors also consider theories such as $\Th(\ZZ, <)$ to be o-minimal.
Also note that o-minimality implies that $(\VV, 0, +)$ is an infinite divisible abelian group, or, in other words, that $K$ has characteristic zero (see Fact \ref{fact_vectorpsace_char_0}).
The overall goal of this section is to prove that, when working in an o-minimal expansion $T$ of the theory of ordered groups, either $T$ is linear and $(\VV, 0, +)$ is $\varnothing$-definably isomorphic to $(M^d, 0, +)$, or $T$ is non-linear and there is a certain definable function from an open subset of $\VV^n$ to $\VV$ that is ``nowhere linear''.
With the possible exception of Theorem \ref{theorem_not_lin}, all uncited results and proofs are probably known or considered folklore.

\subsection{Cells and Dimension}

We briefly recall the definitions of cells and cylindrical definable cell decompositions, and collect a few standard consequences that will be used tacitly throughout this chapter.
Unless noted otherwise, everything in this subsection can be found in (or easily deduced from) the book by van den Dries \cite{Dri98}.
Throughout this subsection, assume that $\mm$ is an o-minimal structure and that $A \subseteq M$ is a set of parameters.

\begin{definition}[Cells and cylindrical definable cell decompositions] \label{def_o_min_cells}
    \textbf{Cells} and their \textbf{dimension} are defined recursively as follows:
    \begin{enumerate}[(i)]
        \item An $A$-definable cell $C \subseteq M$ is either a singleton $\set{a}$ with $a \in \dcl(A)$ or an open interval $(a, b)$ with $a, b \in \dcl(A) \cup \set{\pm\infty}$ and $a < b$.
        In the first case, the dimension of the cell is $0$, and in the second case, it is $1$.
        \item If $C \subseteq M^n$ is an $A$-definable cell and $f, g \colon C \to M \cup \set{\pm\infty}$ are continuous $A$-definable functions with $f < g$, then
        $$
        \Gamma(f) := \set{(\ux, y) \in C \times M : y = f(\ux)}
        $$
        (if $f \neq \pm\infty$) and
        $$
        (f, g) := \set{(\ux, y) \in C \times M : f(\ux) < y < g(\ux)}
        $$
        are $A$-definable cells in $M^{n+1}$.
        The cell $\Gamma(f)$ has dimension $\dim(C)$, and the cell $(f, g)$ has dimension $\dim(C) + 1$.
    \end{enumerate}
\end{definition}

\begin{definition}[Cylindrical definable cell decompositions] \label{def_o_min_cell_decomp}
    Let $\pi \colon M^n \to M^{n-1}$ be the projection to the first $n-1$ coordinates.
    A finite partition of $M^n$ into $A$-definable cells is an ($A$-definable) \textbf{cylindrical definable cell decomposition} of $M^n$ if it is obtained recursively as follows:
    \begin{enumerate}[(i)]
        \item For $n=1$, any finite partition $\mathcal{C}$ of $M$ into ($A$-definable) cells is an ($A$-definable) cylindrical definable cell decomposition of $M$.
        \item For $n > 1$, a finite partition $\mathcal{C}$ of $M^n$ into ($A$-definable) cells is an ($A$-definable) cylindrical definable cell decomposition of $M^n$ if there is an ($A$-definable) cylindrical definable cell decomposition $\mathcal{D}$ of $M^{n-1}$ such that $\pi(C) \in \mathcal{D}$ for all $C \in \mathcal{C}$ and for every $D \in \mathcal{D}$ there is a finite sequence of continuous $A$-definable functions
        $$
        f_0 < f_1 < \dots < f_q < f_{q+1} \colon D \to M \cup \set{\pm \infty}
        $$
        with $f_0 := -\infty$ and $f_{q+1} := \infty$, such that the cells of the decomposition of $M^n$ lying above $D$ are exactly
        $
        \Gamma(f_1), \dots, \Gamma(f_q), \; (f_0, f_1), \dots, (f_q, f_{q+1})
        $.
    \end{enumerate}
    Given a family of ($A$-definable) subsets of $M^n$, we say that an $A$-definable cylindrical definable cell decomposition of $M^n$ is \textbf{adapted} to that family if every cell is either contained in or disjoint from each member of the family.
    Similarly, given a family of $A$-definable functions with $A$-definable domains in $M^n$, we say that the decomposition is \textbf{adapted} to those functions if it is adapted to the domains of all functions and each function is continuous on every cell contained in its domain.
\end{definition}

\begin{fact} \label{fact_o_min_cell_decomp}
    Given finitely many $A$-definable subsets of $M^n$ and finitely many $A$-definable functions on $A$-definable subsets of $M^n$, there is an $A$-definable cylindrical definable cell decomposition of $M^n$ adapted to those sets and functions.
\end{fact}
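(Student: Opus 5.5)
The statement to prove is the standard cell decomposition theorem for o-minimal structures, with parameters and with cylindricity. I would prove it by induction on $n$, following the van den Dries approach. The induction proves two statements simultaneously:
\begin{itemize}
\item[$(\mathrm{I}_n)$] every finite family of $A$-definable subsets of $M^n$ admits an adapted $A$-definable cylindrical definable cell decomposition;
\item[$(\mathrm{II}_n)$] every $A$-definable function $f \colon X \to M$ with $X \subseteq M^n$ admits such a decomposition adapted to $X$ such that $f$ is continuous on each cell inside $X$.
\end{itemize}
The version with several functions follows by taking a common refinement. Refinement is available because $(\mathrm{I}_n)$ applied to the union of all the cells occurring yields a decomposition adapted to all of them.

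\textbf{Base case.} For $n=1$, statement $(\mathrm{I}_1)$ is exactly o-minimality. The endpoints of the finitely many intervals are in $\dcl(A)$, since each is definable from the $A$-definable set as, e.g., the $k$-th boundary point. Statement $(\mathrm{II}_1)$ is the monotonicity theorem: an $A$-definable $f$ on an interval is continuous (indeed monotone) off finitely many points. To prove it, I would show that on every interval there is a subinterval where $f$ is constant or injective, and in the injective case strictly monotone. Then the set of points where $f$ is not locally continuous and monotone is $A$-definable and contains no interval, hence is finite. This uses only o-minimality and the density and unboundedness of the order.

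\textbf{Inductive step for $(\mathrm{I}_{n+1})$.} Given $A$-definable sets $X_1, \dots, X_k \subseteq M^{n+1}$, consider for each $\ux \in M^n$ the fibres $X_{i,\ux} \subseteq M$. Each fibre is a finite union of points and intervals. The key uniform finiteness lemma says there is a bound $N$ on the number of boundary points of $X_{i,\ux}$, independent of $\ux$. Given $N$, I define $B_j \subseteq M^n$ as the set of $\ux$ whose fibres have exactly $j$ boundary points, together with the definable functions $\ux \mapsto$ (the $l$-th boundary point). Then:
\begin{enumerate}
\item Apply $(\mathrm{I}_n)$ and $(\mathrm{II}_n)$ to obtain a decomposition $\mathcal{D}$ of $M^n$ adapted to the $B_j$ and to these functions, and to the finitely many sets recording which boundary points and which intervening intervals lie in which $X_i$.
\item Over each cell $D \in \mathcal{D}$, show after further refinement that the boundary functions are continuous and that the pattern (which graphs and bands lie in which $X_i$) is constant.
\item The graphs and bands between consecutive boundary functions are then cells and give the required decomposition.
\end{enumerate}
Everything stays $A$-definable because all the constructions are $A$-definable.

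\textbf{Inductive step for $(\mathrm{II}_{n+1})$.} For a function $f$ on $X \subseteq M^{n+1}$, I use $(\mathrm{I}_{n+1})$ to reduce to $f$ defined on an open cell. Then I show that the set of discontinuity points has empty interior, and conclude by induction on the dimension. Emptiness of the interior comes from the following argument: on a box one finds a smaller box where $f$ is continuous in the last variable (by $(\mathrm{II}_1)$ fibrewise, using uniformity) and continuous in the remaining variables (by $(\mathrm{II}_n)$). A monotonicity-in-the-last-variable argument then gives joint continuity.

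\textbf{Main obstacle.} The hardest step is the uniform finiteness lemma: for a definable family, the number of connected components of the fibres $X_\ux$ is bounded uniformly. This is needed both to define the boundary functions and to ensure that $B_j = \varnothing$ for $j$ large. The standard proof argues inside the same induction, using the finiteness of the set of "bad" points in the last coordinate via $(\mathrm{II}_1)$ applied along sections, together with a compactness-free o-minimal argument (the ``Knight–Pillay–Steinhorn'' lemma). I would follow that argument, proving uniform finiteness at level $n+1$ from $(\mathrm{I}_n)$ and $(\mathrm{II}_n)$ before the decomposition step. Given that this is textbook material, in the paper itself one would simply cite \cite{Dri98}, Chapter 3, Theorem 2.11.
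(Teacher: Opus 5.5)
Your proposal is correct and follows the same route as the paper. The paper gives no proof of its own; it cites van den Dries \cite{Dri98}, whose Cell Decomposition Theorem is proved by exactly this simultaneous induction on $(\mathrm{I}_n)$ and $(\mathrm{II}_n)$, with o-minimality and the monotonicity theorem as the base case, the uniform finiteness lemma as the key input for $(\mathrm{I}_{n+1})$, and the $A$-definable version holding because every object entering the decomposition is $A$-definable. The one step you leave implicit is that in $(\mathrm{II}_{n+1})$ the non-open cells are handled via the coordinate projection homeomorphism onto an open cell of $M^{\dim C}$ together with $(\mathrm{II}_{\dim C})$, which is presumably what your ``induction on the dimension'' refers to.
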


\noindent Given any $A$-definable set $X \subseteq M^n$, one can define $\dim(X)$, the \textbf{dimension} of $X$, as the maximum of the dimensions of the cells contained in $X$ in any cell decomposition of $M^n$ adapted to $X$. By convention, we define $\dim(\varnothing) = -\infty$.
It is standard that this definition depends neither on $A$ nor on the choice of the cell decomposition.
Moreover, this is exactly the $\acl$-dimension (see Fact \ref{fact_o_min_acl_dcl} below).

\begin{fact} \label{fact_o_min_conn_comp}
    Every cell is definably connected, and every $A$-definable set has finitely many definably connected components, each of which is again $A$-definable.
\end{fact}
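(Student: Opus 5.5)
The statement is the standard fact that every cell is definably connected, and that every $A$-definable set has finitely many definably connected components, each again $A$-definable. The plan has three steps: prove connectedness of cells by induction on the recursive construction in Definition \ref{def_o_min_cells}; then take a cell decomposition adapted to the set and group its cells into components; finally show that each resulting group is $A$-definable.

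\textbf{Step 1: cells are definably connected.} In dimension one, points are trivially connected. For an open interval $(a,b)$, suppose it splits into two disjoint nonempty relatively open definable sets $U,V$. By o-minimality $U$ is a finite union of points and intervals. Take a boundary point $c$ of $U$ inside $(a,b)$; such a point exists because $U$ is nonempty and $U\neq(a,b)$. Openness of $U$ and of $V$ near $c$ then gives a contradiction, since $c$ lies in $U$ or in $V$. This is the usual "finite unions of intervals" argument.

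For $\Gamma(f)$: the projection is a definable homeomorphism onto $C$ because $f$ is continuous, so connectedness transfers from $C$.

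For $(f,g)$: suppose $(f,g)=U\sqcup V$ with $U,V$ relatively open and definable. Each fibre $(f(\ux),g(\ux))$ is a connected interval, so it lies entirely in $U$ or entirely in $V$. Hence $U=\pi^{-1}(U')\cap(f,g)$ and $V=\pi^{-1}(V')\cap(f,g)$ for definable $U',V'$ that partition $C$. To see that $U'$ is open in $C$, take $\ux\in U'$ and the point $(\ux,y)$ with $y$ strictly between $f(\ux)$ and $g(\ux)$. Openness of $U$ gives a box around $(\ux,y)$ inside $U$, and continuity of $f,g$ keeps $y$ inside nearby fibres, so nearby $\ux'$ also lie in $U'$. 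By induction $C$ is connected, so $U'$ or $V'$ is empty, and therefore $U$ or $V$ is empty.

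\textbf{Step 2: finitely many components.} Let $X$ be $A$-definable. By Fact \ref{fact_o_min_cell_decomp}, take an $A$-definable decomposition adapted to $X$, and let $C_1,\dots,C_k$ be the cells contained in $X$. Form the graph on these cells with an edge $C_i\sim C_j$ whenever $C_i\cup C_j$ is definably connected. Each cell is connected by Step 1, so the union of a connected class in this graph is definably connected: a finite chain of overlapping connected sets is connected.

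Conversely, any definably connected subset $Y\subseteq X$ lies within a single class. Otherwise the union of one class and the union of the remaining cells would both be relatively open in $X$ and would separate $Y$. Note that the two unions are automatically disjoint, since the cells partition $X$.

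\textbf{Step 3 (main obstacle): the unions of classes are relatively open.} This is the point that needs care. Suppose a union $W$ of classes failed to be relatively open in $X$. Then some point $p\in C_i\subseteq W$ lies in the closure of some cell $C_j$ in another class. One must then check that $C_i\cup C_j$ is connected. Indeed, if $C_i\cup C_j=U\sqcup V$ with $U,V$ relatively open, each cell lies entirely in $U$ or in $V$ by its own connectedness. If they landed in different parts, say $C_i\subseteq U$ and $C_j\subseteq V$, then $p\in U$, and openness of $U$ gives a neighbourhood of $p$ inside $U$; but $p$ is in the closure of $C_j\subseteq V$, so that neighbourhood meets $V$, a contradiction. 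Hence $C_i\sim C_j$, contradicting that $C_j$ is in another class.

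The components are therefore exactly the unions of classes. There are finitely many of them, and each is a finite union of $A$-definable cells, hence $A$-definable.
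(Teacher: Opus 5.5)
Your proof is correct and is the standard argument. The paper gives no proof of its own and simply defers to van den Dries's book, where cells are shown definably connected by induction on their construction and the components are obtained, as you do, by grouping the cells of an adapted decomposition into unions that are clopen in $X$. One phrasing in Step 2 should be tightened: the cells are pairwise disjoint, so the ``overlapping'' connected sets must be the pairs $C_i\cup C_j$, with consecutive pairs sharing a cell; equivalently, in any separation each cell lies wholly on one side and adjacent cells lie on the same side.
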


\begin{fact} \label{fact_o_min_dim}
    Let $X$ and $Y$ be subsets of $M^n$ definable with parameters.
    \begin{enumerate}[(i)]
        \item The set $X$ cannot be covered by finitely many sets definable with parameters and of smaller dimension.
        \item If $Y \subseteq X$ and $\dim(Y) = \dim(X)$, then $Y$ has non-empty interior in $X$ (with respect to the subset topology on $X$).
    \end{enumerate}
\end{fact}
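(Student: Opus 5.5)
Both parts reduce to the basic calculus of o-minimal dimension, namely that dimension is monotone and that the dimension of a finite union is the maximum of the dimensions. These in turn come straight from the definition via adapted cell decompositions, together with the fact recalled above that $\dim$ does not depend on the chosen decomposition. Throughout, I fix a set of parameters over which $X$, $Y$ and all auxiliary sets are definable.

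For (i), suppose $X \subseteq Y_1 \cup \dots \cup Y_k$ with each $Y_i$ definable with parameters and $\dim(Y_i) < \dim(X)$. By Fact \ref{fact_o_min_cell_decomp}, I take a cylindrical definable cell decomposition of $M^n$ adapted to $X, Y_1, \dots, Y_k$.
\begin{itemize}
\item Choose a cell $D \subseteq X$ in this decomposition with $\dim(D) = \dim(X)$.
\item Since $D \subseteq X$, $D$ meets some $Y_i$.
\item Because the decomposition is adapted to $Y_i$, this forces $D \subseteq Y_i$.
\item By the definition of $\dim(Y_i)$ via this same decomposition, $\dim(D) \leq \dim(Y_i) < \dim(X)$.
\end{itemize}
This is a contradiction, which proves (i).

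For (ii), let $d = \dim(X) = \dim(Y)$, and take a decomposition adapted to $X$ and $Y$. It contains a cell $D \subseteq Y$ with $\dim(D) = d$. It suffices to find a non-empty subset of $D$ that is open in $X$. Let $Z := X \setminus D$, which is definable with $\dim(Z) \leq d$. Consider
$$
B := D \cap \operatorname{cl}(Z).
$$
Every point of $D \setminus B$ has a neighbourhood in $M^n$ missing $Z$. Its trace on $X$ therefore lies inside $D \subseteq Y$. Hence $D \setminus B$ is open in $X$ and contained in $Y$, and it remains to show $D \setminus B \neq \varnothing$.

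Since $D \cap Z = \varnothing$, we have $B \subseteq \operatorname{cl}(Z) \setminus Z$. I would then invoke the standard frontier inequality
$$
\dim(\operatorname{cl}(Z) \setminus Z) < \dim(Z)
$$
from \cite{Dri98}, Chapter 4. This gives $\dim(B) < d = \dim(D)$. Part (i), applied to $D$ and the single set $B$, then shows $D \not\subseteq B$, so $D \setminus B \neq \varnothing$. (If $Z = \varnothing$, then $D \setminus B = D$ and there is nothing to show.)

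The main obstacle is precisely this frontier inequality. The naive idea that a $d$-dimensional cell is automatically open in $X$ fails, because lower- or equal-dimensional cells of $X$ may accumulate on $D$. I would not reprove the inequality but cite it as standard. If a self-contained argument were needed, I would prove it by induction on $n$ via cell decomposition, using that a definable function on a cell is continuous off a set of smaller dimension.
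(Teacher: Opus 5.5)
Your proof is correct. Part (i) follows directly from the decomposition-independent definition of $\dim$, applied to a decomposition adapted to $X,Y_1,\dots,Y_k$, and part (ii) follows from the frontier inequality $\dim(\operatorname{cl}(Z)\setminus Z)<\dim(Z)$ exactly as you set it up; you are also right that a top-dimensional cell of $X$ need not itself be open in $X$. The paper gives no proof of this Fact and simply defers to van den Dries's book \cite{Dri98}, and your argument is the standard deduction from the results there.
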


\begin{fact} \label{cor_o_min_open_cells_dense}
    Let $X = C_1 \dotcup \dots \dotcup C_r$ be a partition into cells, and suppose that $C_1, \dots, C_q$ are precisely the cells that are open in $X$.
    Then
    $$
    X \subseteq \bigcup\nolimits_{i=1}^q \operatorname{Cl}(C_i).
    $$
    In particular, every non-empty (with respect to the subset topology) open subset of $X$ meets some $C_i$ with $1 \leq i \leq q$.
\end{fact}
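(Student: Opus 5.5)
The plan is a proof by contradiction, using the dimension facts already recorded: Fact \ref{fact_o_min_dim}(i) and (ii), together with the definable connectedness of cells from Fact \ref{fact_o_min_conn_comp}. We set
$$
Z := X \setminus \bigcup\nolimits_{i=1}^q \operatorname{Cl}(C_i).
$$
This set is definable and open in $X$. The first claim of the statement is exactly $Z = \varnothing$. The ``in particular'' part then follows at once: if a non-empty open subset $U$ of $X$ missed every $C_i$ with $i \leq q$, then it would also miss every $\operatorname{Cl}(C_i)$ (because $U$ is open), so $U \subseteq Z = \varnothing$.

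So we assume $Z \neq \varnothing$. The set $Z$ is covered by the finitely many sets $Z \cap C_j$ with $j > q$. By Fact \ref{fact_o_min_dim}(i), some $j > q$ satisfies $\dim(Z \cap C_j) = \dim(Z)$. By Fact \ref{fact_o_min_dim}(ii), $Z \cap C_j$ then contains a non-empty set $V$ that is open in $Z$, hence open in $X$. Among all such pairs $(j, V)$ we fix one for which $\dim(C_j)$ is maximal. Since $C_j$ is not open in $X$, the set $B := C_j \setminus \operatorname{int}_X(C_j)$ is non-empty. Since $\operatorname{int}_X(C_j)$ is also non-empty (it contains $V$), the definable connectedness of $C_j$ yields a point $b \in B$ lying in the closure of $\operatorname{int}_X(C_j)$. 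If $b$ can be chosen inside $Z$, then every neighbourhood of $b$ in $Z$ contains two kinds of points: points of $\operatorname{int}_X(C_j)$, and points of $X \setminus C_j$, which belong to other cells $C_k$ with $k > q$.

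The intended contradiction is the following. We localise to a small box $U$ around $b$ with $U \cap X \subseteq Z$ and apply Fact \ref{fact_o_min_dim} once more to the finitely many pieces $U \cap C_k$. This should produce a cell $C_k \neq C_j$ with $k > q$ that has non-empty interior in $X$ inside $Z$ and has $\dim(C_k) > \dim(C_j)$, which contradicts the maximal choice of $j$. Alternatively, it should show that the pieces near $b$ all have dimension at most $\dim(C_j)$, in which case the frontier point $b$ forces $C_j$ to be open near $b$, a contradiction with $b \in B$.

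Two steps are the main obstacle. The first is making this last step rigorous: one has to compare local dimensions of $X$ at $b$ and rule out the case where the other cells accumulating at $b$ have dimension at most $\dim(C_j)$ yet $C_j$ still fails to be relatively open at $b$. Here I would first try the standard fact that a cell is locally closed, so that $\operatorname{Cl}(C_k) \setminus C_k$ has dimension smaller than $\dim(C_k)$; combined with cylindricity of the decomposition, this should control which cells can accumulate at $b$. The second is the choice of $b$ inside $Z$. For this I would replace $C_j$ by a definably connected component of $C_j \cap Z$ that contains $V$, using Fact \ref{fact_o_min_conn_comp}.

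If this direct argument becomes too messy, the fallback is induction on $\dim(X)$. The top-dimensional cells of $X$ are automatically open in $X$ by Fact \ref{fact_o_min_dim}(ii) and local closedness. One then applies the inductive hypothesis to the lower-dimensional part $X \setminus \bigcup_{\dim C_i = \dim X} \operatorname{Cl}(C_i)$, which is again a union of cells of the given partition together with relatively open pieces.
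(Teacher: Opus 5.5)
\textbf{There is a genuine gap, and it cannot be closed: the statement as literally written is false for general $X$.} It fails even when the partition comes from a cylindrical cell decomposition adapted to $X$.

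\emph{Counterexample.} In $M^2$ let $C_1 := \set{(x,0) : x > 0}$, the graph of $0$ over $(0,\infty)$, and $C_2 := \set{0} \times (-1,1)$. Put $X := C_1 \dotcup C_2$. Both are cells of the cylindrical decomposition of $M^2$ over $(-\infty,0), \set{0}, (0,\infty)$ adapted to $X$.
\begin{itemize}
\item $C_1$ is open in $X$.
\item $C_2$ is not open in $X$, since every neighbourhood of $(0,0)$ in $X$ meets $C_1$.
\end{itemize}
So $q = 1$. Yet $(0,\tfrac12) \in X \setminus \operatorname{Cl}(C_1)$. Moreover, $\set{0} \times (\tfrac14,\tfrac34)$ is a non-empty open subset of $X$ disjoint from $C_1$, so the ``in particular'' part fails too.

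\emph{Where your argument breaks.} It stalls exactly at the step you flagged. Here $Z = \set{0} \times ((-1,0) \cup (0,1))$, $j = 2$, and $B = \set{(0,0)}$, so $b = (0,0) \notin Z$. The component $\set{0} \times (0,1)$ of $C_2 \cap Z$ is already open in $X$, so no frontier point remains to work with. At $b$, every nearby cell has dimension at most $\dim(C_2)$, but $C_2$ is still not relatively open there; so the case you hoped to rule out really occurs. Your fallback claim that top-dimensional cells are automatically open in $X$ fails for the same reason, since $\dim(C_2) = \dim(X)$. Fact \ref{fact_o_min_dim}(ii) only gives a top-dimensional cell non-empty interior in $X$. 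Local closedness of a cell does not stop other cells of the same dimension from accumulating on it.

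\emph{What your first step does prove.} It proves the corrected statement in which ``open in $X$'' is replaced by ``has non-empty interior in $X$''. Let $C_1, \dots, C_q$ be the cells with non-empty interior in $X$, and suppose $Z \neq \varnothing$. The set $V \subseteq Z \cap C_j$ that you obtain from Fact \ref{fact_o_min_dim} is open in $Z$, hence open in $X$. So $C_j$ has non-empty interior in $X$, contradicting $j > q$. Your deduction of the ``in particular'' part from $Z = \varnothing$ is correct.

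\emph{The case the paper needs.} When $X$ is open in $M^n$, the two versions agree: a cell with non-empty interior in $M^n$ has dimension $n$, and is therefore open in $M^n$ and hence in $X$. The paper states this fact without proof and applies it only to open boxes, such as $X = B_\epsilon(\uzero) \times B_\epsilon(\uzero)$ in the proof of Theorem \ref{theorem_not_lin}. With the hypothesis that $X$ is open in $M^n$ added, your argument is complete after its first paragraph, and the whole frontier analysis can be dropped.
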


\begin{fact}[\cite{PS86} and see Lemma 1.3 in \cite{Pil88}] \label{fact_o_min_acl_dcl}
    For every set $A$, one has $\acl(A) = \dcl(A)$, and this closure operator has the exchange property. Moreover, the definition of $\dim(X)$ given below Fact \ref{fact_o_min_cell_decomp}, which uses cell decompositions, coincides with the definition of dimension induced by $\acl$ (whenever $\acl$ induces a pregeometry).
\end{fact}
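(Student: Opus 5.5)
The statement has three parts: $\acl = \dcl$, the exchange property, and the agreement of the two notions of dimension. I would prove them in that order, working in a sufficiently saturated $\mm$. Besides cell decomposition (Fact \ref{fact_o_min_cell_decomp}), I would use one standard tool from \cite{Dri98}, the \emph{monotonicity theorem}. It says that for every $A$-definable $f \colon M \to M$ there is a partition of $M$ into finitely many $A$-definable points and open intervals such that, on each interval, $f$ is continuous and either constant or strictly monotone.

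\textbf{Step 1: $\acl = \dcl$.} Let $a \in \acl(A)$, witnessed by a finite $A$-definable set $X = \set{a_1 < \dots < a_k}$ with $a = a_j$. Then $a$ is defined over $A$ by the formula ``$x \in X$ and exactly $j-1$ elements of $X$ are below $x$''. Hence $a \in \dcl(A)$. The other inclusion is trivial.

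\textbf{Step 2: exchange.} Let $b \in \acl(Aa) \setminus \acl(A)$. By Step 1 there is an $A$-definable function $f$ with $b = f(a)$; one obtains it by applying the defining formula uniformly in the parameter, and we may extend $f$ arbitrarily to all of $M$. Apply the monotonicity theorem to $f$ over $A$.
\begin{itemize}
\item Every point of the resulting partition lies in $\dcl(A)$, and $a \notin \dcl(A)$ (otherwise $b \in \dcl(A)$). So $a$ lies in one of the open intervals $I$, and $I$ is $A$-definable.
\item If $f$ were constant on $I$, its value would be $A$-definable, so $b \in \dcl(A)$, a contradiction.
\item Hence $f_{\restriction I}$ is strictly monotone, in particular injective, and $a = (f_{\restriction I})^{-1}(b) \in \dcl(Ab)$.
\end{itemize}
This gives exchange. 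Thus $\acl$ is a pregeometry, and $\dim(\ua/A)$ is defined as the size of a basis of $\ua$ over $A$.

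\textbf{Step 3: the two dimensions agree.} For an $A$-definable $X \subseteq M^n$ I would show
$$
\dim(X) = \max\set{\dim(\ua/A) : \ua \in X}.
$$
Since a cell decomposition adapted to $X$ partitions $X$ into finitely many $A$-definable cells, it suffices to prove this for a single $A$-definable cell $D$ of cell-dimension $k$.

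\emph{Upper bound.} Induct along the recursive construction of $D$. At each step where $D$ is a graph $\Gamma(f)$, the new coordinate is $f$ of the earlier ones, with $f$ $A$-definable, so it lies in $\dcl$ of the earlier coordinates together with $A$. Only the $k$ ``interval'' steps can add a coordinate outside this closure. Hence every $\ua \in D$ is in $\dcl$ of $A$ and $k$ of its coordinates, and $\dim(\ua/A) \leq k$.

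\emph{Lower bound.} Construct $\ua \in D$ coordinate by coordinate, following the cell structure. At a graph step the coordinate is forced. At an interval step $(f,g)$, the interval $(f(\ua'), g(\ua'))$ is infinite by density and is definable over $A\ua'$, where $\ua'$ denotes the coordinates chosen so far. The set $\acl(A\ua')$ is small, so by saturation we can pick the new coordinate in this interval outside $\acl(A\ua')$. This yields $\ua \in D$ with $\dim(\ua/A) = k$.

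Finally, the quantity $\max\set{\dim(\ua/A) : \ua \in X}$ is independent of the choice of decomposition, and passing to larger parameter sets does not change it (the same argument works over any $B \supseteq A$ over which the cells are defined). This also yields the independence claims stated after Fact \ref{fact_o_min_cell_decomp}.

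The main obstacle is the monotonicity theorem itself, used in Step 2. Its proof needs a genuine o-minimal argument: one first shows that $f$ is locally constant, locally strictly increasing or locally strictly decreasing at all but finitely many points, and then uses definable completeness to glue these local behaviours along intervals. I would cite it from \cite{Dri98} rather than reprove it. Everything else above is formal given cell decomposition.
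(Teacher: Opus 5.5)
Your proposal is correct; the paper gives no proof of this fact and only cites \cite{PS86} and \cite{Pil88}, and your argument is the standard one behind those citations: $\acl = \dcl$ via the definable order, exchange via the monotonicity theorem, and agreement of the two dimensions via generic points of cells. One wording fix is needed in Step 2: extend $f$ $A$-definably (e.g., by the identity off the $A$-definable set where $\varphi(x,\cdot)$ has a unique solution), not arbitrarily, since the monotonicity theorem has to be applied over $A$.
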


\begin{fact} \label{cor_o_min_indep_cell_open}
    Let $C \subseteq M^n$ be an $A$-definable cell, and let $\uc \in C$ be $\dcl$-independent over $A$.
    Then $C$ is open in $M^n$.
\end{fact}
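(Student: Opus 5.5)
We argue by induction along the recursive structure of the cell $C$ (Definition \ref{def_o_min_cells}). For $1 \leq j \leq n$, let $\pi_j \colon M^n \to M^j$ denote the projection onto the first $j$ coordinates. By the recursive definition, each $\pi_j(C)$ is an $A$-definable cell in $M^j$, and $\pi_j(C)$ is obtained from $\pi_{j-1}(C)$ either as a graph $\Gamma(f_j)$ or as a band $(f_j, g_j)$. Here $f_j, g_j$ are continuous $A$-definable functions on $\pi_{j-1}(C)$. For $j=1$, "graph" means a singleton $\set{a}$ with $a \in \dcl(A)$, and "band" means an interval with endpoints in $\dcl(A) \cup \set{\pm\infty}$.

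The first step is to rule out graph steps. Suppose that at some stage $j$ the cell $\pi_j(C)$ is $\Gamma(f_j)$. Write $\uc = (c_1, \dots, c_n)$. Since $\pi_j(\uc) \in \pi_j(C)$, we get $c_j = f_j(c_1, \dots, c_{j-1})$. As $f_j$ is $A$-definable, this gives $c_j \in \dcl(A c_1 \dots c_{j-1})$. For $j = 1$ it gives $c_1 = a \in \dcl(A)$. Either way, this contradicts the assumption that $\uc$ is $\dcl$-independent over $A$. Hence every step in the construction of $C$ is a band step, i.e., $C$ is a cell of dimension $n$.

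The second step is the routine observation that a cell built only from band steps is open. We show by induction on $j$ that $\pi_j(C)$ is open in $M^j$. For $j = 1$, $\pi_1(C)$ is an open interval. Now assume $D := \pi_{j-1}(C)$ is open in $M^{j-1}$, and take $(\ux, y)$ in $(f_j, g_j) \subseteq D \times M$. Choose $\epsilon_1, \epsilon_2 \in M$ with $f_j(\ux) < \epsilon_1 < y < \epsilon_2 < g_j(\ux)$, where we use the density of the order and allow the values $\pm\infty$. By continuity of $f_j$ and $g_j$, the set $U := \set{\ux' \in D : f_j(\ux') < \epsilon_1,\ \epsilon_2 < g_j(\ux')}$ is an open neighbourhood of $\ux$ in $D$, and hence open in $M^{j-1}$. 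The box $U \times (\epsilon_1, \epsilon_2)$ then contains $(\ux, y)$ and lies inside $(f_j, g_j)$. So $\pi_j(C)$ is open, and taking $j = n$ shows that $C$ is open in $M^n$.

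The only real content is the first step, and it is immediate from the definitions. The one point needing care is the bookkeeping that each intermediate projection $\pi_j(C)$ is itself an $A$-definable cell, which holds directly by the recursive definition. Alternatively, the first step can be phrased via dimension: by Fact \ref{fact_o_min_acl_dcl}, $\dim(C) \geq \dim(\uc/A) = n$, so $C$ has dimension $n$, and cells of dimension $n$ are exactly those built only from band steps.
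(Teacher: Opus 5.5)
Your proof is correct.

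The paper gives no proof of this statement. It is listed among the facts of that subsection that "can be found in (or easily deduced from)" van den Dries's book. Its placement right after Fact~\ref{fact_o_min_acl_dcl} suggests the intended deduction is the one you mention only as an alternative. Cell dimension coincides with $\dcl$-dimension, so $\dim(C)$ is at least the $\dcl$-rank of $\uc$ over $A$, which is $n$; hence $C$ is an $n$-dimensional cell in $M^n$, built only from band steps, and therefore open.

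Your main route is more elementary. Instead of invoking that comparison of dimension notions, you extract the single consequence actually needed: a graph step at coordinate $j$ forces $c_j = f_j(c_1, \dots, c_{j-1}) \in \dcl(A c_1 \dots c_{j-1})$, because $f_j$ is $A$-definable. This uses only the recursive definition of an $A$-definable cell (Definition~\ref{def_o_min_cells}) and monotonicity of $\dcl$. It needs neither exchange nor the harder fact that cells contain points of full $\dcl$-rank.

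Your second step is the standard verification that a cell built only from band steps is open. The choice of $\epsilon_1, \epsilon_2$, together with continuity of $f_j, g_j$ into $M \cup \set{\pm\infty}$, correctly handles infinite boundary values.

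The dimension route is shorter once Fact~\ref{fact_o_min_acl_dcl} is available. Yours is self-contained and shows that this statement really follows from the definitions alone.
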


\subsection{The \texorpdfstring{$t$-Topology}{t-Topology}}

We start with the fact, due to Pillay, that a group definable in any o-minimal structure can be equipped with a ``manifold'' structure.

\begin{fact}[\cite{Pil88}] \label{fact_t_top}
    Let $(G, e, \circ)$ be a $\varnothing$-definable group in an o-minimal theory $T$.
    Then $G$ has a unique $\varnothing$-definable manifold topology with respect to which $G$ is a topological group.
    We call this topology the \textbf{$\mathbf{t}$-topology}.
\end{fact}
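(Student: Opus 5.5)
The plan is to follow Pillay's original strategy. Let $n := \dim(G)$, with $G \subseteq M^k$. We work with \emph{generic} elements: an element of $G$ is generic over $A$ if it has $\dcl$-dimension $n$ over $A$, and a definable subset of $G$ is \emph{large} if its complement has dimension $< n$. By Fact \ref{fact_o_min_acl_dcl} we use dimension and $\dcl$-rank interchangeably.

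\textbf{Step 1: a chart on a large set.} Take a cell decomposition of $M^k$ adapted to $G$ (Fact \ref{fact_o_min_cell_decomp}). Let $V_0$ be the union of the $n$-dimensional cells contained in $G$. Each such cell is definably homeomorphic, via a coordinate projection, to an open cell in $M^n$, and by Fact \ref{fact_o_min_dim} the set $V_0$ is large in $G$. Give $V_0$ the topology induced from $M^k$.

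\textbf{Step 2: generic continuity of the group operations.} Decompose again, adapted to the definable maps $(x,y) \mapsto x \circ y$ and $(x,y) \mapsto x^{-1} \circ y$ restricted to $V_0 \times V_0$ and landing in $V_0$. We obtain that for independent generics $a,b$ over the parameters, the product $a \circ b$ is generic and lies in $V_0$, and the multiplication is continuous at $(a,b)$ (Fact \ref{cor_o_min_indep_cell_open}). From this we deduce the following:
\begin{enumerate}[(i)]
    \item There is a definable large $V \subseteq V_0$, open in $V_0$, such that for every $g \in G$ the set $\set{x \in V : g \circ x \in V}$ is open in $V$ and $x \mapsto g \circ x$ is continuous on it. One proves this by showing that the set of ``bad'' pairs $(g,x)$ has dimension $< 2n$, applying it at generic $g$, and then propagating to all $g$ by writing $g = g_1 \circ g_2^{-1}$ with $g_1, g_2$ generic.
    \item Finitely many translates $g_1 \circ V, \dots, g_{n+1} \circ V$ cover $G$. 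This is a dimension count: for any $h$, pick $g_i$ generic over $h$, so that $g_i^{-1} \circ h \in V$ because $V$ is large.
\end{enumerate}

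\textbf{Step 3: the atlas.} Declare the translates $g_i \circ V$, with charts transported from $V$, to be an atlas. By Step 2(i) the transition maps are continuous, so this is a definable manifold topology. Left translations are homeomorphisms, and continuity of $(x,y) \mapsto x^{-1} \circ y$ at one generic pair transfers to all pairs by translating; hence $G$ is a topological group.

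\textbf{Step 4: uniqueness and $\varnothing$-definability.} Let $\tau, \tau'$ be two definable group topologies of this kind. The identity map is continuous off a set of dimension $< n$: compare charts, using that definable maps between open subsets of $M^n$ are continuous on a large open set. Consequently the identity is continuous at some point, and by homogeneity of topological groups it is continuous everywhere, so $\tau = \tau'$. The construction above used parameters $\uc$. However, the resulting topology is independent of $\uc$ by uniqueness, hence invariant under automorphisms, and its basis is given by a uniformly definable family. Therefore it is $\varnothing$-definable by compactness in a saturated model.

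The main obstacle is Step 2(i): obtaining a single large set $V$ on which \emph{every} translation, not only translation by a generic element, behaves continuously. This requires a careful dimension count on the bad set in $G \times G$ together with the trick of writing arbitrary elements as products of generics.
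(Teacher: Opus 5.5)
Your overall route is Pillay's original argument, which the paper does not reproduce but only cites. Your Step 4 agrees with the paper's remark after the statement: the identity map is a local homeomorphism at some point, and translation does the rest; the atlas may need parameters while the topology has a $\varnothing$-definable basis. The existence part, however, has two steps that do not go through as you describe them.

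The first is Step 2(i), which you yourself flag as the crux. Suppose $(g,x)$ counts as good merely when $y\mapsto g\circ y$ is continuous near $x$. Then writing $g=g_1\circ g_2^{-1}$ does not help. You can make $(g_2^{-1},x)$ a generic pair. But $(g_1,g_2^{-1}\circ x)$ is interdefinable with $(g\circ x,g_2^{-1}\circ x)$, so it is a generic pair only if $g\circ x$ is generic. Using more factors does not help either, since the last factor and the point where it is applied are always tied to $g\circ x$ in the same way. The hypothesis $g\circ x\in V$ has to be used through the inverse translation, which needs homeomorphisms rather than mere continuity. So call $(g,x)$ good if $y\mapsto g\circ y$ maps a neighborhood of $x$ in $V_0$ homeomorphically onto a neighborhood of $g\circ x$ in $V_0$. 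Generic pairs are good, since $(g^{-1},g\circ x)$ is then generic as well. Let $V$ be the interior in $V_0$ of $\set{x\in V_0:\dim\set{g:(g,x)\text{ is bad}}<n}$; this set is large. Now take $x,g\circ x\in V$ and $g_2$ generic over $g,x$. Then both $(g_2^{-1},x)$ and $(g_1^{-1},g\circ x)=(g_2^{-1}\circ g^{-1},g\circ x)$ are good, and inverting the second shows that $(g_1,g_2^{-1}\circ x)$ is good.

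The second is Step 3. Step 2(i) only controls left translations. But $(x,y)\mapsto x^{-1}\circ y$ is invariant under $(x,y)\mapsto(h\circ x,h\circ y)$, so left translations preserve $c^{-1}\circ d$. When $c^{-1}\circ d$ is not generic (e.g.\ at $(e,e)$), no left translate of $(c,d)$ is a generic pair, and you get no continuity there. You also need the right translations $R_g$ to be continuous, which follows in the same spirit:
\begin{enumerate}[(i)]
    \item For $g$ generic and $x$ generic over $g$, the pair $(x,g)$ is generic, so $R_g$ is continuous at $x$.
    \item Since $R_g$ commutes with left translations, it is then continuous everywhere.
    \item Every element is a product $k\circ l$ of generics, and $R_{k\circ l}=R_l\circ R_k$.
\end{enumerate}
Then the identity $x^{-1}\circ y=p\circ\big((x\circ p)^{-1}\circ(y\circ q)\big)\circ q^{-1}$, with $p=c^{-1}\circ a$ and $q=d^{-1}\circ b$, transfers continuity from the generic pair $(a,b)$ to $(c,d)$.

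Two smaller points. In Step 1, the union of the $n$-cells need not be locally Euclidean in the topology from $M^k$: a cell $(f_1,f_2)$ over an $(n-1)$-dimensional cell can meet the closure of a cell $\Gamma(f_3)$ over an adjacent $n$-dimensional cell. You must first remove this lower-dimensional set. In Step 2(ii), $g_1,\dots,g_{n+1}$ have to be independent generics fixed before $h$; the dimension count then shows that one of them is generic over $h$. Choosing them depending on $h$ gives no finite cover.
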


\noindent In the above, a definable manifold topology is a topology induced by a finite atlas $\set{(S_i, \phi_i) : i \in \ii}$, where $G = \bigcup_{i \in \ii} S_i$ and the $\phi_i$'s are bijections between $S_i$ and open subsets of $M^{\dim(G)}$ such that the maps $\phi_i \circ \phi_j^{-1}$ are homeomorphisms between open subsets of $M^{\dim(G)}$.
A precise definition of a definable manifold topology can be taken from \cite{Ele07}, right above Fact 2.1.1 there.
The uniqueness follows easily from a translation argument and the fact that, given a second atlas $\set{(S'_j, \phi'_j) : j \in \jj}$, the map $\phi_i \circ \Id_G \circ \phi_j'^{-1}$ is a local homeomorphism somewhere for suitable $i$ and $j$.
Finally, notice that the atlas itself might not be $\varnothing$-definable, but the topology is, in the sense that there is a formula $\varphi(\ux; \uy)$ such that the set $\set{\varphi(\mm; \ud) : \ud \in M}$ is a basis of the topology.

\begin{fact} \label{fact_t_connected_finite_xd}
    Any $M$-definable subset $X$ of an $M$-definable group $(G, e, \circ)$ is a finite disjoint union of definably $t$-connected $M$-definable sets.
    In particular, $G/G_0$ is a finite group, where $G_0$ is the definably $t$-connected component of $e$ in $G$.
\begin{proof}
    This is essentially Lemma 2.9 in \cite{Pil88}.
    Notice that, given a definably $t$-connected definable subset $X \subseteq G$ and $g \in G$, the set $g \circ X$ must also be definably $t$-connected, so $G$ consists of finitely many translates of $G_0$.
\end{proof}
\end{fact}

\begin{fact} \label{fact_vectorpsace_char_0}
    Let $(\VV, 0, +, (\lambda \cdot)_{\lambda \in K})$ be an infinite $\varnothing$-definable $K$-vector space in an o-minimal theory $T$.
    Then $K$ has characteristic $0$.
    In particular, $(\VV, 0, +)$ is an infinite divisible abelian group.
\begin{proof}
    Suppose that $K$ has positive characteristic $p$.
    Then every element of the definably $t$-connected component $\VV_0$ of $0$ in $\VV$ is a $p$-torsion element.
    However, by Theorem C in \cite{Spi24}, there is a neighborhood of $0$ that contains no elements of order $\leq p$ besides $0$.
    This instantly implies $\VV_0 = \set{0}$.
    As $\VV/\VV_0$ is finite by Fact \ref{fact_t_connected_finite_xd} above, we conclude that $\VV$ is finite, a contradiction.
\end{proof}
\end{fact}

\noindent The following definition is due to Peterzil and Steinhorn, and gives us an analog of a compact group in the o-minimal setting:

\begin{definition}[Definition 1.1 in \cite{PS99}] \label{def_def_comp}
    We say that an $M$-definable group $(G, e, \circ)$ is \textbf{definably compact} if, given any continuous $M$-definable embedding $\sigma \colon (a, b) \to G$ from an interval into the group, the limits $\lim_{x \to a^+} \sigma(x)$ and $\lim_{x \to b^-} \sigma(x)$ exist in $G$ (limits and continuity are taken with respect to the order topology on $M$ and the $t$-topology on $G$).
\end{definition}

\noindent It is clear that definable compactness is preserved under taking reducts, as there are fewer curves $\sigma$ to check.

\subsection{Linear Structures} \label{sec_lin_str}

In this section, we recall the definition of linear o-minimal structures, which were first studied in \cite{LP93}, as well as some relevant facts. Before we do so, we would like to point out the following fact to the reader:

\begin{fact}[\cite{Dri98}]
    If $\mm = (M, e, \cdot, <, \dots)$ is an o-minimal expansion of an ordered group, then $(M, e, \cdot)$ is an infinite divisible abelian group and both $\cdot$ and $x \mapsto x^{-1}$ are continuous with respect to the order topology.
\end{fact}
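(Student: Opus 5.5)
The plan is to derive everything from one structural lemma: \emph{every definable (with parameters) subgroup $H \neq \set{e}$ of $(M, e, \cdot)$ equals $M$.}
Before that I record the elementary facts used throughout. In an ordered group the order is bi-invariant, so each left and right translation $x \mapsto ax$, $x \mapsto xa$ is an order-preserving bijection, and $x \mapsto x^{-1}$ is order-reversing. Since $<$ is dense without endpoints by our convention on o-minimality, $M$ is infinite. An ordered group is torsion-free.

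\emph{Proof of the lemma.} By o-minimality, $H$ is a finite union of points and intervals. Since it is a nontrivial subgroup of a torsion-free group, it is infinite, so it contains some interval $(a,b)$. Left translating by $a^{-1} \in H$ gives $(e, \varepsilon) \subseteq H$ with $\varepsilon := a^{-1}b > e$.
Suppose some $x > e$ lies outside $H$. The set $\set{x > e : x \notin H}$ is definable, so by o-minimality it has an infimum $t \in M$ with $t \geq \varepsilon$. Also by o-minimality, some interval $(t', t)$ is contained in $H$.
Pick $h \in (t',t)$ with $h^{-1}t < \varepsilon$, which is possible since $h^{-1}t \to e$ as $h \to t$ via translation. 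Then $t$ and all $x$ slightly above $t$ (namely $x < h\varepsilon$) can be written as $x = h \cdot (h^{-1}x)$ with $h^{-1}x \in (e,\varepsilon) \subseteq H$. Hence such $x$ lie in $H$, contradicting the choice of $t$.
So $M_{>e} \subseteq H$, and by inversion also $M_{<e} \subseteq H$. This gives $H = M$.

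Applying the lemma:
\begin{enumerate}[(i)]
    \item \emph{Abelian.} For $g \neq e$, the centralizer $C(g)$ is a definable subgroup containing $g$. By the lemma $C(g) = M$, so every element is central and $(M, e, \cdot)$ is abelian.
    \item \emph{Divisible.} Now that $M$ is abelian, the image of $x \mapsto x^n$ is a definable subgroup for each $n \geq 1$. It is nontrivial, since $g^n \neq e$ for $g \neq e$ by torsion-freeness. By the lemma it is all of $M$.
    \item \emph{Continuity.} Translations and inversion are monotone bijections of a dense order, hence homeomorphisms of the order topology. This handles separate continuity and the continuity of $x \mapsto x^{-1}$.
\end{enumerate}

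For joint continuity of multiplication at $(a,b)$, it suffices to treat a neighbourhood $(ab\varepsilon^{-1}, ab\varepsilon)$ with $\varepsilon > e$. By divisibility choose $\delta$ with $\delta^2 = \varepsilon$. If $x \in (a\delta^{-1}, a\delta)$ and $y \in (b\delta^{-1}, b\delta)$, then commutativity together with monotonicity of multiplication gives $xy \in (ab\delta^{-2}, ab\delta^2)$, which is the required neighbourhood.

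The main obstacle is the infimum argument in the lemma. One must make sure that the infimum $t$ exists in $M$ rather than being $-\infty$ or a gap; this uses o-minimality, which makes a definable set have finitely many boundary points, all in $M \cup \set{\pm\infty}$. One must also check carefully that both $t$ itself and a right-neighbourhood of $t$ are captured by the translated interval. Everything else is routine manipulation in ordered groups.
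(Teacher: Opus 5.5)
Your proposal is correct and is essentially the standard argument behind the citation to \cite{Dri98}: show that every nontrivial definable subgroup equals $M$, then apply this to centralizers and to $\set{x^n : x \in M}$. Your infimum argument simply unwinds the usual appeal to definable connectedness of $M$. There is one small slip: the endpoint $a$ need not lie in $H$, so translating by $a^{-1}$ is not justified as written. The fix is to replace $a$ by any $a' \in (a,b)$, which lies in $H$ and satisfies $(a',b) \subseteq H$.
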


\noindent So whenever we say that $\mm = (M, 0, +, <, \dots)$ is an o-minimal expansion of an ordered group, this also implies that $(M, 0, +)$ is an infinite divisible abelian group. With Fact \ref{fact_t_top} one can also show that the $t$-topology on $M$ as a group is the same as the order topology. We give some general definitions that will be used throughout this chapter:

\begin{definition} \label{def_linear_defs}
    Let $\mm = (M, 0, +, <, \dots)$ be an o-minimal expansion of an ordered group, and let $(G, e, \oplus)$ be an infinite divisible torsion-free abelian $M$-definable group.
    \begin{enumerate}[(i)]
        \item We say that an $M$-definable function $f \colon A \subseteq G^n \to G$ is \textbf{linear} on $U \subseteq A$ if, for all $\ug, \uh, \ut \in G^n$ with $\ug, \uh, \ug \oplus \ut, \uh \oplus \ut \in U$, the equality $f(\ug \oplus \ut) \ominus f(\ug) = f(\uh \oplus \ut) \ominus f(\uh)$ holds.
        We say that $f$ is \textbf{linear} if it is linear on its domain $A$.
        \item We say that an $M$-definable function $f \colon A \subseteq G^n \to G$ is \textbf{piecewise linear} if $A$ can be partitioned into finitely many $M$-definable sets on which $f$ is linear.
        \item We say that an $M$-definable function $f \colon A \subseteq G^n \to G$ is \textbf{locally linear} at $\uu \in A$ if there is an open subset $U \subseteq A$ (with respect to the subset topology induced by the $t$-topology) such that $\uu \in U$ and $f$ is linear on $U$.
        \item We say that an $M$-definable function $f \colon U \to G$ is a \textbf{partial homomorphism} if the domain $U \subseteq G^n$ is open and definably connected (with respect to the $t$-topology), $f$ is linear, $(e, \dots, e) \in U$, and $f((e, \dots, e)) = e$.
        Equivalently, one can also check that the equality $f(\ug \oplus \uh) = f(\ug) \oplus f(\uh)$ holds for all $\ug, \uh \in G^n$ with $\ug, \uh, \ug \oplus \uh \in U$.
        If $U$ is a subset of $G$, i.e., $n = 1$, then we call $f$ a \textbf{partial endomorphism}.
        \item We say that $(G, e, \oplus)$ is \textbf{linear} in $\mm$ if every $M$-definable function $f \colon A \subseteq G^n \to G$ is piecewise linear.
    \end{enumerate}
\end{definition}

\begin{remark} \label{remark_locally_lin_to_partial}
    Let $f \colon A \to G$ be an $M$-definable function with $A \subseteq G^n$ definably connected and open.
    The function $f$ is locally linear at $\ug \in A$ if and only if there exists an open neighborhood $U'$ of $(e, \dots, e)$ contained in $A \ominus \ug$ such that the function $\ut \mapsto f(\ug \oplus \ut) \ominus f(\ug)$ restricted to $U'$ is a partial homomorphism.
    If $f$ is linear on $A$, then the function $\ut \mapsto f(\ug \oplus \ut) \ominus f(\ug)$ is a partial homomorphism on $A \ominus \ug$.
\end{remark}

\noindent In this section, $(G, e, \oplus)$ will always be $(M, 0, +)$.

\begin{definition} \label{def_linear_new}
    An o-minimal expansion $\mm = (M, 0, +, <, \dots)$ of an ordered group is \textbf{linear} if the group $(M, 0, +)$ is linear in $\mm$.
\end{definition}

\noindent We need the following fact, by Peterzil and Starchenko, when working with non-linear o-minimal theories:

\begin{fact}[Theorem 1.2 of \cite{PS98}] \label{fact_def_field}
    If an o-minimal expansion $\mm = (M, 0, +, <, \dots)$ of an ordered group is not linear, then there is an $M$-definable real closed field $(K, \boxzero, \boxone, \boxplus, \boxdot, <)$ where $K \subseteq M$ is an interval and $<$ is the same order as on $M$.
\iffalse\begin{proof}
    Applying Theorem 1.2 of \cite{PS98} to the group interval $([-a, a], 0, +, <)$ for some $a > 0$ (and assuming, without loss of generality, that $\mm$ is sufficiently saturated), we see that one of the following holds:
    \begin{enumerate}[(i)]
        \item There is an ordered vector space $\mathcal{V} := (V, 0, +, (\gamma \cdot)_{\gamma \in D}, <, (u)_{u \in U})$ over an ordered division ring, expanded by some constants, an interval $[-b, b] \subseteq V$, and an order-preserving isomorphism of group intervals $\sigma \colon [-a, a] \to [-b, b]$ such that, given any $\set{a}$-definable subset $S \subseteq [-a, a]^n$, the set $\sigma(S)$ is $\varnothing$-definable in $\mathcal{V}$.
        \item There is a real closed field definable in a subinterval of $[-a, a]$, with the ordering induced by $<$.
    \end{enumerate}
    If (ii) holds for some $a > 0$, we are done.
    Suppose, toward a contradiction, that (i) holds for all $a > 0$.
    Given $a > 0$ and a $\varnothing$-definable function $f \colon A \to M$ with $A \subseteq M^n$ also $\varnothing$-definable, we can easily verify point (ii) of Remark \ref{remark_linear_iff}.
    However, this implies that $\mm$ is linear, contradicting our assumption.
\end{proof}\fi
\end{fact}

\noindent For the rest of this section, we assume that $\mm = (M, 0, +, <, \dots)$ is a linear o-minimal expansion of an ordered group and define $T := \Th(\mm)$.
In particular, $T$ is complete.

\begin{definition} \label{def_part_endo}
    Given a $\varnothing$-definable partial endomorphism $g$, we define the \textbf{total function} of $g$ as
    $$
    \hat{g} \colon M \to M; \quad x \mapsto \begin{cases}
        g(x) & \text{if $x \in \operatorname{dom}(g)$} \\
        0 & \text{otherwise.}
    \end{cases}
    $$
\end{definition}

\noindent Since $T = \Th(\mm)$ is complete and partial endomorphisms are $\varnothing$-definable, it is clear that we obtain the same partial endomorphisms for different models of $T$.

\begin{fact}[Corollary 6.3 in \cite{LP93}] \label{fact_lin_qe}
    $T$ has quantifier elimination in the language
    $$
    \hat{L} := \set{0, +, <, (\hat{g} : \text{$g$ is a $\varnothing$-definable partial endomorphism}), (\mathfrak{c} : \mathfrak{c} \in \dcl_L(\varnothing))}.
    $$
\end{fact}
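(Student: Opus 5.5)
The plan is to reduce quantifier elimination to the statement that every $\varnothing$-definable set is a Boolean combination of sets defined by inequalities between $\hat{L}$-terms. I would argue by induction on $n$, using cylindrical cell decompositions (Fact \ref{fact_o_min_cell_decomp}). By that fact it suffices to show two things. First, every $\varnothing$-definable function $f \colon A \subseteq M^n \to M$ agrees, on each piece of a finite partition of $A$ into $\hat{L}$-quantifier-free definable sets, with an $\hat{L}$-term. Second, the cells themselves are then quantifier-free definable: a cell $\Gamma(f)$ or $(f,g)$ over a quantifier-free definable base $D$ is cut out by $y = t(\ux)$ or $t_1(\ux) < y < t_2(\ux)$ on the relevant pieces of $D$. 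Both claims go together in one induction on $n$, since the pieces for functions on $M^n$ are obtained from cells of $M^n$.

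For the function claim, I would use linearity. By Definition \ref{def_linear_new}, $f$ is piecewise linear. After refining the decomposition, we may assume $f$ is linear on an open cell $U$; lower-dimensional cells are handled by projecting to the coordinates that are free on the cell and applying induction. Fix a point $\ua \in U$. By Remark \ref{remark_locally_lin_to_partial}, the map $\ut \mapsto f(\ua + \ut) - f(\ua)$ is a partial homomorphism on $U - \ua$. A partial homomorphism in $n$ variables splits as $\sum_i g_i(t_i)$ with partial endomorphisms $g_i(t) := h(0,\dots,t,\dots,0)$ near $0$. So, near $\ua$, $f(\ux) = f(\ua) + \sum_i g_i(x_i - a_i)$.

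Two issues remain: parameters and globality. For parameters, the $g_i$ depend only on the germ, and o-minimality together with linearity on $U$ makes the germ independent of $\ua$. Working with $\dcl$-generic points and using that $\dcl_L(\varnothing)$ is named by constants, I would argue that each $g_i$ agrees near $0$ with a $\varnothing$-definable partial endomorphism. Concretely, the family of germs $\{\text{germ of } x \mapsto f(\ua + x e_i) - f(\ua) : \ua \in U\}$ is constant on $U$, hence $\varnothing$-definable. For globality, I would cover $U$ by finitely many $\varnothing$-definable pieces on which $f(\ux) = c + \sum_i \hat g_i(x_i)$, with $c$ and the domains of the $g_i$ chosen large enough. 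Here I would use that the domain of a partial endomorphism can be taken to be a maximal interval around $0$. I would also use divisibility, $g(x) = n\, g(x/n)$ whenever $x/n$ lies in the domain, which extends $g$ additively as far as linearity of $f$ on $U$ permits. The constant $c$ must lie in $\dcl(\varnothing)$ once the piece is $\varnothing$-definable, since it is determined by $f$ on that piece.

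I expect the main obstacle to be this last step: the local representations obtained near each point must be organized into finitely many $\varnothing$-definable pieces that are themselves quantifier-free in $\hat{L}$, while also ensuring that arguments of the $\hat g_i$ stay inside the domains so that the total functions really compute $g_i$. I would first try to define the pieces as the sets where all arguments lie in the relevant domains. These are intervals with $\dcl(\varnothing)$ endpoints, hence quantifier-free. Then, by compactness together with the induction hypothesis on lower-dimensional boundaries, only finitely many such representations are needed.
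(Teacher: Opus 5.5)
The gap is in your globalization step, and it is not merely technical. The normal form you aim for does not exist in general: finitely many $\varnothing$-definable pieces on each of which $f(\ux) = c + \sum_i \hat g_i(x_i)$ with $c \in \dcl(\varnothing)$.

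Take $\mm = (\RR, 0, +, <, 1, g)$ with $g(x) = \sqrt{2}\,x$ on $(-1,1)$. This is a linear o-minimal expansion of an ordered group that defines no multiplication by $\sqrt{2}$ on all of $\RR$. (A definable total map would commute with the automorphisms $a + bt \mapsto a + \beta(b) + bt$ of the elementary extension $\RR \oplus \RR t$, where $\beta$ is any $\QQ$-linear map.) Hence every $\varnothing$-definable partial endomorphism with germ $\sqrt{2}$ has domain bounded above. Otherwise $x \mapsto h(x)$ for $x \geq 0$ and $x \mapsto -h(-x)$ for $x < 0$ would be such a multiplication.

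Now let $f(x_1,x_2) := \hat g(x_1 - x_2) + x_2$ on the open cell $V := \{(x_1,x_2) : |x_1 - x_2| < 1\}$. On $V$, $f$ is linear with $x_1$-germ $\sqrt{2}$ at every point. Given finitely many pieces with representations of your form, pick $s$ above all the upper endpoints of domains of those $g_1$'s with germ $\sqrt{2}$. In a small box around $(s,s)$ inside $V$, some piece has nonempty interior. There $\hat g_1$ is either locally $0$ or has a germ different from $\sqrt{2}$, so the representation cannot agree with $f$.

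Relatedly, a germ has no maximal $\varnothing$-definable domain in general: $g$ extends $\varnothing$-definably to $(-n,n)$ for every $n$ but not to $\RR$. Divisibility only stretches domains by standard factors. Compactness does not help either, because the obstruction lies along long cells, not at their boundaries: the base point $\ua$ has to move with the point.

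The missing idea is to let the base point depend definably on the remaining coordinates, which forces nested terms. Work one coordinate at a time.
\begin{itemize}
\item Take an open cell $\{(\ux', y) : \ux' \in D',\ l(\ux') < y < r(\ux')\}$ on which $f$ is linear. Fix a $\varnothing$-definable choice $p(\ux') \in (l(\ux'), r(\ux'))$: the midpoint, $r(\ux') - e$ or $l(\ux') + e$ for a positive $e \in \dcl(\varnothing)$, or $0$.
\item The fibre maps $s \mapsto f(\ux', p(\ux') + s) - f(\ux', p(\ux'))$ are partial endomorphisms with one common germ. A partial endomorphism is determined on its interval domain by its germ, via $h(x) = n\,h(x/n)$. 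So these maps glue to a single $\varnothing$-definable partial endomorphism $G$ whose domain is the union of the fibre domains.
\item Then $f(\ux', y) = \hat G(y - p(\ux')) + f(\ux', p(\ux'))$ on the cell. The induction hypothesis applies to $l$, $r$, $p$ and $\ux' \mapsto f(\ux', p(\ux'))$, all in $n-1$ variables.
\end{itemize}
This yields terms of the shape $\hat G(y - t_1(\ux')) + t_2(\ux')$; in the example it gives $\hat g(x_1 - x_2) + x_2$ itself. This is exactly the Loveys--Peterzil description $c = f(b + b_1) + b_2$ with $b_1, b_2 \in \dcl(\ua)$ that the paper invokes in the proof of Lemma \ref{lemma_span_is_acl_linear}. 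The paper itself gives no proof of the statement and simply cites Corollary 6.3 of \cite{LP93}.

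Two smaller repairs are also needed.
\begin{itemize}
\item Definition \ref{def_linear_defs} only provides $M$-definable linear pieces, so you first need $\varnothing$-definable ones. The set of points where $f$ is locally linear is $\varnothing$-definable with lower-dimensional complement in an open cell, and local linearity throughout a definably connected open cell gives linearity.
\item If $\dcl(\varnothing) = \{0\}$ there is no such $e$. In that case, however, every partial endomorphism is total and your affine normal form does work.
\end{itemize}
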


\begin{lemma} \label{lemma_span_is_acl_linear}
    We have $\acl_L = \spanA{\;}{\hat{L}}$ in $T$.
\begin{proof}
    Work in some $\mm \models T$.
    Since $\acl_L = \dcl_L$ holds in o-minimal theories, we immediately obtain $\acl_L(\varnothing) = \spanA{\varnothing}{\hat{L}}$.
    Suppose we have already shown $\acl_L(a_1, \dots, a_n) = \spanA{a_1, \dots, a_n}{\hat{L}}$ for all $a_1, \dots, a_n \in M$, and let $b \in M$ and $c \in \acl_L(a_1, \dots, a_n, b)$ be given.
    Write $\ua = (a_1, \dots, a_n)$.
    As discussed above Proposition 4.2 in \cite{LP93}, there must be some partial endomorphism $f$ and $b_1, b_2 \in \acl_L(\ua) = \spanA{\ua}{\hat{L}}$ such that $c = f(b + b_1) + b_2$.
    Thus $c \in \spanA{a_1, \dots, a_n, b}{\hat{L}}$.
\end{proof}
\end{lemma}

\begin{definition} \label{def_skew_d}
    Define
    $
    D := \set{\text{$\varnothing$-definable partial endomorphisms}} / \sim,
    $
    where $g_1 \sim g_2$ if $g_1$ and $g_2$ coincide on an open interval containing $0$.
    We order $D$ by
    $$
    g_1 < g_2 \quad: \Leftrightarrow \quad g_1(a) < g_2(a) \text{ for all sufficiently small $a > 0$.}
    $$
    As the addition and composition of two partial endomorphisms yield another partial endomorphism (after restricting domains appropriately), it is easy to verify that $(D, 0, \Id, +, \circ, <)$ is an ordered division ring.
\end{definition}

\noindent Notice that $D$ does not have to be commutative in general.
As an easy counterexample, take any ordered non-commutative division ring $D$ and verify that the theory of ordered $D$-vector spaces is o-minimal and linear.
Nevertheless, we say $D$-vector space instead of $D$-module.

\begin{fact}[Theorem 6.1 in \cite{LP93}] \label{fact_is_red_of_o_vs}
    The theory $T$ is a reduct of a complete theory $T'$ in the language $L' := \set{0, +, <, (\gamma \cdot )_{\gamma \in D}, (\mathfrak{c} )_{\mathfrak{c} \in \dcl_L(\varnothing)}}$ which expands the following theories:
    \begin{enumerate}[(i)]
        \item The theory of ordered $D$-vector spaces.
        \item The theory $T$, interpreting $\hat{g}$ in $T'$ as follows for the corresponding equivalence class $\gamma \in D$ of $g$ and $a, b \in \dcl_L(\varnothing) \cup \set{\pm \infty}$ with $\operatorname{dom}(g) = (a, b)$:
        $$
        x \mapsto \begin{cases}
            \gamma \cdot x & \text{if $x \in\ (a, b)$} \\
            0 & \text{otherwise.}
        \end{cases}
        $$
    \end{enumerate}
\end{fact}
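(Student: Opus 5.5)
The plan is to build $T'$ from the constants $\dcl_L(\varnothing)$ and the ordered division ring $D$, and then to transfer $T$ along quantifier elimination in $\hat{L}$ (Fact \ref{fact_lin_qe}).

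First I would fix the interpretation. Every $\varnothing$-definable partial endomorphism $g$ is defined on a definably connected open set containing $0$, so $\operatorname{dom}(g)$ is an interval $(a,b)$ with $a<0<b$ and $a,b \in \dcl_L(\varnothing)\cup\set{\pm\infty}$. By linearity, $g$ agrees on $(a,b)$ with multiplication by its germ $\gamma \in D$ in any ordered $D$-vector space into which things embed compatibly. Let $\mathrm{OVS}_D$ denote the theory of ordered $D$-vector spaces. I set
\[
T' := \mathrm{OVS}_D \cup \operatorname{Diag}_{L'}(\dcl_L(\varnothing)),
\]
where the diagram records the order between the constants $\mathfrak{c}$ and all $D$-linear relations $\gamma\cdot\mathfrak{c} = \mathfrak{c}'$ and $\sum_i \gamma_i \cdot \mathfrak{c}_i = \mathfrak{c}$ that hold in $\mm$ whenever all the relevant points lie in the relevant domains. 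For this diagram to make sense one has to show that $\dcl_L(\varnothing)$, viewed as an $\hat{L}$-structure, embeds into some ordered $D$-vector space. I would do this by defining the $D$-span formally, as equivalence classes of formal combinations $\sum_i \gamma_i \mathfrak{c}_i$, and deciding equality and order via the partial endomorphisms. Concretely, for each finite combination one scales by a small $\varepsilon \in D$ so that everything lands in the common domain near $0$, and then compares there. Completeness of $T'$ then follows from the standard quantifier elimination for ordered $D$-vector spaces with named constants: a quantifier-free sentence only mentions the $D$-linear span of the constants, and that span is fixed by the diagram.

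Second, I would show that for every $\mathcal{N}' \models T'$, interpreting $\hat{g}$ as in (ii), the $L$-reduct is a model of $T$. The plan is to use that $T$ has quantifier elimination in $\hat{L}$, and I would choose one of two routes. One route is to obtain an $\forall\exists$-axiomatization of $T$ in $\hat{L}$ and verify each axiom in the reduct. The other route is a back-and-forth between an $\omega$-saturated $\mm \models T$ and an $\omega$-saturated reduct of $\mathcal{N}'$, matching finite partial $\hat{L}$-isomorphisms that fix $\dcl_L(\varnothing)$.

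The back-and-forth reduces to one point: a quantifier-free $\hat{L}$-type of a new element $b$ over a finite set $\ua$ is determined by cuts of $b$ relative to finitely many $\hat{L}$-terms in $\ua$. These terms are piecewise $D$-affine, and the piecewise structure is determined by the same partial endomorphisms and constants on both sides. So the cut is realizable in $\mm$ if and only if it is realizable in $\mathcal{N}'$, using saturation and density on both sides.

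I expect the main obstacle to be this second step, specifically controlling $\hat{L}$-terms with $\hat{g}$ nested and applied to points near the boundary of $\operatorname{dom}(g)$. I would first prove a normal form: every $\hat{L}(\ua)$-term is piecewise given, on finitely many $\hat{L}$-quantifier-free-definable pieces, by expressions $\sum_i \gamma_i a_i + \mathfrak{c}$ with $\gamma_i \in D$ and $\mathfrak{c} \in \dcl_L(\varnothing)$. I would prove this by induction on terms, using that the domains $(a,b)$ have constant endpoints. This normal form holds uniformly in $\mm$ and in any reduct of a model of $T'$, so both sides decide the relevant atomic formulas in the same way. With it in hand, the back-and-forth goes through, and completeness of $T'$ gives the claimed reduct statement.
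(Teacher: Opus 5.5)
Your first step fails, and it is the step that carries the substance of the result. (The paper gives no proof here; it imports Theorem 6.1 of \cite{LP93}.)

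\textbf{Step 1 and completeness.} The $\hat{L}$-structure on $\dcl_L(\varnothing)$ does not determine the ordered $D$-span of the constants. So $\mathrm{OVS}_D\cup\Diag_{L'}(\dcl_L(\varnothing))$ is in general not complete. Your recipe of rescaling $\sum_i\gamma_i\mathfrak{c}_i$ by a small $\varepsilon\in D$ until everything lies in the domains also breaks down: no such $\varepsilon$ exists when the constants are non-archimedean relative to the domains while $D$ is archimedean.

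Here is a concrete counterexample.
\begin{itemize}
\item[(1)] Let $D=\QQ(\pi)\subseteq\RR$ and $V=D\times D$ with the lexicographic order, first coordinate dominant.
\item[(2)] Put $1:=(0,1)$, $c:=(1,0)$, $c':=(\pi,1)$, and let $g$ be multiplication by $\pi$ on $(-1,1)=\set{(0,y):-1<y<1}$.
\item[(3)] Then $\mm=(V,0,+,<,1,c,c',g)$ is a linear o-minimal expansion of an ordered group with germ ring $D$. With the obvious $D$-structure, $c'-\pi\cdot c=1$.
\item[(4)] Pick a $\QQ$-linear $\ell\colon D\to D$ with $\ell(1)=0$ and $\ell(\pi)=1$. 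The map $(x,y)\mapsto(x,y+\ell(x))$ is an order-preserving group automorphism of $V$. It fixes $(-1,1)$ pointwise, hence commutes with $g$, and it fixes $c$, but it sends $c'$ to $(\pi,2)$.
\item[(5)] So $\mm$ is $L$-isomorphic to the same ordered $D$-vector space with $c'$ reinterpreted as $(\pi,2)$, where $c'-\pi\cdot c=2\cdot 1$.
\end{itemize}
Both structures are models of your $T'$ whose reducts are models of $T$, yet they disagree on $c'-\pi\cdot c<\tfrac32\cdot 1$. Hence ``that span is fixed by the diagram'' is false. There is also no canonical way to decide equality and order of formal combinations via the partial endomorphisms, because the answer is genuinely not determined.

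\textbf{What is missing.} You need to \emph{choose} a global ordered $D$-structure that is compatible with all partial endomorphisms simultaneously, and show that such a choice exists on some model of $T$. This in general forces you to pass to a suitable model. Without $c'$, the prime model $(\set{0}\times D)\oplus\QQ c$ admits no such structure at all: $\pi\cdot c$ would have to be $qc+(0,y)$ with $q\in\QQ$, so $(\pi-q)\cdot c$ would be bounded by some $n\cdot 1$, although $\pi\neq q$ and $c>n\cdot 1$ for all $n$. That existence statement is exactly what the cited theorem provides, and your plan contains no argument for it. Once you have such a model, $T'$ is simply the complete $L'$-theory of that expansion.

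\textbf{Step 2 does not close the gap.} The normal form $\sum_i\gamma_ia_i+\mathfrak{c}$ only makes sense where $D$ acts globally, so it does not hold ``uniformly in $\mm$''. There, $\hat g(x-a)$ with $a$ outside every domain is not $\gamma x-\gamma a$, and asserting the normal form presupposes the statement being proved. A normal form valid on both sides would have to be local, for example $t(x;\ua)=\hat\sigma(x-s(\ua))+s'(\ua)$ on pieces with endpoints in $\spanA{\ua}{\hat{L}}$, as in the Claim in the proof of Theorem~\ref{theorem_still_distal}. Even with that, the back-and-forth still needs a model of a correctly chosen, consistent $T'$ to start from.
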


\noindent We now state a few facts about groups definable in linear o-minimal theories.

\begin{fact} \label{fact_def_comp_iff_bound}
    Let $(G, e, \circ)$ be an $M$-definable group.
    Then $G$ is definably compact if and only if $G$ is bounded.
\begin{proof}
    Use Claim 3.4 in \cite{EE09} and Lemma 3.7 in \cite{ES07} (using the latter in the expansion from Fact \ref{fact_is_red_of_o_vs}).
\end{proof}
\end{fact}

\noindent The following fact by Edmundo and Eleftheriou states that every definable group in our setting is an extension of a definably compact group by $(M^m, 0, +)$ for some $m$:

\begin{fact}[Theorem 1.5 in \cite{EE09}] \label{fact_short_ex_seq}
    Let $(G, e, \circ)$ be an $M$-definable group.
    Then there are a definably compact $M$-definable group $H$ and a surjective $M$-definable homomorphism $\pi \colon G \to H$ such that $\Ker(\pi)$ is $M$-definably isomorphic to $(M^m, 0, +)$ for some $m$.
    In other words, there is a short exact sequence
    $$
    0 \;\to \; (M^m, 0, +) \; \to \; G \; \overset{\pi}{\to}\; H \; \to 0.
    $$
\end{fact}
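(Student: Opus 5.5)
Since this statement is Theorem 1.5 of \cite{EE09}, I would reconstruct its proof rather than invent a new one. The plan is to find inside $G$ a largest definable normal subgroup $N$ that is torsion-free and ``unbounded in every direction''. I would then show that $N\cong(M^m,0,+)$ definably and that $H:=G/N$ is definably compact, taking $\pi$ to be the quotient map.

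I would first pass to the expansion $T'$ from Fact \ref{fact_is_red_of_o_vs}, which is an ordered $D$-vector space, and carry out the geometric arguments there. In that setting every definable set is semilinear and the $t$-topology charts of Fact \ref{fact_t_top} can be taken piecewise linear. I would have to check at the end that the subgroup $N$ and the isomorphism constructed are already definable in $T$. This should hold because they will be built from $\dcl$-data and partial endomorphisms, which are $\hat L$-definable by Fact \ref{fact_lin_qe}.

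For the construction of $N$, the key ingredient is the following. If $G$ is not definably compact, there is a definable curve $\sigma\colon(a,\infty)\to G$ with no limit. Using linearity of the charts and a translation/germ argument as in the construction of $D$ (Definition \ref{def_skew_d}), I would turn $\sigma$ into a definable one-parameter subgroup $\gamma\colon (M,+)\to G$ that is torsion-free and unbounded. Taking a maximal definable torsion-free subgroup $N$ then gives a group that, by o-minimal dimension counting and the absence of torsion, is divisible and abelian. Induction on dimension, splitting off one-parameter subgroups one at a time, then gives $N\cong (M^m,0,+)$. Normality of $N$ would follow from uniqueness: conjugates of $N$ are again maximal definable torsion-free subgroups, and the product of two such subgroups is again torsion-free in this linear setting.

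It remains to show that $G/N$ is definably compact. By Fact \ref{fact_def_comp_iff_bound}, it suffices to show that $G/N$ is bounded. If it were unbounded, the one-parameter-subgroup argument applied in $G/N$ would give an unbounded torsion-free definable subgroup of $G/N$. Its preimage in $G$ would then be a definable torsion-free subgroup strictly containing $N$, which is an extension of $M^m$ by $M$. This contradicts maximality.

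I expect the main obstacle to be producing a genuine definable one-parameter subgroup from an unbounded curve, that is, showing that ``linear at infinity'' behaviour of a definable group really yields a homomorphic copy of $(M,+)$ and not merely a partial one. Here I would try to use Fact \ref{fact_lin_qe} to write the group operation on a chart near infinity as affine maps with $D$-coefficients. I would then take the asymptotic direction of $\sigma$ as the generator and extend the resulting partial homomorphism to all of $M$ using divisibility.
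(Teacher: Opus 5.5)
The paper gives no proof of this statement. It imports it from \cite{EE09}, where the argument is carried out inside the linear (more generally, semi-bounded) structure itself, so your outline has to stand on its own. Its skeleton is sound: a maximal definable torsion-free subgroup $N$ exists by a dimension argument, preimages of torsion-free subgroups of $G/N$ are torsion-free, and a non-definably-compact $G/N$ would then contradict maximality. However, the step you flag as the main obstacle is the load-bearing one, and it is missing. It amounts to the Peterzil--Steinhorn theorem from \cite{PS99} (a definable group that is not definably compact contains a one-dimensional definable torsion-free subgroup), plus an argument that in the linear case such a subgroup is definably isomorphic to $(M,0,+)$. Your sketch for it does not work:
\begin{itemize}
\item Extending a partial homomorphism $\gamma$ from a bounded interval $(-q,q)$ by $x\mapsto n\cdot\gamma(x/n)$ quantifies over $n\in\NN$, so it is not first-order.
\item In a non-archimedean model this extension only reaches $\bigcup_n n\cdot(-q,q)$, a $\bigvee$-definable subgroup of the kind $U$ in Fact \ref{fact_def_comp_torus}.
\item Local data near $e$ cannot detect non-compactness at all: the definably compact group $[0,\ell)$ with addition modulo $\ell$ is locally isomorphic near $0$ to $(M,0,+)$.
\end{itemize}

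The remaining steps are asserted rather than proved.
\begin{itemize}
\item Commutativity of $N$ does not follow from ``dimension counting and absence of torsion''. The unitriangular $3\times 3$ matrices over a real closed field form a definable, torsion-free, non-abelian group. In the linear setting commutativity of $G^0$ does hold, but it needs an argument that uses linearity. Your normality argument (that products of conjugates of $N$ are torsion-free subgroups) depends on it.
\item ``Splitting off one-parameter subgroups one at a time'' presupposes that definable extensions $0\to(M,0,+)\to N\to(M^{k},0,+)\to 0$ split definably. This is a genuine definable-extension problem, not a formality. You also cannot fall back on Theorem \ref{lemma_group_lin_iso}, because its proof uses this very Fact.
\item The transfer from $T'$ back to $T$ fails as stated. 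Your one-parameter subgroups come from asymptotic directions of $T'$-definable curves, i.e.\ from total $D$-endomorphisms, which are generally not $T$-definable. For example, let $T=\Th(\RR,<,0,1,+,\mu)$, where $\mu$ is multiplication by $\pi$ restricted to $(-1,1)$, and let $G=M^2$. Nothing in your construction prevents choosing the subgroup $t\mapsto(t,\pi t)$. This yields an isomorphism $M^2\to G$ that is $T'$-definable but not $T$-definable. To repair this, you would have to work only with $T$-definable curves and prove that their directions at infinity are given by $T$-definable total endomorphisms. That is the kind of semi-bounded structure theory the cited proof relies on.
\end{itemize}
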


\noindent The following fact by Eleftheriou and Starchenko allows us to study the definably compact group $H$ from the fact above.

\begin{fact}[Theorem 1.4 in \cite{ES07}] \label{fact_def_comp_torus}
    Let $\mm = (M, 0, +, <, (\gamma \cdot )_{\gamma\in D})$ be an ordered vector space over a division ring $D$.
    Let $(G, e, \circ)$ be an $n$-dimensional $M$-definable group that is definably compact and definably connected.
    Then $G$ is $M$-definably isomorphic to an $M$-definable quotient group $U/L$, for some convex ($M$-)$\bigvee$-definable subgroup $U \leq M^n$ and a lattice $L \leq U$ of rank $n$.
\end{fact}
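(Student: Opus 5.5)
The plan is to realize $G$ as a quotient of a locally defined piece of $(M^n,0,+)$, following the general strategy of Eleftheriou and Starchenko. The steps are: linearize $G$ near $e$, extend to a $\bigvee$-definable covering homomorphism, and use definable compactness to show the kernel is a full-rank lattice.

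\emph{Step 1: $G$ is abelian.} Since $\mm$ is an ordered vector space, no real closed field is definable in it (by Fact \ref{fact_def_field}, or directly from quantifier elimination for ordered $D$-vector spaces). A definably compact, definably connected group with no definable field cannot have a definably simple non-abelian quotient. By the Peterzil--Pillay--Starchenko structure theory, $G$ is therefore abelian; I take this as known. I then fix the $t$-topology from Fact \ref{fact_t_top} and an $M$-definable chart $\phi$ around $e$ onto an open box around $0\in M^n$.

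\emph{Step 2: local linearization.} Transported through $\phi$, the group law $(x,y)\mapsto \phi(\phi^{-1}(x)\circ\phi^{-1}(y))$ is a definable map in an ordered vector space. It is therefore piecewise affine with $D$-linear parts. On an open neighbourhood of $(0,0)$ in which $(0,0)$ is generic for the cell decomposition, it is a single affine map. Because it fixes $0$ in each variable, a short argument shows it is $x+y$ after a linear change of coordinates. This yields a convex box $B\ni 0$ and a definable injective local homomorphism $\phi^{-1}\colon B\to G$.

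\emph{Step 3: extension to a covering homomorphism.} Let $U:=\bigcup_k kB$ (the $k$-fold sums of $B$). This is a convex $\bigvee$-definable subgroup of $M^n$, and each $kB$ is a definable convex box. I define $\tilde\varphi\colon U\to G$ by $\tilde\varphi(b_1+\dots+b_k):=\phi^{-1}(b_1)\circ\dots\circ\phi^{-1}(b_k)$. Well-definedness follows from the local homomorphism property, using convexity of $B$ to split sums into small steps (a standard simply-connectedness argument on boxes). The map is surjective because $\tilde\varphi(U)$ is a subgroup containing a $t$-open neighbourhood of $e$, and $G$ is definably connected. The kernel $L$ is discrete, since $\tilde\varphi$ is injective on $B$. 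Hence $L$ is a free abelian group, and $U/L\cong G$ as groups. Moreover, the isomorphism is $M$-definable, because on each $kB$ the restriction of $\tilde\varphi$ is definable, and by compactness finitely many $kB$ already cover $G$.

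\emph{Step 4: $L$ has rank $n$ (main obstacle).} This is the step I expect to be hardest. The first option I would try is a torsion count. For definably compact, definably connected abelian $G$ of dimension $n$, Edmundo--Otero gives $|G[k]|=k^n$. On the other hand, $(U/L)[k]=(\tfrac1k L\cap U)/L$ has size $k^{r}$ where $r=\operatorname{rank}L$, since $U$ is torsion-free and divisible enough. This forces $r=n$. A more self-contained alternative, avoiding the Edmundo--Otero result, would use definable compactness directly. If $\operatorname{rank}L<n$, I would pick a direction $v$ outside the $D$-span of $L$. The definable curve $t\mapsto\tilde\varphi(tv)$ for $t$ in an interval would then have no limit in $G$, because $U$ is convex and such points escape every $kB$ modulo $L$; this contradicts Definition \ref{def_def_comp}. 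I would try the torsion count first and fall back on the curve argument if the torsion-point input is unavailable in the required generality.
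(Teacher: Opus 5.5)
The paper gives no proof of this statement; it is quoted from \cite{ES07}. Judged on its own, your outline has a genuine gap in Step~3.

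The surjectivity argument (``$\tilde\varphi(U)$ is a subgroup containing a $t$-open neighbourhood of $e$, and $G$ is definably connected'') fails. The set $\tilde\varphi(U)=\bigcup_k\tilde\varphi(kB)$ is only $\bigvee$-definable, while definable connectedness excludes only \emph{definable} clopen subgroups. Nothing in Step~2 controls the size of $B$, since any smaller box is an equally valid output. In a non-archimedean model, $B$ can therefore be infinitesimal relative to $G$.

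For instance, take $0<\epsilon<c$ with $k\epsilon<c$ for all $k\in\NN$, and let $G=[0,c)$ with addition modulo $c$. This group is definably compact, definably connected and $1$-dimensional. With $B=(-\epsilon,\epsilon)$ you get $U=\bigcup_k(-k\epsilon,k\epsilon)$, $\tilde\varphi(u)=u \bmod c$ and $L=\set{0}$, yet $\tilde\varphi(U)\subsetneq G$. Your appeal to compactness to cover $G$ by finitely many $\tilde\varphi(kB)$ presupposes exactly this surjectivity.

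What is missing is a definable set $X\subseteq M^n$ and a definable map $\tilde\varphi\colon X\to G$ such that $\tilde\varphi(x+y)=\tilde\varphi(x)\circ\tilde\varphi(y)$ whenever $x,y,x+y\in X$, and $\tilde\varphi(X)=G$. In other words, you need a definable continuation of the local isomorphism to a region that is large relative to $G$. Constructing it, using the piecewise linearity of the group law globally rather than near a single point, is the substantive part of the theorem. Only afterwards can you let $U$ be the subgroup generated by $X$ and run your remaining steps.

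There are further problems in Steps~2 and~4.

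\emph{Step~2.} The point $(0,0)=(\phi(e),\phi(e))$ is definable over the parameters, hence never generic. The transported law need not be affine near it. For $G=(M,+)$ with the chart $x\mapsto x$ for $x\geq 0$ and $x\mapsto 2x$ for $x<0$, the law is $u+v$ for $u,v>0$ but $u+v/2$ for $u>0>v$ with $u+v/2>0$. This is repairable. Suppose $\phi(x\circ y)=A_1\phi(x)+A_2\phi(y)+w$ for $(x,y)$ near a pair $(a,b)$ such that $(\phi(a),\phi(b))$ lies in an open cell on which the law is affine. Then $\Psi(z):=\phi(a\circ z\circ b)-\phi(a\circ b)$ satisfies $\Psi(x\circ y)=\Psi(x)+\Psi(y)$ for $x,y$ near $e$.

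\emph{Step~4.} The torsion-count route has three dependencies:
\begin{enumerate}[(i)]
  \item it presupposes Step~3;
  \item it needs a version of $|G[k]|=k^n$ valid for o-minimal expansions of ordered groups, whereas Edmundo--Otero prove it for expansions of real closed fields;
  \item it needs $L$ to be free of finite rank, which does not follow from discreteness when $M$ is non-archimedean (the group $\QQ c$ meets $(-\epsilon,\epsilon)$ only in $0$).
\end{enumerate}
Your fallback does not work as stated either. In general, $t\mapsto\tilde\varphi(tv)$ is only known to be definable on a segment contained in a single $kB$. There $\tilde\varphi$ is continuous on $(k+1)B\supseteq\overline{kB}$, so the limits exist and definable compactness is not contradicted.
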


\noindent Notice that $U$ being convex here means $q \cdot a + (1-q) \cdot b \in U$ for all $q \in \QQ \cap [0, 1]$ and $a, b \in U$.
If $n > 0$ in the above, then the group $G$ has torsion.
To see this, take linearly independent $v_1, \dots, v_n \in U$ that generate the lattice $L$.
Now, the equivalence class of $v_1/2$ is a non-trivial torsion element of $U/L$, as $U$ is convex and hence $v_1/2 \in U$.
With the earlier stated facts, this has the following consequence for definably compact groups:

\begin{corollary} \label{corollary_has_tors}
    Let $(G, e, \circ)$ be a definably compact $M$-definable group.
    Then $(G, e, \circ)$ is either trivial or contains torsion.
\begin{proof}
    By Fact \ref{fact_is_red_of_o_vs}, we can assume that $\mm = (M, 0, +, <, (\gamma \cdot )_{\gamma\in D}, (a)_{a \in A})$ is an ordered vector space over a division ring.
    Using Fact \ref{fact_def_comp_iff_bound}, we see that $G_0$, the definably $t$-connected component of $e$ in $G$ (which is definable by Fact \ref{fact_t_connected_finite_xd}), is still definably compact and $M$-definable.
    Now apply Fact \ref{fact_def_comp_torus} to see that $G_0$ is either trivial or contains torsion.
    As the quotient $G/G_0$ is finite by Fact \ref{fact_t_connected_finite_xd}, this translates to $G$ (also in the original structure $\mm = (M, 0, +, <, \dots)$).
\end{proof}
\end{corollary}

\noindent The author believes that the following result should be known, but could not find it anywhere in the literature.
For the sake of completeness, we include a proof, which assumes that there is an isomorphism definable with parameters.

\begin{fact} \label{fact_group_in_th_of_ord_d_vec_space}
    Every $\varnothing$-definable group $G$ in an ordered $D$-vector space $(M, 0, +, <, (\gamma \cdot)_{\gamma \in D})$ that is $M$-definably isomorphic to $(M^d, 0, +)$ is already $\varnothing$-definably isomorphic to $(M^d, 0, +)$.
\begin{proof}
    Take $\varphi(\ux, \uy; w)$ to be a formula such that for any $b \in M$ the formula $\varphi(\ux, \uy; b)$ defines an isomorphism $\iota_b \colon (G, e, \oplus) \to (M^d, 0, +)$ if and only if $b > 0$.
    Note that we can assume that $w$ is a singleton by letting $w > 0$ take the role of a non-zero element needed for a choice function.
    Then one can see, using cell decomposition, that there are finitely many matrices $\Gamma_1, \dots, \Gamma_q \in D^{n \times d}$ and vectors $\underline{\gamma}{}_1, \dots, \underline{\gamma}{}_q \in D^n$ such that for all $\ua \in M^d$ and $b > 0$ we have
    $$
    \iota_b^{-1}(\ua) = \Gamma_i \cdot \ua + \underline{\gamma}{}_i \cdot b
    $$
    for some $i \in \set{1, \dots, q}$.
    Define $\ii := \set{i \in \set{1, \dots, q} : \underline{\gamma}{}_i = 0}$.
    By the uniqueness of the $t$-topology, we see that every $\iota_b$ is a homeomorphism.
    Therefore, $\iota_b^{-1}(B_\epsilon(\uzero))$ is an open neighborhood of $e$ for any $\epsilon > 0$. Here $B_\epsilon(\uzero)$ denotes the open $\epsilon$-ball/box around $\uzero$.
    By choosing $b' \gg b, \epsilon$, we see that
    $$
    \iota_b^{-1}(B_\epsilon(0)) \cap \iota_{b'}^{-1}(B_\epsilon(0)) \subseteq \bigcup\nolimits_{i \in \ii} \Gamma_i \cdot B_\epsilon(0)
    $$
    is also an open neighborhood of $e$ in $G$.
    Now $\iota_{b'}(\iota_b^{-1}(B_\epsilon(0)) \cap \iota_{b'}^{-1}(B_\epsilon(0)))$ contains an open box $B_\delta(\uzero)$ such that every $\ua \in B_\delta(\uzero)$ is mapped to $\Gamma_i \cdot \ua$ for some $i \in \ii$ by $\iota_{b'}^{-1}$.
    Notice that such a $\delta$ must exist for any $b > 0$, and not just for $b'$, since otherwise we obtain a $\varnothing$-definable non-zero element.
    With linearity and the fact that there are no non-zero definable constants, we see that there is some $\gamma \in D \cup \set{\infty}$ such that, given $b > 0$, any $\delta < \gamma \cdot b$ works.
    Using o-minimality, we see that $\lim_{b\to \infty} \iota_b^{-1}(\ua) \in \set{\Gamma_i \cdot \ua : i \in \ii}$ exists for any $\ua \in M$.
    Using o-minimality once more, we can easily see that $\ua \mapsto \lim_{b\to \infty} \iota_b^{-1}(\ua)$ is an injective $\varnothing$-definable homomorphism from $(M^d, 0, +)$ into $(G, e, \oplus)$.
    The image of this map is clearly a $d$-dimensional subgroup of $(G, e, \oplus)$, so because $(G, e, \oplus)$ is divisible (recall that we already know it is isomorphic to $(M^d, 0, +)$ via an isomorphism definable with parameters), we conclude that it is actually an isomorphism.
\end{proof}
\end{fact}

\subsection{Infinite Divisible Torsion-Free Abelian Groups}

\noindent Let $\mm = (M, 0, +, <, \dots)$ be an o-minimal expansion of an ordered group, and let $(G, e, \oplus)$ be an infinite divisible torsion-free abelian group that is $\varnothing$-definable in $\mm$.

\begin{theorem} \label{lemma_group_lin_iso}
    Assume that $\mm$ is linear.
    Then $(G, e, \oplus)$ is $\varnothing$-definably isomorphic to $(M^d, 0, +)$ for $d = \dim(G)$.
\begin{proof}
    By Fact \ref{fact_short_ex_seq}, there is $m \in \NN$, a definably compact $M$-definable group $H$, and a surjective $M$-definable homomorphism $\pi \colon G \to H$ such that $\Ker(\pi)$ is $M$-definably isomorphic to $(M^m, 0, +)$.
    In other words, we have a short exact sequence
    $$
    0 \;\to \; (M^m, 0, +) \; \to \; G \; \overset{\pi}{\to}\; H \; \to 0.
    $$
    Assume, toward a contradiction, that $H$ has torsion.
    Let $h$ be a non-trivial $k$-torsion element and fix $g \in \pi^{-1}(h)$.
    As $k \cdot h = 0$, we see that $v := k \cdot g \in \Ker(\pi) \simeq (M^m, 0, +)$.
    Since $(M^m, 0, +)$ is divisible and $g \notin \Ker(\pi)$ (otherwise $h = 0$), there is $g' \in \Ker(\pi)$ with $g' \neq g$ and $k \cdot g' = v$.
    Now $g \ominus g'$ is a non-trivial $k$-torsion element in $G$, contradicting that $G$ is torsion-free.
    By Corollary \ref{corollary_has_tors}, we see that $H$ must be trivial, so $(M^m, 0, +) \simeq \Ker(\pi) = G$ via an $M$-definable isomorphism.
    This also implies that $m = \dim(G)$.

    We obtain an $L$-formula $\varphi(\ux, \uy; \uw)$ and a tuple of parameters $\ub \in M$ such that $\varphi(\ux, \uy; \ub)$ defines a group isomorphism $\iota \colon (G, e, \oplus) \to (M^d, 0, +)$.
    If there is at least one positive $\varnothing$-definable element in $T := \Th(\mm)$, then $T$ has definable choice.
    In this case, there is a $\varnothing$-definable tuple $\ub'$ for which $\varphi(\ux, \uy; \ub')$ also defines a group isomorphism.
    Otherwise, note that every partial endomorphism ((iv) of Definition \ref{def_linear_defs}) must be defined on all of $M$, so $T$ is actually the theory of an ordered $D$-vector space from Fact \ref{fact_is_red_of_o_vs} without any non-zero constants; hence, we can just apply Fact \ref{fact_group_in_th_of_ord_d_vec_space}.
\end{proof}
\end{theorem}

\noindent Using Theorem \ref{lemma_group_lin_iso}, we immediately see that if $\mm$ is linear, as in Definition \ref{def_linear_new}, then $(G, e, \oplus)$ must also be linear in $\mm$, as defined in (v) of Definition \ref{def_linear_defs}.
The other direction holds as well:

\begin{theorem} \label{theorem_not_lin}
    If $\mm$ is not linear, then $(G, e, \oplus)$ is not linear in $\mm$.
\end{theorem}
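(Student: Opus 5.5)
We argue by contraposition: assuming $(G, e, \oplus)$ is linear in $\mm$, we show that $\mm$ is linear. By Fact \ref{fact_def_field}, if $\mm$ is not linear there is an $M$-definable real closed field $(K, \boxzero, \boxone, \boxplus, \boxdot, <)$ on an interval $K \subseteq M$. The plan is to transport this field structure to $G$ and produce a definable function $G^2 \to G$ that is not piecewise linear, since it behaves like multiplication on some open set.

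The first step is to fix a definable coordinate system on $G$ near $e$. Let $d = \dim(G) \geq 1$. By Fact \ref{fact_t_top} and cell decomposition (Facts \ref{fact_o_min_cell_decomp} and \ref{cor_o_min_indep_cell_open}), there is an $M$-definable open subset $W \subseteq G$ and an $M$-definable homeomorphism $\phi \colon W \to B$ onto an open box $B \subseteq M^d$. After translating inside $G$ and restricting, we may assume $e \in W$. We then choose an open subinterval $I \subseteq K$ and, using the order-preserving field operations, an $M$-definable homeomorphism $\alpha$ from an open box of $M^d$ onto $I^d$; affine maps of $M$ and shrinking suffice for this. Composing, we obtain a definable homeomorphism $\kappa \colon W' \to I^d$ from an open neighborhood $W'$ of some point of $G$ onto $I^d$. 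Pulling back field multiplication in the first coordinate along $\kappa$ gives a definable function $m \colon W' \times W' \to G$, namely $m(g,h) = \kappa^{-1}(\kappa_1(g) \boxdot \kappa_1(h), \kappa_2(g), \dots)$, defined on an open set once $I$ is chosen with $I \boxdot I \subseteq I$ near a suitable point, for instance near $\boxone$.

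Next, we suppose toward a contradiction that $m$ is piecewise linear. By Fact \ref{fact_o_min_dim}, some piece has non-empty interior, so $m$ is linear on an open set $U$. Remark \ref{remark_locally_lin_to_partial} then shows that $m$ locally differs by a constant from a partial homomorphism of $G^2$. Composing with $\kappa$, we would find that a partial homomorphism of the divisible torsion-free group $G$, read in the $\kappa$-chart, equals something of the form $(s,t) \mapsto s \boxdot t$ up to translation on an open box. Fixing $t$, the maps $s \mapsto s \boxdot t$ for different $t$ would then all be, up to translation, partial endomorphisms of $G$ in the chart. Linearity of $m$ in the second variable separately yields additivity: $(s \boxdot t) \ominus (s \boxdot t') $ is independent of $s$, read through $\kappa$. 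This would force the field multiplication to be "constant in $s$" in the $G$-sense, giving the desired contradiction.

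The main obstacle is making this contradiction rigorous. The group law $\oplus$ read through $\kappa$ is some unknown definable operation $\star$ on $I^d$, not $\boxplus$. I would resolve this with Peterzil–Starchenko style germ arguments, using the trichotomy in which $\star$ is locally definably isomorphic to $\boxplus$ after a change of chart. Concretely, the plan is to show that the condition "$m(g \oplus t) \ominus m(g)$ is independent of $g$" forces $\partial_s (s \boxdot t) = t$ to be independent of $s$, in the chart. This would use o-minimal differentiability with respect to $K$ together with the fact that local homomorphisms between one-dimensional definable groups are $K$-differentiable with derivative at $e$ determined by the germ. If that route is too heavy, a fallback is to use the infinite definable family of germs $t \mapsto (s \mapsto s \boxdot t)$. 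Linearity of $G$ would yield, as in Lemma \ref{lemma_span_is_acl_linear}, that $\dcl$ is given by finitely many germs over a point, and o-minimal counting of these germs would contradict the existence of this infinite family.
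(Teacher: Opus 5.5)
There is a genuine gap at the heart of your argument. The test function $m$ is not guaranteed to be non-linear, so assuming it is piecewise linear cannot in general produce a contradiction.

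Take $\mm=(\RR,0,1,+,\cdot,<)$ and $G=(\RR_{>0},\cdot,1)$, so $d=1$. Let $\kappa$ be the identity on a small interval around $1=\boxone$. Your construction allows exactly this, since you place $I$ near $\boxone$ and do not control which point of $G$ lands there. Then $m(g,h)=g\cdot h=g\oplus h$ is a homomorphism $G^2\to G$, hence linear on its whole domain.

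The same example shows that the ``contradiction'' you aim for is not one. A local form $m(g_0\oplus s,h_0\oplus t)=m(g_0,h_0)\oplus\lambda(s)\oplus\mu(t)$ is compatible with field multiplication when $\oplus$ is locally multiplicative, and $(s\boxdot t)\ominus(s\boxdot t')=t/t'$ is independent of $s$.

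Your repair routes also break on this example:
\begin{itemize}
\item $(\RR_{>0},\cdot)$ is not semialgebraically locally isomorphic to $(\RR,+)$, since that would be a definable local logarithm. So no change of chart turns $\oplus$ into $\boxplus$.
\item The maps $s\mapsto s\boxdot t$ are endomorphisms of $(K,\boxplus)$, but read in $G$ they are $\oplus$-translations. They therefore give no infinite definable family of germs of partial $\oplus$-endomorphisms at $e$.
\item The condition $\partial_s(s\boxdot t)=t$ is trivially independent of $s$, so it carries no information.
\end{itemize}

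What is missing is the idea of anchoring the chart so that $\varphi(e)=\uzero=(\boxzero,\dots,\boxzero)$ and working at $(\uzero,\uzero)$, where field multiplication degenerates: $\ua\boxdot\uzero=\uzero$. This is why the paper remarks after the proof that in the $\RCF$ example one must use a chart such as $x\mapsto x-1$. With that anchoring, the paper argues as follows.
\begin{enumerate}
\item Partition $B_\epsilon(\uzero)\times B_\epsilon(\uzero)$ into finitely many cells on which componentwise $\boxdot$ is linear.
\item For each open cell $A_i$ with $(\uzero,\uzero)\in\operatorname{Cl}(A_i)$, compare increments at a fixed base point with increments at base points tending to $(\uzero,\uzero)$. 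This gives $\boxdot(\ut,\ut')=\lambda(\ut)\oplus\mu(\ut')$ near $(\uzero,\uzero)$ on $A_i$, with no constant term because $\boxdot(\uzero,\uzero)=\uzero$.
\item Take a cell decomposition adapted to the $A_i$ and to the coordinate slices through $B_\epsilon(\uzero)\times\{\uzero\}$. This yields an open cell $W$ lying directly above some $W_0\times\{\uzero\}$, so by continuity the formula also holds on $W_0\times\{\uzero\}$.
\item Since $\ua\boxdot\uzero=\uzero$, this forces $\lambda\equiv\uzero$ on $W_0$. Hence $\ua\boxdot\ut=\mu(\ut)=\ub\boxdot\ut$ for distinct $\ua,\ub\in W_0$ and $\ut$ with nonzero entries, contradicting cancellation in $K$.
\end{enumerate}
Knowing only that some piece of linearity has non-empty interior, as in your second step, is not enough. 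You need linearity pieces that accumulate at $(\uzero,\uzero)$ and reach the zero axis, which is what the slice-adapted decomposition supplies.
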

\begin{proof}
    Assume, toward a contradiction, that $(G, e, \oplus)$ is linear in $\mm$.
        We use the following setting (see Figure \ref{figure_local_group_chart}):
        \begin{enumerate}[(i)]
            \item There is an open (with respect to the $t$-topology) neighborhood $U \subseteq G$ of $e$ and an $M$-definable homeomorphism $\varphi \colon U \to B_{\epsilon_0}(\uzero) \subseteq M^d$ that maps $e$ to $\uzero$.
            We identify $U$ with $B_{\epsilon_0}(\uzero)$ and $\oplus$ with the continuous map
            $$
            \varphi \circ \oplus \circ (\varphi^{-1}, \varphi^{-1}) \colon \set{(\ua, \ub) \in B_{\epsilon_0}(\uzero) \times B_{\epsilon_0}(\uzero) : \varphi^{-1}(\ua) \oplus \varphi^{-1}(\ub) \in U} \to B_{\epsilon_0}(\uzero)
            $$
            whose domain is open.
            We define $\ominus$ similarly.
            \item There is an $M$-definable real closed field $(K, \boxzero, \boxone, \boxplus, \boxdot, <)$ where $\boxzero = 0$, $K \subseteq M$ is an interval, and $<$ is the same order as on $\mm$.
            \item We choose $\epsilon > 0$ such that $\epsilon < \min\set{|\boxone|, |\scalebox{0.8}{$\boxminus$}\hspace{1pt} \boxone|}$, and $\varphi^{-1}(\ua) \oplus \varphi^{-1}(\ub), \varphi^{-1}(\ua) \ominus \varphi^{-1}(\ub) \in U$ holds for all $\ua, \ub \in B_\epsilon(\uzero)$.
            We let $\boxdot \colon B_\epsilon(\uzero) \times B_\epsilon(\uzero) \to B_\epsilon(\uzero)$ denote the componentwise multiplication, which is a continuous map.
            This $\epsilon$ may be shrunk throughout the proof.
        \end{enumerate}

\begin{figure}[tbp]
    \centering
    \begin{tikzpicture}[
        x=1cm,
        y=1cm,
        group curve/.style={line width=0.55pt},
        chart neighborhood/.style={line width=1.25pt},
        chart arrow/.style={->, line width=0.55pt},
        marker/.style={circle, fill=black, inner sep=1.2pt},
        open endpoint/.style={circle, draw, fill=white, inner sep=1.2pt},
        interval/.style={line width=0.45pt},
        field interval/.style={line width=0.45pt},
        continuation/.style={densely dotted, line width=0.55pt},
        pic label/.style={font=\scriptsize}
    ]
        \node[pic label] at (6.7,3.95) {$G \subseteq M^2$};

        \coordinate (gLeftCont) at (-1.225,3.18);
        \coordinate (gLeftStart) at (-0.875,3.02);
        \coordinate (gRightEnd) at (7.525,3.35);
        \coordinate (gRightCont) at (7.875,3.50);
        \coordinate (chartLeftEnd) at (4.55,1.75);
        \coordinate (chartRightEnd) at (1.80,2.90);

        \draw[continuation] (gLeftCont) .. controls (-1.15,3.12) and (-1.05,3.153) .. (gLeftStart);
        \draw[group curve] (gLeftStart)
            .. controls (-0.35,2.62) and (0.20,1.50) .. (1.45,1.62)
            .. controls (2.32,1.704) and (3.92,1.40) .. (chartLeftEnd)
            .. controls (5.00,2.00) and (3.80,2.70) .. (3.10,2.20)
            .. controls (2.40,1.70) and (1.20,2.20) .. (chartRightEnd)
            .. controls (2.40,3.60) and (4.55,3.62) .. (5.85,3.02)
            .. controls (6.65,2.70) and (7.20,3.23) .. (gRightEnd);
        \draw[continuation] (gRightEnd) .. controls (7.65,3.41) and (7.76,3.46) .. (gRightCont);
        \draw[chart neighborhood] (chartLeftEnd)
            .. controls (5.00,2.00) and (3.80,2.70) .. (3.10,2.20)
            .. controls (2.40,1.70) and (1.20,2.20) .. (chartRightEnd);
        \draw[interval] ($(chartLeftEnd)+(-0.049,0.087)$) -- ($(chartLeftEnd)+(0.049,-0.087)$);
        \draw[interval] ($(chartRightEnd)+(-0.076,0.065)$) -- ($(chartRightEnd)+(0.076,-0.065)$);
        \node[marker] at (3.1,2.2) {};
        \node[pic label, anchor=north east] at (3.02,2.12) {$e$};
        \node[pic label, align=center] at (4.25,2.55) {$\varphi^{-1}(B_{\epsilon_0}(0))$};

        \draw[chart arrow] (3.1,1.88) -- node[right, pic label] {$\varphi$} (3.1,-0.72);

        \draw[continuation] (-1.225,-1.35) -- (-0.875,-1.35);
        \draw[interval] (-0.875,-1.35) -- (7.525,-1.35);
        \draw[continuation] (7.525,-1.35) -- (7.875,-1.35);
        \node[pic label, right] at (7.875,-1.35) {$M$};

        \draw[field interval] (0.35,-1.35) -- (7.18,-1.35);
        \node[open endpoint] at (0.35,-1.35) {};
        \node[open endpoint] at (7.18,-1.35) {};
        \draw[chart neighborhood] (1.075,-1.35) -- (5.125,-1.35);
        \node[pic label] at (3.765,-1.95) {$\underbrace{\hspace{6.7cm}}_{K}$};

        \draw[interval] (1.075,-1.25) -- (1.075,-1.45);
        \node[pic label, anchor=north] at (1.075,-1.48) {$-\epsilon_0$};
        \draw[interval] (1.6,-1.25) -- (1.6,-1.45);
        \node[pic label, anchor=south] at (1.6,-1.22) {$\scalebox{0.8}{$\boxminus$}\hspace{1pt}\boxone$};
        \draw[interval] (2.275,-1.25) -- (2.275,-1.45);
        \node[pic label, anchor=north] at (2.275,-1.48) {$-\epsilon$};
        \draw[interval] (3.1,-1.25) -- (3.1,-1.45);
        \node[pic label, anchor=south] at (3.1,-1.22) {$0$};
        \draw[interval] (3.925,-1.25) -- (3.925,-1.45);
        \node[pic label, anchor=north] at (3.925,-1.48) {$\epsilon$};
        \draw[interval] (5.125,-1.25) -- (5.125,-1.45);
        \node[pic label, anchor=north] at (5.125,-1.48) {$\epsilon_0$};
        \draw[interval] (6.325,-1.25) -- (6.325,-1.45);
        \node[pic label, anchor=south] at (6.325,-1.22) {$\boxone$};
    \end{tikzpicture}
    \caption[The local chart in the case $d=1$]{Our setting in the case $d=1$, with $G$ viewed as a subset of $M^2$. Since the field operations on $K$ differ from those coming from the ambient group structure on $M$, the points $\scalebox{0.8}{$\boxminus$}\hspace{1pt}\boxone$ and $\boxone$, as well as the endpoints of $K$, need not be equidistant from $0$. The constant $\epsilon$ is chosen so that the componentwise multiplication $\boxdot$ and the operations $\oplus$ and $\ominus$ induced via $\varphi$ from the group operations on $G$ are well behaved.}
    \label{figure_local_group_chart}
\end{figure}
    \noindent The existence of $U$ and $\varphi$ is clear from the definable manifold structure on $(G, e, \oplus)$, and the existence of $(K, \boxzero, \boxone, \boxplus, \boxdot, <)$ follows from Fact \ref{fact_def_field}.
    With the ambient group structure $(M, 0, +)$, we can easily ensure $\varphi(e) = \uzero$ and $\boxzero = 0$.
    Notice that $\epsilon < |\boxone|$ and $\epsilon < |\scalebox{0.8}{$\boxminus$}\hspace{1pt} \boxone|$ ensures that the componentwise multiplication is indeed a map from $B_\epsilon(\uzero) \times B_\epsilon(\uzero)$ into $B_\epsilon(\uzero)$.
    Also note that this setting is preserved when shrinking the constant $\epsilon$.

By the linearity of our group $(G, e, \oplus)$ in $\mm$ (see (v) of Definition \ref{def_linear_defs}), we can partition $B_\epsilon(\uzero) \times B_\epsilon(\uzero)$ into finitely many $M$-definable sets $A_1, \dots, A_q$ such that, for all tuples $\ua, \ua', \ub, \ub', \ut, \ut' \in M^d$ with $(\ua, \ua'), (\ub, \ub'), (\ua \oplus \ut, \ua'\oplus \ut'), (\ub \oplus \ut, \ub'\oplus \ut') \in A_i$, we have
$$
    \boxdot(\ua \oplus \ut, \ua' \oplus \ut') \ominus \boxdot(\ua, \ua') = \boxdot(\ub \oplus \ut, \ub' \oplus \ut') \ominus \boxdot(\ub, \ub').
$$
To do this, pull $\boxdot$ back into $U \times U$, then perform a partition of the respective subset of $G^2$ as in (v) of Definition \ref{def_linear_defs}, and finally push the resulting partition forward into $B_\epsilon(\uzero) \times B_\epsilon(\uzero)$.
Since we identify $U$ with $B_{\epsilon_0}(\uzero)$, we will also call a map $\lambda$, defined on some open neighborhood of $\uzero$, a partial endomorphism of $(G, e, \oplus)$ if the map $\varphi^{-1} \circ \lambda \circ \varphi$ is a partial endomorphism of $(G, e, \oplus)$. Using a cell decomposition, we can assume that each $A_i$ is a cell, which in particular implies that each $A_i$ of full dimension is open in $M^{2d}$.
\begin{subclaim} \label{lemma_is_local_endo}
    Fix $i$ such that $A_i$ is open and $(\uzero, \uzero) \in \operatorname{Cl}(A_i)$.
    There are $\delta > 0$ and $M$-definable partial endomorphisms $\lambda, \mu$ of $(G, e, \oplus)$ such that on $A_i \cap B_{\delta}(\uzero) \times B_\delta(\uzero)$ we have
    $$
    \boxdot(\ut, \ut') = \lambda(\ut) \oplus \mu(\ut').
    $$
\begin{innerproof}
    Take any $(\ua, \ua') \in A_i$.
    Since $A_i$ is open and $\oplus$ is continuous, the sets
    \begin{align*}
        S_1 := ((\ut, \ut') \mapsto (\ua \oplus \ut, \ua'))^{-1}(A_i),& \quad\quad S_2 := ((\ut, \ut') \mapsto (\ua, \ua' \oplus \ut'))^{-1}(A_i), \\ &{}\hspace{-60pt}\text{ and }\quad S_3 := ((\ut, \ut') \mapsto (\ua \oplus \ut, \ua' \oplus \ut'))^{-1}(A_i)
    \end{align*}
    are all open and contain $(\uzero, \uzero)$ (recall that $\varphi(e) = \uzero$ is the neutral element of $\oplus$).
    Hence, the intersection $S_1 \cap S_2 \cap S_3$ contains an open box $B_{\delta}(\uzero) \times B_\delta(\uzero)$.

    Now take $(\ut, \ut') \in A_i \cap B_{\delta}(\uzero) \times B_\delta(\uzero)$.
    Since $(\uzero, \uzero) \in \operatorname{Cl}(A_i)$, we can find an arbitrarily small $(\us, \us') \in A_i$ such that $(\us \oplus \ut, \us' \oplus \ut') \in A_i$.
    Now, with the linearity of $\boxdot$ on $A_i$, we obtain
    \begin{align*}
        \boxdot(\us \oplus \ut, \us' \oplus \ut') \ominus \boxdot(\us, \us') &= \boxdot(\ua \oplus \ut, \ua' \oplus \ut') \ominus \boxdot(\ua, \ua') \\
        &= \boxdot(\ua \oplus \ut, \ua' \oplus \ut') \ominus \boxdot(\ua, \ua' \oplus \ut') \oplus \boxdot(\ua, \ua' \oplus \ut') \ominus \boxdot(\ua, \ua') \\
        &= \big(\underbrace{\boxdot(\ua \oplus \ut, \ua') \ominus \boxdot(\ua, \ua')}_{:= \lambda(\ut)}\big) \oplus \big(\underbrace{\boxdot(\ua, \ua' \oplus \ut') \ominus \boxdot(\ua, \ua')}_{:= \mu(\ut')}\big)
    \end{align*}
    (notice that since $(\ut, \ut') \in B_{\delta}(\uzero) \times B_\delta(\uzero) \cap A_i \subseteq S_1 \cap S_2 \cap S_3 \cap B_\epsilon(\uzero) \times B_\epsilon(\uzero)$, every expression with $\oplus$ and $\ominus$ above is well defined).
    By letting $(\us, \us') \to (\uzero, \uzero)$, we obtain $\boxdot(\ut, \ut') = \lambda(\ut) \oplus \mu(\ut')$.
    With Remark \ref{remark_locally_lin_to_partial}, we see that $\lambda$ and $\mu$ (or rather $\varphi^{-1} \circ \lambda \circ \varphi$ and $\varphi^{-1} \circ \mu \circ \varphi$) are $M$-definable partial endomorphisms of $(G, e, \oplus)$.
    Since $\oplus$ and $\boxdot$ are continuous, we observe that $\lambda$ and $\mu$ are also continuous.
\end{innerproof}
\end{subclaim}

\noindent Let $\ii$ be the set of all $i$ such that $A_i$ is open and $(\uzero, \uzero) \in \operatorname{Cl}(A_i)$, and for all $i \in \ii$, let $\delta_i > 0$ be as in Claim \ref{lemma_is_local_endo} for $A_i$.
By shrinking our constant $\epsilon$, we can assume that $B_\epsilon(\uzero) \times B_\epsilon(\uzero) \subseteq \bigcup_{i \in \ii} \operatorname{Cl}(A_i \cap B_{\delta_i}(\uzero) \times B_{\delta_i}(\uzero))$.
Now, we perform a cylindrical definable cell decomposition of $M^{2d}$ adapted to
$$
B_\epsilon(\uzero) \times B_\epsilon(\underbrace{0, \dots, 0}_{\text{$m$-many}}) \times \{(\hspace{-8pt}\underbrace{0, \dots, 0}_{\text{$(d-m)$-many}}\hspace{-8pt})\} \text{ for $m \in \set{0, \dots, d}$} \text{ and all the $A_i$'s with $i \in \ii$.}
$$
This gives us two open cells $W_0 \subseteq B_{\epsilon}(\uzero) \subseteq M^d$ and $W \subseteq B_{\epsilon}(\uzero) \times B_{\epsilon}(\uzero) \subseteq M^{2d}$ such that we have $W \subseteq A_i$ for some $i \in \ii$ and
$$
W = \set{(\ux, y_1, \dots, y_d) \in B_\epsilon(\uzero) \times B_\epsilon(\uzero) : \ux \in W_0, 0 < y_i < f_i(\ux, y_1, \dots, y_{i-1}) \text{ for $1 \leq i \leq d$}}
$$
for some $M$-definable functions $f_1, \dots, f_d > 0$ (see Figure \ref{figure_open_cell_over_wzero}).
\begin{figure}[tbp]
    \centering
    \begin{tikzpicture}[
        x=1cm,
        y=1cm,
        box boundary/.style={densely dotted, line width=0.55pt},
        cell line/.style={line width=0.55pt},
        graph cell/.style={line width=0.75pt},
        chosen boundary/.style={line width=1.1pt},
        blue region/.style={fill=blue!8},
        red region/.style={fill=red!8},
        green region/.style={fill=green!10},
        point cell/.style={circle, fill=black, inner sep=1.1pt},
        pic label/.style={font=\scriptsize}
    ]
        \coordinate (boxSW) at (-2.5,-2.5);
        \coordinate (boxNE) at (2.5,2.5);
        \coordinate (wleft) at (0,0);
        \coordinate (wright) at (1.45,0);

        \path[blue region] (boxSW) rectangle (boxNE);
        \path[red region] (-2.5,-2.5) -- (2.5,-2.5) -- (2.5,-0.78)
            .. controls (2.08,-0.42) and (1.84,-2.5) .. (1.45,-2.5)
            .. controls (0.82,-2.18) and (0.58,-0.46) .. (0,0)
            .. controls (-0.82,-0.18) and (-1.58,-1.12) .. (-2.5,-0.88) -- cycle;
        \path[green region] (-2.5,-0.88)
            .. controls (-1.58,-1.12) and (-0.82,-0.18) .. (0,0)
            -- (0,2.5) -- (-2.5,2.5) -- cycle;
        \path[green region] (0,0)
            .. controls (0.64,0.76) and (1.42,1.06) .. (2.5,1.42)
            -- (2.5,2.5) -- (0,2.5) -- cycle;

        \draw[box boundary] (boxSW) rectangle (boxNE);

        \draw[cell line] (-2.5,0) -- (2.5,0);
        \draw[cell line] (0,-2.5) -- (0,2.5);
        \draw[cell line] (1.45,-2.5) -- (1.45,2.5);

        \draw[graph cell] (-2.5,-0.88)
            .. controls (-1.58,-1.12) and (-0.82,-0.18) .. (0,0)
            .. controls (0.58,-0.46) and (0.82,-2.18) .. (1.45,-2.5)
            .. controls (1.84,-2.5) and (2.08,-0.42) .. (2.5,-0.78);
        \draw[graph cell] (0,0)
            .. controls (0.64,0.76) and (1.42,1.06) .. (2.5,1.42);

        \draw[chosen boundary] (wleft) -- (wright);
        \begin{scope}
            \clip (0,-2.5) rectangle (1.45,2.5);
            \draw[chosen boundary] (0,0)
                .. controls (0.64,0.76) and (1.42,1.06) .. (2.5,1.42);
        \end{scope}
        \node[pic label] at (0.88,0.34) {$W$};
        \node[pic label, anchor=south] at (0.68,0.73) {$\Gamma(f_1)$};
        \node[pic label, anchor=south east] at (-0.00,0.00) {$(0,0)$};
        \node[point cell] at (0,0) {};
        \node[pic label, anchor=north] at (0.8,0.05) {$W_0 \times \set{0}$};
    \end{tikzpicture}
    \caption[A cell decomposition used to choose $W_0$ and $W$]{The cylindrical cell decomposition used to choose $W_0$ and $W$ in the case $d = 1$. The square represents $B_\epsilon(0) \times B_\epsilon(0)$, and each color represents some $A_i$ with $i \in \ii$. The horizontal line is the slice $B_\epsilon(0) \times \set{0}$, which appears because the decomposition is also adapted to this slice. This implies the existence of a cell $W$ of the form $\set{(x, y) \in B_{\epsilon}(0) \times B_{\epsilon}(0) : x \in W_0, 0 < y < f(x)}$, where $W_0$ is an open interval and $f$ is a positive function on $W_0$.}
    \label{figure_open_cell_over_wzero}
\end{figure}
Notice that
\begin{enumerate}[(a)]
    \item $\pi(W) = W_0$ for the projection $\pi$ to the first $d$ coordinates.
    \item There are $M$-definable partial endomorphisms $\lambda, \mu$ such that $\boxdot(\ut, \ut') = \lambda(\ut) \oplus \mu(\ut')$ for all $(\ut, \ut') \in (W_0 \times \set{\uzero}) \cup W$.
\end{enumerate}
The first point is clear.
As $W \subseteq A_i$ for some $i \in \ii$, Claim \ref{lemma_is_local_endo} yields the endomorphisms in (b).
Also, notice that $W_0 \times \set{\uzero} \subseteq \operatorname{Cl}(W)$ by construction; hence, these endomorphisms also work for $W_0 \times \set{\uzero}$, as they and $\boxdot$ are continuous.

\begin{subclaim}
    There are elements $a, b, t \in K$ such that $a \neq b$, $t \neq \boxzero$, and $a \boxdot t = b \boxdot t$.
\begin{innerproof}
    By (a), for any $\ua \in \pi(W)$, we have $(\ua, \uzero) \in W_0 \times \set{\uzero}$.
    With (b), we obtain
    $$
    \uzero = \boxdot(\ua, \uzero) = \lambda(\ua) \oplus \mu(\uzero) = \lambda(\ua) \oplus \uzero = \lambda(\ua),
    $$
    as $\uzero = \varphi(e)$ is the $\oplus$-neutral element, the map $\boxdot$ is the componentwise multiplication in $K^d$, and $\uzero = (0, \dots, 0) = (\boxzero, \dots, \boxzero)$ is the tuple consisting only of the additively neutral element in our field $K$.
    Choose $(\ua, \ut), (\ub, \ut) \in W$ with $\ua \neq \ub$ and $\ut = (t_1, \dots, t_d) \in K^d$ such that each $t_k \neq \boxzero$ (this is possible as $W \subseteq M^{2d}$ is open).
    Now, as $\lambda(\ua) = \uzero = \lambda(\ub)$, we obtain
    \begin{align*}
        \boxdot(\ua, \ut) \;=\; \lambda(\ua) \oplus \mu(\ut) \;=\; \uzero \oplus \mu(\ut) \;=\; \lambda(\ub) \oplus \mu(\ut)
        \;=\; \boxdot(\ub, \ut).
    \end{align*}
    Write $\ua = (a_1, \dots, a_d)$ and $\ub = (b_1, \dots, b_d)$.
    As $\boxdot$ is the componentwise multiplication in our field $K$, we obtain $(a_1 \boxdot t_1, \dots, a_d \boxdot t_d) = (b_1 \boxdot t_1, \dots, b_d \boxdot t_d)$.
    Now, as $\ua \neq \ub$, there is some $k$ such that $a_k \neq b_k$, but $a_k \boxdot t_k = b_k \boxdot t_k$.
    By definition, we have $t_k \neq \boxzero$, so the claim is proven.
\end{innerproof}
\end{subclaim}

\noindent The claim above contradicts that $(K, \boxzero, \boxone, \boxplus, \boxdot, <)$ is a field, so our assumption that $(G, e, \oplus)$ is linear in $\mm$ must be wrong.
\end{proof}

\noindent The proof above may seem wrong at first when taking $T := \RCF{}$, $(G, e, \oplus) := (\rr_{>0},1, \cdot)$, and $(K, \boxzero, \boxone, \boxplus, \boxdot, <) := (\rr, 0, 1, +, \cdot, <)$ for any $\rr \models \RCF{}$.
However, in this case, the map $\varphi$ from (i) at the beginning of the proof must be chosen such that $\varphi(1) = 0$.
An obvious choice for $\varphi$ would be $x \mapsto x - 1$.
Now the map $\varphi \circ \oplus \circ (\varphi^{-1}, \varphi^{-1})$, which we just denoted by $\oplus$, is given by $(x, y) \mapsto (x + 1) \cdot (y + 1) - 1$, while we still have $\boxdot(x, y) = x \cdot y$.

Notice that the proof of Theorem \ref{theorem_not_lin} would also go through if we restrict (v) of Definition \ref{def_linear_defs} to $M$-definable functions $f \colon A \to G$ with $A \subseteq G^n$ open.
While Theorem \ref{theorem_not_lin} is interesting in its own right, we actually need the following variation, which shows that if $\mm$ (or equivalently $(G, e, \oplus)$ in $\mm$) is not linear, then there is a function on an open subset of $G^n$ that behaves nowhere like a linear function.

\begin{lemma} \label{lemma_not_loc_lin_fkt}
    If $\mm$ is not linear, then there is an $M$-definable function $f \colon U \to G$, where $U \subseteq G^2$ is non-empty and open, such that $f$ is not locally linear at any $\ug \in U$.
\begin{proof}
    We work in the same setting as in the proof of Theorem \ref{theorem_not_lin}: we fix an $M$-definable homeomorphism $\varphi \colon U \to B_{\epsilon_0}(\uzero) \subseteq M^d$, an $M$-definable field $(K, \boxzero, \boxone, \boxplus, \boxdot, <)$, and $\epsilon > 0$ as in (i), (ii), and (iii).
    We also define $\oplus$, $\ominus$, and $\boxdot$ on subsets of $B_{\epsilon_0}(\uzero) \times B_{\epsilon_0}(\uzero)$, as described there.
    Assume, toward a contradiction, that the set
    $$
    A := \set{(\ua, \ub) \in B_\epsilon(\uzero) \times B_\epsilon(\uzero) : \text{$\boxdot$ is locally linear at $(\ua, \ub)$}}
    $$
    is dense in $B_\epsilon(\uzero) \times B_\epsilon(\uzero)$.
    Partition $A = A_1 \dotcup \dots \dotcup A_q$ into its definably connected components.
    Fix $i \in \set{1, \dots, q}$.
    By definable connectedness and Remark \ref{remark_locally_lin_to_partial}, we can find a continuous $M$-definable function $f_i \colon B_\delta(\uzero) \times B_\delta(\uzero) \to B_{\epsilon_0}(\uzero)$ such that, for every $(\ua, \ub) \in A_i$, the map $\boxdot_{(\ua, \ub)}$ given by
    $$
    (\us, \ut) \mapsto \boxdot(\ua \oplus \us, \ub \oplus \ut) \ominus \boxdot(\ua, \ub)
    $$
    agrees with $f_i$ on some open neighborhood of $(\uzero, \uzero)$.
    Without loss, we may assume that $\delta$ does not depend on $i$.
    By shrinking the constant $\epsilon$ and then recomputing the definably connected components if necessary, we may also assume that $B_\epsilon(\uzero) \ominus B_\epsilon(\uzero) \subseteq B_\delta(\uzero)$, while keeping all the preceding conclusions.
    Now fix $(\ua, \ub) \in A_i$.
    The set
    $$
    A'_i :=\set{(\us,\ut) \in A_i \ominus (\ua, \ub) : \boxdot(\ua \oplus \us, \ub \oplus \ut) \ominus \boxdot(\ua, \ub) = f_i(\us,\ut)}
    $$
    is non-empty because it contains $(\uzero, \uzero)$, and it is closed in $A_i \ominus (\ua, \ub)$ because all functions involved are continuous.
    Fix $(\us,\ut) \in A'_i$.
    Since $\boxdot_{(\ua', \ub')}$ agrees locally with $f_i$ for all $(\ua', \ub') \in A_i$, we obtain
    \begin{align*}
        \boxdot_{(\ua, \ub)}(\us \oplus \us', \ut \oplus \ut') &= \boxdot_{(\ua, \ub)}(\us, \ut) \oplus \boxdot_{(\ua \oplus \us, \ub \oplus \ut)}(\us', \ut')  \\
        &= f_i(\us,\ut) \oplus f_i(\us', \ut') \\
        &= f_i(\us \oplus \us', \ut \oplus \ut')
    \end{align*}
    for all sufficiently small $(\us',\ut')$.
    Hence, $A'_i$ is also open in $A_i \ominus (\ua, \ub)$.
    Since $A_i \ominus (\ua, \ub)$ is definably connected, we obtain $A'_i = A_i \ominus (\ua, \ub)$.
    Since $(\ua, \ub)$ was arbitrary, we obtain
    $$
    \boxdot(\ua \oplus \us,  \ub \oplus \ut) \ominus \boxdot(\ua, \ub) = \boxdot(\ua' \oplus \us, \ub' \oplus \ut) \ominus \boxdot(\ua', \ub')
    $$
    for all $(\ua, \ub), (\ua', \ub'), (\us, \ut)$ with $(\ua, \ub), (\ua', \ub'), (\ua \oplus \us, \ub \oplus \ut), (\ua' \oplus \us, \ub' \oplus \ut) \in A_i$.
    As $A_1 \cup \dots \cup A_q$ is dense in $B_\epsilon(\uzero) \times B_\epsilon(\uzero)$, we obtain, by continuity of all functions involved, a partition of $B_\epsilon(\uzero) \times B_\epsilon(\uzero)$ as above Claim \ref{lemma_is_local_endo}.
    From that point on, one can follow the proof of Theorem \ref{theorem_not_lin} to obtain a contradiction to $K$ being a field.
    Hence our initial assumption that $A$ is dense in $B_\epsilon(\uzero) \times B_\epsilon(\uzero)$ must be wrong, so there is an open subset of $B_\epsilon(\uzero) \times B_\epsilon(\uzero)$ on which $\boxdot$ is not locally linear at any point.
    After pulling it back, this open subset is the $U$ from the statement of this lemma.
\end{proof}
\end{lemma}

\noindent If $G$ is one-dimensional, we can furthermore ensure that $U \subseteq G$.
To see this, we first show that we may essentially assume that $G$ is an interval with the order topology.
The following is essentially due to Razenj in \cite{Raz91}, but that paper does not assume that $\mm$ is an o-minimal expansion of an ordered group, so it only gives a definable order on any one-dimensional definable group.

\begin{lemma} \label{lemma_one_dim_non_linear_group}
    Suppose $\dim(G) = 1$.
    Then there is an $M$-definable homeomorphism between $G$ and an open interval.
\begin{proof}
    By Corollary 2.4 in \cite{PS05}, $G$ is definably connected.
    By Proposition 2 and Proposition 4 in \cite{Raz91}, this implies that $G \setminus \set{g}$ has exactly two definably connected components for any $g \in G$.
    By the definition of the $t$-topology and the one-dimensionality of $G$, there are finitely many maps $\phi_1, \dots, \phi_m$ such that each $\phi_k \colon S_k \to I_k$ is a homeomorphism between an open subset $S_k \subseteq G$ and an open interval $I_k$, and $G = \bigcup_{k=1}^m S_k$.
    If $m = 1$, there is nothing to show.
    Suppose that $m \geq 2$.
    After reindexing the maps $\phi_k$ and using the definable connectedness of $G$, we may assume without loss of generality that $S_1 \cap S_2 \neq \varnothing$.
    We may also assume that $S_1 \not\subseteq S_2$ and $S_2 \not\subseteq S_1$.
    By Lemma 1 of \cite{Raz91} and its proof, the set $S_1 \cap S_2$ has either one or two definably connected components, and the following hold:
    \begin{enumerate}[(i)]
        \item If $S_1 \cap S_2$ has two definably connected components, then $G = S_1 \cup S_2$.
        \item Let $J \subseteq \phi_1(S_1 \cap S_2)$ be the image under $\phi_1$ of a definably connected component of $S_1 \cap S_2$.
        Then $(\phi_2 \circ \phi_1^{-1})_{\restriction J}$ is a homeomorphism between intervals, which either preserves or reverses the order, and the following hold:
        \begin{enumerate}[(a)]
            \item If $(\phi_2 \circ \phi_1^{-1})_{\restriction J}$ is order preserving, then either $J$ is an initial segment of $I_1$ and $\phi_2 \circ \phi_1^{-1}(J)$ is an end segment of $I_2$, or $J$ is an end segment of $I_1$ and $\phi_2 \circ \phi_1^{-1}(J)$ is an initial segment of $I_2$.
            \item If $(\phi_2 \circ \phi_1^{-1})_{\restriction J}$ is order reversing, then either $J$ and $\phi_2 \circ \phi_1^{-1}(J)$ are initial segments of $I_1$ and $I_2$, or $J$ and $\phi_2 \circ \phi_1^{-1}(J)$ are end segments of $I_1$ and $I_2$.
        \end{enumerate}
    \end{enumerate}
    If $S_1 \cap S_2$ has two definably connected components, then one can use (i) and (ii) to verify that $G \setminus \set{g}$ has only one definably connected component, contradicting the above.
    Hence $S_1 \cap S_2$ has only one definably connected component, which we denote by $S$.
    After possibly reindexing $\phi_1$ and $\phi_2$ or replacing one of the maps $\phi_k$ with $-\phi_k$, we may assume that $\phi_1(S)$ is an end segment of $I_1$, that $\phi_2(S)$ is an initial segment of $I_2$, and that $(\phi_2 \circ \phi_1^{-1})_{\restriction \phi_1(S)}$ is order preserving.
    Take any $g \in S$.
    Using the ambient group structure on $M$, we may translate $I_1$ and $I_2$ so that $\phi_1(g) = \phi_2(g)$.
    It is now easy to check that we can replace the two maps $\phi_1$ and $\phi_2$ by a single map, namely the inverse of
    $$
    (I_1 \cap (-\infty, \phi_1(g)]) \cup (I_2 \cap (\phi_1(g), \infty)) \to S_1 \cup S_2;\quad x \mapsto \begin{cases}
        \phi_1^{-1}(x) & \text{if $x \leq \phi_1(g)$} \\
        \phi_2^{-1}(x) & \text{if $x > \phi_1(g)$,}
    \end{cases}
    $$
    which is a homeomorphism between $S_1 \cup S_2$ and an open interval.
    Thus, by induction on $m$, we can reduce the atlas to a single map.
\end{proof}
\end{lemma}

\noindent In contrast to Theorem \ref{lemma_group_lin_iso}, Lemma \ref{lemma_one_dim_non_linear_group} is wrong if one replaces ``$M$-definable homeomorphism'' with ``$\varnothing$-definable homeomorphism''.
The easiest counterexample is the structure $(\RR, 0, +, \oplus)$, where
$$
x \oplus y =
\begin{cases}
    0 & \text{if $x + y = 0$} \\
    y & \text{if $x = 0$} \\
    x & \text{if $y = 0$} \\
    \dfrac{x \cdot y}{x + y} & \text{otherwise.}
\end{cases}
$$
Then $(\RR, 0, \oplus)$ is a group whose underlying set is an interval, but the $t$-topology is induced by an order $\prec$ such that $a_1 \prec a_2 \prec 0 \prec a_3 \prec a_4$ for any $a_1, a_2, a_3, a_4$ with $a_2 < a_1 < 0 < a_4 < a_3$.

\begin{lemma} \label{lemma_binary_to_unary_nowhere_linear}
    Suppose $\dim(G) = 1$ and that $\mm \models T$ is not linear.
    Then there is an $M$-definable function $f \colon U \subseteq G \to G$ such that $U$ is a non-empty open definably connected neighborhood of $e$, and $f$ is nowhere locally linear.
\begin{proof}
    By Lemma \ref{lemma_not_loc_lin_fkt}, there is an $M$-definable nowhere locally linear function $f \colon U \to G$, where $U \subseteq G^2$ is non-empty and open. By Lemma \ref{lemma_one_dim_non_linear_group}, we may assume that the group $G$ is an interval and that its $t$-topology is the order topology. In particular, all operations on $G$ are continuous with respect to the order topology.
\begin{subclaim}
    Let $A_1$, $A_2$, and $A_3$ denote the sets of all $(a_1, a_2) \in U$ for which, respectively, the functions
    $$
    t \mapsto f(a_1 \oplus t, a_2), \quad t \mapsto f(a_1, a_2 \oplus t), \quad \text{and} \quad t \mapsto f(a_1 \oplus t, a_2 \oplus t)
    $$
    are not locally linear at $e$. Then there is some $i$ such that $A_i$ has non-empty interior in $U$.
\begin{innerproof}
    If not, then the set $U \setminus (A_1 \cup A_2 \cup A_3)$ has non-empty interior. This means that we can choose two non-empty open subsets $B_0 \subseteq B$ of this set, as well as a non-empty open interval $I_0\subseteq G$ containing $e$ such that:
    \begin{enumerate}[(i)]
        \item $B_0, B$ are boxes. By this we mean that there are $(a_1, a_2) \in U$ and an open interval $I \subseteq G$ containing $e$ such that $B = (a_1, a_2) \oplus I^2$, and similarly for $B_0$.
        \item For all $(b_1, b_2) \in B$, the functions
        $$
        t \mapsto f(b_1 \oplus t, b_2), \quad t \mapsto f(b_1, b_2 \oplus t), \quad \text{and} \quad t \mapsto f(b_1 \oplus t, b_2 \oplus t)
        $$
        are linear on the open intervals $I \oplus (a_1 \ominus b_1)$, $I \oplus (a_2 \ominus b_2)$, and $(I \oplus (a_1 \ominus b_1)) \cap (I \oplus (a_2 \ominus b_2))$, respectively. Now the linearity follows from the local linearity of these functions at $e$ and a definable connectedness argument.
        \item We have $B_0 \oplus I_0^2 \oplus I_0^2 \oplus I_0^2 \subseteq B$ and for every $(b_1, b_2) \in B_0$ we have $B_0 \subseteq (b_1, b_2) \oplus I_0^2$.
    \end{enumerate}
    For all $(b_1, b_2) \in B_0 \oplus I_0^2 \oplus I_0^2$ and $s, t \in I_0$, define $F_\hor(b_2; s) := f(b_1 \oplus s, b_2) \ominus f(b_1, b_2)$ and $F_\ver(b_1; t) := f(b_1, b_2 \oplus t) \ominus f(b_1, b_2)$. By the linearity in (ii), $F_\hor(b_2; s)$ does not depend on $b_1$, and $F_\ver(b_1; t)$ does not depend on $b_2$. We also easily see that
    $$
    F_\ver(b_1; t) \oplus F_\hor(b_2 \oplus t; s) = f(b_1 \oplus s, b_2 \oplus t) \ominus f(b_1, b_2) = F_\hor(b_2; s) \oplus F_\ver(b_1 \oplus s; t).
    $$
    This implies that $\Delta(s, t) := F_\ver(b_1 \oplus s; t) \ominus F_\ver(b_1; t) = F_\hor(b_2 \oplus t; s) \ominus F_\hor(b_2; s)$ does not depend on the choice of $(b_1, b_2) \in B_0 \oplus I_0^2 \oplus I_0^2$. We view $\Delta$ as a function from $I_0^2$ to $G$. Note that by Remark \ref{remark_locally_lin_to_partial} the function $s \mapsto F_\hor(b_2; s)$ is a partial endomorphism of $G$ when restricted to $I_0$. Hence, the function $s \mapsto \Delta(s, t)$ is a partial endomorphism of $G$ for any fixed $t \in I_0$. Using the diagonal linearity from (ii), we obtain the following for any $(b_1, b_2) \in B_0$ and $s, t \in I_0$:
    \begin{align*}
        e &= \big(f(b_1 \oplus t \oplus s, b_2 \oplus t \oplus s) \ominus f(b_1 \oplus t, b_2 \oplus t)\big) \ominus \big(f(b_1 \oplus s, b_2 \oplus s) \ominus f(b_1, b_2)\big) \\
          &= \big(F_\hor(b_2 \oplus t; s) \oplus F_\ver(b_1 \oplus t \oplus s; s)\big) \ominus \big(F_\hor(b_2; s) \oplus F_\ver(b_1 \oplus s; s)\big) \\
          &= \underbrace{F_\hor(b_2 \oplus t; s) \ominus F_\hor(b_2; s)}_{=\,\Delta(s,t)}
           \oplus \underbrace{F_\ver(b_1 \oplus t \oplus s; s) \ominus F_\ver(b_1 \oplus s; s)}_{=\,\Delta(t,s)}.
    \end{align*}
    Thus $\Delta(s,t)\oplus\Delta(t,s)=e$.
    Since $G$ is torsion-free, this immediately implies $\Delta(t, t) = e$ for any $t\in I_0$.
    Since $s \mapsto \Delta(s, t)$ is a partial endomorphism defined on $I_0$, we also obtain $\Delta(t/n, t) = e$ for any $n \geq 1$.
    By o-minimality, we see that there is a non-empty open subset of $I_0$ on which $s \mapsto \Delta(s, t)$ is equal to $e$ (in the case $t = e$ this follows directly from the definition of $\Delta$).
    Since this map is a partial endomorphism, and these have open definably connected domains (i.e., intervals) by definition, we actually obtain $\Delta(s, t) = e$ for all $s, t \in I_0$.
    By the definition of $\Delta$, we see that the partial endomorphisms $\mu_\hor,\mu_\ver \colon I_0 \to G$, defined by $\mu_\hor(s) := F_\hor(b_2; s)$ and $\mu_\ver(t) := F_\ver(b_1; t)$, do not depend on $(b_1, b_2) \in B_0$. Hence, for all $(b_1,b_2) \in B_0$ and all $(s,t) \in I_0^2$, we can write
    $$
    f(b_1\oplus s,b_2\oplus t)\ominus f(b_1,b_2)=\mu_\hor(s)\oplus\mu_\ver(t).
    $$
    By (iii), this holds in particular whenever $(b_1,b_2),(b_1\oplus s,b_2\oplus t) \in B_0$. Thus $f$ is linear on $B_0$, contradicting that $f$ is nowhere locally linear.
\end{innerproof}
\end{subclaim}

    \noindent By the claim, there is some $i \in \set{1,2,3}$ such that $A_i$ has non-empty interior in $U$.
    For the sake of simplicity, assume $i = 1$; the other cases are analogous.
    Choose $(a_1,a_2) \in U$ and a non-empty open interval $J \subseteq G$ containing $e$ such that
    $
    (a_1\oplus t,a_2) \in A_1
    $
    for all $t \in J$.
    Define
    $$
        g \colon J \to G;\quad t \mapsto f(a_1 \oplus t, a_2).
    $$
    If $g$ were locally linear at some $t_0 \in J$, then the function
    $
    s \mapsto f(a_1\oplus t_0\oplus s,a_2)
    $
    would be locally linear at $e$, contradicting $(a_1 \oplus t_0, a_2) \in A_1$.
    Thus $g$ is an $M$-definable nowhere locally linear function defined on a non-empty open definably connected neighborhood of $e$.
\end{proof}
\end{lemma}

\section{The Non-Linear Case} \label{sec_non_lin}

We now deal with the case where $\mm := (M, <, \dots)$ is a non-linear o-minimal expansion of an ordered group and set $T = \Th(\mm)$.
We call any theory of such a form \textbf{non-linear}.
Apart from that, we work again in our usual setting from Section \ref{sec_setting}, i.e., $T$ is also model-complete and there is a definable $K$-vector space $\VV$ in $T$.
In particular, we use the symbols $+$, $-$, and $0$ for the operations on the $K$-vector space $\VV$, and not for the operations on the ambient group structure in $\mm$.

\subsection{Neostability} \label{sec_neo}

\noindent Since any o-minimal theory is \NIP{}, and therefore also \NATP{}, the following result from a previous paper yields that $T\theta^C$ is \NATP{} if $T$ satisfies \Hfour{}:

\begin{fact}[Theorem 4.1 in \cite{Chi26b}]
    Suppose that $T$ satisfies \Hfour{}. If $T$ has \NATP{}, then $T\theta^C$ also has \NATP{}.
\end{fact}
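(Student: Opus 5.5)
The statement is imported from \cite{Chi26b}, so the plan below only outlines how I would reprove it. Applied here, it combines with the \NIP{}-ness of o-minimal theories, hence their \NATP{}-ness. I would argue by contraposition: assume some $L_\theta$-formula $\varphi(x;\uy)$ has ATP in $T\theta^C$, and produce an $L$-formula with ATP in $T$. First, via the Ahn--Kim characterization \cite{AK24}, I would take the witnessing parameters $(\ub_\eta : \eta \in 2^{<\omega})$ to be strongly indiscernible over $\varnothing$ in a monster model of $T\theta^C$. Then I would normalize $\varphi$ using quantifier elimination (Fact \ref{theorem_qe}), applied to the Morleyization $L'$ of $T$. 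This makes $\varphi(x;\uy)$ equivalent to a Boolean combination of $L'$-atomic formulas evaluated at tuples of $R_C$-terms in $x$ and $\uy$.

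By the \commonBaseTheorem{} (Fact \ref{theorem_r_c_element}), the $R_C$-terms occurring in $\varphi$ can be rewritten over a common base. Choosing a $K$-basis $r_1,\dots,r_n$ of the finite-dimensional $K$-span they generate, I would obtain an $L$-formula $\psi(\ux_1,\dots,\ux_n;\uz)$ such that
$$
\varphi(x;\uy) \;\leftrightarrow\; \psi\big(r_1(x),\dots,r_n(x);\bar r(\uy)\big).
$$
Here $\bar r(\uy)$ is a finite tuple of $R_C$-images of $\uy$. The candidate witness of ATP in $T$ is then $\psi(\ux_1\dots\ux_n;\uz)$ with the parameters $\uc_\eta := \bar r(\ub_\eta)$.

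Antichain consistency transfers directly. A realization $a$ of $\bigwedge_{\eta\in X}\varphi(x;\ub_\eta)$ gives the realization $(r_1(a),\dots,r_n(a))$ of the corresponding conjunction of $\psi$-instances, and $T$ is the $L$-reduct of $T\theta^C$.

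The main obstacle is the converse. I must show that for comparable $\eta \trianglelefteq \nu$ the conjunction $\psi(\ux;\uc_\eta)\wedge\psi(\ux;\uc_\nu)$ is inconsistent in $T$, knowing only that the $\varphi$-conjunction is inconsistent in $T\theta^C$. Suppose instead that it has a realization $\ua$ in some $\mm'\succ\mm$. I would split on the linear-dependence type of $\ua$ over $\VV$, or more precisely over $\cl_\theta$ of the parameters. If $\ua$ can be chosen linearly independent over $\VV$, then the $L(M)$-formula $\psi(\ux;\uc_\eta)\wedge\psi(\ux;\uc_\nu)$ implies no finite disjunction of non-trivial linear dependencies. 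Lemma \ref{lemma_rc_li_plus_li} then yields $x$ with $\psi(r_1(x),\dots;\uc_\eta)\wedge\psi(r_1(x),\dots;\uc_\nu)$, contradicting inconsistency.

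The dependent case is handled via \Hfour{} in the form of Fact \ref{lemma_hfour_fml}. Any such dependence forces some non-trivial $K$-combination of the $\ux_i$ into a finite, uniformly bounded set algebraic over the parameters, and by Fact \ref{theorem_acl} this set lies inside $\cl_\theta(\uc_\eta\uc_\nu)$. I would then replace $\psi$ by finitely many refined formulas, one for each dependence pattern, in which the dependent coordinates are substituted by $L$-definable functions of the parameters and the remaining coordinates are independent. By strong indiscernibility and a Ramsey argument, a single pattern works uniformly along the whole tree, and pigeonhole lets me pick one refined formula that is consistent on all antichains. For that formula the independent case applies, which gives path inconsistency in $T$. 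Hence $T$ has ATP, contradicting \NATP{}. I expect the uniformization of the dependence pattern across the tree to be the delicate step.
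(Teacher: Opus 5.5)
Your argument has two genuine gaps.

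\textbf{The normal form is not available.} The equivalence $\varphi(x;\uy)\leftrightarrow\psi(r_1(x),\dots,r_n(x);\bar r(\uy))$ fails in general, and everything after it depends on it.
\begin{itemize}
\item Fact \ref{theorem_qe} needs a language $L'$ with $\spanA{A}{L'}=\acl_L(A)$. The Morleyization only adds relation symbols, so there $\spanA{A}{L'}$ is just the $L$-substructure generated by $A$.
\item Such an $L'$ can exist only if $\acl_L=\dcl_L$. This fails, for instance, for $\ACF{}_p$ with $\FF_q$ named. That theory is stable and satisfies \Hfour{} (Example \ref{examples_hfour}), so the statement must cover it.
\item Worse, even where quantifier elimination in $L'\cup\LRC$ holds, atomic formulas contain mixed terms $r(t(x,\uy))$ with $t$ an $L'$-term. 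The \TPtwo{} witness of Example \ref{example_tpwo}(i) is essentially $w'_1<\theta(z+w)<w'_2$, with $+$ the field addition.
\item There $z+w\in\acl_L(zw)$ is in general linearly independent in $\VV=\rr_{>0}$ from the $R_C$-images of $z$ and of $w$. So $\theta(z+w)$ is not determined by the $L$-type of those images.
\end{itemize}
The $L_\theta$-type of $x$ over $\ub$ depends on all of $\cl_\theta(x\ub)$. A correct translation into $T$ therefore needs extra object variables for these mixed closure elements and their $R_C$-images, existentially quantified where they are merely algebraic. Lifting a $T$-realization back to $T\theta^C$ then means defining $\theta$ on them, which your setup never addresses.

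\textbf{The comparable case is only sketched, and the sketch does not work.} This is the actual content of the theorem.
\begin{itemize}
\item Dependencies of a realization of $\psi(\ux;\uc_\eta)\wedge\psi(\ux;\uc_\nu)$ can be forced by the conjunction alone. For example, $z_1\le x\le z_2$ instantiated at $cd$ and at $de$, with $c<d<e$, forces $x=d$, although each instance has independent realizations.
\item Such dependencies live over $\uc_\eta\uc_\nu$ jointly. They cannot be built into a one-parameter refinement $\psi'(\ux;\uz)$. Likewise, making single instances independent does not put the pair into your independent case.
\item A forced relation $\sum_i\lambda_i x_i=e$ with $e\in\acl_L(\uc_\eta\uc_\nu)$ is harmless in $T$. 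In $T\theta^C$, however, it becomes $r(x)=e$ with $0\neq r=\sum_i\lambda_i r_i\in R_C$.
\item Solving $r(x)=e$ requires $e\in\Image(r)$ and compatibility with the kernels $\Ker(f^C)$. These are not $L$-conditions on the parameters. Lemma \ref{lemma_rc_li_plus_li} does not handle such constraints; you would need the $C$-sequence-systems of Theorem \ref{theorem_big_characterization}.
\end{itemize}
So consistency of the $\psi$-pair in $T$ does not by itself give consistency of the $\varphi$-pair in $T\theta^C$. Your Ramsey/pigeonhole uniformization is asserted, not argued, and does not bridge this.

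What is missing is an amalgamation argument, roughly as follows.
\begin{itemize}
\item Encode the $L$-type of the closure data of $x\ub_\eta$ over $\cl_\theta(\ub_\eta)$.
\item Use \NATP{} of $T$, together with strong indiscernibility, to get an $L$-realization for a comparable pair.
\item Use \Hfour{} to choose this realization as generic as the forced dependencies allow.
\item Construct a $C$-endomorphism on the generated structure that extends the $\theta$-data of both instances, and embed it into an existentially closed model. This requires controlling how $\cl_\theta(\ub_\eta)$ and $\cl_\theta(\ub_\nu)$ sit relative to each other.
\end{itemize}
Since the paper only cites Theorem 4.1 of \cite{Chi26b}, this step is what you would have to supply.
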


\noindent Since both $\operatorname{NTP}_2$ and $\operatorname{NSOP}_1$ imply \NATP{}, this is only interesting if $T\theta^C$ has \TPtwo{} and $\operatorname{SOP}_1$. Because we assume $T$ to be o-minimal, the theory $T$, and therefore also $T\theta^C$, have $\operatorname{SOP}$, which implies having $\operatorname{SOP}_1$. Thus, the goal of this section is to show that $T\theta^C$ has \TPtwo{}.

\begin{definition}
    A formula $\varphi(\ux; \uy)$ has the \textbf{\boldmath tree property of the second kind ${(\operatorname{TP}_2)}$} in a theory $T$ if there are parameters $(\ua_{i, j} : i,j \in \omega)$ in a model of $T$ such that
    \begin{enumerate}[(i)]
        \item for all $\sigma \in \omega^\omega$, the type $\set{\varphi(\ux; \ua_{i, \sigma(i)}) : i \in \omega}$ is consistent.
        \item there is a $k \geq 2$ such that for all $n \in \omega$, the type $\set{\varphi(\ux; \ua_{n, i}) : i \in \omega}$ is $k$-inconsistent.
    \end{enumerate}
\end{definition}

\noindent Note that the following lemma only assumes the general assumptions of our construction, namely, those from Section \ref{sec_setting}.

\begin{lemma} \label{lemma_tptwo}
    Suppose that there is an $L$-formula $\varphi(x; \uz; \uw)$ algebraic in $x$ and a sequence of tuples $(\ud_i : i \in \omega)$ in a model of $T$ such that the type
    $$
    \set{\varphi(x_i; \uz; \ud_i) : i \in \omega}
    $$
    does not imply any finite disjunction of non-trivial linear dependencies in $(x_i : i \in \omega)$ over $\VV$.
    Then the model companion of $T^C_\theta$ has \TPtwo{} if it exists and either of the following holds:
    \begin{enumerate}[(i)]
        \item $R_C$ is not a field.
        \item $C$ is non-trivial, and there is another formula $\psi(\uy; \uw')$ and a sequence $(\ub_j : j \in \omega)$ such that:
        \begin{enumerate}[(a)]
            \item for $i \neq j$, the formulas $\psi(\uy; \ub_i)$ and $\psi(\uy; \ub_j)$ define disjoint subsets of $\VV^{|\uy|}$.
            \item for any $j \in \omega$, the formula $\psi(\uy; \ub_j)$ implies no finite disjunction of non-trivial linear dependencies in $\uy$ over $\VV$.
        \end{enumerate}
    \end{enumerate}
\begin{proof}
    We work in some existentially closed model $(\mm, \theta)$ of $T_\theta^C$ that contains the parameters from the statement.
    We start with case (i).
    If $R_C$ is not a field, then there is some $f \in \Kp{0<C}$ such that both $\Image(f)$ and $\Ker(f)$ are infinite sets (see Fact \ref{lemma_both_im_ker_inf}).
    We show that the formula
    $$
    \exists x \in \VV : \varphi(x; \uz; \uw) \wedge f[\theta](x) = y
    $$
    witnesses \TPtwo{} together with the array $(\ud_i v_j : i, j \in \omega)$, where the $v_j$'s are distinct elements in $\Image(f)$.
    For each $j \in \omega$, choose $u_j \in \VV$ with $f[\theta](u_j) = v_j$.
    Fix $\sigma \in \omega^\omega$.
    Notice that for each $q \in \omega$, the formula
    $$
    \exists\uz :\bigwedge\nolimits_{i =0}^q \varphi(x_i^0 + u_{\sigma(i)}; \uz; \ud_i)
    $$
    implies no finite disjunction of non-trivial linear dependencies in $\xvec{}_0 \dots \xvec{}_{q}$ over $\VV$.
    Now, by Theorem \ref{theorem_big_characterization}, the sentence
    $$
    \exists x_0\dots x_{q} \in \VV : \Big( \exists \uz: \bigwedge\nolimits_{i=0}^q \varphi(\theta^0(x_i) + u_{\sigma(i)}; \uz; \ud_i)\Big) \wedge \Big( \bigwedge\nolimits_{i= 0}^q f[\theta](x_i) = 0 \Big)
    $$
    holds in $(\mm, \theta)$ (notice that the second conjunction is a $C$-sequence-system over $(\VV, \theta)$).
    Since $f[\theta](u_j) = v_j$, we see that $f[\theta](x_i) = 0$ implies $f[\theta](\theta^0(x_i) + u_{\sigma(i)}) = v_{\sigma(i)}$.
    Hence, there is an $\ua \in M$ such that
    $$
    (\mm, \theta) \models \bigwedge\nolimits_{i = 0}^q \exists x \in \VV : \varphi(x; \ua; \ud_i) \wedge f[\theta](x) = v_{\sigma(i)}.
    $$
    Using compactness, we conclude that $\set{ \exists x \in \VV : \varphi(x; \uz; \ud_i) \wedge f[\theta](x) = v_{\sigma(i)} : i \in \omega}$ is consistent for every $\sigma \in \omega^\omega$.
    On the other hand, for each fixed $i \in \omega$, the type
    $$
    \set{\exists x \in \VV : \varphi(x; \uz; \ud_i) \wedge f[\theta](x) = v_j : j \in \omega}
    $$
    must be $(N+1)$-inconsistent for the $N$ that bounds $|\varphi(\VV; \ua; \ud)|$ for all $\ua, \ud \in \mm' \models T$ (recall that $\varphi(x; \uz; \uw)$ is algebraic in $x$ and that $T$ eliminates $\exists^\infty x \in \VV$ by Fact \ref{obser_no_elim_exist_inf_no_model_companion}).
    We conclude that the formula $\exists x \in \VV : \varphi(x; \uz; \uw) \wedge f[\theta](x) = y$ witnesses \TPtwo{}.

    In case (ii), we set $\ux = (x_k : 1 \leq k \leq m)$ for $m := |\uy|$ and similarly show that the $L_\theta$-formula
    $$
    \exists \ux \in \VV : \bigwedge\nolimits_{k=1}^m \varphi(x_k; \uz; \uw_k) \wedge \psi(\theta(x_1)\dots \theta(x_{m}); \uw')
    $$
    witnesses \TPtwo{} together with the parameters $((\ud_{m \cdot i + k} : 1 \leq k \leq m)\ub_j : i, j \in \omega )$.
    First notice that for any path $\sigma \in \omega^\omega$, the formula
    \begin{align}
        \exists \uz : \bigwedge\nolimits_{i =0}^q \Big( \bigwedge\nolimits_{k=1}^m \varphi(x_{i, k}^0; \uz; \ud_{m \cdot i + k}) \wedge \psi(x^1_{i, 1} \dots x^1_{i, m}; \ub_{\sigma(i)})\Big) \label{tag_formula_to_show_li}
    \end{align}
    implies no finite disjunction of non-trivial linear dependencies in $\uxvec{}_{0}\dots\uxvec{}_{q}$ over $\VV$.
    To see this, move the $\psi(\dots)$ outside of the $\exists \uz : \dots$ and observe that the resulting formula is a conjunction of two formulas in disjoint tuples of free variables, each of which implies no finite disjunction of non-trivial linear dependencies.
    Now, with Lemma \ref{lemma_rc_li_plus_li} and compactness, one can easily show the consistency of paths.

    The $k$-inconsistency of rows follows as in case (i), since the subformula $\bigwedge\nolimits_{k =1}^m\! \varphi(x_{i, k}^0; \ua; \ud_{m \cdot i + k})$ is algebraic in the tuple $(x_{i, k}^0 : 1 \leq k \leq m)$ and $\psi(\uy; \ub_j)$ defines disjoint sets for different $j$'s.
\end{proof}
\end{lemma}

\noindent Notice that the proof above only uses the characterization of existentially closed models, so it does not require that $T$ satisfies \Hfour{}.
We are not aware of any $T$ for which the $\varphi(x; \uz; \uw)$ and $(\ud_i : i \in \omega)$ exist as described, but the $\psi(\uy; \uw')$ and $(\ub_j : j \in \omega)$ from (ii) do not.

\begin{example} \label{example_tpwo}
    The following theories have \TPtwo{} for any  kernel configuration $C \in \Cc$ that is non-trivial.
    \begin{enumerate}[(i)]
        \item The theory $\RCF\theta^C$, where $(\VV^\rr, 0, +, (q\cdot)_{q\in \QQ}) := (\rr_{>0}, 1, \cdot, (x\mapsto x^q)_{q\in \QQ})$ is the vector space induced by multiplication.
        To see this, one can apply (ii) of Lemma \ref{lemma_tptwo}, together with $\varphi(x; z; w) := x = z + w$ (where $+$ is the actual addition of a real closed field), \hbox{$(d_i : i \!\in\! \omega) := (i : i \!\in\! \omega)$}, $\psi(y;w'_1w'_2) := w'_1 < y < w'_2$, and $(\ub_j : j \!\in\! \omega) := ((j, j+1) : j \!\in\! \omega)$.
        \item The theory $\ACF{}_p\theta^C$, where $p > 0$ and $\VV$ is the $\FF_p$-vector space given by addition.
        Here we take $\varphi(x; z; w) := x = z \cdot w$, $(d_i : i \in \omega)$ to be any $\FF_p$-linearly independent subset of some algebraically closed field of characteristic $p$, $\psi(y_1y_2; w') := y_2 = y_1^2 + w'$, and $(b_j : j \in \omega)$ to be any sequence of distinct elements.
    \end{enumerate}
\end{example}

\noindent Since $\RCF$ is distal, we see that distality and $\operatorname{NTP}_2$ are, in general, not preserved by our construction.
Similarly, as $\ACF{}_p$ is stable, we see that stability and simplicity are generally not preserved by our construction.

Clearly, the theory $\RCF{}$ is a non-linear o-minimal expansion of the theory of ordered groups.
We now prove that the example above generalizes to all other non-linear o-minimal expansions of ordered groups.

\begin{lemma} \label{lemma_sequence_mix_ff}
    Assume that $\mm$ is sufficiently saturated.
    Then there are an $M$-definable function $f \colon U \subseteq \VV^n \to \VV$ and a sequence $(\uu_i)_{i \in \omega}$ in $U$ such that the following partial type is consistent in some $\mm' \succ \mm$:
    $$
    \Sigma(\ux) := \set{\text{``the sequence $(f(\uu_i + \ux) : i \in \omega)$  is linearly independent over $\VV$''}}.
    $$
\begin{proof}
    By Lemma \ref{lemma_not_loc_lin_fkt} with $G = \VV$ and Remark \ref{remark_locally_lin_to_partial}, there is an $M$-definable function $f \colon U \to \VV$, where $U \subseteq \VV^n$ is $M$-definable, non-empty, and open, such that, for every $\uu \in U$, no restriction of the map $f_{\uu} \colon U - \uu \to \VV; \ut \mapsto f(\uu + \ut) - f(\uu)$ to an open neighborhood of $\uzero$ is a partial homomorphism of $(\VV, 0, +)$.

    Suppose, toward a contradiction, that $\Sigma(\ux)$ is inconsistent for every sequence $(\uu_i : i \in \omega)$ in $U$.
    Let $A := \set{(\uu_1, \dots, \uu_q, \uv) \in \VV^{(q+1)\cdot n} : \bigwedge\nolimits_{i=1}^q \uu_i + \uv \in U}$, where $q$ will be chosen below.
    Since $U$ is open and $+$ is continuous, the set $A$ is also open, where all topological notions are with respect to the $t$-topology.
    By compactness and the definition of linear independence over $\VV$, we obtain
    \begin{align}
        \mm \models \forall (\uy_1, \dots, \uy_q, \ux) \in A: \bigvee\nolimits_{k=1}^m \sum\nolimits_{i=1}^q \lambda_{k, i} \cdot f(\uy_i + \ux) + g_k(\uy_1, \dots, \uy_q) = 0, \label{tag_eqeqeqeq}
    \end{align}
    where $q, m \geq 1$, each $g_k$ is an $M$-definable function, and each $(\lambda_{k, 1}, \dots, \lambda_{k, q})$ lies in $K^q \setminus \set{\uzero}$.
    To be more precise, if no statement of the form (\ref{tag_eqeqeqeq}) holds, then there is a sequence of tuples $(\uu_i : i \in \omega)$ such that the partial type
    $$
    \set{\text{``the sequence $(f(\uu_i + \ux) : i \in \omega)$ is linearly independent over $\VV \cap \dcl(\uu_i : i \in \omega)$''}}
    $$
    is realized in some elementary extension. Using the same arguments as at the end of the proof of Lemma 3.7 in \cite{Chi25b}, we see that this actually implies that $\Sigma(\ux)$ is consistent in some elementary extension (also note that we have $\acl = \dcl$ in o-minimal theories).
    Now, at least one of the equations in (\ref{tag_eqeqeqeq}) defines a subset of $A$ with non-empty interior.
    So, after shrinking $A$, we may assume that $A$ is still open and
    $$
    \mm \models \forall (\uy_1, \dots, \uy_q, \ux) \in A: \sum\nolimits_{i=1}^q \lambda_{i} \cdot f(\uy_i + \ux) + g(\uy_1, \dots, \uy_q) = 0
    $$
    holds, where $\lambda_i := \lambda_{1, i}$ and $g := g_{1}$.
    After reindexing the $\uy_i$ and scaling, we may also assume that $\lambda_1 = 1$.

    Fix some $(\uu_1, \dots, \uu_q, \uv) \in A$.
    Since $A$ is open, we can find an open subset $B \subseteq \VV^n$ containing $\uzero$ such that $(\uu_1, \dots, \uu_q, \uv) + B^{q+1} \subseteq A$.
    For any $\ut \in B$, we obtain
    \begin{align*}
        f_{\uu_1+\uv}(\ut) &:= f(\uu_1 + (\uv + \ut)) - f(\uu_1 + \uv)\\
        &= -\sum\nolimits_{i=2}^q \lambda_{i} \cdot f(\uu_i + (\uv + \ut)) - g(\uu_1, \dots, \uu_q) + \sum\nolimits_{i=2}^q \lambda_{i} \cdot f(\uu_i + \uv) + g(\uu_1, \dots, \uu_q) \\
        &= \sum\nolimits_{i=2}^q \lambda_{i} \cdot (f(\uu_i + \uv) - f(\uu_i + (\uv + \ut))).
    \end{align*}
    Similarly, $f_{\uu_1 + \uv}(\ut) = f((\uu_1 + \ut) + \uv) - f(\uu_1 + \uv) = g(\uu_1, \dots, \uu_q) - g(\uu_1 + \ut, \uu_2, \dots, \uu_q)$.
    Thus, for any $\us, \ut \in \VV^n$ with $\us, \ut, \us + \ut \in B$, we have
    \begin{align*}
        f_{\uu_1+\uv}(\us + \ut) &:= f((\uu_1 + \ut) + (\uv + \us)) - f(\uu_1 + \uv)\\
        &= -\sum\nolimits_{i=2}^q \lambda_{i} \cdot f(\uu_i + (\uv + \us)) - g(\uu_1 + \ut, \uu_2, \dots, \uu_q) \\
        & \hspace{160pt}+ \sum\nolimits_{i=2}^q \lambda_{i} \cdot f(\uu_i + \uv) + g(\uu_1, \dots, \uu_q) \\
        &= \underbrace{\sum\nolimits_{i=2}^q \lambda_{i} \cdot (f(\uu_i + \uv) - f(\uu_i + (\uv + \us)))}_{= f_{\uu_1+\uv}(\us)} + \underbrace{g(\uu_1, \dots, \uu_q) - g(\uu_1 + \ut, \uu_2, \dots, \uu_q)}_{= f_{\uu_1+\uv}(\ut)}.
    \end{align*}
    Hence $f_{\uu_1+\uv}$ restricts to a partial homomorphism on some open neighborhood of $\uzero$, contradicting the choice of $f$.
    We conclude that $\Sigma(\ux)$ is consistent for some sequence $(\uu_i : i \in \omega)$ in $U$.
\end{proof}
\end{lemma}

\begin{corollary} \label{corollary_tP_two}
    Assume that $T$ is non-linear. The model companion of $T^C_\theta$ has \TPtwo{} if it exists and $C$ is non-trivial.
\begin{proof}
    Let $f$ and the sequence $(\uu_i : i \in \omega)$ be as in the conclusion of Lemma \ref{lemma_sequence_mix_ff}.
    We apply Lemma \ref{lemma_tptwo} with the formula $\varphi(x; \uz; \uw) := x = f(\uz + \uw)$ and the sequence $(\uu_i : i \in \omega)$.
    Since we are working in an o-minimal theory, we can apply case (ii) of Lemma \ref{lemma_tptwo}, with the formulas $\psi(y; \ub_j)$ defining disjoint $\epsilon$-balls in $\VV$.
\end{proof}
\end{corollary}

\subsection{Failure of Exchange for the Algebraic Closure}

\noindent Assuming that $T$ satisfies \Hfour{} and that $C$ is non-trivial, we have already seen that $\cl_\theta = \acl_{L_\theta}$ can have the exchange property in $T\theta^C$ only if $\acl_L$ has the exchange property in $T$, $\dim(\VV) = 1$, and $R_C$ is a field (see Fact \ref{theorem_nes_cond_fixed}).
We have also seen that these conditions are necessary but not sufficient.
The counterexample for $\RCF\theta^C$ was given in Example 4.21 in \cite{Chi25b}.
In this section, we generalize this to all non-linear o-minimal expansions of an ordered group.

\begin{lemma} \label{lemma_fkn_stupid_function}
    Assume that $\dim(\VV) = 1$, $\MM \models T$ is a monster model, and let $A \subset M$ be small.
    Let $f \colon U \to \VV$ be a nowhere locally linear $A$-definable function, where $U \subseteq \VV$ is non-empty and open.
    Then there is an $r \in \VV \setminus \acl_L(A)$ such that the formula $f(x_1) + f(x_2) = r$ does not imply any finite disjunction of non-trivial linear dependencies in $(x_1, x_2)$ over $\VV$.
\begin{proof}
    By Lemma \ref{lemma_one_dim_non_linear_group}, after adding to $A$ the parameters needed to define the homeomorphisms from that lemma, we may assume that $\VV$ is an interval with the order topology.
    By o-minimality, after shrinking $U$, we may assume that $U$ is an open interval and that $f$ is continuous and strictly monotone (if $f$ were constant on some interval, then it would be locally linear there).
    In particular, $f \colon U \to f(U)$ is a bijection with continuous inverse.
    
    Now take any $r \in \Image(f) + \Image(f) \setminus \acl_L(A)$ and suppose that the formula $f(x_1) + f(x_2) = r$ implies a finite disjunction of non-trivial linear dependencies in $(x_1, x_2)$ over $\VV$.
    This means that we have
    \begin{align*}
        \MM \models \forall x_1, x_2 \in U : f(x_1) + f(x_2) = r \rightarrow \bigvee\nolimits_{k=1}^m \lambda_{k, 1} \cdot x_1 + \lambda_{k, 2} \cdot x_2 = v_k
    \end{align*}
    where each $(\lambda_{k, 1}, \lambda_{k, 2}) \in K^2 \setminus \set{\uzero}$ and each $v_k \in \VV$.
    We may also assume that this disjunction contains no unnecessary disjuncts.
\begin{subclaim}
    There are two non-empty open intervals $I, J \subseteq \VV$ and two $M$-definable functions $g \colon I \to \VV$ and $h \colon J \to \VV$ such that
    $$
    \MM \models \forall (x, y) \in I \times J : f(x + y) = g(x) + h(y).
    $$
\begin{innerproof}
    Note that $f(x_1) + f(x_2) = r$ implies $x_2 = f^{-1}(r - f(x_1))$, and that each $v_k$ is $Ar$-definable.
    Let $I_r$ be the leftmost non-empty definably connected component of the interior of the set
    $$
    \set{u \in U : f^{-1}(r - f(u)) \text{ is defined and } \lambda_{k, 1} \cdot u + \lambda_{k, 2} \cdot f^{-1}(r - f(u)) = v_k},
    $$
    for some fixed $k$ for which this set has interior.
    Because $f^{-1}(r - f(x_1))$ is non-constant in $x_1$, we can assume that both $\lambda_{k, 1}$ and $\lambda_{k, 2}$ are non-zero.
    In what follows, we set $\lambda := \lambda_{k, 1}$ and assume $\lambda_{k, 2} = 1$.
    It is clear that $I_r$ is an $Ar$-definable interval.
    Because $v_k$ is $Ar$-definable, there is an $A$-definable function $\tilv \colon S \to \VV$, with $r \in S$, such that $v_k = \tilv(r)$.
    We may shrink $S$ so that the following hold:
    \begin{enumerate}[(i)]
        \item $S$ is an open interval that still contains $r$.
        \item For every $s \in S$, the set $I_s$ is non-empty, where we define $I_s$ as the leftmost definably connected component of the interior of the set
        $$
        \set{u \in U : f^{-1}(s - f(u)) \text{ is defined and } \lambda \cdot u + f^{-1}(s - f(u)) = \tilv(s)}
        $$
        and as $\varnothing$ if no such definably connected component exists.
        \item The functions $s \mapsto \text{``left endpoint of $I_s$''}$ and $s \mapsto \text{``right endpoint of $I_s$''}$ are continuous on $S$.
    \end{enumerate}
    If there were no such interval satisfying (i) and (ii), then $r$ would lie in the boundary of the $A$-definable set $\set{s \in S : \text{``$I_s$ is non-empty''}}$, contradicting $r \not\in \acl_L(A)$.
    Since the points of discontinuity of the functions in (iii) are also $A$-definable, we can similarly assume that (iii) holds.
    
    Using (iii), we may further shrink $S$ so that there is an open interval $I_*$ such that $I_* \subseteq I_s$ for all $s \in S$.
    Note that both $I_*$ and $S$ may no longer be $A$-definable.
    Pick any $u_0 \in I_*$.
    Set $J := \tilv(S)$.
    Since, for every $s \in S$, the equality $\tilv(s) = \lambda \cdot u_0 + f^{-1}(s - f(u_0))$ holds and $f^{-1}$ is strictly monotone and continuous, the function $\tilv \colon S \to J$ is a bijection with continuous inverse $h \colon J \to S$.
    It is clear that $J$ is also an open interval.
    Set $I := -\lambda \cdot I_*$ and define $g \colon I \to \VV$ by $g(u) := -f(-\lambda^{-1} \cdot u)$.
    For any $u \in I$ and $v \in J$, we have $-\lambda^{-1} \cdot u \in I_*$, and therefore
    \begin{align*}
        &&v &= \lambda \cdot (-\lambda^{-1} \cdot u) + f^{-1}(h(v) - f(-\lambda^{-1} \cdot u)) \\
        \Leftrightarrow&&v &= -u + f^{-1}(g(u) + h(v)) \\
        \Leftrightarrow&&u + v &= f^{-1}(g(u) + h(v)) \\
        \Leftrightarrow&&f(u + v) &= g(u) + h(v),
    \end{align*}
    which completes the proof of this claim.
\end{innerproof}
\end{subclaim}
    \noindent Now take $u \in I$, $v \in J$, and some $\epsilon > 0$ such that $B_\epsilon(u) \subseteq I$ and $B_\epsilon(v) \subseteq J$.
    Here these open balls are taken with respect to the operations of $\VV$, not the ambient group.
    Given any $s, t \in B_\epsilon(0)$, we obtain
    \begin{align*}
        f_{u+v}(s)\ :=&\ f(u+v + s) - f(u+v)\\
        =&\ f((u+s) + v) - f(u + v) \\
        =&\ g(u+s) + h(v) - g(u) - h(v)\\
        =&\ g(u+s) - g(u),
    \end{align*}
    and similarly $f_{u+v}(t) = h(v+t)-h(v)$.
    On the other hand, we also obtain
    \begin{align*}
        f_{u+v}(s + t) &= f((u + s) + (v+t)) - f(u+v) \\
        &= g(u+s) + h(v+t) - g(u) - h(v) \\
        &= \underbrace{g(u+s) - g(u)}_{= f_{u+v}(s)} + \underbrace{h(v+t) - h(v)}_{= f_{u+v}(t)}.
    \end{align*}
    This shows that the function $t \mapsto f(u+v + t) - f(u+v)$ restricted to $B_\epsilon(0)$ is a partial endomorphism of $\VV$.
    By Remark \ref{remark_locally_lin_to_partial}, this implies that $f$ is locally linear at $u+v$, contradicting our assumption on $f$.
\end{proof}
\end{lemma}

\begin{theorem} \label{theorem_no_exchange}
    Suppose that $T$ is non-linear and satisfies \Hfour{}.
    Then the algebraic closure $\acl_{L_\theta} = \cl_\theta$ has the exchange property in $T\theta^C$ if and only if $C$ is trivial.
\begin{proof}
    Recall that, in the trivial case, $T$ and $T\theta^C$ are interdefinable.
    By Fact \ref{theorem_nes_cond_fixed}, we only need to show that $\cl_\theta = \acl_{L_\theta}$ does not have the exchange property when $\VV$ is one-dimensional and $C$ is non-trivial.
    Work in any sufficiently large monster $(\MM, \theta) \models T\theta^C$.
    By Lemma \ref{lemma_binary_to_unary_nowhere_linear} and Lemma \ref{lemma_fkn_stupid_function}, there is some parameter set $A$, an $L(A)$-definable function $f \colon U \to \VV$, and some $r \in \VV \setminus \acl_L(A)$ such that $f(x_1) + f(x_2) = r$ does not imply any finite disjunction of non-trivial linear dependencies in $(x_1, x_2)$ over $\VV$.
    By \Hfour{}, there is an $L(A)$-formula $\sigma(y)$ that holds if and only if $f(x_1) + f(x_2) = y$ does not imply any finite disjunction of non-trivial linear dependencies in $(x_1, x_2)$ over $\VV$.
    So, since $r \not\in \acl_L(A)$, we may assume without loss of generality that $r \not\in \cl_\theta(A)$.
    We can now use Lemma \ref{lemma_rc_li_plus_li} (and the fact that excluding a finite set introduces no linear dependencies) to find infinitely many realizations of the $L_\theta(Ar)$-formula
    $$
    f(x) + f(\theta(x)) = r.
    $$
    In particular, we can find some $v \in \VV \setminus \cl_\theta(Ar)$ that satisfies $f(v) + f(\theta(v)) = r$.
    This implies $r \in \cl_\theta(Av) \setminus \cl_\theta(A)$ and $v \not\in \cl_\theta(Ar)$.
    In general, $v$ and $r$ are tuples, but since $\dim(\VV) = 1$, we can pick entries $v_0 \in M$ and $r_0 \in M$ of these tuples such that $r_0 \in \cl_\theta(Av_0) \setminus \cl_\theta(A)$ and $v_0 \not\in \cl_\theta(Ar_0)$ (or identify $\VV$ with some interval using Lemma \ref{lemma_one_dim_non_linear_group}).
    Hence $\cl_\theta$ does not have the exchange property in $T\theta^C$.
\end{proof}
\end{theorem}

\subsection{A Criterion for the Non-Existence of the Model Companion}
\label{sec_reve_hfour}
\noindent Recall that, for general theories $T$, \Hfour{} implies the existence of the model companion of $T^C_\theta$ for all $C \in \Cc$. It is still open whether the converse is true.
We now show that the converse is true in a special case.

\begin{fact}[Theorem 5.5, Corollary 5.3, and Remark 5.7 in \cite{Chi26}] \label{theorem_kernel_generic}
    Let $V$ be a new predicate symbol intended for a subspace of $\VV$ and set
    $$
    T_V := T \cup \set{\text{``$V$ defines a vector subspace of $\VV$"}}.
    $$
    If $\rho$ is a polynomial and $(\mm, \theta) \models T_\theta^C$ is an existentially closed model, then one of the following holds:
    \begin{enumerate}[(i)]
        \item $(\mm, \Ker(\rho)) = (\mm, \set{0})$;
        \item $(\mm, \Ker(\rho)) = (\mm, \VV)$;
        \item $(\mm, \Ker(\rho))$ is an existentially closed model of $T_V$.
    \end{enumerate}
    Moreover, if $T$ satisfies \Hfour{}, then $T_V$ has a model companion, denoted $TV$, and every completion of $TV$ is of the form $\Th(\mm, \Ker(\rho))$ for some existentially closed model $(\mm, \theta) \models T^C_\theta$ and $\rho \in K[X]$.
\end{fact}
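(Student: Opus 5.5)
Since this is quoted from \cite{Chi26}, I only sketch how I would reprove it. Write $W := \Ker(\rho[\theta])$. By Bézout's identity I would replace $\rho$ by $\prod_{f \in F} f^{\min\set{e_f, C(f)}}$, where $e_f$ is the multiplicity of $f$ in $\rho$. The key input is $C$-image-completeness (Fact \ref{fact_c_image_comple}(ii)), which gives the following three cases.
\begin{enumerate}[(i)]
    \item If every $f \mid \rho$ has $C(f) = 0$, then $W = \set{0}$.
    \item If $C$ is algebraic and $\mipo(C) \mid \rho$, then $W = \VV$.
    \item Otherwise, Fact \ref{lemma_decomposition} splits $\VV = W \oplus W'$. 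Here $W = \Image(\pi)$ for an idempotent $\pi \in R_C$ (a sum of projections $\pi_{\Ker(f^C)}$, or $\pi_{\Ker(f^{q})}$-type pieces, handled via the \commonBaseTheorem{}). The complement $W' = \Image(1 - \pi)$, and both summands are infinite.
\end{enumerate}

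For case (iii), I would first prove a characterization of existentially closed models of $T_V$ in the style of Theorem \ref{theorem_big_characterization}. The claim: $(\mm, V)$ is existentially closed if and only if, for every $L(M)$-formula $\psi(\ux_1, \ux_2)$ implying no finite disjunction of non-trivial linear dependencies over $\VV$, some $\ux_1 \in V$ and $\ux_2$ satisfy $\psi$ with $\ux_2$ independent modulo $V$. A simpler equivalent form is the existence of $\ux_1 \in V$ and $\ux_2 \in$ a chosen complement. I would prove this by the usual amalgamation argument: embed an extension $(\mm', V')$ into an elementary extension of $\mm$ and encode witnesses linearly independently over $\VV$, using Fact \ref{lemma_iter_lin_indep}. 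Then I would verify this criterion in $(\mm, W)$. Given such $\psi$, substitute $\ux_1 = \pi(\ux)$ and $\ux_2 = (1-\pi)(\uy)$. Since $\pi$ and $1 - \pi$ (applied to separate variables) give $K$-linearly independent elements of $R_C$, Lemma \ref{lemma_rc_li_plus_li} produces the required realization in $\VV$.

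For the ``moreover'' part, I would axiomatize the class just characterized. Using \Hfour{} (with $\sigma_\psi$ as in Fact \ref{lemma_hfour_fml}), the quantification over ``$\psi$ implies no linear dependencies'' becomes first order, exactly as in Theorem \ref{theorem_first_oder}. For the completions, a back-and-forth or quantifier-elimination argument shows that a completion of $TV$ is determined by $\Th(\mm)$ together with the isomorphism type of $V \cap \acl_L(\varnothing)$ inside $\VV \cap \acl_L(\varnothing)$. It then remains to realize each such datum as $(\mm, \Ker(\rho))$. I would choose $C$ and $\rho$ non-trivially, say $C$ with $0 < C(X) < \infty$ and $\rho = X$, and prescribe $\theta$ on $\acl_L(\varnothing) \cap \VV$ (killing exactly the desired subspace, acting bijectively on a complement). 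Then I would extend to an existentially closed model.

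The main obstacle is this last realization step: the prescribed behaviour of $\theta$ on $\acl_L(\varnothing)$ must be compatible with being a $C$-endomorphism and must survive passage to an existentially closed extension. I would first try the kernel configuration with $\mipo(C) = X(X-1)$, taking $\theta$ to be the projection onto a chosen complement of the target subspace. This should also make the description of the completions hold up.
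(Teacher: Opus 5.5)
Your cases (i) and (ii) are fine, but case (iii) rests on a false claim. You assert that $W=\Ker(\rho[\theta])$ is the image of an idempotent $\pi\in R_C$ with complement $\Image(1-\pi)$, citing Fact~\ref{lemma_decomposition}. That fact only splits off $\Ker(F^C)$ for finite $F\subseteq\Kp{0<C<\infty}$. After your reduction, it therefore covers only the situation where every surviving irreducible factor $f$ of $\rho$ has $C(f)<\infty$ and exponent $\min\{e_f,C(f)\}=C(f)$.

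In all other cases no such idempotent exists. There is no $\pi_{\Ker(f^q)}$ in $R_C$ for $0<q<C(f)$ or for $C(f)=\infty$. On $\Ker(f^C)$, every $r\in R_C$ acts through the local ring $K[X]/(f^{C(f)})$. For transcendental $C$, on $\Image(F^C)$ it acts as $\rho_F[\theta]\circ\eta[\theta]^{-1}$ (\commonBaseTheorem{}), and $\Image(F^C)$ is a faithful $K[X]$-module in an existentially closed model. So an idempotent acts on each of these pieces as $0$ or $1$, and its image can never be $\Ker(f^q)$ with $0<q<C(f)$ or with $C(f)=\infty$.

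Two concrete examples:
\begin{itemize}
\item For $\mipo(C)=X^2$ and $\rho=X$, one has $\Ker(\theta)=\Image(\theta)$ and $R_C\cong K[X]/(X^2)$.
\item For $C=C_\infty$ and $\rho=X$, one has $R_C\cong K[X]$, and every nonzero $\eta[\theta]$ is surjective in an existentially closed model. So $\Ker(\theta)$ is not even the image of any element of $R_C$.
\end{itemize}
Every $\LKThe$-definable endomorphism lies in $R_C$ (Theorem 4.10 in \cite{Chi26}), so such kernels have no $\LKThe$-definable complement at all. Your verification via Lemma~\ref{lemma_rc_li_plus_li} with $\pi$ and $1-\pi$ does not reach them. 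When $C(f)=\infty$, that lemma cannot even be used to force variables into $W$.

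The missing idea is to argue directly from Theorem~\ref{theorem_big_characterization}:
\begin{itemize}
\item Split each variable that should lie in $\Ker(\rho)$, via B\'ezout, into components $x_f$, and impose the $C$-sequence-system $f^{q_f}[\theta](x_f)=0$ with $q_f=\min\{e_f,C(f)\}$. The parameter $0$ is always compatible, and there is no compatibility condition when $C(f)=\infty$.
\item Put the remaining variables $\ux_2$ into the $\li$ part if $C$ is transcendental, or decompose them along $\Kp{0<C<\infty}$ if $C$ is algebraic.
\item Independence of $\ux_2$ modulo $\Ker(\rho)$ is independence of $\rho[\theta](\ux_2)$. So finitely many conditions $\lambda\cdot\rho[\theta](\ux_2)\neq 0$ can be added as a bounded formula (after Euclidean division) without creating linear dependencies.
\end{itemize}

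Your characterization of existentially closed models of $T_V$ also needs adjusting. It should demand only finitely many conditions $\lambda\cdot\ux_2\notin V$ at a time, since full independence is an infinite conjunction when $K$ is infinite. The ``chosen complement'' variant is not equivalent, because a model of $T_V$ comes with no complement.

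In the ``moreover'' part, the realization step you flag is actually harmless:
\begin{itemize}
\item an idempotent $\theta$ is a $C$-endomorphism for $\mipo(C)=X(X-1)$;
\item $\Ker(\theta)\cap\acl_L(\varnothing)$ does not change in extensions;
\item model-completeness of $T$ makes the existentially closed extension elementary over $\mm$;
\item $\rho=X$ with $C(X)=1$ is precisely a case where your idempotent argument is valid.
\end{itemize}
The real unproved load there is the classification of completions of $TV$ by the $L$-isomorphism type of $(\acl_L(\varnothing),V\cap\acl_L(\varnothing))$. That needs its own back-and-forth argument using \Hfour{}.
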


\begin{theorem} \label{theorem_prove_conj}
    Suppose that $C \in \Cc$ is non-trivial, $T$ is a complete and model-complete o-minimal theory, $K = \QQ$, $\VV$ is an open interval, and all operations of $(\VV, 0, +, (q \cdot)_{q\in \QQ})$ are continuous with respect to the order topology.
    Then $T^C_\theta$ has a model companion if and only if $T$ satisfies \Hfour{}.
\begin{proof}
    One implication follows from the general result recalled above.
    For the other direction, assume toward a contradiction that $T$ does not satisfy \Hfour{} and that the model companion of $T_\theta^C$ exists.
    Denote this model companion by $T\theta^C_*$, and let $(\mm, \theta) \models T\theta^C_*$ be an $\aleph_1$-saturated model.
    In this setting, Block Gorman proved in \cite{Blo23} that $T$ satisfies \Hfour{} if and only if $T$ does not have the uniform endomorphism property; see Definition 2.2 and Lemma 2.9 in \cite{Blo23}.
    Thus $T$ has the uniform endomorphism property, which by the proof of Theorem 2.4 in \cite{Blo23} has the following consequences:
\begin{enumerate}[(i)]
    \item Given $\mm \models T$, there is an open interval $I \subseteq \VV$ with $0 \in I$, a positive element $\gamma \in I \setminus \set{0}$, and an $M$-definable family $\set{h_y \colon I \to \VV : y \in (0, \gamma)}$ such that, for every $b \in (0, \gamma)$, the map $x \mapsto h_b(x)$ is a partial endomorphism of $(\VV,0,+)$ on $I$ with $h_b(\gamma) = b$.
    Moreover, for every $q \in \QQ \cap (0,1)$, we have $h_{q\cdot \gamma}(t) = q\cdot t$ for all $t \in I$.
    \item The theory $T_{\mathcal{G}} := T \cup \set{\text{``$\mathcal{G}$ is a dense and codense divisible subgroup of $\VV$"}}$ in the language $L$ expanded by a unary predicate for the group $\mathcal{G}$ does not have a model companion.
    More precisely, every existentially closed model $(\mm, \mathcal{G}) \models T_{\mathcal{G}}$ has a definable set of cardinality $\aleph_0$.
\end{enumerate}
    First, assume that $R_C$ is a field.
    This means that either $C = C_0$, that is, $C$ is transcendental and $C(f) = 0$ for all $f \in \Kp{}$, or $C$ is algebraic and $\mipo(C)$ is irreducible of degree at least $2$.
    Consider
    $$
    X := \set{ b \in (0, \gamma) : \forall x \in I \setminus \set{0} : \theta(x) \neq h_b(x)}
    $$
    where $I$ and $\set{h_y \colon I \to \VV : y \in (0, \gamma)}$ are as in (i).
    For any $b \in (0, \gamma)$, the map $x \mapsto h_b(x)$ is a partial endomorphism on $I$, and hence is continuous.
    Since $\VV$ is an interval, such maps are completely determined by their value at any single element of $I \setminus \set{0}$, for example at $\gamma$.
    Hence the formula $x^1 = h_b(x^0)$ implies a finite disjunction of non-trivial linear dependencies in $\xvec = (x^i : i \in \omega)$ over $\VV$ if and only if $h_b(x^0) = q \cdot x^0$ for some $q \in \QQ \cap (0,1)$, if and only if $b = q \cdot \gamma$ for some $q \in \QQ \cap (0,1)$.
    Using Lemma \ref{lemma_rc_li_plus_li}, we see that for any $b \in (0, \gamma) \setminus \QQ\cdot \gamma$ there is an element $v \in I \setminus \set{0}$ such that $\theta(v) = h_b(v)$.
    Thus $b \not\in X$.
    Now fix $q \in \QQ \cap (0,1)$.
    If $C$ is algebraic, then $q$ is not a root of $\mipo(C)$, since this polynomial is irreducible over $\QQ$ and has degree at least $2$.
    In both cases, $C(X - q) = 0$; see Definition \ref{def_kernel_conf}.
    Therefore the map $(X - q)[\theta]$ is injective; see Definition \ref{def_T_C_theta} and Fact \ref{remark_alg_kc}.
    If $\theta(v) = h_{q\cdot \gamma}(v) = q \cdot v$ for some $v \in I \setminus \set{0}$, then this would contradict the injectivity of $(X - q)[\theta]$.
    Hence $q \cdot \gamma \in X$.
    We conclude that $X = (\QQ \cap (0,1)) \cdot \gamma$ is a definable set of cardinality $\aleph_0$ in an $\aleph_1$-saturated model, a contradiction.

    Now assume that $R_C$ is not a field.
    By Fact \ref{lemma_both_im_ker_inf}, there is $f \in \Kp{}$ such that both $\Ker(f)$ and $\Image(f)$ are infinite.
    In particular, $\Ker(f)$ is neither $\set{0}$ nor $\VV$.
    By Fact \ref{theorem_kernel_generic} the reduct $(\mm, \Ker(f))$ is an existentially closed model of the theory
$
T_V$.
    In every existentially closed model of $T_V$, the set $V$ is easily seen to be dense and codense in $\VV$.
    Since $K = \QQ$, this shows that $(\mm, \Ker(f))$ is an existentially closed model of the theory $T_{\mathcal{G}}$ from (ii).
    Therefore there is a set of cardinality $\aleph_0$ definable in $(\mm, \Ker(f))$.
    Since $(\mm, \Ker(f))$ is a reduct of $(\mm, \theta)$, this set is also definable in the $\aleph_1$-saturated model $(\mm, \theta)$, again a contradiction.
\end{proof}
\end{theorem}

\noindent Looking at the contents of \cite{Blo23}, it seems very likely to the author that Theorem \ref{theorem_prove_conj} can be generalized to other fields $K \neq \QQ$ (using o-minimality and $\dim(\VV) = 1$, it follows that $K$ must be an ordered field).
Similarly, one could probably also drop the assumption that all operations of $(\VV, 0, +, (\lambda \cdot)_{\lambda\in K})$ are continuous with respect to the order topology and work in the $t$-topology instead (perhaps using Lemma \ref{lemma_one_dim_non_linear_group}).
Dropping the assumption $\dim(\VV) = 1$ seems to be much more challenging, and dropping the assumption of o-minimality seems even harder.

\begin{remark} \label{rem_t_g_natp}
    Suppose that we are in the setting of Theorem \ref{theorem_prove_conj} and that $T$ satisfies \Hfour{}.
    As discussed in the proof of Theorem \ref{theorem_prove_conj}, this also implies that the model companion of the theory $T_{\mathcal{G}}$ from \cite{Blo23} exists.
    By the proof above and Fact \ref{theorem_kernel_generic}, we see that the model companion of $T_{\mathcal{G}}$ is a reduct of $T\theta^C$ for a suitable $C$.
    Since $T\theta^C$ is \NATP{}, the model companion of $T_{\mathcal{G}}$ is also \NATP{}.
\end{remark}

\section{The Linear Case} \label{sec_linear}

We now assume that $\mm := (M, 0, +, <, \dots)$ is a linear o-minimal expansion of an ordered group, and we set $T := \Th(\mm)$.
Throughout this section, $T'$ will denote the expansion of $T$ from Fact \ref{fact_is_red_of_o_vs} in the language $L' := \set{0, +, <, (\gamma \cdot)_{\gamma \in D}, (\mathfrak{c})_{\mathfrak{c} \in \dcl_L(\varnothing)}}$, where $D$ is the ordered division ring of all $\varnothing$-definable partial endomorphisms of $\mm$ modulo equivalence on an open neighborhood of $0$ (see Definition \ref{def_part_endo}).
Also note that $T$ must be model-complete, since it has quantifier elimination in the language from Fact \ref{fact_lin_qe}.
As ordered $D$-vector spaces are also model-complete, both $T$ and $T'$ satisfy our setting (see the beginning of Section \ref{sec_setting}).
Using Remark \ref{rem_vec_iso} and Theorem \ref{lemma_group_lin_iso}, we may assume from now on that
$$
(\VV, 0, +) = (M^d, 0, +).
$$
Since multiplication by any $\lambda \in K$ is an endomorphism of $\VV$, we see that for all $k, l \in \set{1, \dots, d}$, the map $\lambda_{k, l} \colon M \to M; x \mapsto \pi_k(\lambda \cdot (0, \dots, x, \dots, 0))$, where $\pi_k \colon M^d \to M$ is the projection to the $k$-th coordinate and $x$ is in the $l$-th entry, is an endomorphism of $(M, 0, +)$.
Since $\VV = M^d$, we can write every element $v \in \VV$ as a tuple $(a_1, \dots, a_d) \in M^d$, where $a_1 = \pi_1(v)$ and so on.
Treating these tuples as $M^{d \times 1}$ matrices, we see that multiplication by $\lambda$ is given by
$$
\lambda \cdot v =  \begin{tikzpicture}[baseline=(Frame.base)]
    \drawText{0}{0}{\lambda_{1,1}}
    \drawText{0}{2}{\lambda_{d,1}}
    \drawText{2}{0}{\lambda_{1,d}}
    \drawText{2}{2}{\lambda_{d,d}}
    \drawHDots{1}{0}{1}
    \drawHDots{1}{2}{1}
    \drawVDots{0}{1}{1}
    \drawVDots{2}{1}{1}
    \drawBorder{0}{0}{3}{3}
    \end{tikzpicture}
    \;\cdot\!\!
    \begin{tikzpicture}[baseline=(Frame.base)]
    \drawText{0}{0}{a_1}
    \drawText{0}{2}{a_d}
    \drawVDots{0}{1}{1}
    \drawBorder{0.2}{0}{0.8}{3}
    \end{tikzpicture}
    \;=\;
    \begin{tikzpicture}[baseline=(Frame.base)]
    \drawText{0}{0}{\lambda_{1, 1}(a_1) + \dots + \lambda_{1, d}(a_d)}
    \drawText{0}{2}{\lambda_{d, 1}(a_1) + \dots + \lambda_{d, d}(a_d)}
    \drawVDots{0}{1}{1}
    \drawBorder{-1.5}{0}{2.5}{3}
    \end{tikzpicture}.
$$
Since any partial $\varnothing$-definable endomorphism of $(M, 0, +)$ is an element of $D$, we see that each $\lambda = (\lambda_{k, l})_{1 \leq k, l \leq d}$ is actually a $D^{d \times d}$ matrix.
It is easy to check that addition and multiplication of $\lambda, \mu \in K$ coincide with addition and multiplication of the corresponding matrices $(\lambda_{k, l})_{1 \leq k, l \leq d}$ and $(\mu_{k, l})_{1 \leq k, l \leq d}$.
Hence the field $(K, 0, 1, +, \cdot)$ is actually a subring of $(D^{d \times d}, 0, \Id_d, +, \cdot)$.
It follows easily that $D^{d \times d}$ is a $K$-vector space.
If $\mm \models T'$, then every $\gamma \in D$ is an endomorphism of all of $M$, so, given a matrix $\Gamma = (\gamma_{k, l})_{1 \leq k, l \leq d} \in D^{d \times d}$ and $v = (a_1, \dots, a_d) \in M^d = \VV$, we can define $\Gamma \cdot v$ similar to $\lambda \cdot v$. We provide an alternative description of $D^{d \times d}$ in terms of the vector space $\VV$:

\begin{definition} \label{def_germs}
    We define the set $\mathfrak{G}$ of \textbf{ germs of $L(\varnothing)$-definable endomorphisms of $\VV$ at $\mathbf{0}$} as
    $$
    \set{f \colon U \to \VV : \text{``$f$ is a $\varnothing$-definable partial endomorphism of $(\VV, 0, +)$"}}/ \sim
    $$
    where we set $f \sim g$ if there is some open neighborhood $V \subseteq \VV$ of $0 \in \VV$ such that $f_{\restriction V} = g_{\restriction V}$.
\end{definition}

\noindent Clearly, every $\lambda \in K$ is also an element of $\mathfrak{G}$, so we obtain the following:

\begin{observation}
    The set $\mathfrak{G}$ is a $K$-vector space.
    Furthermore, the map
    \begin{align*}
        \Phi \colon \mathfrak{G} \to D^{d \times d}; \quad f \mapsto \big(x \mapsto \pi_k(f(\underbrace{0, \dots, x, \dots, 0}_{x \text{ in $l$-th entry}}))\big)_{1 \leq k, l \leq d}
    \end{align*}
    is a $K$-vector space isomorphism.
\begin{proof}
    Note that the map $f_{k, l}$ given by $x \mapsto \pi_k(f(0, \dots, x, \dots, 0))$ is, after properly restricting the domain, a $\varnothing$-definable partial endomorphism of $(M, 0, +)$, and hence has a corresponding element in $D$.
    Clearly $f \sim g$, with the relation $\sim$ from Definition \ref{def_germs}, if and only if $f_{k, l} \sim g_{k, l}$ for all $k, l$, with the relation $\sim$ from Definition \ref{def_skew_d}; since it is essentially the same equivalence relation, but on $M$ instead of $M^d$, we use the same symbol.
    With this, one easily sees that $\Phi$ is well defined and bijective.
    The rest is straightforward.
\end{proof}
\end{observation}

\begin{observation} \label{obser_dim_small}
    We have $\dim_K(\mathfrak{G}) \geq d = \dim(\VV)$.
\begin{proof}
    Identify $\mathfrak{G}$ with $D^{d \times d}$.
    Define $P_i := \Diag(0, \dots, 1, \dots, 0) \in D^{d \times d}$ to be the matrix with a $1$ on the $i$-th diagonal entry and $0$ everywhere else.
    It is easy to see that, for any $\lambda \in K$, the matrix $\lambda \cdot P_i$ can have nonzero entries only in the $i$-th column.
    With this, one easily shows that $P_1, \dots, P_d$ are $K$-linearly independent.
\end{proof}
\end{observation}

\noindent As we will see below (Lemma \ref{lemma_omin_sat_hfour}), both $T$ and $T'$ satisfy \Hfour{}.
This implies that the model companions $T\theta^C$ and $T'\theta^C$ both exist.
Recall that we have a ring $R_C$ of $\LKThe$-definable endomorphisms of $\VV$ in these models.
Since any $r \in R_C$ is an endomorphism of $\VV$, we see that for all $k, l \in \set{1, \dots, d}$, the map $r_{k, l} \colon M \to M; x \mapsto \pi_k(r(0, \dots, x, \dots, 0))$, where $\pi_k \colon M^d \to M$ is the projection to the $k$-th coordinate and $x$ is in the $l$-th entry, is an endomorphism of $(M, 0, +)$.
With this, we can treat any $r \in R_C$ as a matrix $(r_{k, l})_{1 \leq k, l \leq d}$ where $r \cdot v$ can be expressed similarly to $\lambda \cdot v$ by treating $v$ as an $M^{d\times 1}$ matrix.
Again, it is easy to see that matrix addition and multiplication coincide with the addition and multiplication in $R_C$.
Treating elements $r \in R_C$ as matrices, we may also write $r \cdot v$ instead of $r(v)$.
Note that $K$ is a subring of $R_C$, so, as previously mentioned, $R_C$ is a $K$-vector space and even a $K$-algebra.

\begin{lemma} \label{lemma_omin_sat_hfour}
    The theories $T$ and $T'$ satisfy \Hfour{}.
\begin{proof}
    Since $T$ is a reduct of $T'$, Lemma \ref{lemma_transfer_hfour_to_reduct} shows that it is enough to prove the claim for $T'$.
    By quantifier elimination in $T'$, every $L'$-formula $\psi(\ux; \uw)$, where $\ux = (x_1, \dots, x_m)$ and each $x_k$ is a variable for an element of $\VV = M^d$, i.e., a $d$-tuple of variables, is equivalent to a finite disjunction of formulas of the form
    $$
    \underbrace{\bigwedge\nolimits_{i=1}^q\sum\nolimits_{k=1}^m \sum\nolimits_{l=1}^d \gamma_{i, k, l} \cdot \pi_l(x_{k})  - s_i(\uw) = 0}_{\psi_0(\ux; \uw)}\; \wedge\; \underbrace{\bigwedge\nolimits_{i=1}^{q'}\sum\nolimits_{k=1}^m \sum\nolimits_{l=1}^d \gamma'_{i, k, l} \cdot \pi_l(x_{k})  - s'_i(\uw) > 0}_{\psi_1(\ux; \uw)}
    $$
    where all coefficients $\gamma_{i, k, l}$ and $\gamma'_{i, k, l}$ lie in $D$ and all $s_i(\uw)$ and $s'_i(\uw)$ are $L'$-terms in $\uw$.
    Fix any $\mm \models T'$ and $\ub \in M^{|\uw|}$.
    The conjunction $\psi_0(\ux; \ub)$ implies a finite disjunction of non-trivial $K$-linear dependencies in $\ux$ over $\VV$ if and only if either it is inconsistent or there are $(\lambda_1, \dots, \lambda_m) \in K^m \setminus \set{\uzero}$ and a $d$-tuple of terms $\ut(\uw) = (t_1(\uw), \dots, t_d(\uw))$ such that
    $$
    \bigwedge\nolimits_{i=1}^d \sum\nolimits_{k=1}^m \sum\nolimits_{l=1}^d (\lambda_k)_{i, l} \cdot \pi_l(x_k) = t_i(\uw)
    $$
    is obtained by taking $D$-linear combinations of the conjuncts of $\psi_0(\ux; \uw)$.
    This follows easily with basic linear algebra (recall that each $(\lambda_k)_{i, l}$ is the $(i, l)$-th entry of the $D^{d \times d}$-matrix that corresponds to the multiplication with $\lambda_{k}$).
    The choices of $(\lambda_1, \dots, \lambda_m)$ and $\ut(\uw)$ need not be unique, but the set of possible choices depends only on the coefficients $\gamma_{i, k, l}$ and the terms $s_i(\uw)$.
    Moreover, $\psi_0(\ux; \ub) \wedge \psi_1(\ux; \ub)$ defines a open subset (with respect to the subset topology) of the affine $D$-subspace defined by $\psi_0(\ux; \ub)$.
    Hence $\psi_0(\ux; \ub) \wedge \psi_1(\ux; \ub)$ is either inconsistent or implies exactly the same $D$-linear dependencies, and therefore the same $K$-linear dependencies, in $\ux$ over $\VV$ as $\psi_0(\ux; \ub)$.

    Write
    $
    \psi(\ux; \uw) \equiv \bigvee\nolimits_{i=1}^N \big(\psi_{i, 0}(\ux; \uw) \wedge \psi_{i, 1}(\ux; \uw)\big),
    $
    with each $\psi_{i, 0}(\ux; \uw)$ and $\psi_{i, 1}(\ux; \uw)$ as above.
    After reindexing (and redefining $q$), suppose that, for $i \leq q$, we have chosen a non-trivial $K$-linear dependency
    $$
    \lambda_{i, 1} \cdot x_1 + \dots + \lambda_{i, m} \cdot x_m = \ut_i(\uw)
    $$
    implied by $\psi_{i, 0}(\ux; \uw)$ as above, and that no such dependency is implied by $\psi_{i, 0}(\ux; \uw)$ for $i > q$.
    Then the formula
    $
    \exists \ux \in \VV : \psi(\ux; \uw) \wedge \bigwedge\nolimits_{i=1}^q \lambda_{i, 1} \cdot x_1 + \dots + \lambda_{i, m} \cdot x_m \neq \ut_i(\uw)
    $
    holds exactly when $\psi(\ux; \uw)$ implies no finite disjunction of non-trivial linear dependencies in $\ux$ over $\VV$. Hence, this formula can be chosen as $\sigma_\psi(\uw)$, the formula from condition \Hfour{} for $\psi(\ux; \uw)$.
\end{proof}
\end{lemma}

\begin{corollary}
    Both $T\theta^C$ and $T'\theta^C$ exist, and $T\theta^C$ is a reduct of $T'\theta^C$.
\begin{proof}
    This follows immediately from Theorem \ref{theorem_first_oder} and Theorem \ref{theorem_reduct_}.
\end{proof}
\end{corollary}

\begin{lemma} \label{lemma_qe_lin}
    The theory $T'\theta^C$ has quantifier elimination in the language $L' \cup L_{R_C}$, and the theory $T\theta^C$ has quantifier elimination in the language $\hat{L} \cup \LRC$, where $\hat{L}$ is the language from Fact \ref{fact_lin_qe}.
\begin{proof}
    It is easy to see that the equality $\spanA{A}{L'} = \acl_{L'}(A)$ holds for any $A \subseteq M' \models T'$.
    Since $T'$ has quantifier elimination in the language $L'$, Fact \ref{theorem_qe} yields the desired result.
    The result for $T\theta^C$ follows similarly from Fact \ref{theorem_qe} and Lemma \ref{lemma_span_is_acl_linear}.
\end{proof}
\end{lemma}

\noindent Recall that $\LRC$ is the language of $R_C$-modules.
Obviously, $0$ and $+$ are already in $L'$, so $\LRC$ only adds function symbols for the elements of the ring $R_C$.
Because $\VV$ is a $d$-ary set, we need to add multiple function symbols for each $r \in R_C$.
For convenience, we write each $r \in R_C$ as a matrix $r = (r_{k, l})_{1 \leq k, l \leq d}$, as in the paragraph above Lemma \ref{lemma_omin_sat_hfour}, and add a unary function symbol for each $r_{k, l}$.

\subsection{Neostability} \label{sec_neo_stab}

Distality was first introduced by Simon in \cite{Sim13} as a notion of ``pure instability'' for \NIP{} theories.
The following is essentially the external characterization of distality given in Lemma 2.7 of \cite{Sim13}, but without the assumption that $T$ is \NIP{}.

\begin{definition} \label{def_distal}
    A theory $T$ is \textbf{distal} if, for every model $\mm \models T$ and
    \begin{enumerate}[(i)]
        \item a set $U \subseteq M$,
        \item a sequence of tuples $I = I_1{}^\frown I_2$ in $M$ that is indiscernible over $U$, with $I_1$ cofinal and $I_2$ coinitial,
        \item a tuple $\ua$ for which $I_1{}^\frown(\ua)^\frown I_2$ is indiscernible over $\varnothing$,
    \end{enumerate}
    the sequence $I_1{}^\frown(\ua)^\frown I_2$ is indiscernible over $U$.
\end{definition}

\noindent The following result is attributed to Hieronymi and Nell and tells us that the definition of distality above coincides with the original one:

\begin{fact}[Proposition 2.9 in \cite{GK20}] \label{fact_nip_not_needed}
    If a theory $T$ is distal, as in Definition \ref{def_distal}, then it is also \NIP{}.
\end{fact}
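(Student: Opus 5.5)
The plan is to show that distality in the sense of Definition \ref{def_distal} implies \NIP{} by contraposition. Assuming $T$ has IP, I will construct a configuration $U$, $I = I_1{}^\frown I_2$, $\ua$ meeting (i)--(iii) of Definition \ref{def_distal} for which $I_1{}^\frown(\ua)^\frown I_2$ is not indiscernible over $U$.

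First I fix a formula $\varphi(\ux;\uy)$ with IP. By the standard Ramsey and compactness argument there is, in a monster model, an indiscernible sequence $(\ua_q : q \in \QQ)$ such that for every subset $S \subseteq \QQ$ the type
$$\set{\varphi(\ua_q;\uy) : q \in S} \cup \set{\neg\varphi(\ua_q;\uy) : q \notin S}$$
is consistent. Equivalently, every pattern of alternation along the sequence is realized. I then pick an irrational cut of $\QQ$, splitting the sequence into $I_1$ (cofinal part below the cut) and $I_2$ (coinitial part above it). By compactness and indiscernibility I can insert a new tuple $\ua$ into this cut so that $I_1{}^\frown(\ua)^\frown I_2$ is indiscernible over $\varnothing$, and so that the sequence $I_1{}^\frown(\ua)^\frown I_2$ still has the property that every pattern is realized. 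This gives (iii).

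Next I choose a parameter $\ub$ realizing $\varphi(\ua_q;\ub)$ for all $q$ in the original sequence together with $\neg\varphi(\ua;\ub)$. Since the extended sequence has all patterns realized, such a $\ub$ exists.

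The main obstacle is securing (ii): $I$ must be indiscernible over $U = \set{\ub}$, and $\ub$ cannot simply be taken arbitrary. To handle this, I apply Ramsey and compactness to the sequence over $\ub$. I extract an indiscernible sequence over $\ub$ that satisfies the EM-type of $I$ over $\ub$ and contains $\varphi(x;\ub)$ for all elements. I then re-realize that extracted sequence as $I$ itself, using an automorphism that fixes the EM-type over $\varnothing$, and move $\ub$ and $\ua$ along with it.

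A cleaner option is to work with the EM-type of the whole configuration $(I_1, \ua, I_2, \ub)$. I would require that the pair $(I,\ub)$ is indiscernible over $\ub$ with $\varphi$ true on all of $I$, that the full sequence is indiscernible over $\varnothing$, and that $\neg\varphi(\ua;\ub)$ holds. The consistency of this type follows by compactness from finite alternation patterns, and each finite fragment only needs finite Ramsey. The conclusion is then immediate: $\varphi(\ux;\ub)$ holds on $I$ but fails at $\ua$, so $I_1{}^\frown(\ua)^\frown I_2$ is not indiscernible over $U$. This contradicts distality, so a distal $T$ must be \NIP{}.
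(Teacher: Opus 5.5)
The paper gives no proof of this statement; it only cites Proposition~2.9 of \cite{GK20}. Your overall strategy is the natural one: argue by contraposition, and from a formula $\varphi$ with IP build $U$, $I_1{}^\frown I_2$ and $\ua$ violating Definition~\ref{def_distal}. Both of your routes to condition (ii), however, have a problem.

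\textbf{First route.} Once you extract a new sequence $I'$, indiscernible over $\ub$, from the EM-type of $I$ over $\ub$, nothing relates $\ua$ to the cut of $I'$. So $I'_1{}^\frown(\ua)^\frown I'_2$ need not be indiscernible. Extracting over $\ua\ub$ does not help, because the EM-type over $\ua$ forgets on which side of $\ua$ the elements lie. An automorphism applied afterwards can only transport relations that already hold.

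\textbf{Second route.} This is the correct one, but the consistency of that type is the whole content of the proof, and you only assert it. It does not follow from shattering plus Ramsey in the obvious way. Take a finite fragment: indices $l_1<\dots<l_k<0<r_1<\dots<r_m$ and finitely many formulas $\Delta$. If you choose $\ub$ with $\varphi(\,\cdot\,;\ub)$ true on a long stretch except at one position, and apply Ramsey to $\Delta$-types over $\ub$, the homogeneous set may lie entirely on one side of that position. Nothing then forces tuples straddling the cut to have the same $\Delta$-type over $\ub$ as one-sided tuples.

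The missing idea is to interleave the failures:
\begin{enumerate}[(a)]
\item By shattering, choose $\ub$ with $\varphi(\ua_i;\ub)$ iff $i$ is odd, for $1\le i\le 2N$.
\item Colour increasing tuples from the odd positions by their $\Delta$-type over $\ub$, and take a homogeneous set $h_1<\dots<h_{k+m}$ of odd positions (for $N$ large).
\item Pick any even $p$ with $h_k<p<h_{k+1}$.
\end{enumerate}
Then the tuple $(\ua_{h_1},\dots,\ua_{h_k},\ua_p,\ua_{h_{k+1}},\dots,\ua_{h_{k+m}})$ is increasing in the original indiscernible sequence, so it satisfies the EM-type over $\varnothing$. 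Moreover, $\varphi(\,\cdot\,;\ub)$ holds at every $\ua_{h_s}$ and fails at $\ua_p$, and the $\ua_{h_s}$ are $\Delta$-homogeneous over $\ub$.

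With this, compactness yields $(\ua_q : q\in\QQ)$ indiscernible over $\varnothing$ and a tuple $\ub$ such that:
\begin{itemize}
\item $(\ua_q : q\neq 0)$ is indiscernible over $\ub$;
\item $\varphi(\ua_q;\ub)$ holds for $q\neq 0$ and fails for $q=0$.
\end{itemize}
Taking $U$ to be the entries of $\ub$, $I_1=(\ua_q:q<0)$, $I_2=(\ua_q:q>0)$ and $\ua=\ua_0$ gives exactly the counterexample to Definition~\ref{def_distal} you want.
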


\noindent From now on, we let $T$ and $T'$ be as in the beginning of Section \ref{sec_linear} once again.

\begin{theorem}
    The theory $T\theta^C$ is distal, and hence \NIP{}. \label{theorem_still_distal}
\begin{proof}
    Let $U \subseteq M$, $I = I_1{}^\frown I_2$, and $\ua$ be given as in (i), (ii), and (iii) of Definition \ref{def_distal}.
    We start with a bit of notation.
    Write $I_1 = (\ub_i : i \in \ii_1)$ and $I_2 = (\ub_i : i \in \ii_2)$.
    If $I = I_1{}^\frown I_2$ is constant, then the implication in Definition \ref{def_distal} is trivial; hence, we assume that $I$ is non-constant, so every subsequence of $I$ corresponds to a unique subsequence of $\ii_1{}^\frown \ii_2$.
    Let $\ud$ be a tuple of the form $\ud_1\ud_2$, where $\ud_1 = \ub_{i_{1,1}}\dots\ub_{i_{1,m}}$ with $i_{1, 1}, \dots, i_{1, m} \in \ii_1$, and $\ud_2 = \ub_{i_{2,1}}\dots\ub_{i_{2,n}}$ with $i_{2, 1}, \dots, i_{2, n} \in \ii_2$.
    We call such a tuple $\ud$ \textbf{special}.
    Given such a special tuple $\ud$, we define the following sequence:
    $$
    I_\ud := (\ub_i : i \in \ii_1, i > \max\set{i_{1,1}, \dots, i_{1, m}})^\frown(\ua)^\frown(\ub_i : i \in \ii_2, i < \min\set{i_{2,1}, \dots, i_{2, n}}),
    $$
    where we set $(\ub_i : i \in \ii_1, i > \max \varnothing) := I_1$ and $(\ub_i : i \in \ii_2, i < \min \varnothing) := I_2$.
    Since $I_1$ is cofinal and $I_2$ is coinitial, the sequence $I_\ud$ is always infinite and contains $\ua$.
    \begin{subclaim}
        Given an $\hat{L} \cup \LRC(IU)$-term $t(\ux; \ud\uu)$, where $\uu$ is a tuple from $U$ and $\ud$ is special, there are a special tuple $\ud' \supseteq \ud$, an $\hat{L} \cup \LRC(I)$-term $t'(\ux; \ud')$, and an $\hat{L} \cup \LRC(IU)$-term $t''(\ud'\uu)$ such that, for all $\uc \in I_{\ud'}$:
        $$
        t(\uc; \ud\uu) = t'(\uc; \ud') + t''(\ud'\uu).
        $$
    \begin{innerproof}
        We prove this by induction on the term $t(\ux; \ud\uu)$.
        The base cases are clear, as are the induction steps for $+$ and for a function symbol $r_{k, l}$ for an entry of the matrix $r \in R_C$; see the paragraph after Lemma \ref{lemma_qe_lin} and note that these function symbols define endomorphisms on all of $M$.
        The only interesting case is the induction step for a function symbol $\hat{g}$, where $g$ is a $\varnothing$-definable partial endomorphism.

        Suppose that $t(\ux; \ud\uu) = \hat{g}(t_0(\ux; \ud\uu))$.
        By the induction hypothesis, there are a special tuple $\ud_1 \supseteq \ud$, an $\hat{L} \cup \LRC(I)$-term $t'_0(\ux; \ud_1)$, and an $\hat{L} \cup \LRC(IU)$-term $t''_0(\ud_1\uu)$ such that
        $$
        t_0(\uc; \ud\uu) = t'_0(\uc; \ud_1) + t''_0(\ud_1\uu)
        $$
        holds for all $\uc \in I_{\ud_1}$.
        Choose $\uc^-, \uc^+ \in I_{\ud_1} \setminus \set{\ua}$ with $\uc^- <_s \ua <_s \uc^+$, where $<_s$ denotes the order of the entries in the sequence $I_1{}^\frown(\ua)^\frown I_2$.
        By the indiscernibility of $I$ over $U$, the element $t_0(\uc^-; \ud_1\uu)$ lies in the domain of $g$ if and only if $t_0(\uc^+; \ud_1\uu)$ lies in the domain of $g$, which is an interval.
        Similarly $t_0(\uc^-; \ud_1\uu)$ lies to the left or right of the domain of $g$ if and only if $t_0(\uc^+; \ud_1\uu)$ does.
        Since the indiscernibility of $I_1{}^\frown(\ua)^\frown I_2$ implies that $t'_0(\ua; \ud_1)$ lies in the closed interval spanned by $t'_0(\uc^-; \ud_1)$ and $t'_0(\uc^+; \ud_1)$, we see that $t_0(\ua; \ud_1\uu)$ lies in the domain of $g$ if and only if $t_0(\uc; \ud_1\uu)$ lies in the domain of $g$ for any, or equivalently every, $\uc \in I_{\ud_1} \setminus \set{\ua}$.
        If $t_0(\uc; \ud_1\uu)$ does not lie in the domain of $g$ for some, or equivalently every, $\uc \in I_{\ud_1}$, then we can define both terms in the statement to be $0$, as $\hat{g}$ is defined to be $0$ outside the domain of $g$, and set $\ud' := \ud_1$.

        Now assume that $t_0(\uc; \ud_1\uu)$ lies in the domain of $g$ for every $\uc \in I_{\ud_1}$.
        Recall that $g$ is a $\varnothing$-definable partial endomorphism on an interval $(-q_1, q_2)$ with $q_1, q_2 > 0$.
        We can easily find a $\varnothing$-definable partial endomorphism $\sigma$ that extends the given partial endomorphism $g$ to the interval $(-2\max(|q_1|, |q_2|), 2\max(|q_1|, |q_2|))$.
        Choose any $\ub_i \in I_{\ud_1} \setminus \set{\ua}$ and let $\ud'$ be $\ud_1\ub_i$, ordered as a special tuple.
        For any $\uc \in I_{\ud'}$, we obtain:
        \begin{align*}
            t(\uc; \ud\uu) &= \hat{g}(t_0(\uc; \ud\uu)) \\
            &= \hat{g}(t'_0(\uc; \ud_1) + t''_0(\ud_1\uu)) \\
            &= \hat{\sigma}(t'_0(\uc; \ud_1) + t''_0(\ud_1\uu)) \\
            &= \hat{\sigma}(t'_0(\uc; \ud_1) + t''_0(\ud_1\uu) - t'_0(\ub_i; \ud_1) - t''_0(\ud_1\uu)) + \hat{\sigma}(t'_0(\ub_i; \ud_1) + t''_0(\ud_1\uu)) \\
            &= \underbrace{\hat{\sigma}(t'_0(\uc; \ud_1) - t'_0(\ub_i; \ud_1))}_{:= t'(\uc; \ud')} + \underbrace{\hat{\sigma}(t'_0(\ub_i; \ud_1) + t''_0(\ud_1\uu))}_{:= t''(\ud'\uu)}.
        \end{align*}
        This completes the induction step.
    \end{innerproof}
    \end{subclaim}
    \noindent Recall that $T\theta^C$ eliminates quantifiers in the language $\hat{L} \cup \LRC$ and that $<$ is the only relation symbol in that language.
    Since $I$ is indiscernible over $U$, it therefore suffices to show that
    $$
    \text{``there is $\uc \in I_{\ud} \setminus \set{\ua}$ such that $t(\ua; \ud\uu) < 0 \Leftrightarrow t(\uc; \ud\uu) < 0$''}
    $$
    holds for all $\hat{L} \cup \LRC(IU)$-terms $t(\ux; \ud\uu)$ with $\ud$ special, in order to prove that $I_1{}^\frown(\ua)^\frown I_2$ is indiscernible over $U$.
    Here we use that the inequality $t_1(\ux; \ud\uu) < t_2(\ux; \ud\uu)$ is equivalent to $t_1(\ux; \ud\uu) - t_2(\ux; \ud\uu) < 0$, and that $x = y$ is equivalent to $\neg (x < y) \wedge \neg(y < x)$.
    Fix such a term $t(\ux; \ud\uu)$ and apply the claim to it, obtaining $t'(\ux; \ud')$ and $t''(\ud'\uu)$.
    As in the proof of the claim, choose $\uc^-, \uc^+ \in I_{\ud'} \setminus \set{\ua}$ with $\uc^- <_s \ua <_s \uc^+$, where $<_s$ denotes the order of the entries in the sequence $I_1{}^\frown(\ua)^\frown I_2$.
    Since $I_1{}^\frown(\ua)^\frown I_2$ is indiscernible, the element $t'(\ua; \ud')$ lies in the closed interval spanned by $t'(\uc^-; \ud')$ and $t'(\uc^+; \ud')$.
    Therefore, $t(\ua; \ud\uu) = t'(\ua; \ud') + t''(\ud'\uu)$ lies in the closed interval spanned by $t(\uc^-; \ud\uu) = t'(\uc^-; \ud') + t''(\ud'\uu)$ and $t(\uc^+; \ud\uu) = t'(\uc^+; \ud') + t''(\ud'\uu)$.
    Since $I$ is indiscernible over $U$, we have $t(\uc^-; \ud\uu) < 0$ if and only if $t(\uc^+; \ud\uu) < 0$.
    We conclude that $t(\uc^-; \ud\uu) < 0$ holds if and only if $t(\ua; \ud\uu) < 0$ holds, finishing the proof.
\end{proof}
\end{theorem}

\noindent Note that we have not directly used the fact that $T$ is distal.
Instead, we used that $T$ is a linear o-minimal theory.
As we have already seen, distality is, in general, not preserved by our construction; e.g., $\RCF{}$ is distal, but $\RCF{\theta}^C$ is not distal.

\subsection{\texorpdfstring{$\operatorname{dp}$-Ranks}{dp-Ranks}} \label{sec_linear_dp}

Next, we calculate the $\operatorname{dp}$-ranks of both $T\theta^C$ and $T'\theta^C$.
First, we recall the definition of a randomness pattern:

\begin{definition} \label{def_random_pattern}
    Let $T$ be a complete theory, let $\MM \models T$ be a sufficiently large monster, and let $\kappa$ be a cardinal.
    A \textbf{randomness pattern} of depth $\kappa$ is a collection of formulas $\set{\phi_\alpha(x; \uy_\alpha) : \alpha \in \kappa}$ and elements $\set{\ub_{\alpha, i} : \alpha \in \kappa, i \in \omega}$ such that for every function $\eta \colon \kappa \to \omega$, there is an element $a_\eta$ with
    $$
    \eta(\alpha) = i \quad \Leftrightarrow \quad \MM \models \phi_\alpha(a_\eta; \ub_{\alpha, i})
    $$
    for all $\alpha \in \kappa$ and $i \in \omega$.
    The $\operatorname{dp}$-rank of $T$, denoted by $\dprk{(T)}$, is the supremum of all cardinals $\kappa$ for which there is a randomness pattern of depth $\kappa$.
\end{definition}

\noindent Notice that randomness patterns and $\dprk{}$ are usually defined for (type-)definable sets $X$.
In this case, one replaces $x$ by a tuple $\ux$ of the arity of $X$, and $\ua_\eta$ has to lie in $X$.

Throughout this section, we work in a sufficiently large monster model $(\MM,\theta) \models T'\theta^C$, where $T$ and $T'$ are as discussed in the beginning of Section \ref{sec_linear}.
We may also assume that the respective reducts of $(\MM,\theta)$ are sufficiently large monster models of $T\theta^C$ and $T$.
Note that we actually compute the $\operatorname{dp}$-rank of the fixed completion $\Th(\MM, \theta)$ of $T'\theta^C$, but since this rank will be independent of the chosen completion, we will simply talk about the $\operatorname{dp}$-rank of $T'\theta^C$.

\begin{definition}
    We say that an $L' \cup \LRC$-term $t(y)$ (where $y$ is a single variable for an element in $M$ and not $\VV = M^d$) is \textbf{linear} if $t(a + b) = t(a) + t(b)$ holds for all $(\mm,\theta) \models T'\theta^C$ and $a, b \in M$.
\end{definition}

\noindent We identify two $L' \cup \LRC$-terms $t_1(y), t_2(y)$ if they are equivalent modulo $T'\theta^C$.
Since an $L' \cup \LRC$-term $t(y)$ is linear if and only if, after simplification, it contains no constant symbols besides $0$, and since we also have a function symbol for the function $y \mapsto 0$, we may treat linear terms as those that contain no constant symbols.

\begin{lemma} \label{lemma_simp_randomness_pattern}
    Every randomness pattern in $T'\theta^C$ with object variable $y$ can be simplified to a randomness pattern of the form $\set{t_\alpha(y) \in (z_1, z_2) : \alpha \in \kappa}$, $\set{b_{\alpha, i, 0} b_{\alpha, i, 1} : \alpha \in \kappa, i \in \omega}$, where $t_\alpha(y)$ is a linear $L' \cup \LRC$-term for every $\alpha \in \kappa$, and $I_{\alpha, i} := (b_{\alpha, i, 0}, b_{\alpha, i, 1})$ is an open interval for every $\alpha \in \kappa$ and $i \in \omega$.
    Furthermore, the terms $(t_\alpha(y) : \alpha \in \kappa)$ must be $D$-linearly independent, and we can assume that $I_{\alpha, i} < I_{\alpha, j}$ for $i < j$.
\begin{proof}
    Let $\set{\phi_\alpha(y; \uz_\alpha) : \alpha \in \kappa}$ and $\set{\ub_{\alpha, i} : \alpha \in \kappa, i \in \omega}$ be a randomness pattern in $M$ of depth $\kappa$.
    That is, for every $\eta \colon \kappa \to \omega$, there is an element $a_\eta \in M$ such that
    $$
    \eta(\alpha) = i \quad \Leftrightarrow \quad (\MM,\theta) \models \phi_\alpha(a_\eta; \ub_{\alpha, i})
    $$
    holds for all $\alpha \in \kappa$ and $i \in \omega$.
    Since $T'\theta^C$ eliminates quantifiers, we can assume that each $\phi_\alpha(y; \uz_\alpha)$ is of the form
    $$
    \bigvee\nolimits_{k=1}^{m_\alpha} \bigwedge\nolimits_{l=1}^{n_\alpha} \phi_{\alpha, k, l}(y; \uz_\alpha)
    $$
    with all $\phi_{\alpha, k, l}(y; \uz_\alpha)$ atomic.
    Indeed, no negated atomic formulas are needed, because we can replace
    \begin{enumerate}[(i)]
        \item $t_1(y; \uz_\alpha) \neq t_2(y; \uz_\alpha)$ with $t_1(y; \uz_\alpha) < t_2(y; \uz_\alpha) \vee -t_1(y; \uz_\alpha) < -t_2(y; \uz_\alpha)$, and
        \item $\neg t_1(y; \uz_\alpha) < t_2(y; \uz_\alpha)$ with $t_1(y; \uz_\alpha) = t_2(y; \uz_\alpha) \vee -t_1(y; \uz_\alpha) < -t_2(y; \uz_\alpha)$.
    \end{enumerate}
    We can assume that the array $\set{\ub_{\alpha, i} : \alpha \in \kappa, i \in \omega}$ is mutually indiscernible, i.e., that the sequences $(\ub_{\alpha, i} : i \in \omega)$ are mutually indiscernible.
    Let $\eta_0 \colon \kappa \to \omega$ be given by $\eta_0(\alpha) = 0$ for all $\alpha$.
    Then there is $a_{\eta_0} \in M$ such that
    $$
    i = 0 \quad \Leftrightarrow \quad (\MM,\theta) \models \phi_\alpha(a_{\eta_0}; \ub_{\alpha, i})
    $$
    holds for all $\alpha \in \kappa$ and $i \in \omega$.
    We observe that, for each $\alpha \in \kappa$, there is:
    \begin{enumerate}[(i)]
        \item some $k_\alpha$ for which $(\MM,\theta) \models \bigwedge\nolimits_{l=1}^{n_\alpha} \phi_{\alpha, k_\alpha, l}(a_{\eta_0}; \ub_{\alpha, 0})$ holds;
        \item some $l_{\alpha}$ and an infinite subset $\ii_{\alpha} \subseteq \omega_{>0}$ such that $(\MM,\theta) \models \neg \phi_{\alpha, k_\alpha, l_{\alpha}}(a_{\eta_0}; \ub_{\alpha, i})$ holds for all $i \in \ii_{\alpha}$.
    \end{enumerate}
    By the mutual indiscernibility of $\set{\ub_{\alpha, i} : \alpha \in \kappa, i \in \omega}$, for any $\eta \colon \kappa \to \omega$, we have
    $$
    \tp((\ub_{\alpha, 0})^\frown(\ub_{\alpha, i} : i \in \ii_{\alpha}) : \alpha \in \kappa) = \tp((\ub_{\alpha, \eta(\alpha)})^\frown(\ub_{\alpha, i} : i \in \omega_{>\eta(\alpha)}) : \alpha \in \kappa),
    $$
    so there must be an $a_\eta \in M$ such that
    \begin{align*}
        \eta(\alpha) = i &\quad \Rightarrow \quad (\MM,\theta) \models \phi_{\alpha, k_\alpha, l_{\alpha}}(a_\eta; \ub_{\alpha, i})\; \text{and} \\
        \eta(\alpha) < i &\quad \Rightarrow \quad (\MM,\theta) \models \neg \phi_{\alpha, k_\alpha, l_{\alpha}}(a_\eta; \ub_{\alpha, i})
    \end{align*}
    hold for all $\alpha \in \kappa$ and $i \in \omega$.
    Choose such an $a_\eta$ for every $\eta \colon \kappa \to \omega$.
    By the linearity of all functions in our language, and after replacing each parameter tuple $\ub_{\alpha, i}$ by a single parameter $b_{\alpha, i} := s_\alpha(\ub_{\alpha, i})$ for a suitable $L' \cup \LRC$-term $s_\alpha$, we can assume that either:
    \begin{enumerate}[(i)]
        \item The formula $\phi_{\alpha, k_\alpha, l_{\alpha}}(y; \uz_\alpha)$ is of the form $t_\alpha(y) = z$, where $t_\alpha(y)$ is a linear $L' \cup \LRC$-term.
        In this case, we have $t_\alpha(a_{\eta_0}) = b_{\alpha, 0}$ and $t_\alpha(a_{\eta_0}) \neq b_{\alpha, 1}$.
        Since $(b_{\alpha, i} : i \in \omega)$ is indiscernible, we obtain $b_{\alpha, i} \neq b_{\alpha, j}$ for $i \neq j$.
        Choose open intervals $I_{\alpha, i}$ around each $b_{\alpha, i}$ that are small enough to intersect no other $I_{\alpha, j}$.
        We obtain
        $$
        i = \eta(\alpha) \quad \Leftrightarrow \quad t_\alpha(a_\eta) \in I_{\alpha, i}
        $$
        for all $i \in \omega$ and $\eta \colon \kappa \to \omega$.
        By reindexing the $I_{\alpha, i}$'s, and possibly replacing $t_\alpha(y)$ and all $I_{\alpha, i}$'s with $-t_\alpha(y)$ and $-I_{\alpha, i}$, we can also assume that $I_{\alpha, 0} < I_{\alpha, 1} < \dots$ holds.
        \item The formula $\phi_{\alpha, k_\alpha, l_{\alpha}}(y; \uz_\alpha)$ is of the form $t_\alpha(y) < z$, where $t_\alpha(y)$ is a linear $L' \cup \LRC$-term.
        In this case, we have $t_\alpha(a_{\eta_0}) < b_{\alpha, 0}$ and $t_\alpha(a_{\eta_0}) \geq b_{\alpha, 1}$.
        We see that $(b_{\alpha, i} : i \in \omega)$ is a strictly decreasing sequence.
        In particular, $t_{\alpha}(a_{\eta}) = b_{\alpha, i}$ can only hold for $i = \eta(\alpha)+1$.
        By replacing the row $\set{b_{\alpha, i} : i \in \omega}$ with $\set{b_{\alpha, 2i} : i \in \omega}$, and each $a_\eta$ with $a_{\eta'}$ for the function $\eta' \colon \kappa \to \omega$ with $\eta'(\alpha) = 2\eta(\alpha)$ and $\eta'(\beta) = \eta(\beta)$ for all $\beta \neq \alpha$, we can also assume that $t_{\alpha}(a_{\eta}) \neq b_{\alpha, \eta(\alpha)+1}$.
        Now set $I_{\alpha, i} := (b_{\alpha, i+1}, b_{\alpha, i})$.
        Again, we obtain
        $$
        i = \eta(\alpha) \quad \Leftrightarrow \quad t_\alpha(a_\eta) \in I_{\alpha, i}
        $$
        for all $i \in \omega$.
        Replacing $t_{\alpha}(y)$ with $-t_{\alpha}(y)$ and each $I_{\alpha, i}$ with $-I_{\alpha, i}$, we can also assume that $I_{\alpha, 0} < I_{\alpha, 1} < \dots$ holds.
    \end{enumerate}
    \begin{subclaim}
        The terms $(t_\alpha(y) : \alpha \in \kappa)$ are $D$-linearly independent.
    \begin{innerproof}
        Without loss of generality, assume that
    \begin{align}
        t_0(y) = \sum\nolimits_{k=1}^m \gamma_{k} \cdot t_{k}(y) + \sum\nolimits_{l=1}^n \gamma_{m +l} \cdot t_{m+l}(y), \label{tag_D_lin_dep}
    \end{align}
    where $\gamma_1, \dots, \gamma_{m+n} \in D$ with $\gamma_1, \dots, \gamma_m < 0$ and $\gamma_{m+1}, \dots, \gamma_{m+n} > 0$.
    We also write the interval $I_{\alpha, i} = (b_{\alpha, i, 0}, b_{\alpha, i, 1})$ as in the statement, with $I_{\alpha, i} < I_{\alpha, j}$ for $i < j$.
    Let $\eta \colon \kappa \to \omega$ denote the function with $\eta(\alpha) = 1$ for all $\alpha \in \kappa$.
    Now, there is an element $a_\eta$ such that $t_\alpha(a_{\eta}) \in I_{\alpha, i} \Leftrightarrow i = 1 = \eta(\alpha)$.
    Using the equality (\ref{tag_D_lin_dep}), we can show
    \begin{align*}
        I_{0, 0} &\leq b_{0, 1, 0} \\
        &< t_0(a_\eta) \\
        &= \sum\nolimits_{k=1}^m \gamma_{k} \cdot t_{k}(a_\eta) + \sum\nolimits_{l=1}^n \gamma_{m +l} \cdot t_{m+l}(a_\eta) \\
        &< \sum\nolimits_{k=1}^m \gamma_{k} \cdot b_{k, 1, 0} + \sum\nolimits_{l=1}^n \gamma_{m +l} \cdot b_{m+l, 1, 1}.
    \end{align*}
    Now consider the function $\eta' \colon \kappa \to \omega$ with $\eta'(\alpha) = 0$ for $\alpha \in \set{0, \dots, m}$ and $\eta'(\alpha) = 2$ otherwise.
    Choose $a_{\eta'}$ such that $t_\alpha(a_{\eta'}) \in I_{\alpha, i} \Leftrightarrow i = \eta'(\alpha)$.
    We obtain
    \begin{align*}
        t_0(a_{\eta'}) &= \sum\nolimits_{k=1}^m \gamma_{k} \cdot t_{k}(a_{\eta'}) + \sum\nolimits_{l=1}^n \gamma_{m +l} \cdot t_{m+l}({a_{\eta'}}) \\
        & > \sum\nolimits_{k=1}^m \gamma_{k} \cdot b_{k, 0, 1} + \sum\nolimits_{l=1}^n \gamma_{m +l} \cdot b_{m+l, 2, 0} \\
        & \geq \sum\nolimits_{k=1}^m \gamma_{k} \cdot b_{k, 1, 0} + \sum\nolimits_{l=1}^n \gamma_{m +l} \cdot b_{m+l, 1, 1} \\
        & > I_{0, 0},
    \end{align*}
    contradicting $t_0(a_{\eta'}) \in I_{0, \eta'(0)} = I_{0, 0}$.
    Hence, (\ref{tag_D_lin_dep}) must be wrong, so we conclude that the sequence of linear $L' \cup \LRC$-terms $(t_\alpha(y) : \alpha \in \kappa)$ is $D$-linearly independent.
    \end{innerproof}
    \end{subclaim}
    \noindent This completes the proof of Lemma \ref{lemma_simp_randomness_pattern}.
\end{proof}
\end{lemma}

\noindent In the following, $\mathfrak{G}$ is the $K$-vector space of all germs of $L(\varnothing)$-definable endomorphisms of $\VV$ at $0$ from Definition \ref{def_germs}, and $R_C$ is the ring of all $\LKThe$-definable endomorphisms from Fact \ref{theorem_r_c_def}.

\begin{theorem} \label{corollary_dp_rank_btter_fml}
    The following holds:
    \begin{enumerate}[(i)]
        \item If $C$ is trivial (i.e., if $C$ is algebraic with $\deg(\mipo(C)) = 1$), then $\dprk{}(T\theta^C) = 1$.
        \item If $\dim_K(\mathfrak{G}) = 1$, then $\dprk{}(T\theta^C) = \dim_K(R_C)$.
        \item Otherwise, $\dprk{(T\theta^C)} = \max\set{\omega, \dim_K(\mathfrak{G}), \dim_K(R_C)}$.
    \end{enumerate}
    Furthermore, we have $\dprk{}(T\theta^C) = \dprk{}(T'\theta^C)$.
\begin{proof}
For now, we work in $T'\theta^C$.
Our goal is to find a $D$-basis of all linear $L' \cup \LRC$-terms in a single variable $y$ and show that this set of terms can be used to define a randomness pattern as in Lemma \ref{lemma_simp_randomness_pattern}.
To find this basis, we actually look at $d$-tuples of linear $L' \cup \LRC$-terms in a single variable $y$.
The reason is that multiplication by elements of $K$ and by elements of $R_C$ is defined on $\VV = M^d$.
Recall that each $r \in R_C$ can be written as a matrix $(r_{k, l})_{1 \leq k,l \leq d}$, and that we have a unary function symbol for every such $r_{k, l}$.
Similarly, we have a function symbol for every $\gamma \in D$.
Since all these function symbols define linear maps, given some $\sigma = (\sigma_{k, l})_{1 \leq k,l \leq d} \in D^{d\times d} \cup R_C$ and some $d$-tuple $\ut(y)$ of linear $L' \cup \LRC$-terms in a single variable $y$, the product
$$
\sigma \cdot \ut(y) =  \begin{tikzpicture}[baseline=(Frame.base)]
    \drawText{0}{0}{\sigma_{1,1}}
    \drawText{0}{2}{\sigma_{d,1}}
    \drawText{2}{0}{\sigma_{1,d}}
    \drawText{2}{2}{\sigma_{d,d}}
    \drawHDots{1}{0}{1}
    \drawHDots{1}{2}{1}
    \drawVDots{0}{1}{1}
    \drawVDots{2}{1}{1}
    \drawBorder{0}{0}{3}{3}
    \end{tikzpicture}
    \;\cdot\;
    \begin{tikzpicture}[baseline=(Frame.base)]
    \drawText{0}{0}{t_1(y)}
    \drawText{0}{2}{t_d(y)}
    \drawVDots{0}{1}{1}
    \drawBorder{0}{0}{1}{3}
    \end{tikzpicture}
    \;=\;
    \begin{tikzpicture}[baseline=(Frame.base)]
    \drawText{0}{0}{\sigma_{1, 1}(t_1(y)) + \dots + \sigma_{1, d}(t_d(y))}
    \drawText{0}{2}{\sigma_{d, 1}(t_1(y)) + \dots + \sigma_{d, d}(t_d(y))}
    \drawVDots{0}{1}{1}
    \drawBorder{-1.87}{0}{2.87}{3}
    \end{tikzpicture}
$$
is again a $d$-tuple of linear $L' \cup \LRC$-terms (as always, we identify $d$-tuples with $d\times1$-matrices).
This shows that the set of $d$-tuples of linear $L' \cup \LRC$-terms in $y$ is a $K$-vector space.
Since each $\lambda \in K$ is identified with a matrix in $D^{d\times d}$, we see that
$$
D^d(y) := \set{(\gamma_1(y), \dots, \gamma_d(y)) : \gamma_1, \dots, \gamma_d \in D}
$$
is a subspace of the set of $d$-tuples of linear $L' \cup \LRC$-terms in $y$.
    
\begin{subdefinition} \label{def_term_basis}
     We fix a $K$-basis $\Gg$ of $D^d(y)$, a set $\dd$ such that $\dd \cup \set{\Id_d}$ is a $K$-basis of $D^{d\times d}$, and a set $\rr$ such that $\rr \cup \set{1}$ is a $K$-basis of $R_C$.
     We recursively define (see Figure \ref{figure_tree_like_construction_b}):
    \begin{enumerate}[(i)]
        \item $\Cc_0 := \Gg$,
        \item $\bb^*_{i} := \rr \cdot \Cc_i := \set{ r \cdot c : r \in \rr, c \in \Cc_i }$, and
        \item $\Cc_{i+1} = \dd \cdot \bb^*_i$.
    \end{enumerate}
    We finally set $\bb := \bigcup_{i\in \omega}\bigcup_{k=1}^d \pi_k(\bb^*_i)$, where $\pi_k$ is the projection to the $k$-th component.
\end{subdefinition}

\begin{figure}[tbp]
    \centering
    \resizebox{\textwidth}{!}{\begin{tikzpicture}[
        x=0.80cm,
        y=1.00cm,
        term node/.style={circle, draw, fill=white, inner sep=1pt, minimum size=6mm, font=\scriptsize},
        bstar node/.style={term node, draw=blue!60!black, fill=blue!8, very thick},
        tree edge/.style={line width=0.35pt},
        continuation edge/.style={densely dotted, line width=0.65pt},
        projection arrow/.style={->, line width=0.45pt},
        set outline/.style={line width=0.35pt, line cap=round, line join=round},
        set continuation/.style={densely dotted, line width=0.35pt, line cap=round, line join=round},
        package outline/.style={draw=black!65, densely dashed, line width=0.45pt, line cap=round, line join=round},
        level label/.style={font=\scriptsize}
    ]
        \def\drawblock#1#2#3#4{\draw[package outline,#4]
                ([yshift=#3]#1)
                -- ([yshift=#3]#2)
                arc[start angle=90, end angle=-90, radius=#3]
                -- ([yshift=-#3]#1)
                arc[start angle=-90, end angle=-270, radius=#3]
                -- cycle;
        }

        \begin{scope}
            \coordinate (g1) at (0,0);
            \coordinate (g10) at (-2,1);
            \coordinate (g11) at (2,1);
            \coordinate (g100) at (-3,2);
            \coordinate (g101) at (-1,2);
            \coordinate (g110) at (1,2);
            \coordinate (g111) at (3,2);
            \coordinate (g1000) at (-3.5,3);
            \coordinate (g1001) at (-2.5,3);
            \coordinate (g1010) at (-1.5,3);
            \coordinate (g1011) at (-0.5,3);
            \coordinate (g1100) at (0.5,3);
            \coordinate (g1101) at (1.5,3);
            \coordinate (g1110) at (2.5,3);
            \coordinate (g1111) at (3.5,3);

            \draw[tree edge] (g1) -- (g10);
            \draw[tree edge] (g1) -- (g11);
            \draw[tree edge] (g10) -- (g100);
            \draw[tree edge] (g10) -- (g101);
            \draw[tree edge] (g11) -- (g110);
            \draw[tree edge] (g11) -- (g111);
            \draw[tree edge] (g100) -- (g1000);
            \draw[tree edge] (g100) -- (g1001);
            \draw[tree edge] (g101) -- (g1010);
            \draw[tree edge] (g101) -- (g1011);
            \draw[tree edge] (g110) -- (g1100);
            \draw[tree edge] (g110) -- (g1101);
            \draw[tree edge] (g111) -- (g1110);
            \draw[tree edge] (g111) -- (g1111);
            \foreach \leaf in {g1000,g1001,g1010,g1011,g1100,g1101,g1110,g1111} {
                \draw[continuation edge] (\leaf) -- ++(-0.2,0.55);
                \draw[continuation edge] (\leaf) -- ++(0.2,0.55);
            }

            \node[term node] at (g1) {$g_1$};
            \node[bstar node] at (g10) {$r_1g_1$};
            \node[bstar node] at (g11) {$r_2g_1$};
            \node[term node] at (g100) {\scalebox{0.68}{$\Gamma_1r_1g_1$}};
            \node[term node] at (g101) {\scalebox{0.68}{$\Gamma_2r_1g_1$}};
            \node[term node] at (g110) {\scalebox{0.68}{$\Gamma_1r_2g_1$}};
            \node[term node] at (g111) {\scalebox{0.68}{$\Gamma_2r_2g_1$}};
            \node[bstar node] at (g1000) {\scalebox{0.48}{$r_1\Gamma_1r_1g_1$}};
            \node[bstar node] at (g1001) {\scalebox{0.48}{$r_2\Gamma_1r_1g_1$}};
            \node[bstar node] at (g1010) {\scalebox{0.48}{$r_1\Gamma_2r_1g_1$}};
            \node[bstar node] at (g1011) {\scalebox{0.48}{$r_2\Gamma_2r_1g_1$}};
            \node[bstar node] at (g1100) {\scalebox{0.48}{$r_1\Gamma_1r_2g_1$}};
            \node[bstar node] at (g1101) {\scalebox{0.48}{$r_2\Gamma_1r_2g_1$}};
            \node[bstar node] at (g1110) {\scalebox{0.48}{$r_1\Gamma_2r_2g_1$}};
            \node[bstar node] at (g1111) {\scalebox{0.48}{$r_2\Gamma_2r_2g_1$}};
        \end{scope}

        \begin{scope}[xshift=6.60cm]
            \coordinate (g2) at (0,0);
            \coordinate (g20) at (-2,1);
            \coordinate (g21) at (2,1);
            \coordinate (g200) at (-3,2);
            \coordinate (g201) at (-1,2);
            \coordinate (g210) at (1,2);
            \coordinate (g211) at (3,2);
            \coordinate (g2000) at (-3.5,3);
            \coordinate (g2001) at (-2.5,3);
            \coordinate (g2010) at (-1.5,3);
            \coordinate (g2011) at (-0.5,3);
            \coordinate (g2100) at (0.5,3);
            \coordinate (g2101) at (1.5,3);
            \coordinate (g2110) at (2.5,3);
            \coordinate (g2111) at (3.5,3);

            \draw[tree edge] (g2) -- (g20);
            \draw[tree edge] (g2) -- (g21);
            \draw[tree edge] (g20) -- (g200);
            \draw[tree edge] (g20) -- (g201);
            \draw[tree edge] (g21) -- (g210);
            \draw[tree edge] (g21) -- (g211);
            \draw[tree edge] (g200) -- (g2000);
            \draw[tree edge] (g200) -- (g2001);
            \draw[tree edge] (g201) -- (g2010);
            \draw[tree edge] (g201) -- (g2011);
            \draw[tree edge] (g210) -- (g2100);
            \draw[tree edge] (g210) -- (g2101);
            \draw[tree edge] (g211) -- (g2110);
            \draw[tree edge] (g211) -- (g2111);
            \foreach \leaf in {g2000,g2001,g2010,g2011,g2100,g2101,g2110,g2111} {
                \draw[continuation edge] (\leaf) -- ++(-0.2,0.55);
                \draw[continuation edge] (\leaf) -- ++(0.2,0.55);
            }

            \node[term node] at (g2) {$g_2$};
            \node[bstar node] at (g20) {$r_1g_2$};
            \node[bstar node] at (g21) {$r_2g_2$};
            \node[term node] at (g200) {\scalebox{0.68}{$\Gamma_1r_1g_2$}};
            \node[term node] at (g201) {\scalebox{0.68}{$\Gamma_2r_1g_2$}};
            \node[term node] at (g210) {\scalebox{0.68}{$\Gamma_1r_2g_2$}};
            \node[term node] at (g211) {\scalebox{0.68}{$\Gamma_2r_2g_2$}};
            \node[bstar node] at (g2000) {\scalebox{0.48}{$r_1\Gamma_1r_1g_2$}};
            \node[bstar node] at (g2001) {\scalebox{0.48}{$r_2\Gamma_1r_1g_2$}};
            \node[bstar node] at (g2010) {\scalebox{0.48}{$r_1\Gamma_2r_1g_2$}};
            \node[bstar node] at (g2011) {\scalebox{0.48}{$r_2\Gamma_2r_1g_2$}};
            \node[bstar node] at (g2100) {\scalebox{0.48}{$r_1\Gamma_1r_2g_2$}};
            \node[bstar node] at (g2101) {\scalebox{0.48}{$r_2\Gamma_1r_2g_2$}};
            \node[bstar node] at (g2110) {\scalebox{0.48}{$r_1\Gamma_2r_2g_2$}};
            \node[bstar node] at (g2111) {\scalebox{0.48}{$r_2\Gamma_2r_2g_2$}};
        \end{scope}

        \coordinate (capCzeroLeft) at (0,0);
        \coordinate (capCzeroRight) at ([xshift=6.60cm]0,0);
        \coordinate (capBzeroLeft) at (-2,1);
        \coordinate (capBzeroRight) at ([xshift=6.60cm]2,1);
        \coordinate (capConeLeft) at (-3,2);
        \coordinate (capConeRight) at ([xshift=6.60cm]3,2);
        \coordinate (capBoneLeft) at (-3.5,3);
        \coordinate (capBoneRight) at ([xshift=6.60cm]3.5,3);
        \drawblock{capCzeroLeft}{capCzeroRight}{3.8mm}{}
        \drawblock{capBzeroLeft}{capBzeroRight}{3.8mm}{draw=blue!60!black}
        \drawblock{capConeLeft}{capConeRight}{3.8mm}{}
        \drawblock{capBoneLeft}{capBoneRight}{3.8mm}{draw=blue!60!black}
        \node[level label, anchor=east] at (-0.65,0) {$\Cc_0$};
        \node[level label, anchor=east] at (-2.65,1) {$\bb^*_0$};
        \node[level label, anchor=east] at (-3.65,2) {$\Cc_1$};
        \node[level label, anchor=east] at (-4.15,3) {$\bb^*_1$};

        \coordinate (bSetLeftBottom) at (13.72,0.92);
        \coordinate (bSetRightBottom) at (14.28,0.92);
        \coordinate (bSetLeftTop) at (13.72,3.22);
        \coordinate (bSetRightTop) at (14.28,3.22);
        \coordinate (bSetInZero) at (13.92,1);
        \coordinate (bSetInOne) at (13.92,3);
        \draw[set outline] (bSetLeftTop) -- (bSetLeftBottom)
            arc[start angle=180, end angle=360, radius=0.28]
            -- (bSetRightTop);
        \draw[set continuation] (bSetLeftTop) -- (13.72,3.47);
        \draw[set continuation] (bSetRightTop) -- (14.28,3.47);
        \node[level label] at (14,2.07) {$\bb$};
        \draw[projection arrow] ([xshift=5mm]capBzeroRight) -- node[pos=0.38, above, level label] {\scalebox{0.75}{$\pi_1,\dots,\pi_d$}} (bSetInZero);
        \draw[projection arrow] ([xshift=5mm]capBoneRight) -- node[pos=0.38, above, level label] {\scalebox{0.75}{$\pi_1,\dots,\pi_d$}} (bSetInOne);
    \end{tikzpicture}}
    \caption[The tree-like construction of $\bb$]{The tree-like construction of the set $\bb$ in the case where $\Gg=\set{g_1,g_2}$, $\rr=\set{r_1,r_2}$, and $\dd=\set{\Gamma_1,\Gamma_2}$ each have precisely two elements. Starting from each $g_i \in \Cc_0$, one alternates multiplication by elements of $\rr$ and $\dd$. The set $\bb$ consists precisely of the coordinate projections of the elements of the sets $\bb^*_i$.}
    \label{figure_tree_like_construction_b}
\end{figure}

\noindent With the above, we see that each $\Cc_i$ is a set of $d$-tuples of linear $L' \cup \LRC$-terms in $y$, that each $\bb^*_i$ is another such set, and that $\bb$ is a set of linear $L' \cup \LRC$-terms in $y$.
In a series of claims, we show that $\bb \cup \set{\Id}$ is a $D$-basis of all linear $L' \cup \LRC$-terms in $y$.

\begin{subclaim} \label{lemma_independent_and_dense}
    The set $\bb$ is $D$-linearly independent over $\Id$.
    Furthermore, the set 
    $$
    \set{(t(a) : t(y) \in \bb \cup \set{\Id}) : a \in M}
    $$
    is dense in $M^{|\bb|+1}$.
    More explicitly, given intervals $(I_t : t(y) \in \bb \cup \set{\Id})$ in the monster model $(\MM,\theta)$ fixed for this section, we can find some $a \in M$ with $t(a) \in I_t$ for all $t(y) \in \bb \cup \set{\Id}$.
\begin{innerproof}
    Fix finite subsets $\Gg_0 \subseteq \Gg$, $\dd_0 \subseteq \dd$, and $\rr_0 \subseteq \rr$, and fix some $N \geq 0$.
    For each $q \geq 0$, we set 
    $$
    \ii_q :=  \Set{\big\langle (r_i, \Gamma_i) : 0 \leq i < q \big\rangle : r_i \in \rr_0, \Gamma_i \in \dd_0},
    $$
    and define $\ii_{\leq N} := \bigcup_{q=0}^N \ii_q$ and $\ii_{<N}$ similarly.
    For every element $r \in \rr_0 \cup \set{1}$, we define the tuple $\ux_r := (x^{\sigma}_{g, r} : \sigma \in \ii_{\leq N}, g \in \Gg_0)$, where each $x^{\sigma}_{g, r}$ is a variable in $\VV$, i.e., a $d$-tuple of variables. Given $\sigma = \big\langle (r_1, \Gamma_1), \dots, (r_n, \Gamma_n) \big\rangle$, we will more or less use the variable $x^{\sigma}_{g, r}$ as a placeholder for $r \cdot \Gamma_n \cdot r_n \cdots \Gamma_1 \cdot r_1 \cdot g(y)$, and, since $r$ can also be $1$, this means that we have placeholders for all nodes in Figure \ref{figure_tree_like_construction_b} of depth $2N+1$ generated by $\Gg_0$, $\dd_0$, and $\rr_0$.
    
    For each $(g, \sigma, r, k) \in \Gg_0 \times \ii_{\leq N} \times \rr_0 \times \set{1, \dots, d}$, choose a non-empty open interval $I_{g, \sigma, r, k}$ and some coefficient $\gamma_{g, \sigma, r, k} \in D$.
    Also let $I$ be another non-empty open interval.
    Now consider the formula
    \begin{align*}
        \psi\big((\ux_r : r \in \rr_0 \cup \set{1})\big) := \exists y \in I : &\bigwedge\nolimits_{g \in \Gg_0} x^{\emptyseq}_{g, 1} = g(y) \\
        &\wedge \bigwedge\nolimits_{g \in \Gg_0} \bigwedge\nolimits_{\sigma \in \ii_{\leq N}}\bigwedge\nolimits_{r \in \rr_0} \bigwedge\nolimits_{k=1}^d  \pi_{k}(x^\sigma_{g, r}) \in I_{g, \sigma, r, k} \\
        &\wedge \bigwedge\nolimits_{g \in \Gg_0} \bigwedge\nolimits_{\sigma \in \ii_{< N}}\bigwedge\nolimits_{r \in \rr_0} \bigwedge\nolimits_{\Gamma \in \dd_0} x^{\sigma^\frown \scalebox{0.5}{$\langle(r, \Gamma)\rangle$}}_{g, 1} = \Gamma \cdot x^\sigma_{g, r}
    \end{align*}
    It is easy to verify that the formula above implies no finite disjunction of non-trivial $K$-linear dependencies over $\VV$.
    Here we use that the $\Gamma$'s are $K$-linearly independent over $\Id_d$, and that the $g$'s are $K$-linearly independent.

    Now Lemma \ref{lemma_rc_li_plus_li} yields $(\MM, \theta) \models \exists \ux \in \VV : \psi\big( r(\ux) : r \in \rr_0 \cup \set{1})\big)$, where we define $\ux := (x^\sigma_g : \sigma \in \ii_{\leq N}, g \in \Gg_0)$.
    Looking at the first and third lines in the definition of the formula $\psi\big((\ux_r : r \in \rr_0 \cup \set{1})\big)$, we see that $\psi\big( r(\ux) : r \in \rr_0 \cup \set{1})\big)$ implies
    $$
    x^\sigma_{g} = \Gamma_{q-1} \cdot r_{q-1} \cdots \Gamma_0 \cdot r_0 \cdot g(y) \; \text{and therefore} \; \pi_k(r \cdot \Gamma_{q-1} \cdot r_{q-1} \cdots \Gamma_0 \cdot r_0 \cdot g(y)) \in I_{g, \sigma, r, k}
    $$
    for $\sigma = \langle (r_0, \Gamma_0), \dots, (r_{q-1}, \Gamma_{q-1}) \rangle \in \ii_q$ and $r \in \rr_0$.
    For this $\sigma$, note that the product $r \cdot \Gamma_{q-1} \cdot r_{q-1} \cdots \Gamma_0 \cdot r_0 \cdot g(y)$ lies in $\bb^*_{q}$ and that its image under $\pi_k$ lies in $\bb$.
    One can easily check that, given any finite subset of $\bb$, we can write all of its elements in such a form for some $\Gg_0, \dd_0, \rr_0$, and $N > 0$.
    By varying the intervals and using compactness, we obtain that $\set{(t(a) : t(y) \in \bb \cup \set{\Id}) : a \in M}$ is dense in $M^{|\bb| + 1}$ (recall that we work in a sufficiently large monster model).
    This density implies that $\bb \cup \set{\Id}$ is $D$-linearly independent.
    Indeed, if not, then there are distinct $t_1(y), \dots, t_n(y) \in \bb \cup \set{\Id}$ and coefficients $(\gamma_1, \dots, \gamma_n) \in D^n \setminus \set{\uzero}$ such that
    $$
    \sum\nolimits_{l=1}^n \gamma_l \cdot t_l(y) = 0
    $$
    holds for all $y$.
    The image of the map $x \mapsto (t_1(x), \dots, t_n(x))$ is then contained in an $L'$-definable set of dimension less than $n$, contradicting the density just proved.
\end{innerproof}
\end{subclaim}

\begin{subclaim} \label{lemma_product_easy_lol}
    Every product of the form $\sigma_q \cdots \sigma_1 \cdot g(y)$, with $\sigma_i \in D^{d\times d} \cup R_C$ for all $i$ and $g \in D^d(y)$, can be written as a $K$-linear combination of elements in $\bigcup_{i \in \omega}(\Cc_i \cup \bb^*_i)$.
\begin{innerproof}
    The case $q = 0$ is clear, as $\Cc_0 = \Gg$ is a $K$-basis of $D^d(y)$.
    Now assume that we have already shown 
    $$
    \sigma_{q-1} \cdots \sigma_1 \cdot g(y) = \sum\nolimits_{l=1}^n \lambda_l \cdot c_l(y)
    $$
    with each $\lambda_l \in K$ and each $c_l(y) \in \bigcup_{i \in \omega}(\Cc_i \cup \bb^*_i)$.
    It remains to show that a product of the form $\sigma \cdot \lambda \cdot c(y)$ with $\sigma \in D^{d\times d} \cup R_C$, $\lambda \in K$, and $c(y) \in \bigcup_{i \in \omega}(\Cc_i \cup \bb^*_i)$ can be written as a $K$-linear combination of elements in $\bigcup_{i \in \omega}(\Cc_i \cup \bb^*_i)$.
    To do so, distinguish between the cases $\sigma \in D^{d\times d}$ and $\sigma \in R_C$, and also between the cases $c(y) \in \bb^*_i$, $c(y) \in \Cc_0$, and $c(y) \in \Cc_{i+1}$.

    We present the proof in the case $\sigma \in R_C$ and $c(y) \in \bb^*_i$.
    First, notice that $c(y) = r \cdot c'(y)$ for some $r \in \rr$ and $c'(y) \in \Cc_i$.
    Since the product $\sigma \cdot \lambda \cdot r$ lies in $R_C$, and since $\rr \cup \set{1}$ is a $K$-basis of $R_C$, we can write $\sigma \cdot \lambda \cdot r = \lambda'_0 + \sum_{k=1}^m \lambda'_k \cdot r'_k$, where $\lambda'_0, \dots, \lambda'_m \in K$ and $r'_1, \dots, r'_m \in \rr$.
    With this, we obtain that
    $$
    \sigma \cdot \lambda \cdot c(y) = \sigma \cdot \lambda \cdot r \cdot \underbrace{c'(y)}_{\in \Cc_i} = \lambda'_0 \cdot c'(y) + \sum\nolimits_{k=1}^m \lambda'_k \cdot \underbrace{r'_k \cdot c'(y)}_{\in \bb^*_i}
    $$
    is a $K$-linear combination of elements in $\bigcup_{i \in \omega}(\Cc_i \cup \bb^*_i)$.
    The other cases are similar.
\end{innerproof}
\end{subclaim}

\begin{subclaim} \label{lemma_basis_of_linear_terms}
    The set $\bb \cup \set{\Id}$ is a $D$-basis of all linear $L' \cup \LRC$-terms in a single variable $y$.
\begin{innerproof}
    We already know that $\bb$ is $D$-linearly independent over $\Id$, so it remains to show that every linear $L' \cup \LRC$-term in $y$ can be written as a $D$-linear combination of elements in $\bb \cup \set{\Id}$.
    We can easily see that every such term can be written as a $D$-linear combination of terms of the form
    $$
    t(y) = (r_{q})_{k_q, l_q} \cdot \gamma_{q} \cdots (r_1)_{k_1, l_1} \cdot \gamma_1(y) 
    $$
    where $\gamma_1, \dots, \gamma_{q} \in D$, and $(r_i)_{k, l}$ denotes the $(k, l)$-th entry of the $d \times d$-matrix $r_i \in R_C$.
    For each $i \in \set{1, \dots, q}$, let $P_i, Q_i \in D^{d\times d}$ be the matrices consisting of zeros and ones such that $P_i \cdot r_i \cdot Q_i$ is the matrix that has $(r_i)_{k_i, l_i}$ as the $(1, 1)$-th entry and $0$ everywhere else.
    Setting $\Gamma_i := Q_{i+1} \cdot \gamma_{i+1} \cdot P_{i}$ for $i \in \set{1, \dots, q-1}$ (treating $\gamma_{i+1}$ as the corresponding diagonal matrix), $\Gamma_{q} := P_q$, and $g(y) := Q_1 \cdot (
        \gamma_1(y),
        0, 
        \dots,
        0
    )$, we see that 
    $$
    t(y) = \pi_{1}(\Gamma_q \cdot r_q \cdot \Gamma_{q-1} \cdot r_{q-1} \cdots \Gamma_1 \cdot r_1 \cdot g(y)).
    $$
    Now Claim \ref{lemma_product_easy_lol} yields $\Gamma_q \cdot r_q \cdot \Gamma_{q-1} \cdot r_{q-1} \cdots \Gamma_1 \cdot r_1 \cdot g(y) = \sum_{k=1}^m \lambda_k \cdot c_k(y)$, where $\lambda_k \in K$ and $c_k(y) \in \bigcup_{i \in \omega}(\Cc_i \cup \bb^*_i)$ hold for all $k$.
    If $c_k(y) \in \Cc_0$, then, as $\Cc_0 \subseteq D^d(y)$, we can write $\pi_1(\lambda_k \cdot c_k(y)) = \gamma(y)$ for some $\gamma \in D$.
    If $c_k(y) \in \Cc_{i+1}$, then, as $\Cc_{i+1} = \dd \cdot \bb^*_i$ and $\dd \subseteq D^{d\times d}$, we can write $\pi_1(\lambda_k \cdot c_k(y)) = \pi_1(\Gamma' \cdot c'(y))$ for some $\Gamma' \in D^{d\times d}$ and $c'(y) \in \bb^*_i$.
    The same is trivially true if $c_k(y) \in \bb^*_i$, so we obtain
    $$
    t(y) = \gamma \cdot y + \sum\nolimits_{l = 1}^{n} \pi_1(\Gamma'_l \cdot c'_l(y)) 
    $$
    where $\gamma \in D$, each $\Gamma'_l \in D^{d\times d}$, and each $c'_l(y) \in \bb^*_i$ for some $i \in \omega$.
    It is easy to verify that $\pi_1 \circ \Gamma'_l = \sum_{k=1}^d (\Gamma'_l)_{1, k} \cdot \pi_k$, so we can conclude that $t(y)$ can be written as a $D$-linear combination of elements in $\bb \cup \set{\Id}$ (see Definition \ref{def_term_basis}).
\end{innerproof}
\end{subclaim}

\noindent We can now calculate the $\dprk{}$ of $T'\theta^C$, and with a small trick also of $T\theta^C$.

\begin{subclaim} \label{clai_dp_rk}
    The following holds:
    $$
    \dprk{(T\theta^C)} = \dprk{(T'\theta^C)} = \dim_D(\text{``linear $L'\cup\LRC$-terms in a single variable $y$"}) = |\bb| + 1.
    $$
\begin{innerproof}
    By Lemma \ref{lemma_simp_randomness_pattern}, every randomness pattern in $T'\theta^C$ must have a depth equal to the cardinality of some $D$-linearly independent set of linear terms in a single variable $y$.
    Together with Claim \ref{lemma_basis_of_linear_terms}, this yields
    $
    \dprk{(T'\theta^C)} \leq |\bb| + 1.
    $
    To show ``$\geq$", choose disjoint non-empty open intervals $(I_i : i \in \omega)$.
    Using the denseness from Claim \ref{lemma_independent_and_dense}, we see that the partial type
    $
    \set{ t(y) \in I_{\eta(t)} : t \in \bb \cup \set{\Id} }
    $
    has a realization $a_\eta$ for every function $\eta \colon \bb \cup \set{\Id} \to \omega$.
    Since the intervals are disjoint, we obtain $t(a_\eta) \in I_i \Leftrightarrow i = \eta(t)$ for all $t \in \bb \cup \set{\Id}$ and $i \in \omega$.
    Thus we have found a randomness pattern of depth $|\bb| + 1$ in $T'\theta^C$, so $\dprk{(T'\theta^C)} \geq |\bb| + 1$.

    Since $T\theta^C$ is a reduct of $T'\theta^C$, we immediately obtain $\dprk{(T\theta^C)} \leq \dprk{(T'\theta^C)}$.
    Recall that every element of $D$ is represented by a $\varnothing$-definable partial endomorphism in $T$, and these representatives are extended to total endomorphisms in $T'$.
    Thus, by choosing all $I_i$ in an infinitesimal neighborhood of $0$, the same randomness pattern also works in $T\theta^C$.
    Hence $\dprk{(T\theta^C)} \geq |\bb| + 1$.
\end{innerproof}
\end{subclaim}

\noindent This proves the final assertion of Theorem \ref{corollary_dp_rank_btter_fml}.
It remains to prove the explicit formulas for $\dprk{}\!(T\theta^C)$ in terms of $\mathfrak{G}$ and $R_C$.
Recall that the set $\mathfrak{G}$ of all germs of $L(\varnothing)$-definable endomorphisms of $\VV$ at $0$ is, as a $K$-vector space, isomorphic to $D^{d\times d}$.
In particular, we have $\dim_K(\mathfrak{G}) = |\dd| + 1$, as $\dd \cup \set{\Id_d}$ is a $K$-basis of $D^{d\times d}$.
Using Claim \ref{clai_dp_rk} and our explicit construction of $\bb$ in Definition \ref{def_term_basis}, we now calculate $\dprk{}(T\theta^C)$ in the cases from the statement of Theorem \ref{corollary_dp_rank_btter_fml}:
\begin{enumerate}[(i)]
    \item The kernel configuration $C$ is trivial.
    In this case, $T$ and $T\theta^C$ are interdefinable, so, since $T$ is o-minimal, we have $1 = \dprk{}(T) = \dprk{}(T\theta^C)$.
    \item We have $\dim_K(\mathfrak{G}) = 1$, or equivalently $\dim_K(D^{d\times d}) = 1$, and $C$ is non-trivial.
    Note that $\dim_K(D^{d\times d}) = 1$ implies $\dim_K(D^d(y)) = 1$, where $D^d(y)$ was defined above Definition \ref{def_term_basis}, since the matrices with non-zero entries only in the first column form a subspace of $D^{d \times d}$ that is isomorphic to $D^d(y)$.
    Hence, in Definition \ref{def_term_basis}, we obtain $\Cc_0 = \set{1(y)}$, $\bb^*_0 = \rr$, and $\Cc_i = \bb^*_i = \varnothing$ for $i \geq 1$.
    Because $\dim_K(\mathfrak{G}) = 1$ also implies $d = 1$ (see Observation \ref{obser_dim_small}), we conclude
    $$
    \dprk{}(T\theta^C) = |\bb| + 1 = |\rr \cup \set{1}| = \dim_K(R_C).
    $$
    \item We have $\dim_K(\mathfrak{G}) > 1$ and $C$ is non-trivial.
    Looking at Definition \ref{def_term_basis}, one can easily verify 
    $$
    |\bb| = d \cdot |\Gg| \cdot \Big( \sum\nolimits_{i \in \omega} |\rr| \cdot (|\dd| \cdot |\rr|)^i \Big).
    $$
    Using the column argument from (ii), one can show that $d \cdot |\Gg| = \dim_K(D^{d\times d}) = |\dd| + 1$.
    Using cardinal arithmetic, one can check that the sum evaluates to $\max \set{\omega, |\dd|, |\rr|}$.
    Since this cardinal is always infinite, we obtain
    $$
    \dprk{(T\theta^C)} = |\bb| + 1 = \max\set{\omega, |\dd| + 1, |\rr| + 1} = \max \set{\omega, \dim_K(\mathfrak{G}), \dim_K(R_C)}.
    $$
\end{enumerate}
This completes the proof of Theorem \ref{corollary_dp_rank_btter_fml}.
\end{proof}
\end{theorem}

\begin{remark}
    If one also defines the $\operatorname{dp}$-rank for all type-definable sets, then Observation \ref{obser_dim_small}, Theorem \ref{corollary_dp_rank_btter_fml}, and the fact that $\dprk{}(M^d) = d \cdot \dprk{(M)}$ always holds yield $\dprk{}(T\theta^C) = \dprk{}(\VV)$ if $C$ is non-trivial.
\end{remark}

\subsection{Exchange for the Algebraic Closure} \label{sec_acl_o_min}

\noindent In Fact \ref{theorem_acl}, we showed that the algebraic closure in $T\theta^C$ is $\cl_\theta$.
Here, $\cl_\theta(A)$ is the smallest set containing $A$ that is closed under both $\acl_L$ and multiplication by every $r \in R_C$.
We have also seen that $\cl_\theta$ cannot have the exchange property unless $R_C$ is a field.
In this section, we show that, when $T$ is linear, $\cl_\theta$ almost never has the exchange property; in fact, a single partial endomorphism of $\VV$ that is not multiplication by some $\lambda \in K$ on its domain is enough to ensure that $\cl_\theta$ does not have the exchange property.

\begin{theorem} \label{theorem_acl_fail}
    The algebraic closure $\acl_{L_\theta} = \cl_\theta$ has the exchange property in $T\theta^C$ if and only if either
    \begin{enumerate}[(i)]
        \item $C$ is trivial.
        \item $\dim_K(\mathfrak{G}) = 1$ and $R_C$ is a field.
    \end{enumerate}
\begin{proof}
    The case where $C$ is trivial is clear, since in this case $T$ and $T\theta^C$ are interdefinable. For the rest of this proof, we assume that $C$ is non-trivial. First, assume that $\acl_{L_\theta} = \cl_\theta$ has the exchange property.
    By Fact \ref{theorem_nes_cond_fixed}, we instantly see that $\dim(\VV) = 1$ and that $R_C$ is a field. This means that we have $\VV = M$. Assume, toward a contradiction, that $\dim_K(\mathfrak{G}) > 1$.
    This implies that there is an $L(\varnothing)$-definable partial endomorphism $f \colon I \subseteq \VV \to \VV$ that, on its domain, is not equal to multiplication by any $\lambda \in K$. Fix any $(\mm, \theta) \models T\theta^C$ and $a \in M \setminus \cl_\theta(\varnothing)$.
    \begin{subclaim} \label{claim_pi_is_dogshit}
        The formula $x_0 \in I \wedge x_1 - f(x_0) = a$ implies no finite disjunction of non-trivial linear dependencies in $x_0x_1$ over $\VV$.
    \begin{innerproof}
        Suppose we had $\mm \models \forall x_0, x_1 : x_0 \in I \wedge x_1 - f(x_0) = a \rightarrow \bigvee_{i=1}^q \lambda_{i, 0} \cdot x_0 + \lambda_{i, 1} \cdot x_1 = u_i$, where each $(\lambda_{i, 0}, \lambda_{i, 1}) \in K^2 \setminus \set{\uzero}$ and each $u_i \in \VV$.
        The subset on which each disjunct holds is definable.
        By o-minimality, after simplifying according to whether $\lambda_{i, 0}$ or $\lambda_{i, 1}$ is zero, we can find an interval $I' = (0, b) \subseteq I$, $\lambda \in K$, and $u \in \VV$ such that
        $$
        f(v) = \lambda \cdot v + u
        $$
        holds for all $v \in I'$.
        (Note that the case $\lambda_{i, 1} = 0$ cannot hold on an interval.)
        Since $f$ is a partial endomorphism, this implies $u = 0$.
        Thus $f = \lambda$ on $I'$, and this already implies $f = \lambda$ on all of $I$, contradicting our assumption on $f$.
    \end{innerproof}
    \end{subclaim}
    \noindent Now, since $\theta^0$ and $\theta^1$ are $K$-linearly independent in $R_C$, and since the formula
    $$
    x_0 \in I \setminus U_0 \wedge x_1 - f(x_0) = a
    $$
    still implies no finite disjunction of non-trivial linear dependencies in $x_0x_1$ for any finite $U_0 \subseteq \VV$, Lemma \ref{lemma_rc_li_plus_li} yields infinitely many $b \in M$ with $a = \theta(b) - f(b)$.
    Thus, we can find an elementary extension $(\mm', \theta') \succ (\mm, \theta)$ and some $b' \in M' \setminus M$ with $a = \theta(b') - f(b')$.
    This implies $a \in \cl_\theta(b') \setminus \cl_\theta(\varnothing)$.
    Since $a \in M$ and $b' \not\in M = \cl_\theta(M)$, we have $b' \not\in \cl_\theta(a)$.
    This contradicts our assumption that $\cl_\theta = \acl_{L_\theta}$ has the exchange property in $T\theta^C$, so we must have $\dim_K(\mathfrak{G}) = 1$.

    Finally, suppose that $\dim_K(\mathfrak{G}) = 1$ and that $R_C$ is a field.
    We show that in this case $\cl_\theta$ does have the exchange property.
    Note that $\dim_K(\mathfrak{G}) = 1$ implies $\dim(\VV) = 1$, by Observation \ref{obser_dim_small}.
    Recall that, by Definition \ref{def_cl_theta}, $\cl_\theta(A)$ is the smallest set that is both closed under $\acl_L$ and multiplication with elements of $R_C$.
    By Lemma \ref{lemma_span_is_acl_linear}, we have $\acl_L = \spanA{\;}{\hat{L}}$, where
    $$
    \hat{L} := \set{0, +, <, (\hat{g} : g \text{ is a } \varnothing\text{-definable partial endomorphism}), (a : a \in \dcl_L(\varnothing))}
    $$
    is the language introduced in Fact \ref{fact_lin_qe}.
    Since every partial endomorphism is the multiplication by some $\lambda \in K \subseteq R_C$ on its domain, by the assumption $\dim_K(\mathfrak{G}) = 1$, one easily verifies that $\cl_\theta(A)$ is the $R_C$-linear span of $A\dcl_L(\varnothing)$ for every $A \subseteq M$.
    Since $R_C$ is a field by assumption, it follows that $\acl_{L_\theta}$ has the exchange property.
\end{proof}
\end{theorem}

\begin{remark}
    Set $T := \Th(\RR, 0, 1, +, (q\cdot)_{q \in \QQ}, <)$, with $\VV$ being the $\QQ$-vector space given by addition.
    \begin{enumerate}[(i)]
        \item The theory $T\theta^{C_0}$ is precisely the theory $T_t$ from Definition 3.5 in \cite{BGCH21}, if one works in the language $\set{0, 1, +, (r \cdot)_{r \in R_{C_0}}, <}$.
        Recall that $R_{C_0} = \QQ(X)$.
        To see that $T\theta^{C_0}$ implies $T_t$, apply Lemma \ref{lemma_rc_li_plus_li}.
        The other implication follows because, in $T$, the sets that imply no finite disjunction of non-trivial linear dependencies over $\VV$ are precisely the sets that contain an open cell.
        We recover all the results from \cite{BGCH21} for this theory $T_t$: it is not o-minimal, and it has quantifier elimination, an o-minimal open core (Theorem 5.3 in \cite{Chi25b}), definable completeness (a consequence of having an o-minimal open core), \NIP{}, infinite $\operatorname{dp}$-rank, and the exchange property.
        The theory $T_t$ was constructed in \cite{BGCH21} to show that there are expansions of an ordered group that are definably complete, have the exchange property, and are \NIP{}, but are not o-minimal.
        \item Let $f \in \QQp{}$ have degree $d \geq 2$, and let $C$ be algebraic with $\mipo(C) = f$.
        We similarly see that $T\theta^C$ is not o-minimal, and that it has quantifier elimination, an o-minimal open core, definable completeness, \NIP{}, and the exchange property.
        However, $\dprk{}(T\theta^C) = d$, by Theorem \ref{corollary_dp_rank_btter_fml} and Remark \ref{rem_dim_r_C}.
        Hence, for any $d \geq 2$, we have shown that there are also expansions of an ordered group that are definably complete, have the exchange property, and have $\operatorname{dp}$-rank $d$, but are not o-minimal.
    \end{enumerate}
\end{remark}

\noindent In combination with Theorem \ref{theorem_no_exchange}, we see that in the non-trivial o-minimal case $\acl_{L_\theta} = \cl_\theta$ has the exchange property if and only if $T$ is the theory of ordered $K$-vector spaces, potentially expanded by some parameters, $\VV$ is precisely that vector space, and $R_C$ is a field.

\bibliographystyle{alphaurl}
\bibliography{sample}

\Addresses

\end{document}